\newcommand{\cD}{\mathcal{D}}
\newcommand{\K}{\mathcal{K}}
\renewcommand{\P}{\mathcal{P}}
\newcommand{\G}{\mathcal{G}}
\newcommand{\X}{\mathcal{X}}
\newcommand{\cY}{\mathcal{Y}}
\newcommand{\Y}{\mathcal{Y}}
\newcommand{\cZ}{\mathcal{Z}}
\newcommand{\gG}{\Gamma}
\newcommand{\T}{\mathbb{T}}
\newcommand{\Q}{\mathcal{Q}}
\newcommand{\R}{\mathbb{R}}
\newcommand{\E}{\mathbb{E}}
\newcommand{\N}{\mathbb{N}}
\newcommand{\Z}{\mathbb{Z}}
\newcommand{\norm}[1]{\left\Vert #1\right\Vert}
\newcommand{\nnorm}[1]{\lvert\!|\!| #1|\!|\!\rvert}
\theoremstyle{plain}
\newtheorem{theorem}{Theorem}[section]
\newtheorem{lemma}[theorem]{Lemma}
\newtheorem{proposition}[theorem]{Proposition}
\newtheorem*{proposition*}{Proposition}
\newtheorem*{claim*}{Claim}
\newtheorem*{lemma*}{Lemma}
\newtheorem*{theoremA'}{Theorem A'}
\newtheorem*{theoremB'}{Theorem B'}
\newtheorem*{theoremC'}{Theorem C'}
\newtheorem*{theorem*}{Theorem}
\newtheorem*{conjecture*}{Conjecture}
\newtheorem{corollary}[theorem]{Corollary}
\newtheorem*{vdCLemma}{Van der Corput's Lemma}
\theoremstyle{definition}
\newtheorem*{definition*}{Definition}
\newtheorem{example}{Example}
\theoremstyle{remark}
\newtheorem*{remark}{Remark}
\newtheorem*{notation}{Notation}
\newcommand{\CC}{\mathcal C}
\newcommand{\CD}{\mathcal D}
\newcommand{\CI}{\mathcal I}
\newcommand{\CK}{\mathcal K}
\newcommand{\CX}{\mathcal X}
\newcommand{\CZ}{\mathcal Z}
\newcommand{\QQ}{{\mathbb Q}}
\newcommand{\ux}{\underline x}
\newcommand{\uh}{{\underline h}}
\newcommand{\bI}{{\bf I}}
\DeclareMathOperator{\krat}{\CK_{\text{\rm rat}}}
\DeclareMathOperator{\vdc}{-vdC}
\DeclareMathOperator{\cl}{cl}
\begin{document}

\title[\tiny{Ergodic averages of commuting transformations with distinct degree...}]{Ergodic averages of commuting transformations with distinct degree polynomial iterates}


\author{Qing Chu}
\address[Qing Chu]{Universit\'e Paris-Est Marne-la-Vall\'ee, Laboratoire d'analyse et de math\'ematiques appliqu\'ees, UMR
CNRS 8050, 5 Bd Descartes, 77454 Marne la Vall\'ee Cedex, France} \email{qing.chu@univ-mlv.fr}

\author{Nikos Frantzikinakis}
\address[Nikos Frantzikinakis]{University of Crete, Department of mathematics, Knossos Avenue, Heraklion 71409, Greece} \email{frantzikinakis@gmail.com}

\author{Bernard Host}
\address[Bernard Host]{
Universit\'e Paris-Est Marne-la-Vall\'ee, Laboratoire d'analyse et de math\'ematiques appliqu\'ees, UMR
CNRS 8050, 5 Bd Descartes, 77454 Marne la Vall\'ee Cedex, France } \email{bernard.host@univ-mlv.fr}

\begin{abstract}
We prove mean convergence, as $N\to\infty$, for the multiple ergodic   averages $\frac{1}{N}\sum_{n=1}^N f_1(T_1^{p_1(n)}x)\cdot\ldots\cdot f_\ell(T_\ell^{p_\ell(n)}x)$, where $p_1,\ldots,p_\ell$ are integer polynomials with distinct degrees, and
$T_1,\ldots,T_\ell$ are commuting, invertible measure preserving
transformations, acting on the same
probability space. This establishes several cases of a conjecture of Bergelson and Leibman, that complement the case of linear polynomials, recently established by Tao.
Furthermore, we show that, unlike the case of linear polynomials, for polynomials of distinct degrees,
the corresponding characteristic factors are   mixtures of inverse limits of nilsystems. We use this particular structure, together with some equidistribution results on nilmanifolds, to give an application to multiple recurrence and a corresponding one to combinatorics.
\end{abstract}

\thanks{The  second author was partially supported by
 Marie Curie IRG  248008, and the third author was partially supported by the Institut Universitaire de France}

\subjclass[2000]{Primary: 37A45; Secondary:  28D05, 05D10, 11B25}

\keywords{Ergodic averages, mean convergence, multiple recurrence}

 \maketitle
\centerline{\today}
\setcounter{tocdepth}{1}
\tableofcontents

\section{Main results, ideas in the proofs, and further directions}
\subsection{Introduction and main results} A well studied and difficult problem in ergodic theory is the analysis of the limiting behavior of multiple ergodic averages of commuting transformations taken along polynomial iterates.
   A related conjecture of Bergelson and Leibman
(given explicitly in \cite{Be96}) states the following:
\begin{conjecture*}
Let $(X,\X,\mu)$ be a probability  space, $T_1, \ldots, T_\ell \colon X\to X$ be commuting, invertible
  measure preserving transformations,  $f_1,\ldots,f_\ell\in L^\infty(\mu)$,
and  $p_1,\ldots,p_\ell\in \Z[t]$.

Then  the  limit
\begin{equation}\label{E:Multies}
\lim_{N\to \infty} \frac{1}{N}\sum_{n=1}^{N} f_1(T_1^{p_1(n)}x)\cdot \ldots\cdot f_\ell(T_\ell^{p_\ell(n)}x)
\end{equation}
 exists in $L^2(\mu)$.
\end{conjecture*}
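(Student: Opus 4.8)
The plan is to prove convergence \emph{without} identifying the limit. For genuinely distinct commuting transformations there is no clean description of the limit as an average over a nilsystem --- that structure is special to a single transformation --- so instead I would show directly that the averages in \eqref{E:Multies} form a Cauchy sequence in $L^2(\mu)$. The first reduction is structural: since $T_1,\dots,T_\ell$ commute, they generate an abelian, hence nilpotent, group $G$ of measure preserving transformations, and each iterate $T_i^{p_i(n)}$ is a polynomial sequence $g_i(n)$ in $G$. Thus it suffices to prove $L^2(\mu)$-convergence of $\frac1N\sum_{n=1}^N \prod_{i=1}^\ell f_i\big(g_i(n)x\big)$ for arbitrary polynomial sequences $g_1,\dots,g_\ell$ valued in a nilpotent group; this covers equal degrees and the linear case on the same footing.

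The engine is PET (polynomial exhaustion) induction driven by van der Corput's lemma. Writing $A_N=\frac1N\sum_{n=1}^N\prod_{i=1}^\ell f_i\circ g_i(n)$ and estimating $\langle A_N,A_N\rangle$ by van der Corput introduces a shift $n\mapsto n+m$; composing each resulting term with $g_\ell(n)^{-1}$ replaces the family $(g_i(n))_i$ by the differenced family $\big(g_\ell(n)g_i(n)^{-1}g_i(n+m)\big)_i$, together with new bounded functions depending on $m$. Nilpotency guarantees that these differenced sequences are again polynomial sequences, and one assigns to each finite family a PET complexity --- recording the number and degrees of its distinct leading terms --- which the van der Corput step strictly decreases; the base case of constant sequences is trivial.

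The hard part is that van der Corput yields only an inequality: it bounds the defect of $A_N$ from being Cauchy by averages of strictly lower complexity, but these lower averages need not be small, so a naive downward induction does not close. This gap is exactly what separates the general conjecture from the distinct-degree case, where one can locate characteristic factors that are inverse limits of nilsystems and finish. To close it I would run a global energy-increment argument: rather than tracking a single estimate, one works with a bounded, monotone energy attached to the whole tower of averages generated by iterating van der Corput, and shows that at each stage either the top-level averages are already nearly Cauchy or the energy increases by a definite amount. Since the energy is bounded above, a pigeonhole/monotonicity argument forces the process to terminate, delivering the required Cauchy estimate.

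Finally, a standard approximation and uniform-boundedness argument in $L^2(\mu)$ upgrades the Cauchy estimate, obtained for a dense class of bounded functions, to genuine norm convergence of \eqref{E:Multies}. Throughout, commutativity enters only to ensure the iterated difference sequences remain polynomial of controlled complexity, so the argument uses nothing beyond the stated hypotheses. I expect the termination of the energy-increment step to be the principal obstacle, since it is precisely what must replace the characteristic-factor machinery that is unavailable for arbitrary commuting transformations of equal degree.
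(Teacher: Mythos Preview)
The statement you are attempting to prove is presented in the paper as a \emph{conjecture}, not a theorem: the paper does not prove it in full generality. What the paper actually establishes is the special case where $p_1,\dots,p_\ell$ have \emph{distinct degrees} (Theorem~\ref{T:Conv}), and its method is entirely different from yours. The paper identifies characteristic factors $\CZ_{k,T_i}$ for each transformation (Theorem~\ref{T:CharA}), uses the structure theorem to replace each $f_i$ by a function whose orbit is a nilsequence (Proposition~\ref{L:ApprNil}), and then appeals to known convergence of nilsequences. This route depends essentially on the distinct-degree hypothesis to show that the $\CZ_k$ factors are characteristic; the paper explicitly notes that for equal-degree polynomials (e.g.\ $p_1=\cdots=p_\ell=n$) its methods break down.

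Your proposal, by contrast, aims at the full conjecture via a Cauchy-sequence argument: PET induction plus van der Corput to lower complexity, and an energy-increment scheme to terminate. This is indeed the shape of the argument that eventually resolved the conjecture (Walsh, \emph{Ann.\ of Math.}\ 2012, even for nilpotent groups), so the strategy is sound in outline. However, as written your proposal has a genuine gap precisely where you flag it: you have not specified what the ``bounded, monotone energy'' is, nor how a single van der Corput step forces a definite increment. This is not a detail --- it is the entire content of the proof. The issue is that after differencing, the new averages involve \emph{new} functions (products of shifts of the old ones), so there is no single fixed quantity that obviously increases; Walsh's insight was to track, for each complexity level, a carefully chosen ``reducible'' structure and to show that failure of the Cauchy property at one level forces a quantitative improvement in an $L^2$ energy at a strictly lower level, with the whole tower bounded a priori. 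Without making this precise your outline is a plan, not a proof. In particular, the final paragraph's appeal to ``a standard approximation and uniform-boundedness argument'' is premature: there is nothing to upgrade until the termination step is actually carried out.
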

 Special forms of the averages in \eqref{E:Multies}  were
introduced  and studied by Furstenberg~\cite{F},
Furstenberg and Katznelson~\cite{FuKa},
and Bergelson and Leibman~\cite{BL},
in a depth that was sufficient for them to establish the theorem of
 Szemer\'edi on arithmetic progressions and its multidimensional and polynomial extensions respectively.

Proving convergence of these averages turned out to be a harder problem.
 When all the transformations $T_1,\ldots,T_\ell$ are equal,
convergence was established after
a long series of intermediate results; the papers    \cite{F,CL1, CL3, CL2,
FW, R, HKa, Zi07}  dealt with the
important case of linear  polynomials, and  using the machinery
 introduced in \cite{HKa}, convergence for arbitrary polynomials was finally obtained in~\cite{HKb}
except for a few cases that were treated in~\cite{L}.
For general commuting transformations, progress has been scarcer.
When all the polynomials in \eqref{E:Multies} are linear,
after a series of partial results \cite{FuKa, CL1, Les93a, Zh96,FK} that were obtained using ergodic theory,
convergence was established
  in~\cite{T} using a finitary argument. Subsequently, motivated by
ideas from~\cite{T}, several other proofs of this ``linear''  result
were found  using non-standard analysis~\cite{To}, and then ergodic theory
\cite{A, H}. Proofs of convergence for general  polynomial iterates  have
been given only under very strong ergodicity assumptions \cite{Be, J}.
On the other hand,  very recently, in \cite{A1, A2}
techniques from~\cite{A} have been refined and extended, aiming to eventually handle the case of general polynomial iterates.
Despite such intense efforts,   for general commuting transformations,
apart from the case where all the polynomials are linear,   no other instance of the conjecture of
Bergelson and Leibman has been resolved. In this article, we are going to establish
this conjecture when the polynomial iterates have distinct degrees:

\begin{theorem}\label{T:Conv}
Let $(X,\X,\mu)$ be a probability  space, $T_1, \ldots, T_\ell \colon X\to X$ be commuting,
invertible  measure preserving transformations, and  $f_1,\ldots,f_\ell\in L^\infty(\mu)$.
 Suppose that the polynomials $p_1,\ldots,p_\ell\in \Z[t]$ have distinct degrees.

Then  the limit
\begin{equation}\label{E:Multies1}
\lim_{N-M\to \infty} \frac{1}{N-M}\sum_{n=M}^{N-1} f_1(T_1^{p_1(n)}x)\cdot \ldots\cdot f_\ell(T_\ell^{p_\ell(n)}x)
\end{equation}
exists in $L^2(\mu)$.
\end{theorem}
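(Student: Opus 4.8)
The plan is to reduce the problem, by means of van der Corput's inequality together with a PET induction, to an equidistribution statement for polynomial orbits on nilmanifolds, and then to conclude via Leibman's theorem.

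First I would establish that there exist positive integers $s_1,\dots,s_\ell$, depending only on the degrees of $p_1,\dots,p_\ell$, such that the Host--Kra factors $\cZ_{s_i}(T_i)$ of the \emph{single} transformations $T_i$ are characteristic for the averages \eqref{E:Multies1}; that is, if $\E(f_i\mid\cZ_{s_i}(T_i))=0$ for some $i$, then \eqref{E:Multies1} converges to $0$ in $L^2(\mu)$. After the harmless normalisation $p_i(0)=0$ (absorb the fixed power $T_i^{p_i(0)}$ into $f_i$), this follows from repeated applications of van der Corput's inequality, organised by a PET induction --- a double induction on the number of ``active'' iterates and on their maximal degree --- run now for commuting transformations while keeping track of which transformation each iterate is attached to. The distinct-degree hypothesis enters in an essential way: since the $\deg p_i$ are pairwise distinct, at each stage the iterate of maximal degree is attached to a single $T_i$ and can be isolated, so that the seminorm which ultimately controls $f_i$ involves $T_i$ alone --- rather than a product $T_{j_1}\cdots T_{j_k}$ of several transformations --- and is therefore a power of the Host--Kra seminorm $\nnorm{\cdot}_{s_i}$ of $T_i$, for which a structure theory is available. (For general polynomials one is instead led to more complicated seminorms of the $\Z^\ell$-action for which the needed structure theory is not available; this is why only the distinct-degree case is currently within reach.)

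By the previous reduction I may assume each $f_i$ is $\cZ_{s_i}(T_i)$-measurable and, replacing $X$ by the factor $\bigvee_{i=1}^\ell\cZ_{s_i}(T_i)$ --- which is invariant under every $T_j$ because the transformations commute --- that $\X=\bigvee_{i=1}^\ell\cZ_{s_i}(T_i)$. The Host--Kra structure theorem realises each $\cZ_{s_i}(T_i)$ as an inverse limit of nilsystems over the ergodic decomposition of $T_i$, i.e.\ as a ``mixture of nilsystems''; moreover each $T_j$, commuting with $T_i$, preserves $\cZ_{s_i}(T_i)$ and acts there --- fibrewise over the ergodic components of $T_i$ --- as an affine map of the nilmanifold, using the description of transformations commuting with a nilrotation. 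Assembling these descriptions, the combined factor $\bigvee_i\cZ_{s_i}(T_i)$ is, up to an inverse limit, a mixture of nilsystems for the full $\Z^\ell$-action: there is a probability space $(\Omega,\nu)$ and, for $\nu$-a.e.\ $\omega$, a nilmanifold $G_\omega/\gG_\omega$ together with commuting elements $b_1(\omega),\dots,b_\ell(\omega)\in G_\omega$ implementing $T_1,\dots,T_\ell$ on the fibre. By a routine inverse-limit approximation it suffices to treat finite-step nilsystems and continuous $f_i$, and then \eqref{E:Multies1} becomes, on the fibre over $\omega$ through a point $x$, the average
\[
\frac1{N-M}\sum_{n=M}^{N-1}\ \prod_{i=1}^\ell f_i\bigl(b_i(\omega)^{p_i(n)}\cdot x\bigr).
\]

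For $\nu$-a.e.\ $\omega$ and every $x$, the sequence $n\mapsto\bigl(b_1(\omega)^{p_1(n)}x,\dots,b_\ell(\omega)^{p_\ell(n)}x\bigr)$ is a polynomial orbit in the nilmanifold $\bigl(G_\omega/\gG_\omega\bigr)^\ell$, so by the uniform form of Leibman's equidistribution theorem its averages along $[M,N)$ converge as $N-M\to\infty$ (to the integral of $\prod_i f_i(x_i)$ over the sub-nilmanifold that is the closure of the orbit); in particular the displayed averages converge for $\mu$-a.e.\ $x$. Since every such average is bounded by $\prod_i\norm{f_i}_\infty$, the dominated convergence theorem gives convergence of \eqref{E:Multies1} in $L^2(\mu)$. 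I expect the PET induction of the first step to be the technical heart of the argument --- running it for commuting transformations and verifying that distinct degrees really do force the terminal seminorms to be single-transformation Host--Kra seminorms. A secondary difficulty is the structural step of endowing $\bigvee_i\cZ_{s_i}(T_i)$ with a mixture-of-nilsystems structure for the $\Z^\ell$-action, which rests on understanding how the remaining transformations act on a nilsystem attached to one of them.
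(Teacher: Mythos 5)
Your overall skeleton (characteristic factors, then nil-structure, then convergence on nilmanifolds) matches the paper's, but both of your key reductions have genuine gaps. First, the PET induction does \emph{not} deliver single-transformation characteristic factors $\cZ_{s_i}(T_i)$ for every $i$; it only does so for the transformation carrying the polynomial of \emph{highest} degree. The van der Corput operation subtracts a chosen iterate from all the others, and the induction on the type terminates in a configuration where only the top-degree transformation appears, with linear iterates; by the design of the operation the function that survives into that terminal configuration, and is therefore controlled by a single-transformation Host--Kra seminorm, is the one attached to the highest-degree polynomial. Your justification (``the iterate of maximal degree is attached to a single $T_i$ and can be isolated'') proves exactly this and nothing more: if you try to isolate $f_2$ (lower degree) by composing with $T_2^{-p_2(n)}$ and applying Cauchy--Schwarz, the remaining average still carries the high-degree $T_1$-iterates, and the terminal seminorm again controls $f_1$, not $f_2$. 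This is why the paper, after Sections~\ref{S:Char2}--\ref{S:Char} handle $T_1$, needs an entirely different mechanism for the lower-degree transformations: it approximates $\cZ_{k,T_1}$-measurable functions by functions whose orbit sequences are nilsequences (Proposition~\ref{L:ApprNil}, itself delicate in the non-ergodic case), invokes the non-correlation of sequences with vanishing uniformity seminorms against nilsequences (Theorem~\ref{T:HKdirect}), and then inducts on $\ell$ through the weighted averages of Theorem~\ref{th:charcnilseq}. Without some substitute for this second step your first reduction is unsupported.

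Second, your structural step --- endowing $\bigvee_i\cZ_{s_i}(T_i)$ with a mixture-of-nilsystems structure for the full $\Z^\ell$-action, with commuting elements $b_1(\omega),\dots,b_\ell(\omega)$ on a common fibre --- is asserted but not available, and is essentially the ``pleasant/magic extension'' programme that the paper deliberately avoids. The ergodic decompositions of the different $T_i$ need not be compatible, the structure theorem for $\cZ_{k,T_i}$ is fibred over the ergodic components of $T_i$ alone, and describing how $T_j$ acts on the (non-ergodic) pro-nilfactor of $T_i$ raises exactly the measurable-selection difficulties the paper flags even for a single transformation. The paper sidesteps all of this by never building a joint nilsystem: it shows that for $\mu$-almost every $x$ each sequence $(f_i(T_i^{p_i(n)}x))_{n\in\N}$ is \emph{separately} a nilsequence, uses that the product of nilsequences is a nilsequence, and concludes since averages of nilsequences converge (by \cite{L}); no joint $\Z^\ell$-structure and no application of Leibman's equidistribution theorem to a product orbit is needed. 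If you replace your fibrewise construction with this nilsequence argument, and replace your first step with the paper's two-stage argument (PET for the top-degree transformation, then the approximation-plus-correlation induction for the rest), the proof closes.
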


 Unlike previous arguments in~\cite{ CL1, CL2, T, To, A, H}, where one finds ways to
 sidestep the problem of giving precise algebraic descriptions of the
 factor systems that control the limiting behavior of special cases of
 the averages \eqref{E:Multies1}, a distinctive feature of the proof
 of Theorem~\ref{T:Conv} is that we give such descriptions.\footnote{A
 key difference between the averages of
 $f_1(T_1^nx)\cdot f_2(T_2^nx)$ and the averages of
 $f_1(T_1^nx)\cdot f_2(T_2^{n^2}x)$ is
 that when $T_1=T_2$ the first one becomes ``degenerate''
 ($=$ averages of $(f_1\cdot f_2)(T_1^nx)$), and this
 complicates the structure of the possible factors that control their
 limiting behavior.
 However, no such choice of
 $T_1,T_2$ makes the second average ``degenerate''.}
 Furthermore, we
 did not find it advantageous to work within a suitable extension of
 our system in order to simplify our study (like the ``pleasant'' or ``magic'' extensions that were
 introduced in \cite{A} and in \cite{H} respectively).
 In this respect, our analysis is more closely related to the one used
 to study convergence results when all the transformations
 $T_1,\ldots,T_\ell$ are equal, and in fact  uses this well
 developed single transformation theory in a crucial way (in some special cases our approach leads to very concise proofs, see Appendix~\ref{sec:appendix}).
 The next result gives the description of the
 aforementioned factors (the factors $\cZ_{k,T_i}$ are defined in
 Section~\ref{SS:Zk}):

\begin{theorem}\label{T:CharA}
Let $(X,\X,\mu)$ be a probability  space, $T_1, \ldots, T_\ell \colon X\to X$ be commuting, invertible  measure preserving transformations, and  $f_1,\ldots,f_\ell\in L^\infty(\mu)$. Let  $p_1,\ldots,p_\ell\in \Z[t]$ be polynomials with distinct degrees and maximum degree $d$.

Then there  exists $k=k(d,\ell)\in \N$ such that: If
   $f_i\bot \cZ_{k,T_i}$ for some $i\in \{1,\ldots,\ell\}$, then
$$
\lim_{N-M\to \infty} \frac{1}{N-M}\sum_{n=M}^{N-1} f_1(T_1^{p_1(n)}x)\cdot \ldots\cdot f_\ell(T_\ell^{p_\ell(n)}x)=0
$$
in   $L^2(\mu)$.
\end{theorem}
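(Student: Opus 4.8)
Let me think about how to prove this statement. We have commuting transformations $T_1, \ldots, T_\ell$, polynomials $p_1, \ldots, p_\ell$ of distinct degrees with max degree $d$, and we want to show that if $f_i \perp \mathcal{Z}_{k,T_i}$ for some $i$ (with $k = k(d,\ell)$), then the ergodic average tends to $0$ in $L^2$.

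The natural approach is a PET (Polynomial Exhaustion Technique) induction combined with van der Corput's lemma. The classical PET induction of Bergelson reduces the "complexity" of a system of polynomials by repeatedly applying the van der Corput trick: the square of the $L^2$-norm of the average $\frac{1}{N-M}\sum_{n=M}^{N-1} \prod_i f_i(T_i^{p_i(n)}x)$ is bounded (up to error terms vanishing as $N-M\to\infty$) by an average over $h$ of an inner product involving products $f_i(T_i^{p_i(n)}x) \overline{f_i(T_i^{p_i(n+h)}x)}$. One records the orbit of the polynomial family under these van der Corput operations until, after finitely many steps — the number depending only on $d$ and $\ell$ — one of the transformations, say $T_j$, appears with a single "leading" polynomial iterate while all other factors either involve lower-degree polynomials or are paired off. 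The key subtlety, which distinguishes this from the single-transformation case, is that since the $p_i$ have distinct degrees, the van der Corput scheme can be arranged so that the factor carrying $T_i$ (the distinguished index) is isolated with its polynomial degree strictly decreasing at each relevant step, and never "interacts" destructively with the others. Because the polynomials have distinct degrees, no cancellation-free collision of leading terms between different transformations occurs, which is precisely what keeps the induction from blowing up in a way that would require controlling a joint factor of several $T_i$'s simultaneously.

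First I would set up the precise notion of "type" of a system of pairs $(S, q)$ where $S$ is a word in the $T_i$ and $q \in \mathbb{Z}[t]$, adapted from \cite{BL} but keeping careful track of which transformation each polynomial is attached to. Then I would run the PET induction, at each stage applying van der Corput's lemma to the pair whose polynomial has the lowest degree, and verify the bookkeeping lemma that the induction terminates after $N(d,\ell)$ steps. At termination, the average is controlled by an expression in which one factor is of the form $g(T_i^{c n^{d'} + \text{lower}}x)$ for a single nonzero integer $c$ and some $d' \le d$, with all remaining factors being bounded functions composed with iterates of other transformations and shifts — crucially, none of those other factors involves $T_i$ to a comparable degree. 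At this point I would invoke the single-transformation theory: the average $\frac{1}{N-M}\sum_n g(T_i^{q(n)}x) \cdot (\text{stuff independent of how } g \text{ sits in } L^\infty)$ is controlled in $L^2$ by the seminorm $\nnorm{g}_{k',T_i}$ for $k'$ depending on the degree of $q$, hence by $\mathcal{Z}_{k',T_i}$; one absorbs the "stuff" using Cauchy-Schwarz and the fact that it is bounded. Propagating this back up through the van der Corput steps — each of which only introduces extra averaging in $h$ and replaces $f_i$ by $f_i \cdot \overline{T_i^{?}f_i}$, operations under which orthogonality to $\mathcal{Z}_{k,T_i}$ is preserved in the appropriate sense because $\mathcal{Z}_{k,T_i}$ is a factor invariant under $T_i$ and closed under the relevant operations — one concludes that $f_i \perp \mathcal{Z}_{k,T_i}$ for $k$ large enough (depending only on $d,\ell$) forces the original average to vanish.

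The main obstacle I expect is the bookkeeping in the PET induction: one must choose the van der Corput steps so that the distinguished index $i$ is the one that gets isolated, and verify that the number of steps — and hence the final $k$ — depends only on $d$ and $\ell$ and not on the specific polynomials or the number of distinct van der Corput iterations. A second, more conceptual difficulty is showing that orthogonality to $\mathcal{Z}_{k,T_i}$ is stable under the van der Corput operation $f_i \mapsto f_i \cdot \overline{f_i \circ T_i^{h}}$ uniformly in $h$; this requires knowing that these operations, combined with the seminorm estimates from the single-transformation theory (the inductive inequality $\nnorm{f}_{k+1,T}^{2^{k+1}} = \lim_H \frac{1}{H}\sum_h \nnorm{f \cdot \overline{f\circ T^h}}_{k,T}^{2^k}$), let us trade a loss of orthogonality at level $k$ for a gain of one level of van der Corput — so that starting from orthogonality at a sufficiently high level $k(d,\ell)$ we have enough "budget" to absorb all the van der Corput steps. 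Handling the lower-degree polynomials $p_{i'}$, $i'\neq i$, that survive the reduction requires only the trivial bound $\|f_{i'}\|_\infty$, so they pose no real difficulty once the degree-separation is exploited to keep them from entangling with the $T_i$-factor.
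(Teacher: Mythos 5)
There is a genuine gap, and it sits exactly where you wave your hands: the claim that ``the van der Corput scheme can be arranged so that the factor carrying $T_i$ (the distinguished index) is isolated.'' This is only true when $i$ is the index of the \emph{highest-degree} polynomial. The vdC/PET operation replaces the family $\{p_j\}$ by shifted differences $p_j(n+h)-p(n)$ and $p_j(n)-p(n)$, and no matter which pair you difference against, the polynomial of maximal degree is the one that survives longest; the reduction therefore terminates with \emph{linear} iterates of the top-degree transformation $T_1$ and only \emph{constant} iterates of the others. You cannot rearrange the induction to land on a lower-degree $T_i$ instead, and bounding the top-degree factor by $\|f_1\|_\infty$ does not help, because after termination the only nonconstant iterates present belong to $T_1$, so the single-transformation theory gives you control by $\mathcal{Z}_{k,T_1}$ and nothing else. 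What your argument actually proves is the paper's Proposition 5.1/Corollary 5.2: a characteristic factor for the transformation attached to the highest-degree polynomial.

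For the remaining indices the paper has to change method entirely (Sections 3 and 6): it inducts on $\ell$, uses the PET result to reduce to $f_1$ being $\mathcal{Z}_{k_0,T_1}$-measurable, then invokes the structure theorem together with a non-ergodic approximation result (Proposition 3.1, whose proof must overcome a measurable-selection problem via the dual functions $\mathcal{D}_k f$ and results of Host--Kra--Maass) to replace $(f_1(T_1^{p_1(n)}x))_n$ by a nilsequence weight $u_n(x)$ for a.e.\ $x$, and finally uses the uniformity-seminorm/nilsequence correlation estimates (Theorem 6.3, from the $\ell^\infty$ uniformity seminorms paper) to show that such weights do not disturb the characteristic factors of the remaining $\ell-1$ transformations. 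None of these ingredients, nor any substitute for them, appears in your proposal, so the statement for $f_i\perp\mathcal{Z}_{k,T_i}$ with $\deg p_i<d$ is not reached. A secondary, smaller point: at the end of the PET reduction the paper does not propagate seminorm orthogonality back up through the vdC steps as you describe; it arranges the bookkeeping so that the first function of the terminal linear average is literally $f_1$ and applies the known linear characteristic-factor theorem directly, which avoids having to prove the ``stability of orthogonality under $f\mapsto f\cdot T^h f$'' step you flag as a difficulty.
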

Factors that satisfy the aforementioned convergence property are often  called \emph{characteristic factors}.
The utility of the characteristic factors obtained in Theorem~\ref{T:CharA}
stems from the fact that each individual
factor is a mixture of systems of algebraic origin, in particular, it is a mixture of  inverse limits of nilsystems \cite{HKa} (see also Theorem~\ref{T:Structure}).
Using this algebraic description  of the characteristic factors (in fact its consequence Proposition~\ref{L:ApprNil} is more suitable for our needs), and some equidistribution
results on nilmanifolds, we give the following application to multiple recurrence:
\begin{theorem}\label{T:LowerBounds}
Let $(X,\X,\mu)$ be a probability  space and $T_1, \ldots, T_\ell \colon X\to X$ be commuting, invertible
 measure preserving transformations.

Then for every choice of distinct positive integers $d_1,\ldots, d_\ell$, and every $\varepsilon>0$, the set
\begin{equation}\label{E:lowerbounds}
\{ n\in\N\colon \mu(A\cap T_1^{-n^{d_1}}A\cap \cdots\cap T_\ell^{-n^{d_\ell}}A)\geq \mu(A)^{\ell+1}-\varepsilon\}
\end{equation}
has bounded gaps.
\end{theorem}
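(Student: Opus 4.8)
The plan is to fix a set $A\in\X$ with $\mu(A)>0$ and $\varepsilon>0$, put
\[
a_n:=\mu\big(A\cap T_1^{-n^{d_1}}A\cap\cdots\cap T_\ell^{-n^{d_\ell}}A\big)=\Big\langle \mathbf 1_A,\ \textstyle\prod_{i=1}^\ell \mathbf 1_A\circ T_i^{n^{d_i}}\Big\rangle_{L^2(\mu)},
\]
and show that $S_\varepsilon:=\{n:a_n\ge\mu(A)^{\ell+1}-\varepsilon\}$ is syndetic. The mechanism is the following soft principle: if $(a_n)$ agrees, outside a set of upper Banach density $0$, with a nilsequence $n\mapsto\Phi(g(n)\Gamma)$ ($\Phi$ continuous, $g$ a polynomial sequence in a nilpotent Lie group), and if moreover $\lim_N\frac1N\sum_{n=1}^N a_n\ge\mu(A)^{\ell+1}$, then $S_\varepsilon$ is syndetic. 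Indeed, by Leibman's equidistribution theorem the orbit $g(n)\Gamma$ is well distributed in a sub-nilmanifold $V$, so $\Phi$ exceeds its average over $V$ along a set of $n$ of positive lower density in every interval; since that average equals the mean of $(a_n)$, which is at least $\mu(A)^{\ell+1}$, and since the exceptional set of Banach density $0$ may be discarded (precisely because the good set has uniform positive density in windows), $S_\varepsilon$ is syndetic. So the proof splits into (I) the nilsequence approximation and (II) the lower bound for the mean.

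Step (I). I would first apply Theorem~\ref{T:CharA} to replace, for $1\le i\le\ell$, the $i$-th copy of $\mathbf 1_A$ by $f_i:=\E(\mathbf 1_A\mid\cZ_{k,T_i})$; expanding $\mathbf 1_A=f_i+(\mathbf 1_A-f_i)$ in each slot yields a main term plus error terms, each of the latter carrying a factor orthogonal to some $\cZ_{k,T_i}$ and hence of vanishing Ces\`{a}ro average by Theorem~\ref{T:CharA}. A standard van der Corput (PET) estimate applied to the $L^2$-norm of the error upgrades this to control by an exceptional set of Banach density $0$, as the soft principle requires. Next, using Theorem~\ref{T:Structure} --- or, more conveniently, its consequence Proposition~\ref{L:ApprNil} --- I would approximate, up to $\varepsilon$, each $\cZ_{k,T_i}$ by a finite-step nilfactor of bounded complexity; after disintegrating over the rational Kronecker factors $\krat(T_i)$ (over which each $\cZ_{k,T_i}$ is a mixture of inverse limits of nilsystems), this reduces the problem to genuine nilsystems, the slot-$0$ copy of $\mathbf 1_A$ being projected onto $\bigvee_i\cZ_{k,T_i}$, which is preserved by every $T_j$ and which Theorem~\ref{T:Structure} likewise presents as a mixture of inverse limits of nilsystems for the $\Z^\ell$-action. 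In the resulting model $a_n$ equals, up to $\varepsilon$ and up to an integral over the mixture,
\[
\int_{G/\Gamma}\psi_0(x)\,\psi_1(b_1^{n^{d_1}}x)\cdots\psi_\ell(b_\ell^{n^{d_\ell}}x)\,dm(x),
\]
with $\psi_i\ge0$ continuous and $b_1,\dots,b_\ell$ commuting translations; since $n\mapsto(b_1^{n^{d_1}},\dots,b_\ell^{n^{d_\ell}})$ is a polynomial sequence in the nilpotent group (and distinct degrees keep its orbit closure governed by the elementary one-variable equidistribution obstructions), this is a nilsequence in $n$, and the mixture of bounded-complexity nilsequences is again a nilsequence.

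Step (II). I would identify $\lim_N\frac1N\sum_{n=1}^N a_n$, via Leibman's theorem, with the Haar integral of the displayed expression over the orbit closure $V$ of $n\mapsto(b_1^{n^{d_1}},\dots,b_\ell^{n^{d_\ell}})$ --- a sub-nilmanifold containing the identity, since $n=0$ gives $(e,\dots,e)$ --- and then prove the fiberwise bound
\[
\int_{G/\Gamma\times V}\psi_0(x)\,\psi_1(z_1x)\cdots\psi_\ell(z_\ell x)\,dm(x)\,dm_V(z)\ \ge\ \Big(\int\psi_0\Big)\prod_{i=1}^\ell\Big(\int\psi_i\Big)\ \ge\ \mu_w(A)^{\ell+1}-\varepsilon,
\]
where $\mu_w$ is the conditional measure over the mixture parameter $w$. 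I would derive this inequality from a Cauchy--Schwarz / Gowers-type argument exploiting that $V$, containing the identity, carries a parallelepiped structure and has Haar coordinate marginals. Integrating over the mixture and applying Jensen's inequality to the convex function $t\mapsto t^{\ell+1}$ turns the fiberwise bounds $\mu_w(A)^{\ell+1}$ into the global bound $\mu(A)^{\ell+1}$; the uniform bound on the nilmanifold complexities makes the syndetic set produced by the soft principle the same across all fibers.

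The step I expect to be the main obstacle is (II): pinning down $V$ and showing that its Haar average of $\psi_0\otimes\cdots\otimes\psi_\ell$ dominates the product of the individual means --- this is precisely a Khintchine-type lower bound for the limit of a multiple ergodic average, where the arithmetic (and the distinct-degree hypothesis that has brought us this far through Theorems~\ref{T:Conv} and~\ref{T:CharA}) must be used concretely --- together with the bookkeeping needed to keep every estimate uniform over the continuum of fibers in the mixture, so that one syndetic set works for all of them. Step (I) is comparatively routine given the machinery already developed, the one delicate point being the van der Corput estimate that turns Ces\`{a}ro smallness of the error into the window-uniform smallness (exceptional set of Banach density $0$) needed to deduce syndeticity rather than merely positive lower density.
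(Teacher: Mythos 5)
Your Step (I) retraces the paper's reduction (Theorem~\ref{T:CharA} followed by Proposition~\ref{L:ApprNil}), but the genuine gap is in Step (II), which you yourself flag as the main obstacle: the inequality
$\int_{G/\Gamma\times V}\psi_0(x)\,\psi_1(z_1x)\cdots\psi_\ell(z_\ell x)\,dm\,dm_V\geq\bigl(\int\psi_0\bigr)\prod_i\bigl(\int\psi_i\bigr)$
is asserted, not proved, and the proposed mechanism cannot deliver it. For the orbit closure $V$ to have ``Haar coordinate marginals'' making the right-hand side emerge, one would need each individual closure $\cl\{b_i^{n^{d_i}}\}$ to be all of $G/\Gamma$ and $V$ to be their full product; both fail in general because of rational spectrum (non-ergodicity of the $b_i$, and rational relations among them), which is exactly the obstruction that Khintchine-type bounds must confront. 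In particular, your ``soft principle'' requires the Ces\`aro mean of $a_n$ over \emph{all} $n$ to be at least $\mu(A)^{\ell+1}$, and this is precisely what the authors state they cannot establish: the remark after Proposition~\ref{P:OptimalChar} says the needed estimate with $r_0=1$ is open. Your proposal never restricts to a subprogression, so it has no way around this.

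The paper's actual mechanism, absent from your proposal, is: choose $r\in r_0\N$ so that (a) the finite-order factors $\K_r(T_i)$ approximate $\krat(T_i)$ in $L^2$, and along $n\in r\N$ the iterates $T_i^{(rn)^{d_i}}$ act trivially on $\K_r(T_i)$-measurable functions, so the lower bound $\mu(A)^{\ell+1}$ comes from the purely measure-theoretic H\"older inequality of Lemma~\ref{L:LowerBound} (no equidistribution of $V$ is invoked at this stage); and (b) by Proposition~\ref{P:OptimalChar}, the components of $\mathbf{1}_A$ orthogonal to $\krat(T_i)$ contribute at most $\varepsilon$ along $r\N$. Point (b) is the analytic heart of the proof; it rests on the equidistribution Lemma~\ref{L:NilEqui}, whose proof uses that the polynomials $n^{d_i}$ with $d_i>d$ are divisible by $t^{d+1}$ — the reason the theorem is stated for $n^{d_1},\dots,n^{d_\ell}$ rather than arbitrary distinct-degree polynomials, a constraint your argument never engages. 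So the characteristic-factor reduction is right, but the Khintchine-type lower bound — the actual content of the theorem — is left unproved, and the device that makes it true (passage to $r\N$ together with decoupling over the rational Kronecker factors) is missing.
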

If the integers are not distinct, say $d_1=d_2$, then the result fails.
For example,  one can take $T_2=T_1^2$, and choose  the (non-ergodic) transformation
$T_1$, and the set $A$,  so that \eqref{E:lowerbounds} fails  with any power of $\mu(A)$ on the right hand side
for every $n\in \N$
 (Theorem~2.1 in \cite{BHK}).
If $\ell=2$, $d_1=d_2=1$, and the joint action of the transformations $T_1,T_2$ is ergodic,
then the result remains true up to a change of the exponent on the right hand side \cite{Chu}.
 But even under similar ergodicity assumptions, the result probably fails when $3$ exponents agree no matter
 what exponent one uses on the right hand side
 (a conditional counterexample appears in Proposition~5.2 of \cite{Fr08}).

It will be clear from our argument that in  the statement of Theorem~\ref{T:LowerBounds}
we can
replace the polynomials $n^{d_1},\ldots,n^{d_\ell}$ by any  collection of polynomials $p_1,\ldots,p_\ell\in \Z[t]$
with zero constant terms that satisfy
 $t^{\text{deg}(p_i)+1}|p_{i+1}$  for $i=1,\ldots,\ell-1$. For example $\{n,n^3+n^2,n^5+n^4\}$ is such a family. On the other hand, our argument does not work for all polynomials with distinct degrees (the problem is to find a replacement for Lemma~\ref{L:NilEqui}), but the same lower bounds are expected to hold for any collection
of rational independent integer polynomials with zero constant terms.

 Using a multidimensional version of Furstenberg's
correspondence principle (see~\cite{FuKa} or~\cite{BL}) it is straightforward
to   give
 a combinatorial consequence of this result.
 We leave the routine details of the proof to the interested reader.
\begin{theorem}
 Let $k,\ell\in \N$,  $\Lambda\subset \Z^k$ with $\bar{d}(\Lambda)>0$,\footnote{For
a set  $\Lambda\subset\mathbb{\Z}^k$, we define its upper density by
$\bar{d}(\Lambda)=\limsup_{N\to\infty}|\Lambda\cap[-N,N]^k|/(2N)^k$.}
 and $v_1,\ldots,v_\ell$ be vectors in $\Z^k$.

 Then for every choice of distinct positive integers $d_1,\ldots, d_\ell$, and every $\varepsilon>0$, the set
\begin{equation}\label{E:bounds}
\{n\in\N\colon \quad
\bar{d}\bigl(\Lambda\cap(\Lambda+n^{d_1}v_1)\cap\cdots\cap (\Lambda
+n^{d_\ell}v_{\ell})\bigr)\geq \bar{d}(\Lambda)^{\ell+1}-\varepsilon\}
\end{equation}
has bounded gaps.
\end{theorem}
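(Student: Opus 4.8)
The plan is to deduce this theorem from Theorem~\ref{T:LowerBounds} by invoking the multidimensional version of Furstenberg's correspondence principle (see~\cite{FuKa} or~\cite{BL}). Given $\Lambda\subset\Z^k$ with $\bar d(\Lambda)>0$, this principle produces a probability space $(X,\X,\mu)$, commuting invertible measure preserving transformations $S_1,\dots,S_k\colon X\to X$ generating a $\Z^k$-action $w\mapsto S^w:=S_1^{w_1}\cdots S_k^{w_k}$ (for $w=(w_1,\dots,w_k)\in\Z^k$), and a set $A\in\X$ with $\mu(A)=\bar d(\Lambda)$, such that for every finite family $w^{(1)},\dots,w^{(m)}\in\Z^k$ one has
\[
\bar d\bigl(\Lambda\cap(\Lambda+w^{(1)})\cap\cdots\cap(\Lambda+w^{(m)})\bigr)\ \geq\ \mu\bigl(A\cap S^{-w^{(1)}}A\cap\cdots\cap S^{-w^{(m)}}A\bigr).
\]
One constructs this system in the standard way: take $X=\{0,1\}^{\Z^k}$ with the shift $\Z^k$-action, let $A$ be the cylinder $\{x\in X\colon x(0)=1\}$, and let $\mu$ be a weak-$*$ limit of the empirical measures $|\Phi_N|^{-1}\sum_{g\in\Phi_N}\delta_{S^g\mathbf 1_\Lambda}$ along a subsequence of $N$ chosen so that $|\Lambda\cap\Phi_N|/|\Phi_N|\to\bar d(\Lambda)$, where $\Phi_N=[-N,N]^k$. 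The decisive feature is that $(X,\X,\mu)$, the transformations $S_j$, and the set $A$ do not depend on $n$.

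Next I would fix distinct positive integers $d_1,\dots,d_\ell$ and $\varepsilon>0$, and set $T_i:=S^{v_i}$ for $i=1,\dots,\ell$. Since the $S_j$ commute, $T_1,\dots,T_\ell$ are commuting invertible measure preserving transformations of $(X,\X,\mu)$, and $S^{-n^{d_i}v_i}=T_i^{-n^{d_i}}$ for every $n$. Applying the displayed inequality with $w^{(i)}=n^{d_i}v_i$ (together with the trivial term $w=0$, which accounts for the leading $\Lambda$ and $A$) gives, for every $n\in\N$,
\[
\bar d\bigl(\Lambda\cap(\Lambda+n^{d_1}v_1)\cap\cdots\cap(\Lambda+n^{d_\ell}v_\ell)\bigr)\ \geq\ \mu\bigl(A\cap T_1^{-n^{d_1}}A\cap\cdots\cap T_\ell^{-n^{d_\ell}}A\bigr).
\]
By Theorem~\ref{T:LowerBounds} applied to $T_1,\dots,T_\ell$, the set $A$, the exponents $d_1,\dots,d_\ell$ and $\varepsilon$, the set of those $n$ for which the right-hand side is at least $\mu(A)^{\ell+1}-\varepsilon$ has bounded gaps. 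Since $\mu(A)=\bar d(\Lambda)$, for all such $n$ the left-hand side is at least $\bar d(\Lambda)^{\ell+1}-\varepsilon$; hence the set in~\eqref{E:bounds} contains a set of bounded gaps, and therefore itself has bounded gaps.

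I do not anticipate any serious obstacle: beyond the construction of the correspondence system the argument is pure bookkeeping, which is why the authors leave it to the reader. The only points worth a line of care are that the correspondence principle must be set up so as to yield a single system valid for all $n$ simultaneously (handled by the weak-$*$ compactness and the diagonal choice of subsequence above), and that the sign conventions in the shift action be fixed consistently, so that translating $\Lambda$ by $+w$ corresponds to $S^{-w}$ acting on the cylinder $A$; with the opposite convention one would instead take $T_i=S^{-v_i}$, which changes nothing. Degenerate configurations — some $v_i$ equal to $0$, or $v_i=v_j$ for $i\neq j$ — are harmless, since Theorem~\ref{T:LowerBounds} allows the $T_i$ to be arbitrary commuting transformations, in particular equal or trivial. (If one only has the weaker $\mu(A)\geq\bar d(\Lambda)$ from some formulations of the principle, it still suffices, since $t\mapsto t^{\ell+1}$ is increasing on $[0,1]$.)
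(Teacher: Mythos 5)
Your proof is correct and takes exactly the route the paper intends: the paper gives no proof of this statement, saying only that it follows from Theorem~\ref{T:LowerBounds} via the multidimensional Furstenberg correspondence principle and leaving the routine details to the reader. Your write-up supplies precisely those details (single system valid for all $n$ via weak-$*$ compactness, the sign convention, and the harmlessness of degenerate $v_i$) correctly.
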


\subsection{Ideas in the proofs of the main results}
\label{subsec:ideas}

\subsubsection{Key ingredients}

The proofs of Theorems~\ref{T:Conv}, \ref{T:CharA},  and \ref{T:LowerBounds}, use
several
ingredients.

\medskip \noindent \emph{Van der Corput's Lemma.} We are going to use    repeatedly
 the following  variation of the classical elementary
lemma of van der Corput.
Its proof is a straightforward modification of  the one
given in \cite{Be}.

\begin{vdCLemma}
Let  $(v_{n})$ be a bounded  sequence of vectors in a Hilbert space.
Let
$$
b_h=\overline{\lim}_{N-M\to \infty}\Big|\frac{1}{N-M}
\sum_{n=M}^{N-1}<v_{n+h},v_{n}>\Big|.
$$
Suppose that
$$
\lim_{H\to\infty}\frac{1}{H}\sum_{h=1}^H b_h=0.
$$
Then
$$
\lim_{N-M\to\infty}
\norm{\frac{1}{N-M}\sum_{n=M}^{N-1} v_{n}}=0.
$$
\end{vdCLemma}
\noindent In most applications we have $b_h=0$ for every sufficiently large
$h$, or for  ``almost every $h$'', meaning that the exceptional set
has zero upper density.

\medskip \noindent \emph{An approximation result.} This enables us in several instances to
 replace sequences of the form $(f(T^nx))_{n\in\N}$, where $f$ is a $\cZ_{k,T}$-measurable function,
with $k$-step nilsequences.
For ergodic systems,  this result is an easy consequence of the structure theorem for the factors $\cZ_{k,T}$ (Theorem~\ref{T:Structure}).
But we need a harder to establish  non-ergodic version (Proposition~\ref{L:ApprNil});
in our context we  cannot assume that each individual transformation is ergodic.

\medskip \noindent \emph{Nilsequence correlation estimates.} These, roughly speaking, assert that
 ``uniform''   sequences do not correlate with nilsequences (see for example Theorem~\ref{T:HKdirect}).

\medskip \noindent \emph{Equidistribution results on nilmanifolds.}
 These will only be used  in the proof
of Theorem~\ref{T:LowerBounds} (see Section~\ref{SS:equidistribution}).

\subsubsection{Combining the key ingredients}

We first prove Theorem~\ref{T:CharA} that   provides convenient characteristic factors for
the multiple ergodic averages   in \eqref{E:Multies1}. Its  proof proceeds in two steps: $(i)$ In Sections~\ref{S:Char2} and \ref{S:Char}
we use a PET-induction argument based
on successive uses of van der Corput's Lemma to find a characteristic factor for the transformation
that corresponds to the highest degree polynomial iterate, and
$(ii)$ In Section~\ref{S:Charlower} we combine step $(i)$,  with the aforementioned approximation result
and  nilsequence correlation estimates, to find characteristic factors for
the other transformations as well.

The strategy for proving Theorems~\ref{T:Conv} and \ref{T:LowerBounds}
can be summarized as follows: In order to study the limit \eqref{E:Multies1}, we first use Theorem~\ref{T:CharA} to reduce matters to the case where all the functions $f_i$ are $\cZ_{k,T_i}$-measurable for some $k\in \N$, and then the aforementioned  approximation result
to  reduce matters to   establishing certain convergence or equidistribution properties on nilmanifolds.
This last step is  easy to carry out when
proving Theorem~\ref{T:Conv} (see Section~\ref{SS:ProofA}), but becomes much more cumbersome when  proving
 Theorem~\ref{T:LowerBounds}. We prove the equidistribution properties needed for  Theorem~\ref{T:LowerBounds} in Section~\ref{S:lower}.

\subsection{Further directions}
When $\ell=2$ and $p_1(n)=p_2(n)=n$, it is known that
some sort of commutativity assumption on the transformations $T_1,T_2$  has to be made in order for the limit \eqref{E:Multies1} to exist   in $L^2(\mu)$ (see \cite{BL04} for  examples where convergence fails
when $T_1$ and $T_2$ generate   solvable groups of exponential growth). On the other hand, it is not clear whether a similar assumption
is necessary
when say $\ell=2$ and $p_1(n)=n$, $p_2(n)=n^2$.
In fact, it could be the case that for  Theorems~\ref{T:Conv}, \ref{T:CharA}, and  \ref{T:LowerBounds}, no commutativity assumption at all is needed.

Since convergence of the averages in \eqref{E:Multies1} for $\CZ_k$-measurable functions can be shown for general families of integer  polynomials (see the argument in Section~\ref{SS:proof}), it follows that
the  averages in \eqref{E:Multies1} converge in $L^2(\mu)$ for any collection of polynomials for which
the conclusion of Theorem~\ref{T:CharA} holds. We conjecture that  the conclusion of Theorem~\ref{T:CharA}
holds if and only if the  family of polynomials $p_1,\ldots,p_\ell$ is  pairwise independent,
meaning, the set $\{1,p_i,p_j\}$ is linearly independent for every $i,j\in \{1,\ldots,\ell\}$ with $i\neq j$
(simple examples show that the condition is necessary).
Furthermore, we conjecture that if the polynomials $1,p_1,\ldots,p_\ell$ are linearly independent, then  the factors $\krat(T_i)$ can take the place of the factors $\cZ_{k,T_i}$ in the hypothesis of Theorem~\ref{T:CharA}.

In the case where all the transformations $T_1,\ldots,T_\ell$ are equal, the conclusion of
Theorem~\ref{T:LowerBounds} is known to hold whenever the polynomials
$n,n^2,\ldots,n^\ell$ are replaced by  any  family  of linearly independent polynomials $p_1,\ldots,p_\ell$,
each having zero constant term \cite{FrK06} (it is known that this independence assumption
is  necessary \cite{BHK}).
We conjecture that a similar result holds for any family of commuting, invertible measure preserving transformations $T_1,\ldots,T_\ell$. And in fact again, the assumption that the transformations $T_1,\ldots,T_\ell$ commute
may be superfluous.

In most cases where the family of polynomials $p_1,\ldots,p_\ell$ is not pairwise independent, for example when $p_1(n)=\ldots=p_\ell(n)=n^2$,  the methods of the present article do not suffice to
study  the limiting behavior of the averages \eqref{E:Multies1}.\footnote{There are particular (but rather exceptional) cases of non-pairwise independent families of polynomials, where  the methods of the present paper can be easily modified and combined with the known ``linear'' results to prove convergence. One such example is when $p_1(n)=n,\ldots,p_{\ell-1}(n)=n$, and $p_\ell(n)$ is a  polynomial with a sufficiently large degree (degree $>2^{\ell}$ makes the problem accessible to the ``simple'' methods of the Appendix).}  It is in cases like this that
working with some kind of ``pleasant" extension (using terminology from \cite{A}) or ``magic'' extension (using terminology form \cite{H})  of the system may offer an essential advantage (this is indeed the case when all the polynomials
are linear).

\subsection{General conventions and notation}
By a \emph{system} we mean a Lebesgue probability space $(X,\CX,\mu)$,
endowed with a single, or several commuting, invertible  measure preserving
 transformations, acting on $X$.

For notational convenience, all functions are implicitly assumed to
be real valued, but straightforward modifications of
our arguments, definitions, etc, can be given
 for complex valued functions as well.

We say  that \emph{the averages of the sequence $(a_n)_{n\in\N}$
converge} to some limit $L$, and we write
$$
 \lim_{N-M\to+\infty}\frac 1{N-M}\sum_{n=M}^{N-1}a_n=L,
$$
if the averages of $a_n$ on any sequence of intervals whose lengths
tend to infinity converge to $L$.
We use similar formulations for  the $\limsup$ and for limits in function
spaces.

Lastly, the following notation will be
used throughout the article: $\N=\{1,2,\ldots\}$, $\T^k=\R^k/\Z^k$, $Tf=f\circ T$,
$e(t)=e^{2\pi i t}$.

\section{Background in ergodic theory and nilmanifolds}

\subsection{Background in ergodic theory}
Let  $(X,\X,\mu, T)$ be a system.

\medskip
\noindent \emph{Factors.}
A {\it homomorphism} from  $(X,\X,\mu, T)$ to a
system $(Y, \cY, \nu, S)$ is a measurable map $\pi\colon X'\to Y'$,
where $X'$ is a $T$-invariant subset of $X$ and $Y'$ is an
$S$-invariant subset of $Y$, both of full measure, such that
$\mu\circ\pi^{-1} = \nu$ and $S\circ\pi(x) = \pi\circ T(x)$ for
 $x\in X'$. When we have such a homomorphism we say that the system $(Y, \Y,
\nu, S)$ is a {\it factor} of the system $(X,\X,\mu, T)$.  If
the factor map $\pi\colon X'\to Y'$ can be chosen to be  bijective,
then we say that the systems $(X,\X, \mu, T)$ and $(Y, \Y, \nu, S)$
are {\it isomorphic} (bijective maps on Lebesgue spaces have
measurable inverses).

A factor can be characterized (modulo isomorphism) by
$\pi^{-1}(\cY)$, which is a $T$-invariant sub-$\sigma$-algebra
of $\mathcal B$, and conversely any $T$-invariant sub-$\sigma$-algebra of
$\mathcal B$ defines a factor. By a classical abuse of terminology we
denote by the same letter the $\sigma$-algebra $\Y$ and its
inverse image by $\pi$. In other words, if $(Y, \Y, \nu, S)$ is a
factor of $(X,\X,\mu,T)$, we think of $\Y$ as a
sub-$\sigma$-algebra of $\X$. A factor can also be
characterized (modulo isomorphism) by a $T$-invariant subalgebra
$\mathcal{F}$ of $L^\infty(X,\X,\mu)$, in which case $\Y$ is
the sub-$\sigma$-algebra generated by $\mathcal{F}$, or equivalently,
$L^2(X,\Y,\mu)$ is the closure of $\mathcal{F}$ in
$L^2(X,\X,\mu)$.

\medskip
\noindent \emph{Inverse limits.}
We say that $(X,\X,\mu,T)$ is an {\it inverse limit of a sequence of
  factors} $(X,\X_j,\mu,T)$ if $(\X_j)_{j\in\mathbb{N}}$ is an
increasing sequence of $T$-invariant sub-$\sigma$-algebras such that
$\bigvee_{j\in\N}\X_j=\X$ up to sets of measure
zero.

\medskip
\noindent \emph{Conditional expectation.}
If $\Y$ is a $T$-invariant sub-$\sigma$-algebra of $\X$ and $f\in
L^1(\mu)$, we write $\E(f|\Y)$, or $\E_\mu(f|\Y)$ if needed, for the
 conditional expectation of $f$ with respect to $\Y$.
 We will frequently make use
of the identities
$$
\int \E(f|\Y) \ d\mu= \int f\ d\mu\text{ and }
T\,\E(f|\Y)=\E(Tf|\Y).
$$
 We say that a function $f$ is orthogonal to $\Y$, and we write
$f\perp\Y$, when it has a zero conditional expectation on $\Y$.
If a function $f\in L^\infty(\mu)$ is measurable with respect to the factor $\Y$,
we write $f\in L^\infty(\Y,\mu)$.

\medskip
\noindent \emph{Ergodic decomposition.}
We write $\CI$, or $\CI(T)$ if needed, for the
$\sigma$-algebra  $\{A\in \X\colon T^{-1}A=A\}$ of invariant sets.
A system is \emph{ergodic} if all the  $T$-invariant sets have measure
either $0$ or $1$.

Let $x\mapsto\mu_x$ be a regular version of the
conditional measures with respect to the $\sigma$-algebra $\CI$.
This means that the map $x\mapsto\mu_x$ is
$\CI$-measurable, and
for very bounded measurable function $f$ we have
$$
 \E_\mu(f| \CI)(x)=\int f\,d\mu_x\ \text{for $\mu$-almost every }x\in X.
$$
Then the \emph{ergodic decomposition} of $\mu$ is
$$
 \mu=\int\mu_x\,d\mu(x).
$$
The measures $\mu_x$ have the additional property that for $\mu$-almost every $x\in X$ the system $(X,\X,\mu_x,T)$ is ergodic.

\medskip
\noindent \emph{The rational Kronecker factor.}
 For every $d\in\N$ we define $\CK_d=\CI(T^d)$. The rational Kronecker factor
is
$$
 \krat=\bigvee_{d\geq 1}\CK_d.
$$
We write $\krat(T)$, or  $\krat(T,\mu)$,  when needed.
This factor  is  spanned by the family of functions
$$
\{f\in L^\infty(\mu)\colon T^df=f \text{ for some } d\in\N\},
$$
 or, equivalently, by the family
$$
 \{f\in L^\infty(\mu)\colon Tf= e(a)\cdot f \text{ for some }
a\in\mathbb{Q}\}.
$$
 If $\E_\mu(f_1|\krat(T,\mu))=0$, then we have,  for $\mu$-almost every
$x\in X$, that  $\E_{\mu_x}(f_1|\krat(T,\mu_x))=0$ (see Lemma~3.2 in \cite{FrK06}).

\subsection{The seminorms $\nnorm{\cdot}_k$ and the factors $\cZ_k$}
\label{SS:Zk}
 Sections 3 and 4 of~\cite{HKa} contain constructions that associate to every
ergodic system a sequence of measures, seminorms, and factors. It
is the case that for  these constructions the hypothesis of ergodicity is not
needed.  Most properties remain valid, and can be proved in exactly the same manner,
 for general, not necessarily ergodic systems. We review the definitions and results we
need in the sequel.

Let $(X,\mathcal{X},\mu,T)$ be a  system.
We write $\mu=\int \mu_x \ d\mu(x)$ for the ergodic decomposition of
$\mu$.
\medskip

\noindent \emph{Definition of the seminorms.} For every $k\geq 1$,
we define  a measure $\mu^{[k]}$ on $X^{2^k}$ invariant under
$T\times T\times\cdots\times T$ ($2^k$ times), by
\begin{gather*}
\mu^{[1]}=\mu\times_{\CI(T)}\mu=\int\mu_x\times\mu_x\,d\mu(x)\ ; \\
\text{ for }k\geq 1,\ \mu^{[k+1]}=\mu^{[k]}\times_{\CI(T\times
T\times\cdots\times T)} \mu^{[k]}.
\end{gather*}

Writing $\ux=(x_0,x_1,\cdots,x_{2^k-1})$ for a point of $X^{2^k}$,
we
 define  a seminorm $\nnorm\cdot_ k$ on $L^\infty(\mu)$ by
$$
 \nnorm
f_k=\Bigl(\int\prod_{i=0}^{2^k-1}f(x_i)\,d\mu^{[k]}(\ux)
\Bigr)^{1/2^k}.
$$
That $\nnorm{\cdot}_k$ is a seminorm can be proved as in \cite{HKa}, and also follows
from the estimate \eqref{E:CSGHK} below.
If needed, we are going to write  $\nnorm{\cdot}_{k,\mu}$, or $\nnorm{\cdot}_{k,T}$.

By the inductive definition of the measures $\mu^{[k]}$ we have
\begin{gather}
\label{E:first}
 \nnorm{f}_{1}=\norm{\E(f|\mathcal{I})}_{L^2(\mu)}\ ;\\
\label{E:recur}
\nnorm f_{k+1}^{2^{k+1}} =\lim_{N-M\to\infty}\frac{1}{N-M}\sum_{n=M}^{N-1}
\nnorm{f\cdot T^nf}_{k}^{2^{k}}.
\end{gather}
This can be considered  as an alternate definition of the
seminorms (assuming one first establishes existence of the limit in \eqref{E:recur}).

For functions  $f_0,f_1,\ldots,f_{2^k-1}\in L^\infty(\mu)$, the next inequality (\cite{HKa}, Lemma 3.9) follows from the definition of the measures by a repeated use of the Cauchy-Schwarz inequality
\begin{equation}
\label{E:CSGHK}
\Bigl|\int\prod_{i=0}^{2^k-1}f_i(x_i)\,d\mu^{[k]}(\ux)\Bigr|\leq\prod_{i=0}^{2^k-1}
\nnorm{f_i}_{k}.
\end{equation}

\subsubsection*{Seminorms and ergodic decomposition}
By induction, for every $k\in\N$  we have
\begin{equation}
\label{eq:decomp_muk}
\mu^{[k]}=\int(\mu_x)^{[k]}\,d\mu(x).
\end{equation}
Therefore,  for every  function $f\in L^\infty(\mu)$ we have
\begin{equation}\label{E:nonergodic}
\nnorm{f}_{k,\mu}^{2^k}=\int \nnorm{f}_{k,\mu_x}^{2^k} \ d\mu(x).
\end{equation}

\subsubsection*{The factors $\CZ_k$}

For every $k\geq 1$, an invariant $\sigma$-algebra $\CZ_k$ on $X$ is
constructed exactly as in Section~4 of \cite{HKa}. It satisfies the same property as in
Lemma~4.3 of \cite{HKa}
\begin{equation}
\label{E:DefZ_l}
\text{\em for } f\in L^\infty(\mu),\ \E_\mu(f|\cZ_{k-1})=0\text{ \em if and
  only if }\  \nnorm f_{k,\mu} = 0.
\end{equation}
Equivalently, one has
\begin{equation}\label{E:DefZ_l'}
L^\infty(\cZ_{k-1},\mu)
=\Big\{f\in L^\infty(\mu)\colon \int f\cdot g \ d\mu=0
 \text{ for every } g\in L^\infty(\mu) \text{ with }
\nnorm g_{k} = 0\Big\}.
\end{equation}
In particular, if $f\in L^\infty(\mu)$ is measurable with respect to
$\cZ_{k-1}$ and satisfies $\nnorm f_k=0$, then $f=0$. Therefore,
 $$
\nnorm\cdot_k\text{ is a norm on } L^\infty(\cZ_{k-1},\mu).
$$
If further clarification is needed, we are going to write  $\cZ_{k,\mu}$, or $\cZ_{k,T}$.
If $f\in L^\infty(\mu)$, then it follows from  \eqref{E:nonergodic} and \eqref{E:DefZ_l} that
\begin{equation}
\label{eq:ZmuZmux}
 \E_\mu(f|\mathcal{Z}_{k,\mu})=0 \text{ if and only if }
\E_{\mu_x}(f|\mathcal{Z}_{k,\mu_x})=0 \text{ for }
 \mu\text{-almost every }  x\in X.
\end{equation}
Furthermore, if $f\in L^\infty(\mu)$, then                                       %
  $$
  f\in L^\infty(\cZ_{k,\mu},\mu) \text{ if and only if } f\in L^\infty(\cZ_{k,\mu_x},\mu_x) \text{ for }        %
   \mu\text{-almost every } x\in X.
   $$                                                                     %
   The first implication is non-trivial to establish though, due to various measurability problems.
We prove this in Corollary~\ref{cor:Zkmux} below.

For every $\ell\in \N$ one has $\nnorm{f}_{1,T^\ell}\ll_\ell \nnorm{f}_{2,T}$ (see proof of Proposition 2 in \cite{HKb}).
Using this and the inductive definition of the seminorms \eqref{E:recur},
one sees that  $\nnorm{f}_{k,T^\ell}\ll_{k,\ell}\nnorm{f}_{k+1,T}$.
Therefore,
\begin{equation}\label{E:powers}
\text{if} \quad f\bot L^\infty(\cZ_{k,T},\mu) \ \text{ then } \
 f\bot L^\infty(\cZ_{k-1,T^\ell},\mu) \ \text{ for every } \ell\in \N.
\end{equation}
\subsection{Nilsystems, nilsequences,  the structure of  $\cZ_k$}
A \emph{nilmanifold}  is a homogeneous space $X=G/\Gamma$ where  $G$ is a nilpotent Lie group,
and $\Gamma$ is a discrete cocompact subgroup of $G$. If $G_{k+1}=\{e\}$ , where $G_k$ denotes the $k$-the commutator subgroup of $G$, we say that $X$ is a
 $k$-\emph{step nilmanifold}.

 A $k$-step nilpotent Lie group $G$ acts on $G/\gG$ by left
translation where the translation by a fixed element $a\in G$ is given
by $T_{a}(g\gG) = (ag) \gG$.  By $m_X$ we denote the unique probability
measure on $X$ that is invariant under the action of $G$ by left
translations (called the {\it normalized Haar measure}), and by $\G/\gG$ we denote the
Borel $\sigma$-algebra of $G/\gG$. Fixing an element $a\in G$, we call
the system $(G/\gG, \G/\gG, m_X, T_{a})$ a {\it $k$-step  nilsystem}.

If $X=G/\Gamma$ is a $k$-step nilmanifold,  $a\in G$,  $x\in X$, and
$f\in \CC(X)$,
 we call the sequence $(f(a^nx))_{n\in\N}$ a \emph{basic $k$-step nilsequence}.
A \emph{$k$-step nilsequence}, is a uniform limit of \emph{basic $k$-step nilsequences}. As is easily verified,
the collection of $k$-step nilsequences, with the topology of uniform convergence, forms a closed algebra.
 We caution the reader that in other articles the term $k$-step nilsequence is used for what we call here
 basic $k$-step nilsequence, and in some instances the function $f$ is assumed to satisfy weaker or stronger
 conditions than continuity.

The connection between the factors $\cZ_k$ of a given ergodic system and
nilsystems is given by the following structure theorem (\cite{HKa}, Lemma 4.3, Definition 4.10,
and Theorem 10.1):
\begin{theorem}[{\bf\cite{HKa}}]\label{T:Structure}
  Let $(X,\X,\mu,T)$ be an ergodic  system and $k\in\N$.

  Then  the  system  $(X,\mathcal{Z}_{k},\mu,T)$ is a (measure theoretic)  inverse limit of  $k$-step
  nilsystems.
\end{theorem}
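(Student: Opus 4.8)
The plan is to follow the strategy of \cite{HKa}. Since $(X,\cZ_k,\mu,T)$ is by construction a ``system of order $k$'' (that is, it coincides with its own $k$-th factor), it suffices to prove the statement when $X$ itself is such a system, and then to argue by induction on $k$. For $k=1$, relations \eqref{E:first} and \eqref{E:DefZ_l} identify $\cZ_1$ with the Kronecker factor, i.e.\ the maximal factor on which $T$ acts as a rotation of a compact abelian group; such a factor is an inverse limit of rotations on finite-dimensional compact abelian groups, and these are exactly the $1$-step nilsystems. This settles the base case.

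For the inductive step, assume the theorem holds for $k-1$ and let $X$ be a system of order $k$. The first task is to show that $\cZ_k=X$ is an \emph{isometric extension} of its factor $\cZ_{k-1}$, and in fact an extension by a compact abelian group: writing $Y=\cZ_{k-1}$ with transformation $S$, there are a compact abelian group $U$ and a measurable cocycle $\rho\colon Y\to U$ so that $X=Y\times_\rho U$ with $T(y,u)=(Sy,\rho(y)\cdot u)$. This is a relative analogue of the Kronecker/Abramov theory; the decisive point — that the extension is an \emph{abelian group} extension rather than merely a compact or relatively weakly mixing one over $\cZ_{k-1}$ — is forced by the fact that $\nnorm{\cdot}_{k+1}$ is a norm on $L^\infty(\cZ_k,\mu)$ (see \eqref{E:DefZ_l}), together with an analysis of the ergodic components of the cube measure $\mu^{[k]}$, which is a self-joining of $2^k$ copies of $X$ over $\cZ_{k-1}$.

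The core of the argument is the next step: the cocycle $\rho$ satisfies a Conze--Lesigne-type functional equation. Letting $g$ range over the structure group of translations acting on $Y=\cZ_{k-1}$, one shows that for every such $g$ the multiplicative derivative $\Delta_g\rho(y):=\rho(g\cdot y)\,\rho(y)^{-1}$ is cohomologous, via a transfer function $c_g\colon Y\to U$, to a cocycle that lies strictly lower in the hierarchy (``of type $<k-1$''). This equation is extracted from the recursion $\mu^{[k+1]}=\mu^{[k]}\times_{\CI}\mu^{[k]}$ by restricting the measure $\mu^{[k+1]}$ on $X^{2^{k+1}}$ to suitable coordinate subspaces, analyzing its ergodic decomposition, and applying the Cauchy--Schwarz--Gowers inequality \eqref{E:CSGHK}. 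Once this is in place, I would invoke the inductive hypothesis to approximate $Y$ by a genuine $(k-1)$-step nilmanifold $Y_0=G_0/\gG_0$ and $\rho$ by a cocycle measurable with respect to $Y_0$ obeying (a version of) the same equation. On $Y_0\times_\rho U$ one then forms the group $G$ of transformations $(y,u)\mapsto(g\cdot y,\lambda(y)\cdot u)$, with $g\in G_0$ and $\lambda\colon Y_0\to U$ solving the cohomological equation associated with $\Delta_g\rho$ — the Conze--Lesigne equation guarantees that such $\lambda$ exist for every $g\in G_0$, hence that $G$ acts transitively. One checks that $G$ is a nilpotent Lie group of step $k$ (bracketing a ``vertical'' element with a ``level-$m$'' element drops the level, so the $(k{+}1)$-st commutator is trivial), that the stabilizer $\gG$ of a point is discrete and cocompact (from cocompactness of $\gG_0$ and properness of the action), and that $T$ is a left translation $T_a$; thus $Y_0\times_\rho U\cong G/\gG$ is a $k$-step nilsystem, and passing to the inverse limit over the approximations exhibits $\cZ_k$ as an inverse limit of $k$-step nilsystems.

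I expect the two genuinely hard points to be: (a) deriving the Conze--Lesigne-type equation for $\rho$ from the structure of the cube measures, which demands a careful tracking of how the measures $\mu^{[k]}$ embed inside products together with repeated use of \eqref{E:CSGHK}; and (b) the finite-dimensionality needed to conclude that $G$ is a \emph{Lie} group — one must show that the space of transfer functions $\lambda$ solving the derived equations is finite-dimensional, which rests on a compactness argument and a reduction to the case where $U$ is a torus. The inverse-limit bookkeeping — arranging the approximating nilmanifolds $Y_0$ and the corresponding cocycles compatibly — is more routine but still delicate.
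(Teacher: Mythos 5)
This statement is not proved in the paper at all: it is the Host--Kra structure theorem, imported wholesale from \cite{HKa} (the paper points the reader to Lemma~4.3, Definition~4.10 and Theorem~10.1 there). So there is no internal argument to compare yours against; within the logic of this paper the ``proof'' is the citation. What you have written is, in effect, a table of contents for the proof in \cite{HKa} itself, and as such it is accurate: the reduction to systems of order $k$, the identification of $\cZ_1$ with the Kronecker factor as the base case, the fact that a system of order $k$ is an extension of $\cZ_{k-1}$ by a compact abelian group (extracted from the structure and ergodic decomposition of the measures $\mu^{[k]}$ together with the Cauchy--Schwarz--Gowers inequality \eqref{E:CSGHK}), the Conze--Lesigne-type functional equation for the cocycle, the construction of a transitive nilpotent group of skew transformations $(y,u)\mapsto(gy,\lambda(y)u)$, and the reduction to toral cocycles to make that group a Lie group are precisely the successive stages of that argument.

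The gap is that nothing beyond the outline is actually carried out. Every load-bearing sentence --- ``one shows that the extension is an abelian group extension'', ``this equation is extracted from the recursion $\mu^{[k+1]}=\mu^{[k]}\times_{\CI}\mu^{[k]}$'', ``one checks that $G$ is a nilpotent Lie group of step $k$'' --- stands in for a full section of \cite{HKa}, and the two points you yourself flag as hard, (a) the derivation of the functional equation from the cube measures and (b) the finite-dimensionality needed to get a Lie group, are exactly where the substance lies; neither is discharged here. There is also a genuine subtlety you gloss over in the inverse-limit step: the inductive hypothesis only gives $\cZ_{k-1}$ as an inverse limit of $(k-1)$-step nilsystems, and one must descend the cocycle $\rho$ (and its functional equation) to a cocycle on one member of the approximating tower with values in a finite-dimensional torus, compatibly along the tower --- this is not ``routine bookkeeping'' but a real reduction in \cite{HKa}. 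In short: as a summary of the known proof your text is faithful; as a proof it establishes nothing that the citation does not already supply.
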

\begin{remark} In fact, in \cite{HKM} it is  shown that, for ergodic systems,  the factor $(X,\mathcal{Z}_{k},\mu,T)$ is (measurably) isomorphic to a topological
inverse limit of ergodic $k$-step nilsystems (for a definition see \cite{HKM}). We are going to use this fact later.\footnote{A \emph{topological dynamical system} is a pair $(Y,S)$ where $Y$ is a compact metric space and $S\colon Y\to Y$ is a continuous transformation. If $(Y_i,S_i)_{i\in\N}$ is a sequence topological dynamical systems and $\pi_i\colon Y_{i+1}\to Y_i$ are factor maps, the \emph{inverse limit} of the systems is defined to be the compact subset $Y$ of  $\prod_{i\in\N}Y_i$ given by $Y=\{(y_i)_{i\in\N}\colon \pi_i(y_{i+1})=y_i\}$, with the induced infinite product metric and continuous transformation $T$.}
\end{remark}

\subsection{Characteristic factors for linear averages}
Using successive applications of van der Corput's lemma, the following can be proved by  induction on $\ell$ as in Theorem~12.1 of \cite{HKa} (the $\ell=2$ case follows for example  from Theorem~2.1 in \cite{FW}):
\begin{theorem}\label{T:linear}
Let $\ell\geq 2$, $(X,\X,\mu, T)$ be a system, $f_1,\ldots,f_\ell\in L^\infty(\mu)$,
 and  $a_1,\ldots,a_\ell$ be  distinct non-zero integers.
 Suppose that $f_i\bot \cZ_{\ell-1}$ for some $i\in \{1,\ldots,\ell\}$.

Then  the averages
$$
\frac{1}{N-M}\sum_{N=M}^{N-1} f_1(T^{a_1n}x)\cdot \ldots \cdot f_\ell(T^{a_\ell n}x)
$$
converge to $0$ in $L^2(\mu)$.
\end{theorem}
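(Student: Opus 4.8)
The plan is to prove the statement by induction on $\ell$, following the PET/van der Corput scheme of Theorem~12.1 of~\cite{HKa}. First I would reduce to the case of ergodic $T$: by the ergodic decomposition $\mu=\int\mu_x\,d\mu(x)$ and \eqref{eq:ZmuZmux}, the hypothesis $f_i\bot\cZ_{\ell-1,\mu}$ forces $f_i\bot\cZ_{\ell-1,\mu_x}$ for $\mu$-a.e.\ $x$; since $\|\,\cdot\,\|_{L^2(\mu)}^2=\int\|\,\cdot\,\|_{L^2(\mu_x)}^2\,d\mu$ and the integrands are uniformly bounded, it is enough to treat ergodic $T$ (where, if one wishes, Theorem~\ref{T:Structure} is available). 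For the base case $\ell=2$ I would simply invoke the classical fact that the Kronecker factor $\cZ_1$ is characteristic for the averages $\frac1{N-M}\sum_{n=M}^{N-1}f_1(T^{a_1n}x)f_2(T^{a_2n}x)$ with $a_1\neq a_2$ nonzero (Theorem~2.1 of~\cite{FW}).

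For the inductive step, assume the statement for $\ell-1$ (with $\ell\ge 3$), let $a_1,\dots,a_\ell$ be distinct and nonzero, assume $\|f_j\|_\infty\le 1$, and fix an index $j_0\neq i$. Apply the van der Corput Lemma to $v_n=\prod_{j=1}^\ell f_j(T^{a_j n}\cdot)$. Writing $G_{j,h}:=(T^{a_jh}f_j)\cdot f_j$ and composing with the measure preserving map $T^{-a_{j_0}n}$, one gets
\[
\langle v_{n+h},v_n\rangle=\int G_{j_0,h}(x)\prod_{j\neq j_0}G_{j,h}\bigl(T^{(a_j-a_{j_0})n}x\bigr)\,d\mu(x).
\]
Since $G_{j_0,h}$ does not depend on $n$, averaging in $n$ and applying the Cauchy--Schwarz inequality gives
\[
b_h\le\overline{\lim}_{N-M\to\infty}\Bigl\|\frac1{N-M}\sum_{n=M}^{N-1}\prod_{j\neq j_0}G_{j,h}\bigl(T^{(a_j-a_{j_0})n}\cdot\bigr)\Bigr\|_{L^2(\mu)}.
\]
The $\ell-1$ exponents $a_j-a_{j_0}$ ($j\neq j_0$) are distinct and nonzero, so the inner average falls under the inductive hypothesis with the factor $\cZ_{\ell-2}$; in particular $b_h=0$ whenever $G_{i,h}\bot\cZ_{\ell-2}$, i.e.\ whenever $\nnorm{G_{i,h}}_{\ell-1}=0$ (recall $i\neq j_0$).

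To handle the remaining $h$, I would use the $T$-invariance of the seminorm, which gives $\nnorm{G_{i,h}}_{\ell-1}=\nnorm{f_i\cdot T^{-a_ih}f_i}_{\ell-1}$, together with \eqref{E:recur}: since $f_i\bot\cZ_{\ell-1}$,
\[
\lim_{N-M\to\infty}\frac1{N-M}\sum_{n=M}^{N-1}\nnorm{f_i\cdot T^{n}f_i}_{\ell-1}^{2^{\ell-1}}=\nnorm{f_i}_{\ell}^{2^{\ell}}=0 .
\]
As this vanishing holds along every sequence of intervals and the summands are nonnegative, it persists along any fixed arithmetic progression (split $[1,mH]$ into residue classes mod $m$), so $\lim_{H\to\infty}\frac1H\sum_{h=1}^H\nnorm{G_{i,h}}_{\ell-1}^{2^{\ell-1}}=0$. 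Feeding this back into the inductive hypothesis — either through a quantitative form of it (a bound for $\overline{\lim}\|\,\cdot\,\|_{L^2}$ in terms of a seminorm of one function), or by splitting $G_{i,h}=\E(G_{i,h}\mid\cZ_{\ell-2})+\bigl(G_{i,h}-\E(G_{i,h}\mid\cZ_{\ell-2})\bigr)$ and estimating the two pieces separately — yields $\lim_{H\to\infty}\frac1H\sum_{h=1}^H b_h=0$, and the van der Corput Lemma completes the induction.

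The main obstacle is this last step. The reduction produces exponents $a_j-a_{j_0}$, so at intermediate stages one naturally meets the factors $\cZ_{k,T^m}$ of powers of $T$, and pinning the characteristic factor down to $\cZ_{\ell-1}$ — a factor depending only on $\ell$ — requires the comparison estimates $\nnorm{f}_{k,T^m}\ll_{k,m}\nnorm{f}_{k+1,T}$ recalled just before~\eqref{E:powers}. Relatedly, the induction should be set up carrying a quantitative seminorm bound rather than only the bare qualitative statement: the qualitative form gives $b_h=0$ merely on the set where $\nnorm{G_{i,h}}_{\ell-1}$ is \emph{exactly} zero, which need not have density one, whereas the argument above only produces density-one control of how small $\nnorm{G_{i,h}}_{\ell-1}$ is. Getting this bookkeeping right — and not degrading the index of the controlling seminorm at each step — is the real content of the proof.
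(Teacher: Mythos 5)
Your proposal is correct and follows exactly the route the paper itself takes, which is nothing more than the citation "induction on $\ell$ via successive applications of van der Corput's Lemma as in Theorem~12.1 of \cite{HKa}, with the $\ell=2$ base case from Theorem~2.1 of \cite{FW}". The bookkeeping worry you raise at the end does resolve itself: once the inductive hypothesis is carried in the quantitative form $\overline{\lim}\,\norm{\cdot}_{L^2}\leq C(a_1,\ldots,a_\ell)\,\nnorm{f_i}_{\ell,T}$ (seminorms of $T$ itself, never of its powers), the vdC step raises the index by exactly one via \eqref{E:recur} and your nonnegativity argument along the progression $a_ih$, so the degradation through $\cZ_{k,T^m}$ only threatens the $\ell=1\to 2$ transition, which is precisely why the base case is taken from \cite{FW} rather than from the $\ell=1$ mean ergodic theorem.
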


\section{A key approximation property}\label{S:ApprNil}
In this section we are going to prove the following  key approximation result:
\begin{proposition}\label{L:ApprNil}
Let $(X,\X,\mu,T)$ be a system (not necessarily ergodic) and suppose that
$f\in L^{\infty}(\cZ_k,\mu)$ for some $k\in \N$.

Then for every $\varepsilon>0$ there exists a function
$\tilde{f}\in L^\infty(\mu)$, with $L^\infty$-norm bounded by $\norm{f}_{L^\infty(\mu)}$, such that
\begin{enumerate}
\item\label{it:tildeapprox}
 $\tilde{f}\in L^{\infty}(\cZ_k,\mu)$ and
$\norm{f-\tilde{f}}_{L^1(\mu)}\leq \varepsilon$\ ;
\item\label{it:tildenil}
 for $\mu$-almost every $x\in X$, the sequence $(\tilde f (T^nx))_{n\in\N}$
is a $k$-step nilsequence.
\end{enumerate}
\end{proposition}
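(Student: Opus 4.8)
The plan is to deduce the non-ergodic statement from the ergodic structure theorem (Theorem~\ref{T:Structure}) by integrating over the ergodic decomposition $\mu=\int \mu_x\,d\mu(x)$, while being careful about the measurability issues flagged in the excerpt. First I would record the ergodic case: if $(X,\X,\nu,T)$ is ergodic and $f\in L^\infty(\cZ_k,\nu)$, then by Theorem~\ref{T:Structure} the factor $(X,\cZ_k,\nu,T)$ is an inverse limit of $k$-step nilsystems, so $f$ can be approximated in $L^1(\nu)$, within any prescribed $\varepsilon$ and without increasing the $L^\infty$ bound (truncate), by a function that is the pullback of a continuous function on some $k$-step nilsystem $(G/\Gamma, T_a)$; for such a pullback the orbit sequence $(\tilde f(T^n x))_n$ is literally a basic $k$-step nilsequence for a.e.\ $x$. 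This gives the proposition when $\mu$ itself is ergodic.

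For the general case, I would pass to the ergodic decomposition and use the two facts quoted in Section~\ref{SS:Zk}: by \eqref{eq:ZmuZmux} and the following displayed equivalence (to be proved as Corollary~\ref{cor:Zkmux}), $f\in L^\infty(\cZ_{k,\mu},\mu)$ implies $f\in L^\infty(\cZ_{k,\mu_x},\mu_x)$ for $\mu$-a.e.\ $x$. So for a.e.\ $x$ the ergodic case applies to $(X,\X,\mu_x,T)$ and produces, for each fixed $\varepsilon$, a function $\tilde f_x\in L^\infty(\cZ_{k,\mu_x},\mu_x)$ with $\|\tilde f_x\|_\infty\le\|f\|_\infty$, $\|f-\tilde f_x\|_{L^1(\mu_x)}\le\varepsilon$, and $(\tilde f_x(T^n\cdot))_n$ a $k$-step nilsequence $\mu_x$-a.e. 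The task is then to assemble these into a single globally defined $\tilde f$ on $X$: one wants $\tilde f$ to agree (up to the required error) with $\tilde f_x$ on the ergodic component through $x$, to stay $\cZ_k$-measurable globally, and to keep the $L^\infty$ and $L^1$ bounds, which by \eqref{E:nonergodic}-style integration over $x$ follow once the fiberwise bounds hold.

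The main obstacle is precisely this measurable selection / gluing step, and it is essentially the same difficulty as Corollary~\ref{cor:Zkmux}: one must choose the data $(G/\Gamma, a, F)$ defining $\tilde f_x$ in a jointly measurable way in $x$. I would handle it by working with a fixed countable "universal" family of nilsystems and continuous functions. Concretely, using separability of $C(G/\Gamma)$ and the countability of a suitable list of model nilmanifolds, one can enumerate a countable collection $\{g_j\}$ of bounded functions on $X$ such that, for every ergodic component $\mu_x$ and every $f\in L^\infty(\cZ_{k,\mu_x},\mu_x)$, some $g_j$ is an $\varepsilon$-$L^1(\mu_x)$ approximant whose orbits are $k$-step nilsequences; then the set $A_j=\{x: \|f-g_j\|_{L^1(\mu_x)}\le\varepsilon,\ \|g_j\|_\infty\le\|f\|_\infty\}$ is $\CI$-measurable (the map $x\mapsto\int|f-g_j|\,d\mu_x=\E_\mu(|f-g_j|\,|\,\CI)(x)$ is measurable), these sets cover $X$ up to null sets, and one sets $\tilde f=\sum_j g_j\mathbf 1_{B_j}$ for $B_j=A_j\setminus\bigcup_{i<j}A_i$. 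Since each $B_j\in\CI$ and $g_j$ is $\cZ_k$-measurable, $\tilde f$ is $\cZ_k$-measurable; the $L^\infty$ bound is immediate; integrating the fiber estimates over $x$ gives $\|f-\tilde f\|_{L^1(\mu)}=\int\sum_j\mathbf 1_{B_j}(x)\|f-g_j\|_{L^1(\mu_x)}\,d\mu(x)\le\varepsilon$; and for $x\in B_j$ the sequence $(\tilde f(T^n x))_n=(g_j(T^n x))_n$ is a $k$-step nilsequence for a.e.\ such $x$, which is \eqref{it:tildenil}. The delicate points I expect to spend real work on are constructing the countable universal family (invoking that, up to isomorphism, the $k$-step nilsystems arising in Theorem~\ref{T:Structure} — or in the topological refinement of \cite{HKM} — can be drawn from a countable list, together with density of a countable set of continuous functions) and verifying the $\CI$-measurability of $x\mapsto\|f-g_j\|_{L^1(\mu_x)}$ rigorously via conditional expectations; the nilsequence property in \eqref{it:tildenil} is then essentially free once the approximants are pullbacks of continuous functions.
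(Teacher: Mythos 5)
Your overall architecture (settle the ergodic case via Theorem~\ref{T:Structure}, then glue over the ergodic decomposition) is the natural one, and you correctly identify that the whole difficulty is the measurable selection step. But the resolution you propose does not work as stated, and this is exactly the ``non-trivial measurable selection problem'' the paper flags. You posit a countable family $\{g_j\}$ of \emph{globally defined} functions on $X$ such that for $\mu$-almost every ergodic component $\mu_x$, some $g_j$ is simultaneously (a) an $\varepsilon$-approximant of $f$ in $L^1(\mu_x)$ and (b) a pullback of a continuous function on a nilsystem factor of $(X,\mu_x,T)$. The obstruction is that the measure-theoretic isomorphisms $\pi_{k,x}$ from $(X,\cZ_{k,\mu_x},\mu_x,T)$ to its nilsystem inverse limit are defined only up to $\mu_x$-null sets, are in no way canonical, and vary with $x$ over an uncountable family of pairwise non-isomorphic targets (already the rotation number of the Kronecker factor varies uncountably). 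Countability of a list of model nilmanifolds and separability of $\CC(G/\Gamma)$ give you countably many functions \emph{on the models}, not countably many functions \emph{on $X$}: to transport them to $X$ you need the factor maps, which is the selection problem you were trying to solve. Your $\CI$-measurability argument for the sets $A_j$ is fine, but it presupposes the family $\{g_j\}$, which is never constructed.

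The paper's proof supplies precisely the missing canonical family: the dual functions $\cD_{k+1}\phi$, $\phi\in L^\infty(\mu)$. These are defined globally by an explicit ergodic-average formula (no choice of isomorphism involved), their linear combinations are dense in $L^1(\cZ_k,\mu)$ (Proposition~\ref{P:dual}\eqref{it:dual5}), they restrict to the corresponding dual functions of each ergodic component (Proposition~\ref{P:dual}\eqref{it:dual1}), and on an ergodic component they coincide almost everywhere with a continuous function on the topological inverse limit of nilsystems (Theorem~\ref{P:DualContinuity} and Corollary~\ref{C:DualContinuity}, from \cite{HKM}), so their orbit sequences are $k$-step nilsequences. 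The remaining measurability of the set $\{x\colon (\cD_{k+1}\phi(T^nx))_{n\in\N} \text{ is a $k$-step nilsequence}\}$ is handled not by a selection argument but by the local characterization of nilsequences in \cite{HKM} (Corollary~\ref{cor:HKM2}). To repair your proof you would need to replace your hypothetical universal family by these dual functions (or find some other canonical, globally defined, fiberwise-continuous approximants), at which point you are essentially reproducing the paper's argument.
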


By \cite{L}, if $(a_n)_{n\in\N}$ is a $k$-step nilsequence, and $p\in\Z[t]$
is a polynomial of degree $d\geq 1$, then the sequence
$(a_{p(n)})_{n\in\N}$ is a $(dk)$-step nilsequence. Therefore, the
function $\tilde f$ given by the proposition satisfies:
\begin{enumerate}
\setcounter{enumi}{2}
\item\label{it:tildepolnil}
\em
for $\mu$-almost every $x\in X$ and  every polynomial $p\in\Z[t]$ of
degree $d\geq 1$, the sequence $\bigl(\tilde
f(T^{p(n)}x)\bigr)_{n\in\N}$ is a $(dk)$-step nilsequence.
\end{enumerate}

If the system $(X,\X,\mu,T)$ is ergodic, then one can deduce  Proposition~\ref{L:ApprNil} from Theorem~\ref{T:Structure}
in a straightforward way. It turns out to be much harder to prove
this result
in the non-ergodic case (and this strengthening  is crucial for our later
applications),
due to a non-trivial measurable selection problem that one has to overcome.
We give the proof in the following subsections.

\subsection{Dual functions}\label{subsec:dual}
In this subsection, $(X,\X,\mu,T)$ is a system, and  the ergodic
decomposition of $\mu$ is $\mu=\int\mu_x\,d\mu(x)$.
We  remind the reader that we work with real valued functions
only.

We  define a family of functions that will be used in the proof of Proposition~\ref{L:ApprNil} and gather some basic properties they satisfy.

When $f$ is a bounded measurable function on $X$, for every $N\in\N$, we write
$$
A_N(f)=\frac{1}{N^k}
\sum_{1\leq n_1,\ldots,n_k\leq N} \prod_{\substack{\epsilon\in \{0,1\}^k,\\
\epsilon \neq 00\cdots 0}}
 f(T^{n_1\epsilon_1+\cdots +n_k\epsilon_k}x).
$$
 It is known  by  Theorem 1.2 in~\cite{HKa} that the averages $A_N(f)$ converge
in $L^2(\mu)$
 (in fact by \cite{As} they converge pointwise but we do not need this strengthening), and we define
$$
  \cD_k f=\lim_{N\to\infty}A_N(f)
$$
where the limit is taken in $L^2(\mu)$. If needed,  we write $\cD_{k,\mu}f$. The function $\cD_k f$
satisfies (\cite{HKa}, Theorem 13.1): For every $g\in L^\infty(\mu)$, we have
\begin{equation}
\label{eq:CDfg}
\int g\cdot\CD_kf\,d\mu=
\int g(x_0)\prod_{i=1}^{2^k-1}f(x_i)\,d\mu^{[k]}(\ux)
\end{equation}
where $\ux=(x_0,x_1,\cdots,x_{2^k-1})\in X^{2^k}$. In particular, by
the definition of $\nnorm f_k$, we have
\begin{equation}
\label{eq:CDff}
\int f\cdot\CD_kf\,d\mu=\nnorm f_k^{2^k},
\end{equation}
and by inequality~\eqref{E:CSGHK}, for every function $g\in
L^\infty(\mu)$ we have
\begin{equation}
\label{eq:boundcdfg}
\Bigl|\int g\cdot\CD_kf\,d\mu\Bigr|\leq\nnorm g_k\cdot\nnorm
f_k^{2^k-1}.
\end{equation}
\begin{example}
We have
$$
\cD_1f=\lim_{N\to\infty}\frac{1}{N} \sum_{n_1=1}^N T^nf=\E(f |\CI).
$$
 Also,
z If $T$ is an ergodic rotation on the circle $\T$ with the Haar
measure $m_\T$, then an easy computation gives
 $$
 (\cD_2f)(x)= \int_\T \int_\T f(x+s)\cdot f(x+t)\cdot f(x+s+t) \,
dm_\T(s) \, dm_\T(t).
 $$
 Notice that in this case the function $\cD_2f(x)$ may be  non-constant,
and no matter whether the function $f$ is continuous or not,   the function $\cD_2f(x)$ is  continuous
 on $\T$.
\end{example}

We gather some additional basic properties of dual functions.

\begin{proposition}\label{P:dual}
Let $(X,\X,\mu,T)$ be a system.

Then for every $f\in L^\infty(\mu)$ and  $k\in\N$ the following hold:
\begin{enumerate}
\item\label{it:dual1}
For $\mu$-almost every $x\in X$, we have
$\cD_{k,\mu}f=\cD_{k,\mu_x}f$ as functions of $L^\infty(\mu_x)$.

\item\label{it:dual4}
The function $\cD_kf$ is $\cZ_{k-1}$-measurable, in fact
$\cD_kf=\cD_k\tilde f$  where $\tilde f=\E(f|\cZ_{k-1})$.

\item\label{it:dual5}
 Linear combinations of functions $\cD_kf$ with $f\in L^\infty(\mu)$ are dense in
$L^1(\cZ_{k-1},\mu)$.
\end{enumerate}
\end{proposition}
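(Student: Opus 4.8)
The plan is to establish the three items in order, leveraging the fact that each property for $\cD_{k,\mu}$ can be reduced to the corresponding property over the ergodic components $\mu_x$, where the statements are either known from \cite{HKa} or follow from elementary Hilbert-space arguments.

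For item~\eqref{it:dual1}, the key is the ergodic decomposition \eqref{eq:decomp_muk} of the measures $\mu^{[k]}$, which gives $\mu^{[k]}=\int(\mu_x)^{[k]}\,d\mu(x)$. First I would use the defining relation \eqref{eq:CDfg} for $\cD_{k,\mu}f$: for every $g\in L^\infty(\mu)$, $\int g\cdot\cD_{k,\mu}f\,d\mu=\int g(x_0)\prod_{i=1}^{2^k-1}f(x_i)\,d\mu^{[k]}(\ux)$. Plugging in the decomposition and noting that the $\sigma$-algebra $\CI(T)$ sits below the map $x\mapsto\mu_x$, I would integrate against test functions $g$ of the form $g_0\cdot\phi(\mu_x)$ with $g_0\in L^\infty(\mu)$ and $\phi$ bounded $\CI$-measurable, to conclude that $\E_\mu(g_0\cdot\cD_{k,\mu}f\mid\CI)(x)=\int g_0\cdot\cD_{k,\mu_x}f\,d\mu_x$ for $\mu$-almost every $x$. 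Since $\cD_{k,\mu}f$ is itself $\CI$-measurable-modulo nothing — actually it need not be, so instead I would condition directly — the cleanest route is: $\cD_{k,\mu}f$ satisfies, for a countable dense family of $g_0\in L^\infty(\mu)$, the identity $\int g_0(x)\cdot\cD_{k,\mu}f(x)\,d\mu_x(x)=\int g_0(x_0)\prod_{i\geq 1}f(x_i)\,d(\mu_x)^{[k]}(\ux)=\int g_0\cdot\cD_{k,\mu_x}f\,d\mu_x$ for $\mu$-a.e.\ $x$, by disintegrating the global identity and using that both sides are $\CI$-measurable in $x$. A countable dense family of $g_0$ then pins down $\cD_{k,\mu}f=\cD_{k,\mu_x}f$ in $L^2(\mu_x)$ for $\mu$-a.e.\ $x$.

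For item~\eqref{it:dual4}, I would argue that $\cD_kf$ is $\cZ_{k-1}$-measurable by showing it is orthogonal to every $h\in L^\infty(\mu)$ with $\nnorm h_k=0$: by \eqref{eq:boundcdfg}, $\bigl|\int h\cdot\cD_kf\,d\mu\bigr|\leq\nnorm h_k\cdot\nnorm f_k^{2^k-1}=0$, and by the characterization \eqref{E:DefZ_l'} this places $\cD_kf\in L^\infty(\cZ_{k-1},\mu)$ (a truncation/approximation argument handles the fact that \eqref{E:DefZ_l'} is stated for bounded functions, which $\cD_kf$ is, being an $L^2$-limit of uniformly bounded averages). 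For the identity $\cD_kf=\cD_k\tilde f$ with $\tilde f=\E(f\mid\cZ_{k-1})$: writing $f=\tilde f+g$ with $\nnorm g_k=0$ by \eqref{E:DefZ_l}, one expands $\cD_kf-\cD_k\tilde f$ as a sum of dual-type averages in which at least one factor is $g$; pairing against an arbitrary test function and applying \eqref{E:CSGHK} to the corresponding $\mu^{[k]}$-integral, each such term vanishes because it is bounded by a product of seminorms including $\nnorm g_k=0$.

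For item~\eqref{it:dual5}, suppose $h\in L^\infty(\cZ_{k-1},\mu)$ is orthogonal to every $\cD_kf$, $f\in L^\infty(\mu)$; it suffices to show $h=0$. Taking $f=h$ and using \eqref{eq:CDff}, $0=\int h\cdot\cD_kh\,d\mu=\nnorm h_k^{2^k}$, so $\nnorm h_k=0$; but $\nnorm\cdot_k$ is a norm on $L^\infty(\cZ_{k-1},\mu)$ by the remark following \eqref{E:DefZ_l'}, hence $h=0$. Since linear combinations of $\cD_kf$ lie in $L^1(\cZ_{k-1},\mu)$ by item~\eqref{it:dual4}, this gives density in the $L^1(\cZ_{k-1},\mu)$ norm by Hahn--Banach (the dual of $L^1$ being $L^\infty$, and any annihilating $h\in L^\infty(\cZ_{k-1},\mu)$ being forced to vanish).

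The main obstacle I anticipate is the measure-theoretic bookkeeping in item~\eqref{it:dual1}: one must choose a genuinely simultaneous version of all the limits and disintegrations so that the pointwise-in-$x$ identity holds for $\mu$-a.e.\ $x$ across a countable family of test functions at once, and one must be careful that $\cD_{k,\mu_x}f$ is well-defined (the averages $A_N(f)$ converge in $L^2(\mu_x)$ for a.e.\ $x$, which follows from $L^2(\mu)$-convergence together with \eqref{eq:decomp_muk}). The other two items are essentially formal consequences of the Cauchy--Schwarz--Gowers inequality \eqref{E:CSGHK} and the defining properties of $\cZ_{k-1}$ and $\nnorm\cdot_k$.
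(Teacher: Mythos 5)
Your proof is correct; items~\eqref{it:dual4} and~\eqref{it:dual5} follow the paper's argument essentially verbatim (the paper asserts rather than proves that $\cD_k$ maps $L^\infty(\cZ_{k-1},\mu)$ to itself, whereas you derive the $\cZ_{k-1}$-measurability of $\cD_kf$ directly from \eqref{eq:boundcdfg} and \eqref{E:DefZ_l'}, which is if anything more self-contained). The genuine divergence is in item~\eqref{it:dual1}. The paper avoids disintegration entirely: since $A_N(f)\to\cD_{k,\mu}f$ in $L^2(\mu)$, a subsequence converges $\mu$-a.e., hence $\mu_x$-a.e.\ for $\mu$-a.e.\ $x$; the same subsequence converges to $\cD_{k,\mu_x}f$ in $L^2(\mu_x)$ by definition, so the two limits agree. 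This sidesteps all of the bookkeeping you anticipate (choosing a countable test family that is simultaneously dense in $L^2(\mu_x)$ for a.e.\ $x$, the measurability of $x\mapsto\int g_0\cdot\cD_{k,\mu_x}f\,d\mu_x$, and the localization via $\CI$-measurable multipliers), at the cost of not exhibiting the structural identity between the disintegrations of \eqref{eq:CDfg}. Your route does work, but one parenthetical should be repaired: the $L^2(\mu_x)$-convergence of the full sequence $A_N(f)$ does not follow from its $L^2(\mu)$-convergence together with \eqref{eq:decomp_muk} (that only yields convergence along a subsequence for a.e.\ $x$); the well-definedness of $\cD_{k,\mu_x}f$ comes from applying Theorem~1.2 of \cite{HKa} to the ergodic system $(X,\X,\mu_x,T)$, exactly as in the paper.
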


\begin{proof}
We show~\eqref{it:dual1}. The averages $A_N(f)$ converge to $\cD_{k,\mu}f$ in
$L^2(\mu)$. Therefore,  there exists a subsequence of $A_N(f)$ that converges to
$\cD_{k,\mu}f$  almost everywhere with respect to $\mu$.
As a consequence,  for $\mu$-almost every $x\in X$,
this subsequence converges to $\cD_{k,\mu}f$
 almost everywhere with respect to $\mu_x$. On the other hand, by the definition of $\cD_{k,\mu_x}f$,
for $\mu$-almost every $x\in X$ this subsequence also converges to $\cD_{k,\mu_x}f$
in $L^2(\mu_x)$. The result follows.

We show~\eqref{it:dual4}. Since the operation $\cD_k$ maps $L^\infty(\cZ_{k-1},\mu)$
to itself, it suffices to establish the second claim.
Let $g\in L^\infty(\mu)$.
 Using~\eqref{eq:CDfg} and expanding $f$ as
$\tilde f+(f-\tilde f)$,
we see
that $\int g\cdot\CD_k f\,d\mu$  is equal to
$\int g\cdot\CD_k \tilde f\,d\mu$,
plus integrals  of the form
$$
 \int g(x_0)\cdot\prod_{i=1}^{2^k-1}f_i(x_i)\,d\mu^{[k]}(\ux),
$$
where each of the functions $f_i$ is equal to  either $\tilde f$ or to
$f-\tilde f$, and at least one of the functions $f_i$ is  equal to $f-\tilde f$.
Since $\E(f-\tilde f|\CZ_{k-1})=0$, by~\eqref{E:DefZ_l} we have
$\nnorm{f-\tilde f}_k=0$, and
by  inequality~\eqref{E:CSGHK},
all these integrals are equal to zero.
 This establishes that
$\int g\cdot\CD_k f\,d\mu$  is equal to
$\int  g\cdot\CD_k \tilde f\,d\mu$, and the
announced result follows.

We show~\eqref{it:dual5}.
By duality, it suffices to
show that if $g\in L^\infty(\cZ_{k-1},\mu)$ satisfies $\int g \cdot \cD_kf\ d\mu=0$ for every $f\in L^\infty(\mu)$, then $g=0$.
 Taking $f=g$ gives  $\int g \cdot \cD_kg\ d\mu=0$,  and
using~\eqref{eq:CDff} we get $\nnorm{g}_k=0$. Since
  $\nnorm{\cdot}_k$ is a norm in $L^\infty(\cZ_{k-1},\mu)$ we deduce that $g=0$. This completes the proof.
\end{proof}
\begin{corollary}
\label{cor:Zkmux}
Let $(X,\X,\mu,T)$ be a system and
 $f\in L^\infty(\CZ_{k,\mu},\mu)$ for some $k\in\N$.

  Then, for $\mu$-almost every $x\in X$, we have
$f\in  L^\infty(\CZ_{k,\mu_x},\mu_x)$.
\end{corollary}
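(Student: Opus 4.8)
The plan is to deduce Corollary~\ref{cor:Zkmux} from part~\eqref{it:dual5} of Proposition~\ref{P:dual} together with the already-established fact \eqref{eq:ZmuZmux} about the vanishing of conditional expectations on $\cZ_{k,\mu_x}$. Recall that \eqref{eq:ZmuZmux} handles the ``orthogonality'' direction: $\E_\mu(g|\cZ_{k})=0$ forces $\E_{\mu_x}(g|\cZ_{k})=0$ for $\mu$-a.e.\ $x$; what we need here is the complementary ``measurability'' direction, which, as the text warns, is the genuinely nontrivial one because of measurable selection issues. First I would fix $f\in L^\infty(\cZ_{k,\mu},\mu)$ and use Proposition~\ref{P:dual}\eqref{it:dual5} to approximate $f$ in $L^1(\mu)$ by finite linear combinations $g_j=\sum_i c_i^{(j)}\,\cD_{k+1,\mu}f_i^{(j)}$ of dual functions of level $k+1$ (so that each $\cD_{k+1}$-function is $\cZ_{k,\mu}$-measurable by part~\eqref{it:dual4}), with $\norm{f-g_j}_{L^1(\mu)}\to 0$; passing to a subsequence we may assume $g_j\to f$ $\mu$-almost everywhere.

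The next step is to transfer the dual-function structure to almost every ergodic component. By Proposition~\ref{P:dual}\eqref{it:dual1}, for each fixed $f_i^{(j)}$ we have $\cD_{k+1,\mu}f_i^{(j)}=\cD_{k+1,\mu_x}f_i^{(j)}$ as elements of $L^\infty(\mu_x)$ for $\mu$-a.e.\ $x$; since there are only countably many such functions (finitely many for each $j$), we may choose a single full-measure set of $x$ on which all these identifications hold simultaneously. For such $x$, the function $g_j$ agrees $\mu_x$-a.e.\ with $\sum_i c_i^{(j)}\,\cD_{k+1,\mu_x}f_i^{(j)}$, which is $\cZ_{k,\mu_x}$-measurable by applying part~\eqref{it:dual4} of the proposition to the ergodic system $(X,\X,\mu_x,T)$. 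Thus $g_j\in L^\infty(\cZ_{k,\mu_x},\mu_x)$ for $\mu$-a.e.\ $x$.

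Finally I would pass to the limit in $j$ fiberwise. We know $g_j\to f$ $\mu$-almost everywhere, hence by Fubini (using the disintegration $\mu=\int\mu_x\,d\mu(x)$) for $\mu$-a.e.\ $x$ we have $g_j\to f$ $\mu_x$-almost everywhere; since all functions are bounded by $\norm{f}_{L^\infty(\mu)}$ (the $g_j$ can be truncated at this level without spoiling the approximation), dominated convergence gives $g_j\to f$ in $L^2(\mu_x)$, and therefore $f\in L^\infty(\cZ_{k,\mu_x},\mu_x)$ because $L^\infty(\cZ_{k,\mu_x},\mu_x)$ is closed in $L^2(\mu_x)$. I expect the main obstacle to be the bookkeeping needed to produce one common full-measure set of $x$ handling countably many almost-everywhere statements (the countably many instances of Proposition~\ref{P:dual}\eqref{it:dual1}, the Fubini step for $g_j\to f$, and the a.e.\ validity of part~\eqref{it:dual4} on the components); this is exactly the ``measurable selection'' subtlety flagged in the text, but it is resolved cleanly once one insists on working with a fixed countable family of dual functions coming from the $L^1$-density statement rather than with an abstract approximation.
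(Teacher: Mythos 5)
Your proof is correct and follows essentially the same route as the paper's own argument: approximate $f$ in $L^1(\mu)$ by finite linear combinations of dual functions via Proposition~\ref{P:dual}~\eqref{it:dual5}, pass to an almost everywhere convergent subsequence, and transfer to the ergodic components using parts \eqref{it:dual1} and \eqref{it:dual4}. Your additional care with the countably many null sets and the truncation step only makes explicit what the paper leaves implicit.
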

\begin{proof}
By part~\eqref{it:dual5} of Propositition~\ref{P:dual}, there exists a
sequence $(f_n)_{n\in\N}$, of  finite linear combinations
of functions of the form $\CD_{k+1}\phi$ where $\phi\in
L^\infty(\mu)$, such that  $f_n\to f$ in
$L^1(\mu)$. Passing to a subsequence, we can assume that
$f_n\to f$  almost everywhere with respect to $\mu$. As a consequence, for $\mu$-almost
every $x\in X$, we have $f_n\to f$ almost everywhere with respect to $\mu_x$.

Furthermore, by parts~\eqref{it:dual1} and~\eqref{it:dual4} of
Propositition~\ref{P:dual}, we have that $f_n\in
L^\infty(\CZ_{k,\mu_x},\mu_x)$ for $\mu$-almost every $x$ and every $n\in \N$.
The announced result follows.
\end{proof}
\subsection{Proof of Proposition~\ref{L:ApprNil}}
In order to prove Proposition~\ref{L:ApprNil} we are going to make use of two
ingredients.
The first is a continuity property of dual functions (it follows from Proposition~5.2 and Lemma~5.8 in \cite{HKM}):

\begin{theorem}[{\bf \cite{HKM}}]\label{P:DualContinuity}
Suppose that the topological dynamical system $(Y,S)$ is a topological inverse limit of minimal
$(k-1)$-step  nilsystems, and let $m_Y$ be the unique $S$-invariant measure in $Y$.

Then
  for every $f\in L^{\infty}(m_Y)$, and  $k\in \N$, the function $\cD_k f$ coincides $m_Y$-almost everywhere with a continuous function in $Y$.
\end{theorem}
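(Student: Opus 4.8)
The plan is to exploit the fact that, on an inverse limit of minimal nilsystems, the dual function $\cD_k f$ is an integral of products of translates of $f$ against the conditional cube measures, and that these conditional measures are both weak-$*$ continuous in the base point and have the Haar measure of $Y$ as each of their non-initial coordinate marginals. Normalizing $\norm{f}_{L^\infty(m_Y)}\le 1$, and using that a topological inverse limit of minimal (hence uniquely ergodic) nilsystems is itself uniquely ergodic, so that $m_Y$ is ergodic with full support, I would reduce the statement to two assertions plus a soft approximation argument: (a) for continuous $g$ on $Y$ the function $\cD_k g$ has a continuous representative; and (b) the estimate $\norm{\cD_k f-\cD_k g}_{L^\infty(m_Y)}\le (2^k-1)\,\norm{f-g}_{L^1(m_Y)}$ holds for all $f,g$ with $L^\infty$-norm at most $1$. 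Granting (a) and (b), choose continuous $g_n$ with $\norm{g_n}_{L^\infty}\le 1$ and $g_n\to f$ in $L^1(m_Y)$; by (b) the sequence $(\cD_k g_n)$ is Cauchy in $L^\infty(m_Y)$, and since each term is continuous and $m_Y$ has full support it is uniformly Cauchy, hence converges uniformly to some $h\in C(Y)$. Applying (b) once more gives $\cD_k g_n\to\cD_k f$ in $L^\infty(m_Y)$, so $\cD_k f=h$ almost everywhere, which is the claim.

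Both (a) and (b) rest on a \emph{fibre representation} of the dual function. Writing $\ux=(x_0,\dots,x_{2^k-1})$ and disintegrating $\mu^{[k]}$ over its $x_0$-marginal (which equals $m_Y$) as $\mu^{[k]}=\int \mu^{[k]}_{x_0}\,dm_Y(x_0)$, formula \eqref{eq:CDfg} tested against arbitrary $g\in L^\infty(m_Y)$ yields, for $m_Y$-almost every $x_0$, $\cD_k f(x_0)=\int \prod_{i=1}^{2^k-1} f(x_i)\,d\mu^{[k]}_{x_0}$. The two structural inputs I need about these conditional measures are: (I) the map $x_0\mapsto\mu^{[k]}_{x_0}$ admits a representative that is weak-$*$ continuous; and (II) for each $j\in\{1,\dots,2^k-1\}$ the pushforward of $\mu^{[k]}_{x_0}$ under the $j$-th coordinate projection $\pi_j$ equals $m_Y$ (it would suffice that this pushforward be absolutely continuous with respect to $m_Y$ with density bounded uniformly in $x_0$). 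Both follow from the Host--Kra description of the cube measure on nilmanifolds together with passage to the inverse limit. Indeed, on a genuine minimal $(k-1)$-step nilsystem $X=G/\Gamma$ the measure $\mu^{[k]}$ is the Haar measure of the cube nilmanifold $X^{[k]}=\mathcal G^{[k]}/\Gamma^{[k]}$ (\cite{HKa}); the fibre of $\pi_0\colon X^{[k]}\to X$ over $x_0$ is a coset of the face group $\mathcal G^{[k]}_*=\{g\in\mathcal G^{[k]}:g_0=e\}$ carrying its Haar measure, which varies continuously with $x_0$ (giving (I)), while for $j\ne 0$ the projection of $\mathcal G^{[k]}_*$ to the $j$-th coordinate is onto $G$ — choosing a direction $i$ in which the $i$-th coordinate of $j$ equals $1$, the corresponding upper-face subgroup already surjects onto the $j$-th factor — so the projected Haar measure is $m_X$ (giving (II)). These identities are inherited by the inverse limit $Y$ from the defining minimal nilsystems $Y_i$, since $m_Y^{[k]}$ projects coordinatewise to $m_{Y_i}^{[k]}$.

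With the fibre representation and (I)--(II) in hand, (a) and (b) are immediate. For (a), if $g$ is continuous then $\ux\mapsto\prod_{i\ge 1} g(x_i)$ is continuous on $Y^{2^k-1}$, so by (I) the integral $x_0\mapsto\int\prod_{i\ge1}g(x_i)\,d\mu^{[k]}_{x_0}$ is continuous and agrees almost everywhere with $\cD_k g$. For (b), assuming $\norm{f}_{L^\infty},\norm{g}_{L^\infty}\le 1$ and telescoping the product difference one slot at a time gives, for almost every $x_0$, $|\cD_k f(x_0)-\cD_k g(x_0)|\le\sum_{j=1}^{2^k-1}\int |f-g|(x_j)\,d\mu^{[k]}_{x_0}$; by (II) each summand equals $\int |f-g|\,dm_Y=\norm{f-g}_{L^1(m_Y)}$, which is the desired bound.

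The main obstacle is establishing the structural facts (I) and (II), in particular the surjectivity statement underlying (II), on a \emph{topological inverse limit} rather than on a single nilsystem: this is where one genuinely invokes the nilpotent cube-group theory of \cite{HKa,HKM}, and where one must handle disconnectedness of $G$ (minimality lets one reduce to the case where $G$ is generated by its identity component and the translation element, keeping the coordinate projections of the face group surjective) and choose the weak-$*$ continuous and the almost-everywhere representatives coherently across the levels $Y_i$. Once these facts are secured, the rest of the argument — the telescoping estimate and the uniform-Cauchy passage to the limit — is soft and uses only \eqref{eq:CDfg}, the disintegration of $\mu^{[k]}$, and unique ergodicity of $Y$.
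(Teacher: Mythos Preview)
The paper does not give its own proof of this statement; it is quoted from \cite{HKM} with the parenthetical remark that it follows from Proposition~5.2 and Lemma~5.8 there. Your sketch is a faithful reconstruction of that argument: Proposition~5.2 of \cite{HKM} supplies exactly the fibre representation of $\cD_kf$ via the disintegration of $\mu^{[k]}$ and the weak-$*$ continuity of $x_0\mapsto\mu^{[k]}_{x_0}$ on inverse limits of minimal nilsystems (your (I)), while Lemma~5.8 there gives the coordinate-marginal identity (your (II)) that yields the $L^1$-to-$L^\infty$ contraction estimate. So your approach and the cited one coincide, and you have correctly isolated that the only nontrivial content is establishing (I) and (II) at the level of the inverse limit.
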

Let $(X,\X,\mu,T)$ be an ergodic system and let $(Y_{k},\mathcal{Y}_{k}, m_{k},S_{k})$ be a topological inverse limit of minimal nilsystems that is measure theoretically isomorphic to  the factor  $(X,\cZ_{k},\mu,T)$ (see the remark following Theorem~\ref{T:Structure}). With $\pi_k\colon X\to Y_k$ we denote the measure preserving isomorphism that identifies
 $(X,\cZ_{k},\mu,T)$  with $(Y_k,\cY_k, m_k,S_k)$. Using this notation we have:
\begin{corollary}\label{C:DualContinuity}
Let $(X,\X,\mu,T)$ be an ergodic system,  $f\in L^{\infty}(\mu)$, and  $k\in \N$.

Then
   there exists $g\in \CC(Y_{k-1})$ such that
    $\cD_k f$ coincides $\mu$-almost everywhere with the function $g\circ \pi_{k-1}$.
\end{corollary}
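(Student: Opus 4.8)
The plan is to deduce Corollary~\ref{C:DualContinuity} directly from Theorem~\ref{P:DualContinuity} by transporting the dual function $\cD_k f$ through the measure preserving isomorphism $\pi_{k-1}$. First I would observe that by part~\eqref{it:dual4} of Proposition~\ref{P:dual}, $\cD_{k,\mu} f$ is $\cZ_{k-1}$-measurable, and moreover $\cD_{k,\mu}f = \cD_{k,\mu}\tilde f$ where $\tilde f = \E(f\mid\cZ_{k-1})$; so without loss of generality $f$ itself may be taken to be $\cZ_{k-1}$-measurable. Since $\pi_{k-1}\colon (X,\cZ_{k-1},\mu,T)\to (Y_{k-1},\cY_{k-1},m_{k-1},S_{k-1})$ is a measure preserving isomorphism, there is a function $h\in L^\infty(m_{k-1})$ with $f = h\circ\pi_{k-1}$ $\mu$-almost everywhere.

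The key step is then to check that the dual-function operator $\cD_k$ is an intrinsic invariant of the factor system $(X,\cZ_{k-1},\mu,T)$, i.e.\ that it is preserved under measure-theoretic isomorphism. This is clear from the definition: $\cD_k$ is built from the averages $A_N$ of products of $T$-iterates of the function, and these are defined purely in terms of the $L^2$ structure and the transformation; equivalently, via the identity~\eqref{eq:CDfg}, $\cD_k$ is characterized by its pairings against $L^\infty$ functions through the measure $\mu^{[k]}$, which itself is constructed functorially from $(X,\cZ_{k-1},\mu,T)$ (the measures $\mu^{[j]}$ for $j\le k$ only see the factor $\cZ_{k-1}$, since $\nnorm{\cdot}_k$ vanishes on functions orthogonal to $\cZ_{k-1}$ by~\eqref{E:DefZ_l}). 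Hence $\cD_{k,\mu} f = \bigl(\cD_{k,m_{k-1}} h\bigr)\circ\pi_{k-1}$ $\mu$-almost everywhere. Now apply Theorem~\ref{P:DualContinuity} to the topological inverse limit $(Y_{k-1},S_{k-1})$ of minimal $(k-1)$-step nilsystems: it provides $g\in\CC(Y_{k-1})$ with $\cD_{k,m_{k-1}} h = g$ $m_{k-1}$-almost everywhere. Composing with $\pi_{k-1}$ and using that $\pi_{k-1}$ is measure preserving gives $\cD_k f = g\circ\pi_{k-1}$ $\mu$-almost everywhere, as desired.

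The main obstacle, and the only genuinely non-formal point, is justifying that $\cD_k$ descends through the isomorphism, i.e.\ that the measure $\mu^{[k]}$ (hence $\cD_k$) depends only on the factor $\cZ_{k-1}$ and is carried by $\pi_{k-1}$ to the corresponding object $m_{k-1}^{[k]}$ on $Y_{k-1}$. Both claims follow from the inductive construction of the measures and from~\eqref{E:DefZ_l}--\eqref{eq:CDfg}: isomorphic systems have isomorphic ergodic decompositions, hence identical $\mu^{[j]}$ for all $j$, and the factor $\cZ_{k-1}$ absorbs everything relevant to $\cD_k$ by part~\eqref{it:dual4} of Proposition~\ref{P:dual}. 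Once this bookkeeping is in place the corollary is immediate. Note also that Theorem~\ref{P:DualContinuity} is stated for the dual operator $\cD_k$ on a topological inverse limit of minimal $(k-1)$-step nilsystems, which is exactly the structure of $(Y_{k-1},S_{k-1})$ by the remark following Theorem~\ref{T:Structure}, so no additional input is needed there.
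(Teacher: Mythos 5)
Your proposal is correct and follows essentially the same route as the paper: reduce to $f\in L^\infty(\cZ_{k-1},\mu)$ via part~(ii) of Proposition~\ref{P:dual}, write $f=h\circ\pi_{k-1}$, push $\cD_k$ through the isomorphism, and invoke Theorem~\ref{P:DualContinuity}. The only difference is that you justify the identity $\cD_{k,\mu}f=(\cD_{k,m_{k-1}}h)\circ\pi_{k-1}$ via the measures $\mu^{[k]}$ (the paper asserts it without comment); the most direct justification is simply that $A_N(h\circ\pi_{k-1})=A_N(h)\circ\pi_{k-1}$ pointwise because $\pi_{k-1}$ intertwines the transformations, and the $L^2$ limits correspond because $\pi_{k-1}$ is measure preserving.
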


\begin{proof}
By part~\eqref{it:dual4} of Proposition~\ref{P:dual} we have that
$\cD_kf=\cD_k\tilde{f}$ where $\tilde{f}=\E(f|\cZ_{k-1})$.
Therefore, we can assume that $f\in L^\infty(\cZ_{k-1},\mu)$. Writing $f=\phi\circ \pi_{k-1}$ for some $\phi\in L^\infty(m_{k-1})$, we have $\cD_k f=(\cD_k \phi)\circ \pi_{k-1}$. The announced result now follows from Theorem~\ref{P:DualContinuity}.
\end{proof}
The second ingredient is Theorem~1.1 of~\cite{HKM}, which gives
a characterization of nilsequences that uses only
local information about the sequence.
To give here the exact statement would necessitate to introduce
definitions and notation that we are not going to use in the sequel, so we choose
to only state an immediate consequence that we need.
\begin{theorem}[{\bf \cite{HKM}}]
 Let  $(a_s(n))_{n\in\N}$   be a collection of sequences indexed
 by a set $S$.

Then for every $k\in \N$ the set of $s\in S$ for which the sequence $(a_s(n))_{n\in\N}$ is a $k$-step
nilsequence belongs to the $\sigma$-algebra
 spanned by  sets of the form
 $A_{l,m,n}=\{s\in S\colon |a_s(m)-a_s(n)|\leq 1/l\}$,  where
$l,m,n\in \N$.
\end{theorem}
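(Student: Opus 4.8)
The plan is to deduce this from the full statement of Theorem~1.1 of~\cite{HKM} --- of which the assertion above is only the weakened form we need --- and importing that statement is the one non-trivial input. Theorem~1.1 of~\cite{HKM} characterizes the property of a bounded sequence $(a(n))_{n\in\N}$ being a $k$-step nilsequence in a \emph{local} way: it is expressed by countably many conditions, each of which refers only to finitely many of the values $a(n)$, and only through comparisons of their pairwise distances $|a(m)-a(n)|$ with the scales $1/l$, $l\in\N$ (this locality is precisely what makes the relevant regionally proximal relations of order $k$, and hence the nilsequence property, checkable on finite windows). I would take this description as a black box.

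Granting it, the rest is bookkeeping. First, fix an enumeration $C_1,C_2,\dots$ of the countable list of conditions that jointly characterize $k$-step nilsequences, and in each one let every quantifier ``for all $\varepsilon>0$'' range over $\varepsilon\in\{1/l\colon l\in\N\}$. Second, observe that every atomic predicate occurring in the $C_j$, read as a subset of the index set $S$ with $a$ replaced by $a_s$, is of the form $A_{l,m,n}$ or its complement. Third, note that assembling these atomic sets by the Boolean connectives and the (now countable) quantifiers inside the $C_j$ uses only countable unions, countable intersections and complements, and that there are only countably many $C_j$. Hence
$$
\{s\in S\colon (a_s(n))_{n\in\N}\text{ is a }k\text{-step nilsequence}\}=\bigcap_{j\in\N}\{s\in S\colon C_j\text{ holds for }(a_s(n))_{n\in\N}\}
$$
is built from the sets $A_{l,m,n}$ by countably many such operations, and therefore belongs to the $\sigma$-algebra they generate. (In the intended application one takes $S=X$ and $a_x(n)=\tilde f(T^nx)$, for which each $A_{l,m,n}$ is visibly a measurable subset of $X$.)

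The main obstacle is the first step: faithfully transcribing Theorem~1.1 of~\cite{HKM} and checking it really has this shape; everything afterwards is routine descriptive set theory, which is why I would not spell it out. Two points need care in that transcription. First, one must see that the criterion depends on the values $a(n)$ only through the distances $|a(m)-a(n)|$: this holds because the class of $k$-step nilsequences is invariant under $a\mapsto\lambda a+c$ for $\lambda\neq 0$ and $c$ constant, and over $\R$ the family of pairwise distances determines a sequence up to such a transformation. Second, the family $\{A_{l,m,n}\}$ records distances only at resolution $1/l$, so one cannot express a strict inequality $|a(m)-a(n)|<\varepsilon$ for a single fixed $\varepsilon$; this is harmless precisely because every $\varepsilon$ appearing in the criterion is universally quantified and may be taken from $\{1/l\}$, so that only the sublevel sets at the scales $1/l$ --- that is, the sets $A_{l,m,n}$ --- are ever needed.
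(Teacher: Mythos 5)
The paper gives no proof of this statement at all: it merely asserts that it is an ``immediate consequence'' of Theorem~1.1 of \cite{HKM}, explicitly declining even to state that theorem, so your proposal takes exactly the same route --- importing \cite{HKM} as a black box and supplying the routine $\sigma$-algebra bookkeeping. Your added care about the two genuine pressure points (that the criterion sees the values only through the distances $|a_s(m)-a_s(n)|$, and that non-strict comparisons at the scales $1/l$ suffice because every $\varepsilon$ in the criterion is universally quantified) is sound and goes beyond what the paper records.
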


Using this,  we  immediately  deduce the following measurability property:

\begin{corollary}\label{cor:HKM2}
Let $(X,\X,\mu,T)$ be a system, $f\in L^\infty(\mu)$, and $k\in\N$.

Then the set
$
A_f=\{x\in X\colon (f(T^nx))_{n\in\N} \text{ is a $k$-step nilsequence} \}
$
is measurable.
\end{corollary}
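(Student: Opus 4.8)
The plan is to apply the preceding theorem of \cite{HKM} with the index set taken to be $X$ itself. Concretely, for each $x\in X$ set $a_x(n)=f(T^nx)$, so that $(a_x(n))_{n\in\N}$ is a well-defined sequence for every $x$ (or at least for $\mu$-almost every $x$, after discarding the null set where the orbit is not defined, which does not affect measurability). The cited theorem then asserts that $A_f=\{x\in X\colon (a_x(n))_{n\in\N}\text{ is a $k$-step nilsequence}\}$ belongs to the $\sigma$-algebra generated by the sets $A_{l,m,n}=\{x\in X\colon |a_x(m)-a_x(n)|\leq 1/l\}=\{x\in X\colon |f(T^mx)-f(T^nx)|\leq 1/l\}$ for $l,m,n\in\N$.

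It therefore suffices to check that each $A_{l,m,n}$ lies in $\X$. Since $f\in L^\infty(\mu)$ is measurable and $T$ is an invertible measure preserving transformation, each composition $f\circ T^j$ is $\X$-measurable; hence the function $x\mapsto f(T^mx)-f(T^nx)$ is $\X$-measurable, and $A_{l,m,n}$ is the preimage of the closed interval $[-1/l,1/l]$ under this function, so $A_{l,m,n}\in\X$. Consequently the $\sigma$-algebra generated by all the $A_{l,m,n}$ is contained in $\X$, and in particular $A_f\in\X$, which is the desired conclusion.

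There is no real obstacle here: the entire content is contained in the quoted theorem of \cite{HKM}, and the only thing to verify is the routine measurability of the sets $A_{l,m,n}$, which follows immediately from the measurability of $f$ and of the iterates $T^j$. The one minor point to mention, for cleanliness, is that $(f(T^nx))_{n\in\N}$ may only be defined for $x$ in a $T$-invariant full-measure set (since elements of $L^\infty(\mu)$ are equivalence classes); one fixes a Borel representative of $f$ and works on that invariant set, after which the argument above applies verbatim.
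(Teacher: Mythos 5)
Your proposal is correct and is exactly the argument the paper intends: it deduces the corollary ``immediately'' from the quoted theorem of \cite{HKM} by taking the index set to be $X$ with $a_x(n)=f(T^nx)$, so that each generating set $A_{l,m,n}$ is measurable by measurability of $f$ and the iterates of $T$. Your remark about fixing a representative of $f\in L^\infty(\mu)$ is a reasonable point of hygiene that the paper leaves implicit.
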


We are now ready for the proof of Proposition~\ref{L:ApprNil}.
\begin{proof}[Proof of Proposition~\ref{L:ApprNil}]
First notice that if a function $\tilde{f}\in L^\infty(\mu)$ satisfies properties $(i)$ and $(ii)$, then the function
$g=\min(|\tilde{f}|,\norm{f}_{L^\infty(\mu)}) \cdot \text{sign}(\tilde{f})$ has $L^\infty$-norm
bounded by $\norm{f}_{L^\infty(\mu)}$ and still satisfies properties $(i)$ and $(ii)$ (we used here
that $\min(|a_n|,M)\cdot \text{sign}(a_n)$ is a nilsequence if $a_n$ is). So it suffices to find
$\tilde{f}\in L^\infty(\mu)$ that satisfies properties $(i)$ and $(ii)$.

Since, by part~\eqref{it:dual5} of Proposition~\ref{P:dual}, for every $k\in\N$, linear combinations of
functions of the form  $\cD_{k+1,\mu} \phi$ with $\phi\in L^{\infty}(\mu)$  are dense in
$L^1(\cZ_{k,\mu},\mu)$,  we can assume that $f$ is of the  form $\cD_{k+1,\mu} \phi$ for some $\phi\in L^{\infty}(\mu)$. Hence,
 it suffices to show that for every $\phi\in L^\infty(\mu)$ we have   $\mu(A_{\cD_{k+1,\mu} \phi})=1$, where
 $$
 A_{\cD_{k+1,\mu} \phi}=\{x\in X\colon ((\cD_{k+1,\mu} \phi)(T^nx))_{n\in\N} \text{ is a  } k\text{-step nilsequence} \}.
 $$

 Let $\mu=\int \mu_x \ d\mu(x)$ be the ergodic decomposition of the measure $\mu$.
Since by Corollary~\ref{cor:HKM2} the set $A_{\cD_{k+1,\mu} \phi}$ is $\mu$-measurable, it suffices to show that $\mu_x(A_{\cD_{k+1,\mu} \phi})=1$ for $\mu$-almost every $x\in X$.
 By part~\eqref{it:dual1} of Proposition~\ref{P:dual} we have for $\mu$-almost every $x\in X$ that
 $\cD_{k+1,\mu}\phi=\cD_{k+1,\mu_x}\phi$ as functions of $L^\infty(\mu_x)$. As a consequence, it remains to show that $\mu_x(A_{\cD_{k+1,\mu_x} \phi})=1$ for $\mu$-almost every $x\in X$.

We have therefore reduced matters to establishing that $\mu(A_{\cD_{k+1,\mu}}\phi)=1$ for ergodic systems and $\phi\in L^\infty(\mu)$.
Using  Corollary~\ref{C:DualContinuity} and the notation introduced there, we get
 that there exists  a  function $g\in \CC(Y_k)$ such  that for $\mu$-almost every $x\in X$ we have
$$
(\cD_{k+1,\mu} \phi) (x)= g(\pi_k x).
$$
 As a consequence, for $\mu$-almost every $x\in X$, we have
$$
(\cD_{k+1,\mu} \phi)(T^nx)=g(S_k^n\pi_k x) \text{ for every } n\in\N.
$$
Since $(Y_k,S_k)$ is a topological inverse limit of nilsystems and
$g\in \CC(Y_k)$,
for every $y\in Y_k$ the sequence
$(g(S_k^ny))_{n\in\N}$ is a  $k$-step nilsequence.  We conclude that indeed $\mu(A_{\cD_{k+1} \phi})=1$. This completes the proof.
\end{proof}

\section{A characteristic factor for the
highest degree iterate:  Two transformations}\label{S:Char2}

In this section and the next one, we are going to prove  Theorem~\ref{T:CharA}  under the additional
assumption that the function  corresponding to the highest degree polynomial iterate satisfies
the stated orthogonality assumption. For example, if
$\deg(p_1)>\deg(p_i)$ for $i=2,\ldots,\ell$, we assume that
$f_1\bot \CZ_{k,T_1}$ for some $k\in \N$.

In fact our method necessitates that we prove a more general
result (Proposition~\ref{P:CharB}). This result is  also going to be used in Section~\ref{S:Charlower}, when we deal with the
polynomials of lower degree.

However, since the proof is notationally heavy, we present it first in
the case of two commuting transformations. In the next section we
give a sketch of the proof for the general case, focusing on the few
points where the differences are significant.

In this section, we show:

\begin{proposition}\label{P:CharB2}
Let $(X,\X,\mu,T_1,T_2)$ be a system
and $f_1,\ldots, f_m\in L^\infty(\mu)$.  Let   $(\P,\Q)$  be a nice
 ordered  family of pairs of polynomials, with degree $d$
 (all notions are defined in Section~\ref{subsec:families}).

 Then there exists $k=k(d,m)\in \N$ such that: If      $f_1\bot \cZ_{k,T_1}$, then
the averages
\begin{equation}\label{E:fgh}
\frac{1}{N-M}\sum_{n=M}^{N-1}  f_1(T_1^{p_1(n)} T_2^{q_1(n)}x) \cdot \ldots \cdot f_m(T_1^{p_m(n)}
T_2^{q_m(n)}x)
\end{equation}
converge to $0$ in $L^2(\mu)$.
\end{proposition}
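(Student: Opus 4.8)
The plan is to prove Proposition~\ref{P:CharB2} by a PET-induction (polynomial exhaustion) argument on the complexity of the nice ordered family $(\P,\Q)$, the complexity being the invariant attached to such families in Section~\ref{subsec:families}. Each step of the induction consists of a single application of van der Corput's Lemma, and since the complexity of any nice family of degree $d$ with $m$ pairs is bounded in terms of $d$ and $m$, the recursion has bounded depth, which is what yields a uniform $k=k(d,m)$. Discarding the case where some $f_j=0$ we may assume $\norm{f_j}_{L^\infty(\mu)}\le 1$ for all $j$, and we put
$$
v_n=v_n(x)=\prod_{j=1}^m f_j\bigl(T_1^{p_j(n)}T_2^{q_j(n)}x\bigr)\in L^\infty(\mu),
$$
so that the average in \eqref{E:fgh} equals $\frac1{N-M}\sum_{n=M}^{N-1}v_n$. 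In the base case the family has minimal complexity; there the distinguished pair forces the $T_1$-iterate attached to $f_1$ to be trivial (or of a shape handled by the single-transformation results), and the conclusion follows from $f_1\bot\cZ_{k,T_1}$ using facts such as $\CI(T_1)=\cZ_{0,T_1}\subset\cZ_{k,T_1}$ and $\krat(T_1)\subset\cZ_{1,T_1}$.

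For the inductive step I would apply van der Corput's Lemma to $(v_n)$. Because $T_1,T_2$ preserve $\mu$, composing inside the integral $\langle v_{n+h},v_n\rangle$ with $T_1^{-p_{i_0}(n)}T_2^{-q_{i_0}(n)}$, where $i_0$ is the index prescribed by the PET ordering, turns the $j=i_0$ factor of the block coming from $v_n$ into a function of $x$ alone and gives
$$
\langle v_{n+h},v_n\rangle=\int f_{i_0}(x)\,\prod_{(j,\epsilon)} g_{j,\epsilon,h}\bigl(T_1^{P_{j,\epsilon,h}(n)}T_2^{Q_{j,\epsilon,h}(n)}x\bigr)\,d\mu(x),
$$
a product over the $2m-1$ remaining indices, with $P_{j,0,h}=p_j-p_{i_0}$, $P_{j,1,h}(\,\cdot\,)=p_j(\,\cdot+h)-p_{i_0}(\,\cdot\,)$ and analogously for $Q$, and with each $g_{j,\epsilon,h}$ equal to some $f_j$ precomposed with a power $T_1^{c}T_2^{c'}$ absorbing the constant terms, so the $P_{j,\epsilon,h},Q_{j,\epsilon,h}$ may be taken with zero constant term. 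The combinatorial input from Section~\ref{subsec:families} is that, for every $h$ outside a set of density zero, the pair $\bigl(\{P_{j,\epsilon,h}\},\{Q_{j,\epsilon,h}\}\bigr)$ is again a nice ordered family of strictly smaller complexity whose distinguished pair — placed first after reordering — is a descendant of $(p_1,q_1)$, hence attached to $f_1$ up to a translate $T_1^{c}f_1$. Since $\cZ_{k',T_1}$ is $T_1$-invariant, $f_1\bot\cZ_{k',T_1}$ implies $T_1^{c}f_1\bot\cZ_{k',T_1}$, so the inductive hypothesis applies to the reduced average $\frac1{N-M}\sum_n\prod_{(j,\epsilon)}g_{j,\epsilon,h}(\,\cdot\,)$, which is itself of the form \eqref{E:fgh}, with the value $k'=k'(d',2m-1)$ it requires; choosing $k(d,m)$ to dominate every such $k'$ that can occur, the assumption $f_1\bot\cZ_{k,T_1}$ suffices. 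Thus the reduced average tends to $0$ in $L^2(\mu)$, \emph{a fortiori} in $L^1(\mu)$, and pairing against the bounded $f_{i_0}$ gives $b_h=0$ for almost every $h$; then $\frac1H\sum_{h=1}^H b_h\to 0$, and van der Corput's Lemma yields $\frac1{N-M}\sum_n v_n\to 0$ in $L^2(\mu)$, which is the proposition.

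The van der Corput manipulations are routine; the real difficulty — the main obstacle — lies in the bookkeeping behind Section~\ref{subsec:families}, namely in defining ``nice ordered family'' and its complexity so that all of the following hold simultaneously: (a) niceness is preserved under the reduction $(\P,\Q)\mapsto\bigl(\{P_{j,\epsilon,h}\},\{Q_{j,\epsilon,h}\}\bigr)$; (b) the complexity strictly decreases in a well-founded order; (c) the distinguished pair stays a descendant of $(p_1,q_1)$ throughout, so the orthogonality of $f_1$ always controls the average; and (d) the recursion depth, hence the required $k$, is bounded in terms of $d$ and $m$. Two smaller technical points also need attention: the exceptional $h$, for which the reduced family degenerates, form a set of density zero and are therefore harmless since van der Corput's Lemma only asks for $\frac1H\sum_h b_h\to0$; and one must check that the reduction only ever modifies the $f_j$ by precomposition with fixed group elements $T_1^{c}T_2^{c'}$, which alters neither the seminorms $\nnorm{\cdot}_{\bullet,T_1}$ nor the relation $f_1\bot\cZ_{\bullet,T_1}$.
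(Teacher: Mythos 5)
Your analytic skeleton is exactly the paper's: PET induction on a well-founded complexity of nice families, one van der Corput step per stage, composition with $T_1^{-p(n)}T_2^{-q(n)}$ for a distinguished pair, reduction to a terminal configuration handled by the single-transformation theory, and a bound on the recursion depth giving a uniform $k(d,m)$. But the proposal stops short of the actual content of the proof. The items you list as (a)--(d) --- that some choice of $(p,q)\in(\P,\Q)$ makes the reduced family nice again for all large $h$, strictly decreases the type, keeps the first pair a translate of the descendant of $(p_1,q_1)$, and that the number of stages is bounded by a function of $d$ and $m$ alone --- are precisely Lemmas~\ref{L:reduceA} and~\ref{L:k(d,m)}, and they are where all three defining conditions of ``nice'' and the hypothesis $\deg(p_1)\geq 2$ are actually used. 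The choice of $(p,q)$ is not dictated mechanically by the ordering: one must case-split on whether $\Q'$ is empty, and if so on whether every $p\in\P$ is equivalent to $p_1$, taking $(p,q)=(0,q)$ with $q$ of minimal degree in $\Q'$, or $(p,q)=(p_1,q_1)$, or a pair with $p\nsim p_1$ of minimal degree, respectively; and verifying property~\eqref{it:nice3} for the new family only works for all but finitely many $h$. Likewise the depth bound is not immediate, because the number of pairs doubles at each stage while the set of reachable types below a given one is infinite; the paper needs the recursive construction of the bounding function in Lemma~\ref{L:k(d,m)}. Asserting these as ``combinatorial input from Section~\ref{subsec:families}'' leaves the core of the argument unproven.

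A second, smaller point: your description of the base case is inverted. In the terminal (degree-$1$) nice families it is the $T_2$-iterates that become constant, while the $T_1$-iterates attached to $f_1$ remain \emph{linear and non-constant}, with leading coefficient distinct from all the others; if the $T_1$-iterate on $f_1$ were trivial, the hypothesis $f_1\perp\CZ_{k,T_1}$ would give nothing. What is needed there is the full Theorem~\ref{T:linear} (the Host--Kra factors $\CZ_{\tilde m-1,T_1}$ are characteristic for linear multiple averages of the single transformation $T_1$, applied to the functions $T_2^{c_i}f_i$), not merely $\CI(T_1)\subset\CZ_{k,T_1}$ or $\krat(T_1)\subset\CZ_{1,T_1}$, which suffice only for one or two terms. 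The rest of your van der Corput bookkeeping (absorbing constant terms into fixed translates $T_1^{c}T_2^{c'}$ of the $f_j$, discarding the factor $f_{i_0}$ by Cauchy--Schwarz, and tolerating a density-zero set of exceptional $h$) matches the paper and is fine.
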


Applying this to the nice family $(\P,\Q)$  where $\P=(p_1,0)$ and
$\Q=(0,p_2)$, we get:
\begin{corollary}
Let $(X,\X,\mu,T_1,T_2)$ be a system and $f_1,f_2\in L^\infty(\mu)$.
Let $p_1$ and  $p_2$ be integer polynomials  with
$d=\deg(p_1)>\deg(p_2)$.

Then there exists $k=k(d)$ such that, if
$f_1\perp\CZ_{k,T_1}$, then the averages
$$
\frac{1}{N-M}\sum_{n=M}^{N-1}  f_1(T_1^{p_1(n)}x)\cdot f_2(T_2^{p_2(n)}x)
$$
converge to $0$ in $L^2(\mu)$.
\end{corollary}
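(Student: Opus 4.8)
The plan is to obtain the corollary as the $m=2$ instance of Proposition~\ref{P:CharB2}. Apply that proposition to the functions $f_1,f_2$ and to the ordered family of pairs of polynomials $(\P,\Q)$ with $\P=(p_1,0)$ and $\Q=(0,p_2)$. With this choice the two polynomial iterates in \eqref{E:fgh} become $T_1^{p_1(n)}T_2^{0}=T_1^{p_1(n)}$ and $T_1^{0}T_2^{p_2(n)}=T_2^{p_2(n)}$, so the average appearing in Proposition~\ref{P:CharB2} is exactly the one in the corollary. (The polynomial called $p_2$ in the corollary plays the role of $q_2$ in the notation of Proposition~\ref{P:CharB2}; this notational overlap causes no trouble.) Under this identification the hypothesis $f_1\perp\cZ_{k,T_1}$ and the conclusion are verbatim those of Proposition~\ref{P:CharB2}, and the value $k=k(d,2)$ furnished there depends only on $d$, as required.

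The one point that needs attention is that $(\P,\Q)$ is indeed a \emph{nice} ordered family of pairs of degree $d$ in the sense of Section~\ref{subsec:families}. The salient features are the following: its degree is $\max\{\deg p_1,\deg p_2\}=d$, and this maximum is attained only by the first member $\P=(p_1,0)$, whose first coordinate is $p_1$, of degree exactly $d$ --- this is precisely the feature that singles out $T_1$ as the transformation for which a characteristic factor is produced, matching the orthogonality assumption being placed on $f_1$; moreover the two members are distinct as pairs, since $p_1\neq 0$ (here $d\geq 1$), so that the difference $\P-\Q=(p_1,-p_2)$ is a nonzero pair, which is the nondegeneracy a nice family must satisfy; and the family is ordered with $\P$ before $\Q$, so that the distinguished top-degree member comes first. (If $p_2$ is constant, or even identically zero, the family remains admissible and the conclusion is in any case immediate from the single transformation polynomial theory of \cite{HKb} applied to $f_1$.) Granting the routine but notation-heavy verification that these coincide with the requirements of Section~\ref{subsec:families}, Proposition~\ref{P:CharB2} applies and yields the corollary.

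At the level of the corollary itself I do not anticipate any genuine obstacle: once the definition of a nice family is unwound it is a direct specialization of Proposition~\ref{P:CharB2}. Were one to seek a self-contained proof of this special case, the natural route is the same PET-induction mechanism that underlies Proposition~\ref{P:CharB2}: put $v_n=f_1(T_1^{p_1(n)}x)\,f_2(T_2^{p_2(n)}x)$, apply van der Corput's Lemma, and after composing inside the integral with $T_1^{-p_1(n)}$ reach averages of a product of four functions in which the $f_1$-factors now involve only the $T_1$-iterate $p_1(n+h)-p_1(n)$, of degree $d-1$, whereas the $f_2$-factors still involve $T_1^{-p_1(n)}$, of degree $d$. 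The real work is then the bookkeeping needed to keep iterating van der Corput until the iterate attached to $f_1$ has been isolated, together with the proof that the number of iterations --- hence the index $k$ --- is bounded in terms of $d$ alone, uniformly over the polynomials $p_1,p_2$. This is exactly the combinatorics that the notion of a nice ordered family of pairs is designed to organize, which is why passing through Proposition~\ref{P:CharB2} is the efficient path.
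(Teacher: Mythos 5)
Your proposal is correct and is exactly the paper's proof: the corollary is obtained by specializing Proposition~\ref{P:CharB2} to the nice family $(\P,\Q)$ with $\P=(p_1,0)$ and $\Q=(0,p_2)$, for which the average \eqref{E:fgh} reduces to the one in the statement and $k=k(d,2)$ depends only on $d$. Your informal description of niceness conflates the tuples $\P,\Q$ with the two pairs $(p_1,0)$ and $(0,p_2)$ of the family (harmlessly, since they coincide here), but the actual conditions \eqref{it:nice1}--\eqref{it:nice3} are readily checked: $\deg(p_1)\geq\deg(0)$, $\deg(p_1)>\max\{\deg(0),\deg(p_2)\}$, and $\deg(p_1-0)=d>\deg(0-p_2)$.
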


\subsection{A simple example}
\label{subsec:simplexample}
We give here a very simple example in order to explain our
strategy. In the appendix  we consider slightly more general
averages and get more precise results (the main drawback of these simpler arguments
is that  they do not allow us to treat any two polynomials with distinct degrees).
 Let $(X,\CX,\mu,T_1,T_2)$ be a system and $f_1,f_2\in
L^\infty(\mu)$.
\begin{claim*}
If  $f_1\perp\CZ_{2,T_1}$, then the averages
\begin{equation}
\label{eq:exampleST}
\frac{1}{N-M}\sum_{n=M}^{N-1}  f_1(T_1^{n^2}x)\cdot
f_2(T_2^{n}x)
\end{equation}
converge to $0$ in $L^2(\mu)$.
\end{claim*}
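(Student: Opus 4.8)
The plan is to deduce the claim from the single–transformation result Theorem~\ref{T:linear} by two successive applications of van der Corput's Lemma; this is the simplest instance of the PET-induction that Proposition~\ref{P:CharB2} carries out in general.

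\emph{First van der Corput step.} Set $v_n=f_1(T_1^{n^2}x)\cdot f_2(T_2^nx)$, a bounded sequence in $L^2(\mu)$. Using that $T_1,T_2$ commute and preserve $\mu$, the substitution $x\mapsto T_2^{-n}x$ inside $\langle v_{n+h},v_n\rangle$ together with $(n+h)^2=n^2+2hn+h^2$ gives
\[
\langle v_{n+h},v_n\rangle=\int f_1(T_1^{(n+h)^2}T_2^{-n}x)\,f_1(T_1^{n^2}T_2^{-n}x)\cdot f_2(T_2^hx)\,f_2(x)\,d\mu .
\]
The factor $f_2(T_2^hx)f_2(x)$ does not depend on $n$ and is bounded by $\norm{f_2}_\infty^2$, so by the Cauchy--Schwarz inequality it suffices to prove that, for every fixed $h\ge 1$, the averages over $n$ of
\[
v^{(h)}_n:=f_1(T_1^{(n+h)^2}T_2^{-n}x)\,f_1(T_1^{n^2}T_2^{-n}x)
\]
converge to $0$ in $L^2(\mu)$: then the quantity $b_h$ of van der Corput's Lemma vanishes for every $h$, and one last application of the lemma to $(v_n)$ yields the claim.

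\emph{Second van der Corput step.} Fix $h\ge 1$ and apply van der Corput's Lemma to $(v^{(h)}_n)_n$ with auxiliary parameter $m$. Substituting $x\mapsto T_2^nT_1^{-n^2}x$ in $\langle v^{(h)}_{n+m},v^{(h)}_n\rangle$, all the quadratic-in-$n$ exponents cancel and one obtains
\[
\langle v^{(h)}_{n+m},v^{(h)}_n\rangle=\int g_1(T_1^{2(m+h)n}x)\,g_2(T_1^{2mn}x)\,g_3(T_1^{2hn}x)\cdot f_1(x)\,d\mu ,
\]
where $g_1(x)=f_1(T_1^{(m+h)^2}T_2^{-m}x)$, $g_2(x)=f_1(T_1^{m^2}T_2^{-m}x)$ and $g_3(x)=f_1(T_1^{h^2}x)$, so that $g_3=T_1^{h^2}f_1$. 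The essential observation is that $g_3$ is obtained from $f_1$ by a power of $T_1$ alone, and $\CZ_{2,T_1}$ is $T_1$-invariant, so $f_1\perp\CZ_{2,T_1}$ forces $g_3\perp\CZ_{2,T_1}$. For $m\ge 1$ with $m\ne h$ the integers $2(m+h),2m,2h$ are distinct and nonzero, hence Theorem~\ref{T:linear}, applied to the single transformation $T_1$ with $\ell=3$ and using $g_3\perp\CZ_{2,T_1}=\CZ_{\ell-1,T_1}$, shows that the averages over $n$ of $g_1(T_1^{2(m+h)n}x)g_2(T_1^{2mn}x)g_3(T_1^{2hn}x)$ converge to $0$ in $L^2(\mu)$. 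Multiplying by the bounded weight $f_1$ and using Cauchy--Schwarz, the averages over $n$ of $\langle v^{(h)}_{n+m},v^{(h)}_n\rangle$ converge to $0$ for all $m\ge 1$ except $m=h$; therefore their average over $m$ tends to $0$, and van der Corput's Lemma gives $\frac{1}{N-M}\sum_{n=M}^{N-1}v^{(h)}_n\to 0$ in $L^2(\mu)$, which is what we needed.

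\emph{Where the difficulty lies.} The manipulation of the polynomial exponents in the two steps is routine; the point that requires care is to track which of the surviving functions still carries the orthogonality hypothesis. After the two reductions all four functions are translates of $f_1$, but only the translate by a power of $T_1$ alone, namely $T_1^{h^2}f_1$, is again orthogonal to $\CZ_{2,T_1}$ --- the translates also involving $T_2^{-m}$ need not be, since $T_2$ need not preserve the $T_1$-factor $\CZ_{2,T_1}$. Fortunately this ``good'' function is precisely the one attached to a nonzero linear iterate of $T_1$, so Theorem~\ref{T:linear} applies. This is the mechanism that makes the hypothesis $f_1\perp\CZ_{k,T_1}$, phrased in terms of the single transformation carrying the highest-degree iterate, sufficient, and it is at the heart of the general argument in the following sections.
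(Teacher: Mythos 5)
Your proof is correct and follows essentially the same route as the paper's: two successive applications of van der Corput's Lemma (composing with $T_2^{-n}$ and then with $T_2^{n}T_1^{-n^2}$), arriving at the same linear single-transformation average to which Theorem~\ref{T:linear} is applied with the function $T_1^{h^2}f_1$ carrying the orthogonality. One minor correction to your closing remark: the translates involving $T_2^{-m}$ \emph{are} also orthogonal to $\CZ_{2,T_1}$, since $T_2$ commutes with $T_1$ and preserves $\mu$, hence preserves the seminorms $\nnorm{\cdot}_{k,T_1}$ and the factor $\CZ_{2,T_1}$ (the paper uses exactly this fact in Section~\ref{SS:CharB2}); this does not affect your argument, which only needs one of the three functions to be orthogonal.
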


Using  van der Corput's Lemma it suffices to show that for every
$h_1\in\N$, the averages in $n$ of
$$
 \int f_1(T_1^{n^2}x)\cdot f_2(T_2^nx)\cdot f_1(T_1^{(n+h_1)^2}x)
\cdot f_2(T_2^{n+h_1}x)\ d\mu(x)
$$
converge to $0$.
After composing with $T_2^{-n}$ and using the  Cauchy-Schwarz inequality,
we reduce matters to showing that the averages in $n$ of
$$
f_1(T_1^{n^2}T_2^{-n}x)\cdot f_1(T_1^{(n+h_1)^2}T_2^{-n}x)
$$
converge to $0$ in $L^2(\mu)$.
Using van der Corput's Lemma  one more time, we reduce matters to showing that for
every fixed $h_1\in \N$, for  every large enough $h_2\in\N$,   the averages in $n$ of
\begin{multline*}
 \int f_1(T_1^{n^2}T_2^{-n}x)\cdot f_1(T_1^{(n+h_1)^2}T_2^{-n}x)\cdot
f_1(T_1^{(n+h_2)^2}T_2^{-n-h_2}x)\cdot
f_1(T_1^{(n+h_1+h_2)^2}T_2^{-n-h_2}x)\,
d\mu(x)
\end{multline*}
converge to $0$, or equivalently, that the averages in $n$ of
\begin{multline}
\label{eq:finalexample}
 \int f_1(x)\cdot f_1(T_1^{2nh_1+h_1^2}x)\cdot
f_1(T_1^{2nh_2+h_2^2}T_2^{-h_2}x)\cdot
f_1(T_1^{2n(h_1+h_2)+(h_1+h_2)^2}T_2^{-h_2}x)
\, d\mu(x)
\end{multline}
converge to $0$.

The important property of this last average is that it involves only constant iterates of the
transformation $T_2$ (for
$h_1,h_2$ fixed). Therefore, we can apply the known results about the
convergence of averages of a single transformation.
 It  follows from Theorem~\ref{T:linear}  that the averages in $n$
of~\eqref{eq:finalexample}
converge to $0$ for all  $h_1,h_2\in\N$ such that
 the linear polynomials $2h_1n, 2h_2n, 2(h_1+h_2)n$ are distinct,
that is, for all $h_1,h_2\in\N$ with $h_1\neq h_2$. The claim follows.

We will come back to this example in Section~\ref{subsec:simplexample2}.

\subsection{Families of pairs and their type}
In this subsection we follow \cite{BL}  with some changes on the notation, in order to
define the type of a family of pairs of polynomials.
\label{subsec:families}
\subsubsection{Families of pairs of polynomials}
 Let $m\in\N$. Given two ordered families of polynomials
 $$
 \P=(p_1,\ldots,p_m), \quad \Q=(q_1,\ldots,q_m)
 $$
 we define the \emph{ordered family of pairs of polynomials} $(\P,\Q)$ as follows
$$
(\P,\Q)=\big((p_1,q_1),\ldots,(p_m,q_m)\big).
$$
The reader is advised to think of this family as an efficient way to
record  the polynomial iterates that appear in \eqref{E:fgh}.

The maximum of the degrees of the polynomials in the families $\P$
and $\Q$ is called
the \emph{degree of the family} $(\P,\Q)$.

For convenience of exposition, if pairs of constant polynomials appear in $(\P,\Q)$ we remove them, and
henceforth we assume:
\begin{itemize}
\item[] \em
All families $(\P,\Q)$ that we consider do not contain pairs of
constant polynomials.
\end{itemize}

\subsubsection{Definition of type}\label{SS:type}

 We fix an integer $d\geq 1$ and
restrict ourselves to families $(\P,\Q)$ of degree $\leq d$.

We say that two polynomials $p,q\in \Z[t]$ are \emph{equivalent}, and write $p\sim q$,
if they have the same degree and the same leading coefficient. Equivalently,  $p\sim q$ if and only if $\deg(p-q)< \min\{\deg(p),\deg(q)\}$

We define $\Q'$ to be the following set (possibly empty)
$$
\Q'=\{q_i\in \Q\colon p_i \text{ is constant}\}.
$$

For $i=1,\ldots, d$, let $w_{1,i}$, $w_{2,i}$ be the number of distinct non-equivalent classes of polynomials of degree $i$ in $\P$ and  $\Q'$ correspondingly.

We define the \emph{(matrix) type} of the family $(\P,\Q)$ to be the $2\times d$  matrix
$$
\begin{pmatrix}
w_{1,d}& \ldots &  w_{1,1}\\ w_{2,d}& \ldots& w_{2,1}
\end{pmatrix}.
$$
If $\Q'$ is empty, then all the elements of the second row are taken to be $0$.
For example, with $d=4$, the family
$$
\big((n^2, n^4),\ (n^2+n,n), \ (2n^2, 2n), \ (0,n^3), \ (0,n)\big)
$$
has  type
$$
\begin{pmatrix}
0& 0& 2& 0\\ 0& 1& 0 & 1
\end{pmatrix}.
$$
We order the types lexicographically; we  start by comparing the first element of the first row of each matrix, and after going
through all the elements of the first row,  we compare the  elements of the second row of each matrix,  and so on.
In symbols: given two $2\times d$ matrices $W=(w_{i,j})$
and $W'=(w'_{i,j})$, we say that $W> W'$ if:  $w_{1,d}>w'_{1,d}$, or $w_{1,d}=w'_{1,d}$ and $w_{1,d-1}>w'_{1,d-1}$, $\ldots$, or $w_{1,i}=w'_{1,i}$ for $i=1,\ldots,d$ and $w_{2,d}>w'_{2,d}$, and so on.

For example
\begin{multline*}
\begin{pmatrix}
 2& 2\\ 0& 0
\end{pmatrix}
>
\begin{pmatrix}
 2& 1\\ \star &\star
\end{pmatrix} >
\begin{pmatrix}
 2& 0\\ \star & \star
\end{pmatrix} >
\begin{pmatrix}
 1& \star\\ \star& \star
\end{pmatrix} >
\begin{pmatrix}
 0& \star\\ \star& \star
\end{pmatrix}
\geq
\begin{pmatrix}
 0& 0\\ \star& \star
 \end{pmatrix}\geq
 \begin{pmatrix}
 0& 0\\ 0& \star
\end{pmatrix}
\geq
 \begin{pmatrix}
 0& 0\\ 0& 0
\end{pmatrix}.
\end{multline*}
where in the place of the stars one can put any collection of non-negative integers.

An important observation is that although for a given type $W$ there is an
infinite number of possible types $W'<W$, we have
\begin{lemma}
\label{lem:decreasing}
Every decreasing sequence of
 types of families of polynomial pairs is stationary.
\end{lemma}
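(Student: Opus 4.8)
The statement asserts that there is no infinite strictly decreasing sequence of types, where a type is a $2\times d$ matrix of non-negative integers ordered lexicographically (reading the first row left to right, then the second row left to right). The plan is to recognize this as a standard well-ordering statement: the set of all such matrices, ordered lexicographically, is order-isomorphic to a well-ordered set, so it admits no infinite strictly descending chain.

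\textbf{Key steps.} First I would fix $d$ and identify a type $W=(w_{i,j})$ with the tuple of its entries listed in the order used for the lexicographic comparison, namely $(w_{1,d},w_{1,d-1},\ldots,w_{1,1},w_{2,d},\ldots,w_{2,1})\in\N_0^{2d}$, where $\N_0=\{0,1,2,\ldots\}$. The ordering on types is then exactly the lexicographic order on $\N_0^{2d}$. Second, I would invoke (or prove by a one-line induction on the number of coordinates) the elementary fact that the lexicographic order on $\N_0^{n}$ is a well-order: $\N_0$ itself is well-ordered, and a finite lexicographic product of well-ordered sets is well-ordered. Hence every nonempty set of types has a least element, which is equivalent to saying there is no infinite strictly decreasing sequence. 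Third, I would spell out the direct descent argument in case one prefers to avoid citing well-ordering: given a putative infinite strictly decreasing sequence of types, the sequence of first coordinates $w_{1,d}$ is non-increasing in $\N_0$, hence eventually constant, say from index $n_1$ on; from $n_1$ on the second coordinates $w_{1,d-1}$ are non-increasing, hence eventually constant from some $n_2\geq n_1$; iterating through all $2d$ coordinates we reach an index $n_{2d}$ beyond which all $2d$ coordinates are constant, so the types are all equal beyond $n_{2d}$, contradicting strict decrease. This proves the sequence must be stationary.

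\textbf{Main obstacle.} There is essentially no obstacle here; the only point requiring a small amount of care is to make sure the encoding of a type as an element of $\N_0^{2d}$ faithfully reproduces the lexicographic order defined in the text (in particular that the second row is compared only after the entire first row, and within each row from the highest-degree entry to the lowest), so that the abstract well-ordering statement applies verbatim. Once that bookkeeping is in place, the result is immediate from the well-ordering of $\N_0^{2d}$ under the lexicographic order.
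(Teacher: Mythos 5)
Your proof is correct. The paper in fact states this lemma without proof, treating it as the standard fact that the lexicographic order on $2\times d$ matrices of non-negative integers (equivalently, on $\N_0^{2d}$) is a well-order; your encoding of a type as a tuple in $\N_0^{2d}$ and the descent argument through the coordinates is exactly the intended justification, so there is nothing to add.
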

Therefore, if some operation reduces the type,
then after a finite number of repetitions it is going to terminate. This is the basic principle
behind all  the PET induction arguments used in the literature and in this article.

\subsection{Nice families and the van der Corput operation.}
In this subsection we  define a class of families of pairs of polynomials
that we are going to work with in the sequel, and an important operation that preserves
such families and reduces their type.
\subsubsection{Nice families} Let $\P=(p_1,\ldots,p_m)$ and $\Q=(q_1,\ldots,q_m)$.
\begin{definition*}
We call the ordered  family of pairs of polynomials  $(\P,\Q)$ \emph{nice} if
\begin{enumerate}
\item\label{it:nice1}
 $\deg(p_1)\geq \deg(p_i)$ for $i=1,\ldots,m$\ ;
\medskip

\item\label{it:nice2}
  $\deg(p_1)>\deg(q_i)$ for $i=1,\ldots,m$\ ;
\medskip

\item \label{it:nice3}
 $\deg(p_1-p_i)>\deg (q_1-q_i)$ for $i=2,\ldots,m$.
\end{enumerate}
(Notice that a consequence of~\eqref{it:nice3} is that $p_1-p_i\neq \text{const}$ for $i=2,\ldots,m$.)
\end{definition*}
As an example, if a nice family consists of $m$ pairs of polynomials and has degree $1$, then we have:  $\deg(p_1)=1$,  $\deg(p_i)\leq 1$,
  $\deg(q_i)=0$ for  $i=1,\ldots,m$,  and  $\deg(p_1-p_i)=1$ for $i=2,\ldots,m$.  It follows that the type of this family is
\begin{equation}
\label{eq:degre1}
 \begin{pmatrix}
 0& \cdots & 0& k \\ 0& \cdots & 0 & 0
\end{pmatrix}
\end{equation}
for some $ k\in \N$ with $k\leq  m$.

\subsubsection{The van der Corput operation}
\label{subsec:vdcoperation}
Given a family $\P=\big(p_1,\ldots,p_m\big)$,  $p\in \Z[t]$, and $h\in\N$, we define
 $$
 S_h\P=(p_1(n+h),\ldots,p_m(n+h)) \text{ and }
 \P-p=\big(p_1-p,\ldots,p_m-p\big).
 $$
Given a family of   pairs of polynomials $(\P,\Q)$, a pair  $(p,q)\in (\P,\Q)$, and $h\in\N$,
 we define the following operation
 $$
(p,q,h)\vdc(\P,\Q) =(\tilde{P},\tilde{Q})^\ast
 $$
 where
 $$
  \tilde{P}=(S_h\P-p,\P-p), \quad \tilde{Q}=(S_h\Q-q,\Q-q),
 $$
and  $^\ast$ is the operation that removes all pairs of constant polynomials from a given family of
pairs of polynomials.
A more explicit form of the family $ (p,q,h)\vdc(\P,\Q)$ is
 \begin{multline*}
 \big( (S_hp_1-p,S_hq_1-q),\ldots,(S_hp_m-p,S_hq_m-q),
(p_1-p,q_1-q),\ldots,(p_m-p,q_m-q)\big)^\ast.
\end{multline*}
 Notice that if the family $(\P,\Q)$ has degree $d$ and contains $m$ pairs of polynomials, then for every $h\in\N$,
 the family $(p,q,h)\vdc(\P,\Q)$ has degree at most $d$ and contains at most $2m$ pairs of polynomials.

\subsection{An example}
\label{SSS:example2}
In order to explain our method we  give an example that is somewhat more complicated
than the example of Section~\ref{subsec:simplexample}.
 When we study the limiting behavior of the averages
 $$
\frac{1}{N-M}\sum_{n=M}^{N-1} f_1(T_1^{n^3}x)\cdot f_2(T_2^{n^2}x),
 $$
   we define $\P=(n^3,0)$, $\Q=(0,n^2)$, and  introduce the family of pairs of polynomials
 $$
 (\P,\Q)=\big((n^3,0),(0,n^2)\big).
 $$
 This family is nice and has  type
 $\left(
\begin{smallmatrix}
1& 0& 0\\ 0& 1& 0
\end{smallmatrix}\right)$.
  Applying the vdC operation with    $(p,q)=(0,n^2)$  and $h\in\N$,
we arrive to the new family
$$
(0,n^2,h)\vdc(\P,\Q) =(\tilde{P}_h,\tilde{Q}_h)
 $$
where
$$
  \tilde{P}_h=((n+h)^3,0,n^3,0), \quad \tilde{Q}_h=(-n^2,2hn+h^2,-n^2,0)\ ;
 $$
then the corresponding family of pairs  is
 $$
 \big( \big((n+h)^3,-n^2\big),\big(0,2hn+h^2\big), \big(n^3,-n^2\big)\big).
 $$
The important point is that for every $h\in\N$  this new family  is also nice and has smaller type,
namely $\left(\begin{smallmatrix} 1& 0& 0\\ 0& 0& 1
\end{smallmatrix}\right)$.
Translating back to ergodic theory, we get the averages
 $$
 \frac{1}{N-M}\sum_{n=M}^{N-1}
 \tilde{f}(T_1^{(n+h)^3}T_2^{-n^2}x)\cdot
\tilde{g}(T_2^{2hn+h^2}x)
 \cdot \tilde{h}(T_1^{n^3}T_2^{-n^2}x)
 $$
 for some choice of functions $\tilde{f},\tilde{g},\tilde{h}\in
L^\infty(\mu)$. Concerning the choice of these functions, the only important thing for our purposes is that    $\tilde{f}=f_1$.

\subsection{The general strategy}
As was the case in the previous example, we are going to show that if we are given a nice
 family $(\P,\Q)$ with $\deg (p_1)\geq 2$, then  it is always  possible  to find
appropriate $(p,q)\in (\P,\Q)$ so that for all large enough $h\in \N$ the
 operation $(p,q,h)\vdc$ leads to a  nice family that has smaller type.
Our  objective is, after successively applying the   operation
$(p,q,h)\vdc$,
to finally get
  nice families  of degree $1$, and thus with matrix type of the form~\eqref{eq:degre1}.

 Translating this back to  ergodic theory,  we get multiple ergodic averages (with certain parameters)
where: $(i)$ only linear iterates of the transformation  $T_1$ appear
and the iterates of $T_2$  are constant, and $(ii)$ the
``first'' iterate of $T_1$ is applied to the ``first'' function of the original average.
The advantage now is that the limiting behavior of such averages can be treated easily using
the well developed theory   of multiple ergodic averages involving a single transformation.

Let us remark though that in practice
this process becomes  cumbersome very quickly.
For instance, in the example of Section~\ref{SSS:example2}, for every $h\in \N$, the next
$(p_h,q_h,h')\vdc$ operation
uses $(p_h,q_h)=(0,2hn+h^2)$ and leads to a family with matrix type
$\left(\begin{smallmatrix} 1& 0& 0\\ 0& 0& 0   \end{smallmatrix}\right)$. The subsequent  vdC operation
leads to a family with matrix type   $\left(\begin{smallmatrix} 0& 7& 0\\ 0& 0& 0   \end{smallmatrix}\right).$
 One then has to apply the vdC operation a huge number of  times (it is not even easy to  estimate this number)
in order  to reduce the matrix type to the form~\eqref{eq:degre1}.
 So even in the case of two commuting transformations,
it is practically impossible to spell out the details of how this process works when both
polynomial iterates are non-linear.

\subsection{Choosing a good vdC operation}
The next lemma is the key ingredient used to  carry out the previous  plan.
 To prove it  we are going to use freely  the following easy to prove fact:
If $p,q$ are two non-constant polynomials   and $p\sim q$, then
$\deg (p-q)\leq \deg (p)-1$,
and with the possible exception of one $h\in\Z$ we have
$\deg (S_hp-q)= \deg (p)-1$.

\begin{lemma}\label{L:reduceA}
Let $(\P,\Q)$ be  a nice family of pairs of polynomials,  and suppose that $\deg(p_1)\geq 2$.

 Then there exists
 $(p,q)\in (\P,\Q)$, such that for every large enough $h\in\N$,  the
 family $(p,q,h)\vdc(\P,\Q)$   is nice and has strictly smaller type than that of $(\P,\Q)$.
\end{lemma}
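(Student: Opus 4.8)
The plan is to perform a careful case analysis on the structure of the nice family $(\P,\Q)$, driven by the leading behaviour of the polynomials $p_1$ and $q_1$, and in each case exhibit an explicit pair $(p,q)\in(\P,\Q)$ whose associated van der Corput operation both preserves niceness and strictly decreases the matrix type. First I would fix notation: write $d=\deg(p_1)\ge 2$, and examine which of the pairs $(p_i,q_i)$ have $p_i\sim p_1$ (equivalently, contribute to $w_{1,d}$). The key guiding intuition is that applying $(p,q,h)\vdc$ with a pair $(p,q)=(p_j,q_j)$ replaces $\P$ by $(S_h\P-p_j,\P-p_j)$, so a polynomial $p_i$ with $p_i\sim p_j$ gets its degree reduced (for all but one exceptional $h$, by the easy fact stated just before the lemma), while polynomials not equivalent to $p_j$ keep their degree. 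So the operation ``kills'' one equivalence class in degree $d$ at the cost of possibly creating new lower-degree classes — exactly the mechanism that decreases the lexicographic type.

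The main dichotomy I would set up is: either (a) there is some pair $(p_j,q_j)\in(\P,\Q)$ with $p_j$ non-constant, $p_j\sim p_1$, and $j\ne 1$ — in which case one should try $(p,q)=(p_j,q_j)$ so that both $p_1-p_j$ and the doubled copies $S_hp_i - p_j$, $p_i-p_j$ have degree $<d$ for the classes equivalent to $p_1$; or (b) the leading coefficient of $p_1$ is not shared by any other $p_i$ (i.e.\ $w_{1,d}=1$), in which case one picks $(p,q)=(p_1,q_1)$, so that $S_hp_1-p_1$ has degree $d-1$ (generic $h$) and $p_1-p_1=0$ is removed, thereby decreasing $w_{1,d}$ from $1$ to $0$. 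In both cases I would then verify the three niceness conditions for the output family $(\tilde\P,\tilde\Q)=(p,q,h)\vdc(\P,\Q)$: condition (i) (the first polynomial has maximal $\P$-degree) — the first polynomial of $\tilde\P$ is $S_hp_1-p$, whose degree should be arranged to be the new maximum; condition (ii) ($\deg$ of first $\P$-polynomial strictly exceeds all $\tilde\Q$-degrees) — here one uses niceness condition (iii) of the original family, $\deg(p_1-p_i)>\deg(q_1-q_i)$, which is exactly what controls the $\tilde\Q$ side after subtracting $q$; condition (iii) for $(\tilde\P,\tilde\Q)$ — this again reduces to the original condition (iii) plus the elementary degree facts about $S_hp-q$ and $p-q$ when $p\sim q$.

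The delicate point, and the one I expect to be the main obstacle, is making the right choice of $(p,q)$ so that after the operation the \emph{first} pair of $\tilde\P$ still has the strictly-largest $\P$-degree and simultaneously dominates all the $\tilde\Q$-degrees — i.e.\ re-establishing niceness condition (iii), $\deg(\tilde p_1-\tilde p_i)>\deg(\tilde q_1-\tilde q_i)$, for every new pair. The new pairs come in two flavours: differences of two ``shifted'' polynomials $S_h p_i - S_h p_j$ (which equal $S_h(p_i-p_j)$, hence have the same degree and leading coefficient as $p_i-p_j$), differences of two ``unshifted'' ones $p_i-p_j$, and cross differences $S_h p_i - p_j$. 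For the first two the relation $\deg(p_i-p_j)>\deg(q_i-q_j)$ needed follows formally from the original condition (iii) via the identity $p_i-p_j=(p_1-p_j)-(p_1-p_i)$ and the ultrametric degree inequality; the genuinely new estimates are for the cross terms $S_hp_i-p_j$ versus $S_hq_i-q_j$, and here one must invoke the ``exceptional $h$'' clause and choose $h$ large enough (outside finitely many bad values coming from each relevant pair) so that every cross difference $S_hp_i-p_j$ attains its expected degree. I would organize this as: list the finitely many polynomial identities/relations that must hold, observe each excludes at most one value of $h$, and conclude that all sufficiently large $h$ work simultaneously.

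Finally, I would confirm the type strictly decreases: in case (b) the top-left entry $w_{1,d}$ drops by one, which immediately beats the lexicographic order regardless of what happens in lower-degree slots; in case (a), $w_{1,d}$ drops by at least one because the equivalence class of $p_1$ (and the class of $p_j$, which is the same class) no longer appears at degree $d$ in $\tilde\P$ — the shifted copies $S_hp_i-p_j$ and the unshifted $p_i-p_j$ for $p_i\sim p_1$ all drop below degree $d$ — while no new degree-$d$ $\P$-classes are created since every $\tilde p$ is a difference of polynomials of degree $\le d$ with the degree-$d$ leading terms cancelling exactly for the relevant indices. That $w_{1,d}$ strictly decreases suffices, by the lexicographic ordering, to conclude the type is strictly smaller, completing the proof. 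Combined with Lemma~\ref{lem:decreasing}, iterating Lemma~\ref{L:reduceA} will eventually reach a nice family of degree $1$, which is the desired endpoint for the PET induction.
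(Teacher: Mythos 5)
Your selection rule for $(p,q)$ — the heart of this lemma — is not correct as stated. In your case (a) you subtract a pair $(p_j,q_j)$ with $p_j\sim p_1$. But whenever $\P$ also contains some $p_i\nsim p_1$ (a polynomial of lower degree, or one of degree $d$ with a different leading coefficient), the resulting family fails to be nice: its first polynomial $S_hp_1-p_j$ has degree at most $d-1$, while $S_hp_i-p_j$ and $p_i-p_j$ retain degree $d$, so condition (i) of niceness is violated. Your claim that ``no new degree-$d$ $\P$-classes are created'' is also false: if $\deg p_i<d$ then $p_i-p_j$ has degree $d$ with leading coefficient equal to minus that of $p_j$, so $w_{1,d}$ need not drop at all. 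The paper makes the opposite choice: when some $p\in\P$ satisfies $p\nsim p_1$, it subtracts a pair $(p,q)$ with $p\nsim p_1$ and $p$ of \emph{minimal} degree $e$ in $\P$. Then $S_hp_1-p$ keeps degree $d$ (so conditions (i) and (ii) survive), the entries $w_{1,j}$ for $j>e$ are untouched, and $w_{1,e}$ drops by exactly one, because the class of $p$ is killed and, $e$ being minimal, there is no lower-degree polynomial available to repopulate degree $e$. The choice $(p,q)=(p_1,q_1)$ is reserved for the case where \emph{every} polynomial of $\P$ is equivalent to $p_1$ — not merely every degree-$d$ one, as your ``$w_{1,d}=1$'' case (b) would allow.

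You also never treat the pairs whose first coordinate is constant, i.e.\ the set $\Q'$. When $\Q'\neq\emptyset$ the paper takes $p=0$ and $q$ of minimal degree in $\Q'$, which leaves the first row of the type unchanged and strictly reduces the second row; subtracting a non-constant $p$ in that situation would convert the constant first coordinates into copies of $-p$, creating new first-row classes that can exactly offset whatever reduction you achieve, so the type need not decrease. The portions of your write-up concerning the finitely many exceptional values of $h$, and the verification of condition (iii) via $p_i-p_j=(p_1-p_j)-(p_1-p_i)$ together with the ultrametric degree inequality, are in the right spirit and agree with the paper; but without the correct case split (reduce $\Q'$ first if nonempty; otherwise subtract the minimal-degree non-equivalent $p$, or $p_1$ itself only when all of $\P$ is one equivalence class) the induction does not go through.
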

\begin{proof}
Let  $\P=(p_1,\ldots,p_m)$, $\Q=(q_1,\ldots,q_m)$, then for $(p,q)\in (\P,\Q)$ and  $h\in\N$
the  family $(p,q,h)\vdc(\P,\Q)$  is an ordered family of
pairs of polynomials, all of them of the form
$$
(S_hp_i-p,S_hq_i-q), \  \text{ or } \ (p_i-p, q_i-q).
$$

\subsubsection*{We choose $(p,q)$ as follows:}
  If $\Q'$ is non-empty, then we take $p=0$ and let $q$ to be a polynomial
  of smallest degree in $\Q'$. Then the first row of the matrix type
  remains unchanged, and the second row will get ``reduced'', leading
  to a smaller matrix type. Suppose now that $\Q'$ is empty. If $\P$
  consists of a single polynomial $p_1$, then we choose
  $(p,q)=(p_1,q_1)$ and the result follows. Therefore, we can assume
  that $\P$ contains a polynomial other than $p_1$. We consider two
  cases. If $p\sim p_1$ for all $p\in \P$, then we choose
  $(p,q)=(p_1,q_1)$.
Otherwise, we choose
  $(p,q)\in (\P,\Q)$ such that $p\nsim p_1$  and $p$  is a polynomial in $\P$ with minimal degree (such a choice exists since $p_1$ has the highest degree in $\P$).

In all cases, for every $h\in \N$,  the first row of the matrix type
of $(p,q,h)\vdc(\P,\Q)$ is ``smaller'' than that of $(\P,\Q)$,
and as a consequence the new family has strictly smaller  type.

It remains to verify that for every  large enough   $h\in\N$   the ordered family of pairs of polynomials
$(p,q,h)\vdc(\P,\Q)$ is nice.
We remark that,  by construction,  the first polynomial pair in this family is
$(S_hp_1-p, S_hq_1-q)$.

\begin{claim*}
Property~\eqref{it:nice1} holds for every $h\in \N$.
\end{claim*}
 Equivalently, we claim that
 $$
 \deg (S_hp_1-p)\geq \max\{\deg (p_i-p),\deg (S_hp_i-p)\}
  \text{ for } i=1,\ldots,m.
    $$
  If $p\nsim p_1$, then  $\deg (S_hp_1-p)=\deg (p_1)$ and the claim follows from our assumption  $\deg (p_1)\geq \deg (p_i)$  for $i=1,\ldots,m$. If $p\sim p_1$, then by the choice of the polynomial $p$ we have  $p=p_1$ and  $p\sim p_i$ for $i=1,\ldots,m$. As a result,
  $\deg (S_hp_1-p)=\deg (p_1)-1$ and  $\max\{\deg (p_i-p), \deg (S_hp_i-p)\}\leq \deg (p_1)-1$,
  proving the claim.

\begin{claim*} Property~\eqref{it:nice2} holds for every $h\in \N$. \end{claim*}
Equivalently, we claim that
$$
 \deg (S_hp_1-p)> \max\{\deg (q_i-q),\deg (S_hq_i-q)\}
  \text{ for } i=1,\ldots,m.
    $$
       If $p\nsim p_1$, then
     $\deg (S_hp_1-p)=\deg (p_1)$ and the claim follows since  by assumption $\deg (p_1)> \deg (q_i)$  for $i=1,\ldots,m$.
    If $p\sim p_1$, then by the choice of $p$ we have   $(p,q)=(p_1,q_1)$  and $p\sim p_i$ for $i=1,\ldots,m$.
     By hypothesis we have
\begin{equation}\label{E:q2w}
\deg (q_i-q_1)< \deg (p_i-p_1)\leq \deg (p_1)-1=\deg (S_hp_1-p_1).
\end{equation}
It remains to  verify that $\deg (S_hp_1-p_1)>\deg (S_hq_i-q_1)$. To see this we   express $S_hq_i-q_1$ as $(S_hq_i-q_i)+(q_i-q_1)$. If $q_i$ is non-constant, then the first polynomial has degree $\deg (q_i)-1<\deg (p_1)-1=\deg (S_hp_1-p_1)$. If $q_i$ is constant, then it has degree $0<\deg (p_1)-1=\deg (S_hp_1-p_1)$ (we used here that $\deg (p_1)\geq 2$).
Furthermore, by \eqref{E:q2w} the second polynomial has
degree $\deg (q_i-q_1)< \deg (S_hp_1-p_1)$.
This proves the claim.

\begin{claim*}
Property~\eqref{it:nice3} holds for all except finitely many values
of $h$.
\end{claim*}
Equivalently, we claim that
 $$
 \deg (S_hp_1-S_hp_i)> \deg (S_hq_1-S_hq_i),
  \text{ for } i=2,\ldots,m,
    $$
    and
 $$
 \deg (S_hp_1-p_i)> \deg (S_hq_1-q_i),
  \text{ for } i=1,\ldots,m.
    $$
    The first estimate follows immediately from our hypothesis $\deg (p_1-p_i)> \deg (q_1-q_i)$
   for  $i=2,\ldots,m$. It remains to verify the second estimate.   If $p_i\nsim p_1$, then
     $\deg (S_hp_1-p_i)=\deg (p_1)$ and the claim follows since  by hypothesis $\deg (p_1)> \deg (q_i)$  for $i=1,\ldots,m$. Suppose now that  $p_i\sim p_1$. Then $\deg (S_hp_1-p_i)=\deg (p_1)-1$, with the
     possible exception of one $h\in\N$ (hence we get at most $m-1$ exceptional values of $h$). So it remains to verify that $\deg (S_hq_1-q_i)< \deg (p_1)-1$.
To see this we express  $S_hq_1-q_i$ as $(S_hq_1-q_1)+(q_1-q_i)$. The first polynomial
  has degree $\deg (q_1)-1<\deg (p_1)-1$ if $q_1$ is non-constant, and degree $0<\deg (p_1)-1$
 (we used that $\deg (p_1)\geq 2$) if $q_1$ is constant.
The second polynomial  has  degree $\deg (q_1-q_i)<\deg (p_1-p_i)\leq \deg (p_1)-1$ since $p_i\sim p_1$.
This establishes the claim and completes the proof.
\end{proof}

We say that  a  subset of $\N^k$  is \emph{good} if it is of the form
\begin{equation}\label{E:parameters}
\{h_1\geq c_1,\  h_2\geq c_{2}(h_1),\ldots, \ h_k\geq c_{k}(h_1,\ldots,h_{k-1})\}
\end{equation}
for some $c_i\colon \N^{i-1}\to \N$.
The next lemma will be used in order to prove that the level $k$ of
the characteristic factors $\cZ_{k,T_i}$ considered in
Theorem~\ref{T:CharA} depends only on the number and the maximum
degree of the polynomials involved.

\begin{lemma}\label{L:k(d,m)}
Let $(\P,\Q)$ be a nice family with degree $d\geq 2$ that  contains $m$ pairs of polynomials.
Suppose that we successively apply the  $(p,q,h)\vdc$ operation for appropriate choices
of $p,q\in \Z[t]$ and $h\in\N$, as described in the previous lemma,
each time getting a  nice family of pairs of polynomials with strictly smaller matrix  type.

Then after a finite  number of  operations we get, for a good set of parameters,
  nice families of pairs of polynomials of degree $1$.
Moreover, the  number of operations needed
can be bounded by
a function of $d$ and $m$ alone.
\end{lemma}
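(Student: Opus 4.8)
The plan is to combine three per-step facts, two of which are essentially recorded already in Section~\ref{subsec:vdcoperation}, with one global finiteness argument. First, a vdC operation never raises the degree: every entry of $(p,q,h)\vdc(\P,\Q)$ has the form $(S_hp_i-p,S_hq_i-q)$ or $(p_i-p,q_i-q)$ with $(p,q)\in(\P,\Q)$, and each of $S_hp_i-p$, $p_i-p$, $S_hq_i-q$, $q_i-q$ has degree at most that of $(\P,\Q)$; hence the degree is non-increasing along the process, and by the same remark the number of pairs at most doubles at each step. Second, for a nice family the degree equals $\deg p_1$ (by~\eqref{it:nice1} and~\eqref{it:nice2}), and one always has $\deg p_1\ge 1$: if $\deg p_1=0$ then every $p_i$ is constant, so, no pair of constant polynomials being allowed, every $q_i$ is non-constant, whence $\deg q_i\ge 1>\deg p_1$, contradicting~\eqref{it:nice2}. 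Consequently Lemma~\ref{L:reduceA} applies exactly when the current family has degree $\ge 2$, and the process can terminate only at degree $1$.

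First I would simply run the process described in the statement: starting from $(\P,\Q)$, repeatedly apply Lemma~\ref{L:reduceA}, each time using the pair $(p,q)$ it prescribes together with a threshold $c$ so that the operation is legal and strictly type-reducing for all $h\ge c$. By Lemma~\ref{lem:decreasing} the resulting strictly decreasing chain of types is finite, so after finitely many steps we reach a family on which Lemma~\ref{L:reduceA} cannot be applied, i.e.\ a nice family of degree $1$. The constraints accumulated along the way read $h_1\ge c_1$, $h_2\ge c_2(h_1)$, \ldots, the $i$-th threshold depending only on the family obtained after $i-1$ steps and hence on $h_1,\dots,h_{i-1}$; moreover the equivalence pattern among the polynomials produced at each step is governed by finitely many polynomial conditions in the parameters, so, enlarging each $c_i$ to also exclude a finite exceptional set (the result is still of the form~\eqref{E:parameters}), we obtain a good set of parameters on which the whole sequence of types produced, and in particular the number $L$ of steps, is independent of the parameters.

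The remaining and only substantial point is that $L$ is bounded by a function of $d$ and $m$ alone; the difficulty is that the number of pairs, and thus the size of the boxes in which the successive types live, may grow at every step. I would settle this by compactness. Suppose, for contradiction, that for some $d,m$ no such bound exists: for each $j\in\N$ choose a nice family of degree $\le d$ with at most $m$ pairs whose reduction (for some admissible choice of parameters) uses at least $j$ steps, and along that reduction record its sequence of types $W^{(j)}_0>W^{(j)}_1>\cdots$. By the first fact the $r$-th term has all entries $\le 2^r m$ and is a $2\times d$ matrix, hence ranges over a finite set. A nested pigeonhole extraction — at stage $r$ retain an infinite set of indices $j$ along which $W^{(j)}_0,\dots,W^{(j)}_r$ agree, which is possible since all but finitely many retained indices satisfy $j\ge r+1$ and hence still have a $W^{(j)}_{r+1}$, confined to a finite set — yields an infinite strictly decreasing chain of types, contradicting Lemma~\ref{lem:decreasing}. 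Hence a bound $f(d,m)$ exists, and since every nice family of degree $\le d$ with at most $m$ pairs has type with entries $\le m$, $f(d,m)$ bounds $L$ for all of them, which is what is claimed. (Equivalently, letting $N(W,m)$ be the supremum of the step count over nice families of type $W$ with at most $m$ pairs, one step gives $N(W,m)\le 1+\max\{N(W',m'):W'<W,\ m'\le 2m\}$ with the maximum over a finite set of pairs $(W',m')$, so $N(W,m)<\infty$ by well-founded induction on the type, and $f(d,m)=\max_W N(W,m)$ over the finitely many admissible initial types $W$.) The verifications underlying the first two facts are straightforward and are in effect contained in Section~\ref{subsec:vdcoperation} and the proof of Lemma~\ref{L:reduceA}.
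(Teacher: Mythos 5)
Your proposal is correct. The parenthetical at the end of your third paragraph is, in essence, exactly the paper's proof: the authors define a function $f$ recursively by $f(W_1,m)=0$ and $f(W,m)=\max_{W'<W}\bigl(f(W',2m)\bigr)+1$, where the maximum runs over the finitely many types of nice families of degree at most $d$ with at most $2m$ pairs, prove $N(\P,\Q)\leq f(W(\P,\Q),m)$ by induction on the type, and then take $F(d,m)=\max_W f(W,m)$ over the finitely many admissible initial types. Your primary argument — the proof by contradiction via a nested pigeonhole extraction of an infinite strictly decreasing chain of types from a hypothetical sequence of arbitrarily long reductions — is a genuinely different (K\"onig-style compactness) route to the same finiteness statement; it trades the explicit, if uncomputable-looking, recursive bound for a pure existence argument, and it correctly isolates the real difficulty, namely that the ambient box of possible types grows with each step because the number of pairs can double. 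Both arguments ultimately rest on the same two facts you record (well-foundedness of the lexicographic order on types, and the bound $2^r m$ on the number of pairs after $r$ steps), so the difference is one of packaging rather than substance. One small remark: your claim that, after enlarging the thresholds $c_i$, the entire sequence of types becomes independent of the parameters is more than you need (and, with several parameters, would require a slightly more careful argument about which equivalences hold identically versus exceptionally); neither your compactness argument nor the paper's recursion uses it, since both only require that every realized type sequence is strictly decreasing and confined to the finite boxes you describe.
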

\begin{remark}
The exact dependency on $d$ and $m$ seems neither easy nor very useful to pin down; it appears to be a tower of exponentials the length of which depends on $d$ and  $m$.
\end{remark}
\begin{proof}
We fix $d\geq 2$.
The first statement follows immediately from Lemma~\ref{lem:decreasing}.

We denote by $W(\P,\Q)$ the matrix type of a given family $(\P,\Q)$, and
by $N(\P,\Q)$ the number of operations mentioned in the statement needed to get the particular matrix type.

First we claim that  it suffices to show the following:
For every nice family $(\P,\Q)$ with degree $d$, containing at most $m$ polynomials, we have
\begin{equation}\label{E:TypeEstim}
N(\P,\Q)\leq f(W(\P,\Q),m)
\end{equation}
for some function $f$, with the obvious domain, and range in the non-negative integers.
Indeed, since there exists a finite number of possible matrix types for a  family $(\P,\Q)$
with degree at most $d$ and containing  at most $m$ polynomials (in fact there are at most $(m+1)^{2d}$
such matrix types), we have
$$
N(\P,\Q)\leq F(d,m)=\max_{W}(f(W,m))
$$
where $W$ ranges over all possible matrix types of nice families with degree $d$  that contain at most $m$
pairs of polynomials. This proves our claim.

Next we turn our attention to establishing \eqref{E:TypeEstim}.
Let
$$
W_1=\begin{pmatrix} 0& \cdots &0& 1\\ 0& \cdots &0& 0    \end{pmatrix}.
$$
Notice that $W_1$ is the smallest
matrix type (with respect to the order introduced before) that can appear as a first coordinate entry in the domain of $f$.
We define $f$  recursively as follows:
\begin{equation}\label{E:deff}
f(W_1,m)=0 \text{ for every } m\in \N, \quad \text{and}\quad
f(W,m)=\max_{W'<W}( f(W',2m))+1
\end{equation}
where the maximum is taken over the finitely many possible
 matrix types $W'$ of nice families  of degree at most $d$ that contain at
most $2m$ pairs of polynomials.

Since every $(p,q,h)\text{-vdC}$ operation of the previous lemma preserves nice families of pairs of polynomials,
does not increase their degree, reduces their matrix type, and
  at most doubles the number $m$ of (non-constant) pairs of polynomials in the family,
a straightforward induction on the type $W(\P,\Q)$ establishes \eqref{E:TypeEstim} with $f$
defined by \eqref{E:deff}.
This completes the proof.
\end{proof}

\subsection{Proof of Proposition~\ref{P:CharB2}}\label{SS:CharB2}

Let  $(\P,\Q)$ be a nice family of pairs of polynomials where $\P=(p_1,\ldots,p_m)$ and
$\Q=(q_1,\ldots,q_m)$ and let $d$ be the degree of this family.
We remind the reader that our goal is to show that
there exists $k=k(d,m)\in \N$ such that: If      $f_1\bot \cZ_{k,T_1}$, then
the averages of
\begin{equation}\label{E:fgh'}
 f_1(T_1^{p_1(n)} T_2^{q_1(n)}x) \cdot \ldots \cdot f_m(T_1^{p_m(n)}
T_2^{q_m(n)}x)
\end{equation}
converge to $0$  in $L^2(\mu)$.

\medskip\noindent\textbf{(a)}
Suppose first that  $\deg (p_1)=1$. Since the family $(\P,\Q)$ is nice, we have
$\deg (p_i)=1$ for $i=1,\ldots,m$, all the polynomials $q_1,\ldots,q_m$ are constant, and $p_1-p_i\neq\text{const}$
for $i=1,\ldots,m$. In other words we are reduced to studying the limiting behavior of the
  averages in $n$ of
$$
 f_1(T_1^{a_1n+b_1}T_2^{c_1}x)\cdot f_2(T_1^{a_2n+b_2}T_2^{c_2}x)\cdot\ldots\cdot
f_m(T_1^{a_m n+b_m}T_2^{c_m}x)$$
where $a_i,b_i,c_i\in\Z$, $a_i\neq 0$, for $i=1,\ldots,m$,  and $a_1\neq a_i$ for $i=2,\ldots,m$. Suppose that
$f_1\bot \cZ_{m-1,T_1}$, then  also $T_2f_1\bot \cZ_{m-1,T_1}$ (since
$T_1$ and $T_2$ commute).
By Theorem~\ref{T:linear}  the previous averages
converge to $0$ in $L^2(\mu)$, and as a consequence the same holds for the averages of \eqref{E:fgh'}.

\medskip\noindent\textbf{(b)}
Suppose now that $\deg (p_1)\geq 2$. Our objective is to
repeatedly use  van der Corput's Lemma in order
to reduce matters to the previously established linear case.

To begin with, using van der Corput's Lemma we see that in order to
establish  convergence to $0$ for the averages of~\eqref{E:fgh'}, it suffices to show that,
for every sufficiently large $h\in \N$,
the averages in $n$ of
\begin{multline*}
\int f_1(T_1^{p_1(n+h)} T_2^{q_1(n+h)}x) \cdot \ldots \cdot
f_m(T_1^{p_m(n+h)} T_2^{q_m(n+h)}x)\cdot \\
{f}_1(T_1^{p_1(n)} T_2^{q_1(n)}x) \cdot \ldots \cdot
{f}_m(T_1^{p_m(n)} T_2^{q_m(n)}x)\ d\mu
\end{multline*}
converge to $0$.
We compose with $T_1^{-p(n)}T_2^{-q(n)}$, where $(p,q)\in (\P,\Q)$ is chosen as in Lemma~\ref{L:reduceA}, and use the Cauchy-Schwarz inequality.
  This  reduces matters to showing that, for every sufficiently large
$h\in \N$,
the averages in $n$  of
 \begin{multline}\label{E:pop}
 f_1(T_1^{p_1(n+h)-p(n)} T_2^{q_1(n+h)-q(n)}x) \cdot \ldots \cdot
f_m(T_1^{p_m(n+h)-p(n)} T_2^{q_m(n+h)-q(n)}x)
\cdot \\
{f}_1(T_1^{p_1(n)-p(n)} T_2^{q_1(n)-q(n)}x) \cdot \ldots \cdot
{f}_m(T_1^{p_m(n)-p(n)} T_2^{q_m(n)-q(n)}x)
\end{multline}
 converge to $0$ in $L^2(\mu)$.
We remove the functions that happen to be composed with constant
iterates of  $T$ and $S$, since they do not affect  convergence to
$0$. This corresponds to the operation $^\ast$  defined in
Section~\ref{subsec:vdcoperation}.
We get  multiple ergodic averages that correspond  to the families of polynomials
 $(p,q,h)\vdc(\P,\Q)$; our goal is to show convergence to $0$ in $L^2(\mu)$ for every large enough  $h\in\N$ .

 By Lemma~\ref{L:reduceA}, for every large enough  $h\in\N$,  the
family $(p,q,h)\vdc(\P,\Q)$ is nice, and its first pair  is
 $(p_1(n+h)-p(n),q_1(n+h)-q(n))$.
Notice also that, in \eqref{E:pop}  the iterate  $T_1^{p_1(n+h)-p(n)}T_2^{q_1(n+h)-q(n)}$ is
applied to the function $f_1$.
We consider two cases depending on the degree of the polynomial $p_1(n+h)-p(n)$.

\medskip\noindent\textbf{(b$_1$)}  If $\deg (p_1(n+h)-p(n)) =1$, then
  we are reduced to the case (a) studied before. As we explained, if
$f_1\bot \cZ_{2m,T_1}$,
then the averages \eqref{E:pop} converge to $0$ in $L^2(\mu)$  for every large enough $h\in \N$.
As a consequence,  the averages \eqref{E:fgh'} converge to $0$ in $L^2(\mu)$.

 \medskip\noindent\textbf{(b$_2)$} If $\deg (p_1(n+h)-p(n))
\geq 2$, then  we can iterate the  ``van der Corput  operation''.
By Lemma~\ref{L:k(d,m)}, there  exists $k=k(d,m)\in \N$, such that after at most $k$ such operations,
   we arrive to  averages  involving, for a good set of parameters $G$ of the form \eqref{E:parameters},  nice families of pairs of polynomials of the type studied in part (a). More precisely, we are left with
  studying the averages in $n$ of
  \begin{equation}\label{E:fgh''}
 g_1(T_1^{a_1n+b_1}T_2^{c_1}x)\cdot\ldots\cdot
g_{\tilde{m}}(T_1^{a_{\tilde{m}} n+b_{\tilde{m}}}T_2^{c_{\tilde{m}}}x)
\end{equation}
where the functions $g_i$, and the integers $a_i, b_i, c_i$, depend on $k$ parameters,
and satisfy: (i) $g_1=f_1$ (this last condition follows easily by the definition of the vdC-operation),
and (ii) $a_1\neq a_i$ for $i\in \{2,\ldots,\tilde{m}\}$.
  Our goal is to show convergence to $0$ in $L^2(\mu)$ for the averages of \eqref{E:fgh''} for   this good set of  parameters $G$.
Then repeated uses of van der Corput's Lemma show that the averages of \eqref{E:fgh'} converge to $0$ in $L^2(\mu)$.

We proceed to establish our goal.
Since  the number of functions involved  at most doubles after each vdC-operation
  is performed, we have
$\tilde{m}\leq 2^{k}m$.
It follows by Theorem~\ref{T:linear} and properties (i) and (ii) above, that if $f_1\bot \cZ_{2^km,T_1}$,
 then for every choice of parameters in the  ``good'' set $G$,
 the averages of \eqref{E:fgh''} converge to $0$ in $L^2(\mu)$, establishing our goal.
 As a consequence, the averages of \eqref{E:fgh'} converge to $0$ in $L^2(\mu)$.

   Concluding,  if $f_1\bot\cZ_{2^km,T_1}$, then in all
cases we showed that the averages of~\eqref{E:fgh'} converge to $0$ in $L^2(\mu)$.
 This completes the proof of Proposition~\ref{P:CharB2}.\qed

\section{A characteristic factor for the
highest degree iterate:  The general case}\label{S:Char}

The next proposition is the generalization of
Proposition~\ref{P:CharB2} to the case of an arbitrary number of
transformations.
Its proof is very similar to the proof of Proposition~\ref{P:CharB2}
that was given in the previous section. To avoid unnecessary repetition, we
 define the concepts needed in the proof of  Proposition~\ref{P:CharB},
and then only summarize its proof providing details only when non-trivial
modifications of the arguments used in the previous section are needed.

\begin{proposition}\label{P:CharB}
 Let $(X,\X,\mu,T_1,\cdots,T_\ell)$ be a system,
and $f_1,\ldots,f_m\in L^\infty(\mu)$.
Suppose that  $(\P_1,\ldots,\P_\ell)$  is a nice  ordered  family of  $\ell$-tuples of polynomials
with degree $d$ (all notions are  defined below).

Then there exists $k=k(d,\ell,m)\in \N$   such that:
If     $f_1\bot \cZ_{k,T_1}$, then the averages
$$
 \frac{1}{N-M}\sum_{n=M}^{N-1}  f_1(T_1^{p_{1,1}(n)}\cdots T_\ell^{p_{\ell,1}(n)}x) \cdot \ldots \cdot f_m(T_1^{p_{1,m}(n)}\cdots T_\ell^{p_{\ell,m}(n)}x)
$$
 converge to $0$ in $L^2(\mu)$.
\end{proposition}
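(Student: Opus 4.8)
The plan is to run the same PET-induction scheme as in Section~\ref{S:Char2}, the passage from two to $\ell$ transformations being purely notational. First I would install the combinatorial framework for ordered families $(\P_1,\dots,\P_\ell)$ of $\ell$-tuples of polynomials (with $\P_j=(p_{j,1},\dots,p_{j,m})$ recording the exponents of $T_j$). The \emph{degree} is the largest degree occurring; the \emph{type} is an $\ell\times d$ matrix whose $j$-th row records, for each degree $1\le i\le d$, the number of $\sim$-equivalence classes of polynomials of degree $i$ among those $p_{j,s}$ for which $p_{1,s},\dots,p_{j-1,s}$ are all constant (so the first row just counts the classes among $p_{1,1},\dots,p_{1,m}$), and types are ordered lexicographically by reading the matrix row by row; Lemma~\ref{lem:decreasing} then holds verbatim. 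A family is \emph{nice} if $\deg(p_{1,1})\ge\deg(p_{1,i})$ for all $i$, $\deg(p_{1,1})>\deg(p_{j,i})$ for all $j\ge 2$ and all $i$, and $\deg(p_{1,1}-p_{1,i})>\deg(p_{j,1}-p_{j,i})$ for all $j\ge 2$ and $i=2,\dots,m$. The operation $(p_1,\dots,p_\ell,h)\vdc(\P_1,\dots,\P_\ell)$ is defined coordinatewise exactly as in Section~\ref{subsec:vdcoperation}, followed by the deletion of all $\ell$-tuples of constant polynomials.

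Next I would prove the two structural lemmas. The analogue of Lemma~\ref{L:reduceA} asserts that a nice family with $\deg(p_{1,1})\ge 2$ admits a tuple $(p_1,\dots,p_\ell)$ in it such that, for all large $h$, the family $(p_1,\dots,p_\ell,h)\vdc(\P_1,\dots,\P_\ell)$ is again nice and of strictly smaller type; the choice of tuple is the obvious lift of the one in Lemma~\ref{L:reduceA} (if some row below the first is nonzero, pick a tuple whose first coordinate is constant at the relevant indices, taking $p_1=0$, so as to shrink the topmost nonzero lower row; otherwise subtract either the first tuple $(p_{1,1},\dots,p_{\ell,1})$ when all $p_{1,s}\sim p_{1,1}$, or a tuple whose first coordinate is non-equivalent to $p_{1,1}$ and of minimal degree). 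Checking that each of the three niceness conditions survives is the same degree count as in Section~\ref{S:Char2}, now carried out for every $j=2,\dots,\ell$ in place of the single lower index, with $\deg(p_{1,1})\ge 2$ used at exactly the same point (to absorb the degree-dropped terms $S_hq-q$); the finitely many bad $h$ from condition (iii) remain finitely many after combining the contributions of the $\ell-1$ lower coordinates. The analogue of Lemma~\ref{L:k(d,m)}, giving a bound $k(d,\ell,m)$ on the number of operations needed to reach degree $1$, follows from the same recursive-majorant argument: each operation never raises the degree, at most doubles $m$, strictly lowers the type, and only finitely many types (at most $(m+1)^{\ell d}$) are available at each stage.

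For the base case, when $\deg(p_{1,1})=1$ niceness forces $\deg(p_{1,i})=1$ for all $i$, all $p_{j,i}$ with $j\ge 2$ constant, and $p_{1,1}-p_{1,i}\ne\text{const}$ for $i\ge 2$; the averages then take the form $\frac1{N-M}\sum_{n=M}^{N-1}\prod_{i=1}^{m} f_i(T_1^{a_in+b_i}T_2^{c_{2,i}}\cdots T_\ell^{c_{\ell,i}}x)$ with $a_i\ne 0$ and $a_1\ne a_i$ for $i\ge 2$. Since $T_2,\dots,T_\ell$ commute with $T_1$, each $T_2^{c_{2,i}}\cdots T_\ell^{c_{\ell,i}}f_i$ is a bounded function, and the factor indexed by $i=1$ is a $T_1$-commuting translate of $f_1$, hence still orthogonal to $\cZ_{k,T_1}$; merging any two factors carrying the same $a_i$ (which never involves $i=1$) we reach single-transformation averages $\prod g_i(T_1^{a_i'n}x)$ with the $a_i'$ distinct and nonzero and $g_1\perp\cZ_{k,T_1}$, so Theorem~\ref{T:linear} applies once $k$ is at least the number of surviving factors, which is at most $2^{k(d,\ell,m)}m$. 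Unwinding the chain of van der Corput reductions of Section~\ref{S:Char2} propagates convergence to $0$ back to the original averages, and tracking the doubling of $m$ over the at most $k(d,\ell,m)$ steps yields the uniform level $k=k(d,\ell,m)$ claimed.

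I expect the only genuine difficulty to be purely administrative: verifying in the reduction lemma that the coordinatewise van der Corput operation, with the tuple chosen as above, simultaneously preserves all three niceness conditions across every lower coordinate and strictly decreases the lexicographic type. This is conceptually identical to the $\ell=2$ computation in Section~\ref{S:Char2} and introduces no new idea; it simply has more indices to keep straight. Every other ingredient --- van der Corput's Lemma, the Cauchy--Schwarz step that subtracts $(p_1(n),\dots,p_\ell(n))$ before iterating, and the appeal to Theorem~\ref{T:linear} in the base case --- transfers without change.
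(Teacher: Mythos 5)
Your proposal reproduces the paper's architecture faithfully --- the matrix type with lexicographic order, the notion of nice family (with condition (iii) stated in its intended form), the coordinatewise van der Corput operation, the counting lemma, and the base case via Theorem~\ref{T:linear} with the merging of factors sharing a linear coefficient handled correctly. However, there is a genuine error in your reduction lemma, and it sits exactly at the one point where the passage from $\ell=2$ to general $\ell$ is more than notational. When more than one of the lower rows of the type matrix is nonzero, targeting the \emph{topmost} nonzero lower row does not decrease the type and can increase it. The phenomenon you are missing is that for $\ell\geq 3$ a tuple can have its first $i$ coordinates all constant and yet survive the deletion step $^\ast$, because some coordinate beyond the $i$-th is nonconstant; after subtracting your chosen tuple, its $i$-th coordinate becomes a polynomial in the class $[-p_i]$, which is a \emph{new} equivalence class of degree $\deg p_i$ in the new $\P_i'$, cancelling the loss of the class $[p_i]$, while new classes of lower degree also appear. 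Concretely, take $\ell=3$ and the nice family $\bigl((n^4,0,0),(0,n^2,0),(0,0,n^3)\bigr)$, whose type has rows $(1,0,0,0)$, $(0,0,1,0)$, $(0,1,0,0)$. Your rule targets row two and subtracts $(0,n^2,0)$, producing the family $\bigl(((n+h)^4,-n^2,0),(n^4,-n^2,0),(0,2hn+h^2,0),(0,-n^2,(n+h)^3),(0,-n^2,n^3)\bigr)$. The new $\P_2'$ contains the two classes $[-n^2]$ (degree $2$, contributed by the third tuple of the original family, whose first two coordinates were constant) and $[2hn+h^2]$ (degree $1$), so the second row becomes $(0,0,1,1)$, lexicographically \emph{larger} than $(0,0,1,0)$ while the first row is unchanged. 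The type has gone up, and Lemma~\ref{lem:decreasing} no longer yields termination of the induction.

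The paper makes the opposite choice: it targets the \emph{bottommost} nonzero lower row, i.e.\ the largest $i\in\{2,\dots,\ell\}$ with $\P_i'\neq\emptyset$, taking $p_1=\dots=p_{i-1}=0$ and $p_i$ of smallest degree in $\P_i'$. With that choice, any surviving tuple whose coordinates $1,\dots,i-1$ are constant must have a nonconstant $i$-th coordinate --- otherwise its first nonconstant coordinate would witness $\P_{i''}'\neq\emptyset$ for some $i''>i$, contradicting maximality --- so no spurious class $[-p_i]$ is created, the count of degree-$\deg p_i$ classes in row $i$ drops by exactly one, and rows $1,\dots,i-1$ are untouched. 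In the example above this means subtracting $(0,0,n^3)$, which leaves rows one and two alone and strictly reduces row three. For $\ell=2$ there is a single lower row, so the two rules coincide and your extrapolation from Section~\ref{S:Char2} looks harmless; for $\ell\geq 3$ the order matters. Once this choice is corrected, the rest of your argument goes through as written.
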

Applying this result to the family $(\P_1,\ldots,\P_\ell)$ where
$\P_1=(p_1,0,\ldots,0)$, $\P_2=(0,p_2,\ldots,0)$, ...
$\P_\ell=(0,\ldots,0,p_\ell)$, we get:
\begin{corollary}
\label{cor:charZT1}
 Let $(X,\X,\mu,T_1,\cdots,T_\ell)$ be a system,
and $f_1,\ldots,f_\ell\in L^\infty(\mu)$. Let $p_1,\cdots,p_\ell$ be
integers polynomials  with distinct degrees and highest degree
$d=\deg(p_1)$.

Then there exists $k=k(d,\ell)$ such that: If $f_1\perp \CZ_{k,T_1}$,
then the averages
$$
\frac{1}{N-M}\sum_{n=M}^{N-1}  f_1(T_1^{p_1(n)}x)\cdot\ldots\cdot f_\ell(T_\ell^{p_\ell(n)}x)
$$
converge to $0$ in $L^2(\mu)$.
\end{corollary}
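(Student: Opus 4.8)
The plan is to prove Proposition~\ref{P:CharB} by the same PET-induction scheme used for Proposition~\ref{P:CharB2}, the only genuinely new input being the bookkeeping for $\ell$ transformations. First I would extend the definitions of Section~\ref{subsec:families}. Writing $\P_j=(p_{j,1},\dots,p_{j,m})$, call $(\P_1,\dots,\P_\ell)$ \emph{nice} if $\deg(p_{1,1})\geq\deg(p_{1,i})$ for all $i$, if $\deg(p_{1,1})>\deg(p_{j,i})$ for all $j\geq 2$ and all $i$, and if $\deg(p_{1,1}-p_{1,i})>\deg(p_{j,1}-p_{j,i})$ for all $j\geq 2$ and all $i\geq 2$. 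The \emph{type} becomes an $\ell\times d$ matrix whose $j$-th row records, for degrees $d,d-1,\dots,1$, the number of non-equivalent classes of polynomials of that degree among the $p_{j,i}$ lying in columns $i$ where $p_{1,i},\dots,p_{j-1,i}$ are all constant (so the first row just counts classes in $\P_1$, and for $\ell=2$ one recovers the role of $\Q'$). These matrices are ordered lexicographically, read row by row and within each row from high degree to low; the analogue of Lemma~\ref{lem:decreasing} is immediate. The van der Corput operation $(p_1,\dots,p_\ell,h)\vdc(\P_1,\dots,\P_\ell)$ is defined columnwise as before: each column $(p_{1,i},\dots,p_{\ell,i})$ is replaced by its $S_h$-shift minus $(p_1,\dots,p_\ell)$ together with itself minus $(p_1,\dots,p_\ell)$, after which all-constant columns are deleted. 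As in the two-variable case it does not raise $d$ and at most doubles $m$.

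The core of the argument is the analogue of Lemma~\ref{L:reduceA}: if $(\P_1,\dots,\P_\ell)$ is nice with $\deg(p_{1,1})\geq 2$, then a column of the family can be chosen so that for all sufficiently large $h$ the family $(p_1,\dots,p_\ell,h)\vdc(\P_1,\dots,\P_\ell)$ is again nice with strictly smaller type. The selection rule mirrors the one in Lemma~\ref{L:reduceA}, applied to the lowest still-non-trivial row of the type: if some row $j\geq 2$ is non-trivial, one subtracts the tuple that equals, in coordinate $j$, a minimal-degree representative of the classes counted by row $j$ and is constant in the remaining coordinates, which leaves rows $1,\dots,j-1$ untouched while strictly decreasing row $j$; if all lower rows are trivial, one acts on $\P_1$ exactly as in Lemma~\ref{L:reduceA}. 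The three niceness conditions are then checked degree by degree for the new columns $(S_hp_{1,i}-p_1,\dots)$ and $(p_{1,i}-p_1,\dots)$, using the elementary facts that for non-constant $p\sim q$ one has $\deg(p-q)\leq\deg p-1$ and $\deg(S_hp-q)=\deg p-1$ for all but one $h$, together with the standing hypothesis $\deg(p_{1,1})\geq 2$ to dominate the constant terms that shifting creates. The quantitative bound $k=k(d,\ell,m)$ then comes, exactly as in Lemma~\ref{L:k(d,m)}, from a recursion over the finitely many possible $\ell\times d$ types, each operation at most doubling $m$.

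With these ingredients the proof follows Section~\ref{SS:CharB2} line by line. When $\deg(p_{1,1})=1$, niceness forces every $p_{1,i}$ to be linear with $p_{1,1}-p_{1,i}$ non-constant and every $p_{j,i}$ with $j\geq 2$ constant, so the average has the form $\frac1{N-M}\sum_n\prod_{i=1}^m f_i\bigl(T_1^{a_in+b_i}T_2^{c_{2,i}}\cdots T_\ell^{c_{\ell,i}}x\bigr)$ with $a_i\neq 0$ and $a_1\neq a_i$ for $i\geq 2$; since $T_2,\dots,T_\ell$ commute with $T_1$, replacing $f_1$ by $T_2^{c_{2,1}}\cdots T_\ell^{c_{\ell,1}}f_1$ preserves orthogonality to $\cZ_{m-1,T_1}$, so Theorem~\ref{T:linear} gives convergence to $0$. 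When $\deg(p_{1,1})\geq 2$, one applies van der Corput's Lemma, composes with $T_1^{-p_1(n)}\cdots T_\ell^{-p_\ell(n)}$ for the column selected above, applies the Cauchy--Schwarz inequality, and discards the constant-iterate factors; this passes to the averages attached to $(p_1,\dots,p_\ell,h)\vdc(\P_1,\dots,\P_\ell)$, with $f_1$ still sitting on the first column and $p_{1,1}(n+h)-p_1(n)$ its $T_1$-exponent. Iterating at most $k(d,\ell,m)$ times lands, for a good set of parameters, on degree-$1$ nice families with at most $2^{k}m$ columns, to which the previous paragraph applies provided $f_1\perp\cZ_{2^{k}m,T_1}$; unwinding the van der Corput steps then forces the original averages to $0$. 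Corollary~\ref{cor:charZT1} follows immediately by applying the Proposition with $\P_j$ the $\ell$-tuple having $p_j$ in slot $j$ and $0$ elsewhere: distinctness of the degrees and $d=\deg(p_1)$ make this family nice, with $m=\ell$.

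I expect the genuine obstacle to be the analogue of Lemma~\ref{L:reduceA} in the second paragraph: one must verify that the generalized selection rule simultaneously decreases the $\ell\times d$ type strictly and preserves all three niceness conditions. Relative to the two-transformation case the extra work is purely combinatorial --- the hierarchical definition of the type means one must confirm that acting on row $j$ leaves rows $1,\dots,j-1$ exactly fixed, and the niceness checks for the lower-index transformations now involve the additional difference polynomials $p_{j,1}-p_{j,i}$ and their $S_h$-shifts --- but no analytic idea beyond those already present in Section~\ref{S:Char2} is required.
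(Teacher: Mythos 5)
Your proposal is correct and follows essentially the same route as the paper: the paper also deduces the corollary by applying Proposition~\ref{P:CharB} to the nice family $\P_1=(p_1,0,\dots,0),\dots,\P_\ell=(0,\dots,0,p_\ell)$, and proves that proposition by exactly the generalization you describe --- the hierarchical $\ell\times d$ type built from the sets $\P_i'$ of entries lying below all-constant prefixes, a selection rule that reduces the bottom-most non-trivial row (or acts on $\P_1$ as in Lemma~\ref{L:reduceA} when all lower rows are trivial), and the same PET recursion bounding the number of vdC operations by a function of $d$, $\ell$, $m$. You have in fact supplied slightly more detail than the paper, which declares the niceness verification and the remaining steps ``completely analogous'' to Section~\ref{S:Char2} and omits them.
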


\subsection{Families of $\ell$-tuples and their types}
In this subsection we follow \cite{BL} with some changes in the notation.
\subsubsection{Families of $\ell$-tuples of polynomials}
Let $\ell,m\in\N$. Given $\ell$ ordered families of polynomials
$$
\P_1=(p_{1,1},\ldots,p_{1,m}) ,\ldots, \P_\ell=(p_{\ell,1},\ldots,p_{\ell,m})
$$
we define an \emph{ordered family of $m$ polynomial $\ell$-tuples} as follows
$$
(\P_1,\ldots,\P_\ell)=\big((p_{1,1},\ldots,p_{\ell,1}),\ldots,(p_{1,m},\ldots,p_{\ell,m})\big).
$$
The reader is advised to think of this family as an efficient way to
record the polynomial iterates that appear in the average of
$$
 f_1(T_1^{p_{1,1}(n)}\cdots T_\ell^{p_{\ell,1}(n)}x) \cdot \ldots
\cdot f_m(T_1^{p_{1,m}(n)}\cdots T_\ell^{p_{\ell,m}(n)}x).
$$
The maximum of the degrees of the polynomials in the families
$\P_1,\ldots,\P_\ell$ is called \emph{the degree of  the family} $(\P_1,\ldots,\P_\ell)$.

For convenience of exposition, if  $\ell$-tuples of constant  polynomials  appear in $(\P_1,\ldots,\P_\ell)$ we remove them, and
henceforth we assume:

\emph{All families  $(\P_1,\ldots, \P_\ell)$  that we consider do not contain $\ell$-tuples of constant
polynomials.}

\subsubsection{Definition of type}
We fix $d\geq 1$ and restrict ourselves to families of degree $\leq d$.

For $i=1,\ldots, \ell$, we define $\P_i'$ to be the following set (possibly empty)
$$
\P_i'=\{ \text{non-constant } p_{i,j} \in \P_i\colon p_{i',j} \text{ is constant for } i'<i\}.
$$
(It follows that $\P_1'$ is the set of non-constant polynomials is $\P_1$.)

For $i=1,\ldots, \ell$ and $j=1,\ldots, d$, we  let $w_{i,j}$ be the number of distinct non-equivalent classes of polynomials of degree $j$ in the family  $\P_i'$.

We define the \emph{(matrix) type} of the family $(\P_1,\ldots, \P_\ell)$ to be the matrix
$$
\begin{pmatrix}
w_{1,d}& \ldots &  w_{1,1}\\ w_{2,d}& \ldots& w_{2,1}\\ \vdots & \ldots &\vdots \\
w_{\ell,d}& \ldots& w_{\ell,1}
\end{pmatrix}.
$$

For example, let $d=4$,
 and consider the family of triples of polynomials
\begin{multline*}
\big((n^2, n^4,n^4), \ (n^2+n,3n^3,0), \ (2n^2, 0,2n), \ (n,2n,0), \\
\ (0,n^3,n^4), \  (0,2n^3, n^2),
\ (0,0,n^3), \ (0,0,n^3+1)\big).
\end{multline*}
 Since
$$
 \P_1'=\{n^2,n^2+n,2n^2,n\},\quad   \P_2'=\{n^3,2n^3\}, \quad
 \P_3'=\{ n^3,n^3+1 \},
$$
the   type of this family is
$$
\begin{pmatrix}
0& 0& 2& 1\\ 0& 2& 0 & 0\\0& 1& 0&0
\end{pmatrix}.
$$

As in Section~\ref{SS:type}, we order these  types  lexicographically: Given two $\ell\times d$ matrices $W=(w_{i,j})$
and $W'=(w'_{i,j})$, we say that the first is bigger than the second,
and write $W>W'$, if $w_{1,d}>w'_{1,d}$, or $w_{1,d}=w'_{1,d}$ and $w_{1,d-1}>w'_{1,d-1}$, $\ldots$,
or $w_{1,i}=w'_{1,i}$ for $i=1,\ldots,d$ and $w_{2,d}>w'_{2,d}$, and so on.
As for the types of families of pairs, we have:
\begin{lemma}
\label{lem:decreasing2}
Every decreasing sequence of
 types of families of polynomial $\ell$-tuples is stationary.
\end{lemma}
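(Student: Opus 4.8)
The plan is to exploit that we have fixed the degree bound $d$, so the type of every family under consideration is an $\ell\times d$ matrix with non-negative integer entries $w_{i,j}$, and that the order on types defined just above is literally the lexicographic order once we read such a matrix row by row (within each row from the degree-$d$ entry down to the degree-$1$ entry). Concatenating the rows, I would identify each type with the vector
$$
(w_{1,d},\dots,w_{1,1},\,w_{2,d},\dots,w_{2,1},\,\dots,\,w_{\ell,d},\dots,w_{\ell,1})\in(\N\cup\{0\})^{\ell d},
$$
so that the matrix-type order corresponds exactly to the lexicographic order on $(\N\cup\{0\})^{\ell d}$. Then Lemma~\ref{lem:decreasing2} reduces to the purely combinatorial fact that $(\N\cup\{0\})^{k}$ with the lexicographic order admits no infinite strictly decreasing sequence; applying this with $k=\ell d$ immediately gives that any decreasing sequence of types is eventually constant, i.e.\ stationary.

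For the combinatorial fact I would argue by induction on $k$, exactly as for Lemma~\ref{lem:decreasing}. The case $k=1$ is just the statement that a non-increasing sequence of non-negative integers is eventually constant. For the inductive step, let $v_1\geq v_2\geq\cdots$ be decreasing in $(\N\cup\{0\})^{k}$ and write $v_n=(a_n,v_n')$ with $a_n\in\N\cup\{0\}$ and $v_n'\in(\N\cup\{0\})^{k-1}$. By definition of the lexicographic order $(a_n)_n$ is non-increasing, hence there is $N$ with $a_n=a_N$ for all $n\geq N$; for such $n$ the inequality $v_n\geq v_{n+1}$ together with $a_n=a_{n+1}$ forces $v_n'\geq v_{n+1}'$, so $(v_n')_{n\geq N}$ is a decreasing sequence in $(\N\cup\{0\})^{k-1}$, which is eventually constant by the inductive hypothesis. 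Hence $(v_n)_n$ is eventually constant.

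There is essentially no obstacle here; the only point that deserves care is that the entries $w_{i,j}$ of a type are \emph{not} bounded a priori, since the number $m$ of polynomial $\ell$-tuples in the family is not fixed. Consequently one cannot finish by merely invoking finiteness of the set of possible types (as one can once both $d$ \emph{and} $m$ are fixed, which is the point later used in Lemma~\ref{L:k(d,m)}); one genuinely needs the well-foundedness of the lexicographic order on $(\N\cup\{0\})^{\ell d}$ established above. With that in hand the proof is complete.
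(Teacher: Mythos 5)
Your proof is correct; the paper states Lemma~\ref{lem:decreasing2} (like Lemma~\ref{lem:decreasing}) without proof, and the well-foundedness of the lexicographic order on $(\N\cup\{0\})^{\ell d}$, established by your induction on the number of coordinates, is exactly the intended justification. Your closing remark is also on point: since $m$ is not fixed the entries $w_{i,j}$ are unbounded, so one cannot argue from finiteness of the set of types and genuinely needs the well-ordering argument.
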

\subsection{Nice families and the van der Corput operation}
 In this subsection we  define a class of families of $\ell$-tuples of polynomial
that we are going to work with in the sequel, and an important operation that preserves
such families and reduces their type.

\subsubsection{Nice families}
Let $\P_1=(p_{1,1},\ldots,p_{1,m})$, $\ldots$, $\P_1=(p_{\ell,1},\ldots,p_{\ell,m})$.
\begin{definition*}
We call the ordered  family of polynomial $\ell$-tuples $(\P_1,\ldots,\P_\ell)$ \emph{nice} if
\begin{enumerate}
\item\label{it:nice21}
 $\deg(p_{1,1})\geq \deg(p_{1,j})$ for $j=1,\ldots,m$\ ;
\medskip

\item\label{it:nice22}
$\deg(p_{1,1})>\deg(p_{i,j})$ for $i=2,\ldots,\ell$, $j=1,\ldots,m$\ ;

\medskip
\item\label{it:nice23}
$\deg(p_{1,1}-p_{1,j})>\deg(p_{i,1}-p_{i,j})$ for $i=2,\ldots,\ell$,
$j=2,\ldots,m$.
\end{enumerate}
(Notice  that a consequence of~\eqref{it:nice23} is that
$p_{1,1}-p_{1,j}$ is not constant for   $j= 2,\ldots,m$.)
\end{definition*}

The type of a nice family of degree $1$ has only one non-zero entry, namely
$w_{1,1}$.

\subsubsection{The van der Corput operation}
Given a family $\P=\big(p_1,\ldots,p_m\big)$,   $p\in \Z[t]$, and $h\in\N$, we define
$S_h\P$ and $\P-p$ as in Section~\ref{subsec:vdcoperation}.
Given a family of $\ell$-tuples of polynomials  $(\P_1,\ldots,\P_\ell)$,
 $(p_1,\cdots,p_\ell)\in (\P_1,\cdots\P_\ell)$, and $h\in\N$,
we define the following operation
 $$
(p_1,\ldots, p_\ell,h)\vdc(\P_1,\ldots, \P_\ell) =(\tilde{P}_{1,h},\ldots \tilde{P}_{\ell,h})^*
 $$
 where
 $$
  \tilde{P}_{i,h}=(S_h\P_i-p_i,\P_i-p_i).
 $$
for $i=1,\ldots,\ell$,  and  $^\ast$ is the operation that removes
all  constant $\ell$-tuples polynomials  from a given family of
 $\ell$-tuples polynomials.
 Notice that if $(\P_1,\ldots, \P_\ell)$ is a degree $d$ family containing $m$  polynomial $\ell$-tuples, then  for every $h\in\N$,
 the family
 $(p_1,\ldots, p_\ell,h)\vdc(\P_1,\ldots, \P_\ell)$ has degree at most $d$ and contains  at most
  $2m$ polynomial  $\ell$-tuples.

\subsection{Choosing a good vdC operation}
As in the case of two transformations,  our objective is starting with a nice family $(\P_1,\ldots, \P_\ell)$
to successively apply appropriate operations $(p_1,\ldots,
p_\ell,h)\vdc(\P_1,\ldots, \P_\ell)$
in order to arrive to  nice families of polynomial $\ell$-tuples  with  types
 that have  only non-zero entry the entry $w_{1,1}$. This case then can be treated easily using known  results that involve a  single transformation.

\begin{lemma}\label{L:reduceA'}
Let $(\P_1,\ldots, \P_\ell)$ be  a nice family with $\deg(p_{1,1})\geq 2$.

 Then there exists $(p_1,\ldots,p_\ell)\in (\P_1,\ldots,\P_\ell)$ such that for every large enough $h\in\N$
the family  $(p_1,\ldots,p_\ell,h)\vdc(\P_1,\ldots, \P_\ell)$ is nice and has strictly smaller type than
  $(\P_1,\ldots, \P_\ell)$.
\end{lemma}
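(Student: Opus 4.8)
The plan is to reproduce the argument of Lemma~\ref{L:reduceA} almost verbatim; the only genuinely new ingredient is the correct choice of the pivot tuple $(p_1,\ldots,p_\ell)\in(\P_1,\ldots,\P_\ell)$, after which the estimate that the type strictly decreases and the verification of the three niceness conditions are the same degree bookkeeping as before, now with the index $i$ ranging over $\{2,\ldots,\ell\}$ in place of a single ``second'' coordinate. \emph{Choice of pivot.} Let $i_0\in\{1,\ldots,\ell\}$ be the \emph{largest} index for which $\P_{i_0}'$ is nonempty; this is well defined since $\P_1'$ contains $p_{1,1}$, which is non-constant because $\deg(p_{1,1})\geq 2$. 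Choosing the \emph{last} nonempty $\P_i'$ is what makes the argument work for $\ell>2$: by maximality of $i_0$ together with the standing convention that no $\ell$-tuple of constant polynomials occurs in the family, every $j$ with $p_{i',j}$ constant for all $i'<i_0$ automatically has $p_{i_0,j}$ non-constant --- otherwise the smallest index $i^\ast>i_0$ with $p_{i^\ast,j}$ non-constant would exhibit $p_{i^\ast,j}\in\P_{i^\ast}'$, contradicting maximality. If $i_0>1$, take $p_{i_0}$ of minimal degree in $\P_{i_0}'$, fix an index $j_0$ with $p_{i_0,j_0}=p_{i_0}$ and $p_{i',j_0}$ constant for $i'<i_0$, and use the tuple $(p_{1,j_0},\ldots,p_{\ell,j_0})$ as pivot. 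If $i_0=1$ (in which case, by the same argument, \emph{every} $p_{1,j}$ is non-constant, so $\P_1=\P_1'$), split into the three sub-cases of Lemma~\ref{L:reduceA} --- $\P_1$ reduces to a single polynomial, or all $p_{1,j}\sim p_{1,1}$, or some $p_{1,j}\nsim p_{1,1}$ --- and pick the pivot exactly as there.

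\emph{The type strictly decreases.} Since the pivot's coordinates $1,\ldots,i_0-1$ are constant polynomials, subtracting them and shifting by $h$ alters neither the constancy pattern nor the equivalence classes in those coordinates, so rows $1,\ldots,i_0-1$ of the matrix type are unchanged. In row $i_0$, the observation above guarantees that the polynomials entering the new $\P_{i_0}'$ are precisely the $h$-shifts and the translates by $p_{i_0}$ of the members of $\P_{i_0}'$, whence minimality of $\deg(p_{i_0})$ forces the column $\deg(p_{i_0})$ of row $i_0$ to drop by one while the columns to its left stay put --- word for word the computation for the second row in the proof of Lemma~\ref{L:reduceA}. The rows below $i_0$ are irrelevant to the lexicographic comparison. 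When $i_0=1$ the three sub-cases reduce the first row exactly as in Lemma~\ref{L:reduceA}. Hence the new type is strictly smaller for \emph{every} $h\in\N$.

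\emph{Niceness for large $h$.} The first tuple of the new family is $(S_hp_{1,1}-p_1,\ldots,S_hp_{\ell,1}-p_\ell)$, and its first entry has degree $\deg(p_{1,1})$ when $i_0>1$ (as $p_1$ is then constant) and degree $\geq 1$ in each sub-case when $i_0=1$; in particular it is non-constant, so it survives the $^\ast$ operation and remains first. Properties~\eqref{it:nice21}, \eqref{it:nice22}, \eqref{it:nice23} for the new family then amount, respectively, to
$$
\deg(S_hp_{1,1}-p_1)\geq\deg(S_hp_{1,j}-p_1)\ \text{ and }\ \deg(S_hp_{1,1}-p_1)\geq\deg(p_{1,j}-p_1);
$$
$$
\deg(S_hp_{1,1}-p_1)>\deg(S_hp_{i,j}-p_i)\ \text{ and }\ \deg(S_hp_{1,1}-p_1)>\deg(p_{i,j}-p_i)\quad(i\geq 2);
$$
$$
\deg(S_hp_{1,1}-S_hp_{1,j})>\deg(S_hp_{i,1}-S_hp_{i,j})\ \text{ and }\ \deg(S_hp_{1,1}-p_{1,j})>\deg(S_hp_{i,1}-p_{i,j})\quad(i\geq 2).
$$
Each of these is verified by expanding every polynomial as $(S_hr-r)+(r-r')$, using $\deg(S_hr-r)\leq\deg(r)-1$ for non-constant $r$, using $\deg(p_{1,1})\geq 2$ to dominate the constant polynomials that may occur among the $p_{i,j}$, invoking the original niceness of $(\P_1,\ldots,\P_\ell)$ for the difference estimates, and using that $\deg(S_hp-q)=\deg(p)-1$ for all but at most one $h$ whenever $p\sim q$ --- this last point being the only source of exceptional values of $h$, of which there are finitely many. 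These are the computations of Lemma~\ref{L:reduceA} run separately for each $i\in\{2,\ldots,\ell\}$, so only the few places where the extra rows interact need to be spelled out.

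\emph{Main obstacle.} The one substantive point is isolating the correct pivot so that the van der Corput operation does not reintroduce the class of $p_{i_0}$ into row $i_0$; this is exactly where the maximality of $i_0$ and the no-constant-tuple convention are used. Had one attacked, say, the first nonempty $\P_i'$, an index $j$ with $p_{i_0,j}$ constant but $p_{i,j}$ non-constant for some $i>i_0$ would, after subtracting the non-constant $p_{i_0}$, contribute a polynomial equivalent to $p_{i_0}$ to the new $\P_{i_0}'$, and the type would fail to drop. Everything else is bookkeeping inherited from the two-transformation case.
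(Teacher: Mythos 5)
Your proposal is correct and follows essentially the same route as the paper: pick the pivot tuple through the \emph{last} non-empty $\P_{i}'$ (a minimal-degree element there), falling back on the three sub-cases of Lemma~\ref{L:reduceA} when only $\P_1'$ is non-empty, and then run the same degree bookkeeping as in the two-transformation case to see that the type drops and niceness is preserved for all but finitely many $h$. Your explicit observation that maximality of $i_0$, together with the no-constant-tuple convention, prevents new equivalence classes from entering row $i_0$ is a point the paper leaves implicit, but the argument is otherwise the same.
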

\begin{proof}
We remind the reader that  we have
$\P_i=(p_{i,1},\ldots,p_{i,m})$ for $i=1,\ldots,\ell$.
For $(p_1,\ldots, p_\ell)\in (\P_1,\ldots, \P_\ell)$,
the  family $(p_1,\ldots,p_\ell,h)\vdc(\P_1,\ldots, \P_\ell)$  consists of vectors of polynomials that have the form
$$
(S_hp_{1,j}-p_1,\ldots, S_hq_{\ell,j}-p_\ell), \ j= 1,\ldots,m,
\  \text{ or } \
(p_{1,j}-p_1, \ldots, p_{\ell,j} -p_\ell).
$$

We choose $(p_1,\ldots, p_\ell)$ as follows:

  If $\P_\ell'$ is non-empty,
then we take $p_1=\cdots=p_{\ell-1}=0$ and $p_\ell$ to be a
 polynomial of smallest degree in $\P_\ell'$.
Then for every $h\in\N$, the first $\ell-1$ rows of the type will remain unchanged,
and the last row will get ``reduced'', leading to a smaller matrix type.
Similarly, if the families $\P_\ell', \P_{\ell-1}',\ldots, \P_{i-1}'$
are empty, and $P_{i}'$ is non-empty for some $2 \leq i \leq \ell+1$,
then we take $p_1=\cdots=p_{i-1}=0$ and $p_i$ to be a polynomial of smallest
degree in $\P_i'$.
Then for every $h\in\N$,
the first $i-1$ rows of the matrix type remain unchanged, and the
$i$-the row will get ``reduced'', leading to a smaller matrix type.

 Suppose now that the families $\P_\ell', \P_{\ell-1}',\ldots, \P_{2}'$ are empty.
If $\P_1$ consists of a single polynomial, namely $p_{1,1}$, then we
choose $(p_1,\ldots,p_\ell)=(p_{1,1},\ldots, p_{\ell,1})$ and the
result follows.
Therefore, we can assume that $\P_1$ contains some
polynomial other than $p_{1,1}$.
We consider two cases.
If $p\sim p_{1,1}$ for all $p\in \P_1$, then we choose
$(p_1,\ldots,p_\ell)=(p_{1,1},\ldots, p_{\ell,1})$.
Otherwise, we choose $(p_1,\ldots,p_\ell)\in (\P_1,\ldots,\P_\ell)$ with
$p_1\nsim p_{1,1}$, and $p_1$ is a polynomial in $\P_1$ with smallest degree
(such a choice exists since $p_{1,1}$ has the highest degree in
$\P_1$).
In all cases, for every $h\in\N$, the first row of the
matrix type of $(p_1,\ldots,p_\ell,h)\vdc(\P_1,\ldots, \P_\ell)$
is ``smaller'' than that of $(\P_1,\ldots, \P_\ell)$.

It remains to verify that for every large enough $h\in\N$ the family
 $(p_1,\ldots,p_\ell,h)\vdc(\P_1,\ldots, \P_\ell)$
 is nice. This part is identical with the one used in  Lemma~\ref{L:reduceA} and so we omit it.
\end{proof}

The proof of the next lemma is completely analogous to the proof of Lemma~\ref{L:k(d,m)} in the previous section
and so we omit it.
\begin{lemma}\label{L:k(d,m)'}
Let $(\P_1,\ldots,\P_\ell)$ be a nice family with degree $d\geq 2$ that  contains $m$ polynomial $\ell$-tuples.
Suppose that we successively apply the  $(p_1,\ldots,p_\ell,h)\vdc$ operation for appropriate choices of
$p_1,\ldots,p_\ell\in \Z[t]$ and $h\in\N$, as described in the previous lemma,
each time getting a nice family of $\ell$-tuples of polynomials with
strictly smaller type.

Then after a finite  number of  operations we get, for a good set of parameters,
   nice families of $\ell$-tuples of polynomials
of degree $1$.
Moreover,  the number of  operations needed can be bounded by
a function of $d$, $\ell$, and $m$.
\end{lemma}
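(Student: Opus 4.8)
The plan is to transcribe, essentially word for word, the proof of Lemma~\ref{L:k(d,m)}, with the $2\times d$ matrix types replaced by $\ell\times d$ matrix types and the two-transformation auxiliary results replaced by their $\ell$-tuple analogues. The first assertion --- that after finitely many $(p_1,\ldots,p_\ell,h)\vdc$ operations one reaches nice families of $\ell$-tuples of degree $1$ --- is immediate from Lemma~\ref{lem:decreasing2}: each operation supplied by Lemma~\ref{L:reduceA'} produces a nice family of no larger degree and strictly smaller matrix type, so after finitely many steps the type can no longer be decreased by that lemma. Since a nice family of degree $\geq 2$ has $\deg(p_{1,1})\geq 2$ (and a nice family always has $\deg(p_{1,1})\geq 1$, as the no-constant-$\ell$-tuples convention forces some non-constant polynomial and niceness makes $p_{1,1}$ dominate its degree), the process can only stop once the family has degree $1$.

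For the effective bound, write $W(\P_1,\ldots,\P_\ell)$ for the matrix type of a family and $N(\P_1,\ldots,\P_\ell)$ for the number of operations of Lemma~\ref{L:reduceA'} needed --- for a suitable good set of parameters --- to bring it down to degree $1$. As in the two-transformation case, it suffices to exhibit a function $f(W,m)$, defined whenever $W$ is the matrix type of a nice family of degree $\leq d$ with at most $m$ polynomial $\ell$-tuples and $m\in\N$, with values in $\N\cup\{0\}$, such that
$$
N(\P_1,\ldots,\P_\ell)\leq f\bigl(W(\P_1,\ldots,\P_\ell),m\bigr).
$$
Indeed, a nice family of degree at most $d$ with at most $m$ polynomial $\ell$-tuples has one of at most $(m+1)^{\ell d}$ possible matrix types, since each entry $w_{i,j}$ lies in $\{0,1,\ldots,m\}$; so setting $F(d,\ell,m)=\max_W f(W,m)$ over these finitely many types gives the asserted bound.

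To build $f$, let $W_1$ be the $\ell\times d$ matrix whose only non-zero entry is $w_{1,1}=1$. This is the smallest matrix type that can occur for a nice family (a nice family has $\deg(p_{1,1})\geq 1$, so $w_{1,1}\geq 1$), and a nice family has type $W_1$ exactly when it has degree $1$ and $\P_1'$ consists of a single equivalence class of linear polynomials, in which case no operation is needed. Define $f$ recursively by $f(W_1,m)=0$ for all $m\in\N$ and
$$
f(W,m)=\max_{W'<W}\bigl(f(W',2m)\bigr)+1
$$
for $W>W_1$, the maximum ranging over the finitely many matrix types $W'<W$ of nice families of degree $\leq d$ with at most $2m$ polynomial $\ell$-tuples; this recursion is well-founded by Lemma~\ref{lem:decreasing2}. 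Since each $(p_1,\ldots,p_\ell,h)\vdc$ operation chosen as in Lemma~\ref{L:reduceA'} keeps the family nice, does not increase its degree, strictly decreases its matrix type, and at most doubles the number of $\ell$-tuples, a straightforward induction on $W(\P_1,\ldots,\P_\ell)$ --- with base cases the families already of degree $1$, for which $N=0\leq f(W,m)$ --- yields the displayed inequality with this $f$, completing the argument.

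Where the real work sits: nowhere beyond the bookkeeping already carried out for Lemma~\ref{L:k(d,m)}. The only two points I would check carefully are that $W_1$ is genuinely the minimal type that can occur for a nice family (so that the recursion bottoms out precisely at the degree-$1$ families we target), and that the operations furnished by Lemma~\ref{L:reduceA'} really do have the four stability properties invoked in the induction (niceness preservation, non-increase of degree, strict decrease of type, at-most-doubling of the number of $\ell$-tuples). Both were established in the two-transformation setting and go through verbatim for $\ell$-tuples, so I anticipate no genuine obstacle.
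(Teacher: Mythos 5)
Your proposal is correct and is exactly what the paper intends: the paper omits this proof, stating it is ``completely analogous'' to that of Lemma~\ref{L:k(d,m)}, and your write-up is precisely that transcription, with the $2\times d$ types replaced by $\ell\times d$ types, the count $(m+1)^{2d}$ replaced by $(m+1)^{\ell d}$, and the same recursion $f(W_1,m)=0$, $f(W,m)=\max_{W'<W}f(W',2m)+1$. Your two flagged checkpoints (minimality of $W_1$ and the four stability properties of the vdC operation from Lemma~\ref{L:reduceA'}) are indeed the only points needing verification, and both hold as you say.
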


\subsection{Proof of Proposition~\ref{P:CharB}} Using Lemma~\ref{L:reduceA'} and Lemma~\ref{L:k(d,m)'},
the rest of the proof of Proposition~\ref{P:CharB}  is completely analogous to the end of the proof of
Proposition~\ref{P:CharB2} given in  Section~\ref{SS:CharB2}  and so we omit it.

\section{Characteristic factors for the
lower degree iterates and proof of convergence}\label{S:Charlower}
In this section we prove Theorem~\ref{T:CharA} and then Theorem~\ref{T:Conv}.


\subsection{A simple example}
\label{subsec:simplexample2}
In order to explain our method, we continue with the example of
Section~\ref{subsec:simplexample}, studying the limiting behavior of the averages of
\begin{equation}
\label{eq:examplesuite}
 f_1(T_1^{n^2}x)\cdot f_2(T_2^nx).
\end{equation}
We have shown that these averages converge to $0$ in $L^2(\mu)$
whenever $f_1\perp \CZ_{2,T_1}$. We are therefore reduced to study
these averages under the additional hypothesis that $f_1$ is
measurable with respect to $\CZ_{2,T_1}$.

Using the approximation property of Proposition~\ref{L:ApprNil},
we further reduce matters to the case where, for $\mu$-almost every $x\in X$, the sequence
$(f_1(T^nx))_{n\in\N}$ is a $2$-step nilsequence. Therefore, the
sequence $(f_1(T^{n^2}x))_{n\in\N}$ is a $4$-step nilsequence.
We are left with studying the limiting behavior of  the averages of
$$
  u_n(x)\cdot f_2(T_2^nx),
$$
where $(u_n)_{n\in\N}$ is a uniformly bounded sequence of $\mu$-measurable functions,
such that $(u_n(x))_{n\in\N}$ is a $4$-step nilsequence for $\mu$-almost
every $x\in X$.

In this particular case, Corollary~\ref{cor:uniformnil} below suffices to show
that the averages converge to $0$ in $L^2(\mu)$ whenever
$f_2\perp\CZ_{4,T_2}$. (For more intricate averages we need more
elaborate results about weighted multiple averages.)

We are reduced to the case where $f_1$ is
measurable with respect to $\CZ_{2,T_1}$ and $f_2$ is measurable with
respect to $\CZ_{4,T_2}$. Applying Proposition~\ref{L:ApprNil} to
these two functions, we reduce matters to the case where, for $\mu$-almost every
$x\in X$, the sequences $(f_1(T_1^{n^2}x))_{n\in\N}$ and
$(f_2(T_2^nx))_{n\in\N}$ are finite step nilsequences. Therefore, for $\mu$-almost every
$x\in X$, the sequence~\eqref{eq:examplesuite} is a nilsequence and as a consequence
its averages converge.

  We introduce now the tools that we need to carry out the previous plan in our more general setup.

\subsection{Uniformity seminorms}
We follow~\cite{HKc}. Let $k\in\N$ be an integer. Let
$(a_n)_{n\in\Z}$ be a bounded  sequence of real numbers and
$\bI=(I_N)_{N\in\N}$ be a sequence of intervals whose lengths
$|I_N|$ tend to infinity. We say that this sequence of intervals is
\emph{$k$-adapted} to the sequence $(a_n)$, if for every
$\uh=(h_1,\cdots,h_k)\in\N^k$, the limit
$$
c_\uh(\bI,a):=\lim_{N\to\infty}  \frac{1}{|I_N|} \sum_{n\in I_N}
\prod_{\epsilon\in\{0,1\}^k}a_{n+h_1\epsilon_1+\cdots+h_k\epsilon_k}
$$
exists\footnote{In~\cite{HKc} it is assumed that the limit exists for
$\uh\in\Z^k$ but this does not change anything in the proofs.}.
 Clearly, every sequence of intervals whose lengths tend to
infinity admits a subsequence which is adapted to the sequence $(a_n)$.

Suppose that $\bI=(I_N)_{N\in\N}$  is $k$-adapted to  $(a_n)_{n\in\Z}$.
We define
$$
 \nnorm a_{\bI,k}:=\Bigl(\lim_{H\to+\infty}\frac
1{H^k}\sum_{1\leq h_1,\cdots,h_k\leq H} c_{\uh}(\bI,a)
\Bigr)^{1/2^k}.
$$
Indeed, by Proposition~2.2 of~\cite{HKc}, the above limit exists and is
non-negative.
\begin{lemma}
\label{lem:subsec}
Let $(X,\CX,\mu,T)$ be a system, $f\in L^\infty(\mu)$, and
$\bI=(I_N)_{N\in\N}$ be a sequence of intervals whose lengths tend to
infinity. Suppose that $f\bot \CZ_{k-1,\mu}$ for some $k\geq 2$.

Then the sequence $\bI$ admits a subsequence $\bI'=(I'_N)_{N\in\N}$
such that, for $\mu$-almost every $x\in X$,
$\bI'$ is $k$-adapted to the sequence $(f(T^nx))_{n\in \N}$ and
$\nnorm{f(T^nx)}_{\bI',k}=0$.
\end{lemma}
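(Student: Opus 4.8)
The plan is as follows. Since $f\perp\cZ_{k-1,\mu}$, property~\eqref{E:DefZ_l} gives $\nnorm f_{k,\mu}=0$, so it suffices to produce a subsequence $\bI'=(I'_N)_{N\in\N}$ of $\bI$ such that, for $\mu$-almost every $x$, the sequence $\bI'$ is $k$-adapted to $a(x):=(f(T^nx))_{n\in\N}$ and
$$
\int\nnorm{a(x)}_{\bI',k}^{2^k}\,d\mu(x)=\nnorm f_{k,\mu}^{2^k}=0 .
$$
Once this is known, and granting that the integrand is nonnegative (Proposition~2.2 of~\cite{HKc}), we get $\nnorm{a(x)}_{\bI',k}=0$ for $\mu$-almost every $x$, which is the conclusion.

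For $\uh=(h_1,\dots,h_k)\in\N^k$ set $g_\uh:=\prod_{\epsilon\in\{0,1\}^k}T^{h_1\epsilon_1+\dots+h_k\epsilon_k}f\in L^\infty(\mu)$, so that for every $x$ and $N$
$$
\frac1{|I_N|}\sum_{n\in I_N}\ \prod_{\epsilon\in\{0,1\}^k}f\bigl(T^{n+h_1\epsilon_1+\dots+h_k\epsilon_k}x\bigr)=\frac1{|I_N|}\sum_{n\in I_N}\bigl(T^ng_\uh\bigr)(x) .
$$
The first step is the observation that the mean ergodic theorem holds along \emph{any} sequence of intervals whose lengths tend to infinity: writing $I_N=[M_N,M_N+L_N)$, the left side equals $T^{M_N}\bigl(\frac1{L_N}\sum_{j=0}^{L_N-1}T^jg_\uh\bigr)$ and $T^{M_N}$ is an isometry fixing $\E(g_\uh|\CI)$, so $\frac1{|I_N|}\sum_{n\in I_N}T^ng_\uh\to\E(g_\uh|\CI)$ in $L^2(\mu)$. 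Next, enumerating $\N^k=\{\uh^{(1)},\uh^{(2)},\dots\}$ and repeatedly extracting a.e.-convergent subsequences, I would build nested subsequences $\bI=\bI_0\supseteq\bI_1\supseteq\bI_2\supseteq\cdots$ (the passage from $\bI_{j-1}$ to $\bI_j$ making the averages above with $\uh=\uh^{(j)}$ converge $\mu$-almost everywhere) and take the diagonal subsequence $\bI'$. Then, for $\mu$-almost every $x$ and every $\uh\in\N^k$, $c_\uh(\bI',a(x))=\E(g_\uh|\CI)(x)$; in particular $\bI'$ is $k$-adapted to $a(x)$ for almost every $x$, and the number $\nnorm{a(x)}_{\bI',k}^{2^k}=\lim_{H\to\infty}H^{-k}\sum_{1\le h_1,\dots,h_k\le H}c_\uh(\bI',a(x))$ is well defined for almost every $x$.

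Finally I would integrate in $x$. The partial averages $H^{-k}\sum_{1\le h_1,\dots,h_k\le H}c_\uh(\bI',a(x))$ are bounded by $\norm f_{L^\infty(\mu)}^{2^k}$ and converge a.e.\ to $\nnorm{a(x)}_{\bI',k}^{2^k}$, so dominated convergence, together with $c_\uh(\bI',a(x))=\E(g_\uh|\CI)(x)$ and $\int\E(g_\uh|\CI)\,d\mu=\int g_\uh\,d\mu$, gives
$$
\int\nnorm{a(x)}_{\bI',k}^{2^k}\,d\mu(x)=\lim_{H\to\infty}\frac1{H^k}\sum_{1\le h_1,\dots,h_k\le H}\ \int\ \prod_{\epsilon\in\{0,1\}^k}T^{h_1\epsilon_1+\dots+h_k\epsilon_k}f\ d\mu .
$$
By the ``Gowers form'' expression of the Host--Kra seminorms, obtained by iterating~\eqref{E:first} and~\eqref{E:recur}, the right-hand side equals $\nnorm f_{k,\mu}^{2^k}$, which is $0$, and the proof is complete.

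The only point that is not essentially bookkeeping is this last identity: \eqref{E:recur} directly presents $\nnorm f_{k,\mu}^{2^k}$ as an \emph{iterated} Ces\`aro limit, whereas the display above produces the single ``cube'' Ces\`aro average $\lim_H H^{-k}\sum_{\uh\le H}\int\prod_\epsilon T^{h_1\epsilon_1+\dots+h_k\epsilon_k}f\,d\mu$, and one must know the two agree. This interchange of limits is standard --- it is how the seminorms are presented in~\cite{HKa}, and it can be justified by an application of van der Corput's Lemma --- so I would cite it rather than reprove it.
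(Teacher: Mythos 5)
Your argument is correct and follows the paper's proof almost step for step: the mean ergodic theorem along arbitrary interval sequences, a diagonal extraction over the countably many $\uh$, the identification of $c_\uh(\bI',a(x))$ with $\E\bigl(\prod_{\epsilon}T^{h_1\epsilon_1+\cdots+h_k\epsilon_k}f\,\big|\,\CI\bigr)(x)$, and then averaging in $\uh$. The only (harmless) deviations are at the end: the paper takes the average in $\uh$ pointwise to obtain $\nnorm{a(x)}_{\bI',k}=\nnorm{f}_{k,\mu_x}$ and concludes via \eqref{E:nonergodic}, whereas you integrate in $x$ and use nonnegativity; and the closing identity you propose to cite externally is already available internally, since it is exactly \eqref{eq:CDff} combined with the $L^2$-convergence of $A_N(f)$ to $\cD_kf$, so no separate interchange-of-iterated-limits argument is required.
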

\begin{proof}
Let $\mu=\int\mu_x\,d\mu(x)$ be the ergodic decomposition of $\mu$.
For $x\in X$, we write $a(x)=(a_n(x))_{n\in\N}$ for the sequence defined
by $a_n(x)=f(T^nx)$.

By the Ergodic Theorem, for  every $\uh=(h_1,\cdots,h_k)\in\N^k$,
the averages
$$
 \frac 1{|I_N|}\sum_{n\in I_N}\prod_{\epsilon\in\{0,1\}^k}
a_{n+h_1\epsilon_1+\cdots+h_k\epsilon_k}(x)
$$
converge in $L^2(\mu)$. As a consequence,  a subsequence of this sequence of
averages converges $\mu$-almost everywhere. This subsequence depends on
the parameter $\uh$, but since there are only countably many such
parameters, by a diagonal argument we can find a subsequence $\bI'
=(I'_N)_{N\in\N}$ such that for $\mu$ almost every $x\in X$ the limit
\begin{equation}
\label{eq:avIprime}
 c_{\uh}(\bI',a(x))=\lim_{N\to+\infty}
\frac 1{|I'_N|}\sum_{n\in I'_N}\prod_{\epsilon\in\{0,1\}^k}
a_{n+h_1\epsilon_1+\cdots+h_k\epsilon_k}(x)
\end{equation}
exists for every choice of $\uh=(h_1,\cdots,h_k)\in \N^k$. This
means that, for $\mu$-almost every $x\in X$, the sequence of
intervals $\bI'$ is $k$-adapted to the
 sequence $(a_n(x))_{n\in \N}$.

Furthermore, by the Ergodic Theorem, for every $\uh\in\N^k$ the
averages on the right hand side of~\eqref{eq:avIprime}
converge in $L^2(\mu)$ to
$$
\E_\mu\Bigl(\prod_{\epsilon\in \{0,1\}^k}
 T^{h_1\epsilon_1+\cdots+ h_k\epsilon_k}f
\Big|\CI(T)\Bigr)(x)=
 \int
\prod_{\epsilon\in \{0,1\}^k}
 T^{h_1\epsilon_1+\cdots+h_k\epsilon_k}f\;d\mu_x.
$$
 Therefore,
 for $\mu$-almost every $x\in X$, we have
$$
 c_{\uh}(\bI',a(x))=\int
\prod_{\epsilon\in \{0,1\}^k}
 T^{h_1\epsilon_1+\cdots +h_k\epsilon_k}f\;d\mu_x.
$$
Taking the average in $\uh$, using the definition of $\CD_kf$
(Section~\ref{subsec:dual}),
and~\eqref{eq:CDff}, we get  for $\mu$-almost every $x\in X$ that
$$
 \nnorm {a(x)}_{\bI',k}=\nnorm f_{k,\mu_x}.
$$
Since by hypothesis $\E_\mu(f|\CZ_{k-1})=0$, by~\eqref{E:DefZ_l}
we have $\nnorm f_{k,\mu}=0$, and as a consequence $\nnorm f_{k,\mu_x}=0$ for
$\mu$-almost every $x\in X$ by~\eqref{E:nonergodic}. This completes the proof.
\end{proof}

We are also going to use the following result:
\begin{theorem}[\cite{HKc}, Corollary 2.14]\label{T:HKdirect}
Let  $(a_n)_{n\in\N}$ be a bounded sequence  of real numbers, and
$\bI=(I_N)_{N\in\N}$ be a sequence of intervals that is $k$-adapted to this
sequence for some $k\geq 2$.
Suppose that  $\nnorm{a_n}_{{\bf I},k}=0$.

Then for every bounded $(k-1)$-step nilsequence $u_n$  we have
$$
\lim_{N\to\infty}  \frac{1}{|I_N|}\sum_{n\in I_N} a_n u_n=0.
$$
\end{theorem}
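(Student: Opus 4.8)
The natural approach is a van der Corput induction on $k$, after two reductions. First, since $\bigl|\frac{1}{|I_N|}\sum_{n\in I_N}a_n(u_n-u_n')\bigr|\leq(\sup_n|a_n|)\sup_n|u_n-u_n'|$ and every $(k-1)$-step nilsequence is a uniform limit of basic ones, it suffices to treat $u_n=f(b^nx_0)$ with $f\in\CC(Y)$, $Y=G/\gG$ a $(k-1)$-step nilmanifold, $b\in G$, $x_0\in Y$; and, replacing $Y$ by the closure of the orbit $\{b^nx_0\colon n\in\N\}$ --- a sub-nilmanifold of step $\leq k-1$ on which $(b^nx_0)_{n\in\N}$ is equidistributed for the Haar measure $m_Y$, by Leibman's theorem --- we may assume $(b^nx_0)_{n\in\N}$ is equidistributed in $Y$. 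Second, it is enough to prove the conclusion along an arbitrary subsequence of $\bI$, so, refining $\bI$, we may assume that it is adapted to $(a_nu_n)$ and to all the auxiliary sequences below, and that every average over $I_N$ that we write converges as $N\to\infty$; note that van der Corput's Lemma, whose proof is insensitive to the choice of intervals, applies along a fixed sequence of intervals.

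We prove, by induction on $k\geq 2$, the quantitative statement: for every bounded real sequence $(a_n)$ and every $(k-1)$-step nilsequence $(u_n)$ (with $\bI$ adapted as above),
$$\limsup_{N\to\infty}\Bigl|\frac{1}{|I_N|}\sum_{n\in I_N}a_nu_n\Bigr|\leq C_u\cdot\nnorm{a}_{\bI,k},$$
where $C_u$ depends only on $(u_n)$ and remains bounded when $(u_n)$ ranges over a bounded family of $(k-1)$-step nilsequences carried by a fixed nilmanifold; this clearly implies the theorem. For $k=2$, a $1$-step nilsequence is a uniform limit of trigonometric polynomials, so we may take $u_n=\e(n\theta)$; applying van der Corput's Lemma (in its quantitative form) in the Hilbert space $\C$ to $v_n=a_n\e(n\theta)$ gives
$$\limsup_{N\to\infty}\Bigl|\frac{1}{|I_N|}\sum_{n\in I_N}a_n\e(n\theta)\Bigr|^2\leq\lim_{H\to\infty}\frac1H\sum_{h=1}^{H}|c_h(\bI,a)|,$$
with $c_h(\bI,a)=\lim_{N\to\infty}\frac{1}{|I_N|}\sum_{n\in I_N}a_{n+h}a_n$, and the Cauchy--Schwarz inequality together with the $\ell^\infty$-analogues of \eqref{E:first} and \eqref{E:recur} (which give $\nnorm{a}_{\bI,2}^{4}=\lim_{H\to\infty}\frac1H\sum_{h=1}^{H}|c_h(\bI,a)|^2$) bounds the right-hand side by $\nnorm{a}_{\bI,2}^2$, as required.

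For the inductive step, $k\geq3$, apply van der Corput's Lemma in $\C$ to $v_n=a_nu_n$: writing $a^{(h)}_n=a_{n+h}a_n$ and $u^{(h)}_n=u_{n+h}\overline{u_n}$,
$$\limsup_{N\to\infty}\Bigl|\frac{1}{|I_N|}\sum_{n\in I_N}a_nu_n\Bigr|^2\leq\lim_{H\to\infty}\frac1H\sum_{h=1}^{H}\ \limsup_{N\to\infty}\Bigl|\frac{1}{|I_N|}\sum_{n\in I_N}a^{(h)}_nu^{(h)}_n\Bigr|.$$
One would like to apply the inductive hypothesis to $a^{(h)}$ and $u^{(h)}$; since $u^{(h)}_n=f(b^{n+h}x_0)\,\overline{f(b^nx_0)}=g_h(b^nx_0)$ with $g_h=(f\circ T_{b^h})\cdot\overline f\in\CC(Y)$ bounded by $(\sup_Y|f|)^2$, this requires identifying $(u^{(h)}_n)_n$ with a $(k-2)$-step nilsequence. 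Decompose $g_h$ along the vertical characters of the central torus $G_{k-1}/(G_{k-1}\cap\gG)$: the component of trivial vertical frequency descends to a continuous function on the $(k-2)$-step nilmanifold $G/(G_{k-1}\gG)$, so its contribution to $(u^{(h)}_n)_n$ is a $(k-2)$-step nilsequence on a \emph{fixed} nilmanifold, and the inductive hypothesis applies with a constant uniform in $h$; granting that the contributions of the components of nontrivial vertical frequency are negligible after the Cesàro average in $h$ --- here one uses the equidistribution of $(b^hy)_{h\in\N}$ in $Y$ for every $y\in Y$ --- one obtains
$$\lim_{H\to\infty}\frac1H\sum_{h=1}^{H}\limsup_{N\to\infty}\Bigl|\frac{1}{|I_N|}\sum_{n\in I_N}a^{(h)}_nu^{(h)}_n\Bigr|\leq C\cdot\lim_{H\to\infty}\frac1H\sum_{h=1}^{H}\nnorm{a^{(h)}}_{\bI,k-1},$$
and Hölder's inequality combined with the $\ell^\infty$-recursion $\nnorm{a}_{\bI,k}^{2^k}=\lim_{H\to\infty}\frac1H\sum_{h=1}^{H}\nnorm{a^{(h)}}_{\bI,k-1}^{2^{k-1}}$ bounds the right-hand side by $C\,\nnorm{a}_{\bI,k}^2$, closing the induction.

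I expect the main obstacle to be precisely the treatment of $u^{(h)}_n=u_{n+h}\overline{u_n}$: this sequence is a priori only a $(k-1)$-step nilsequence, and it genuinely fails to be $(k-2)$-step in general (e.g.\ for $u_n=\cos(2\pi n^2\alpha)$ it contains a true $2$-step part), so one must isolate its trivial--vertical--frequency component and dispose of the rest by an equidistribution argument in $h$, all while keeping the inductive constant $C_u$ under uniform control as $h$ varies --- which forces working on the fixed nilmanifold $G/(G_{k-1}\gG)$ with equicontinuous families of symbols. In the companion paper \cite{HKc} this entire step is packaged through a structure theorem for bounded sequences relative to the seminorms $\nnorm{\cdot}_{\bI,k}$. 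Everything else is a mechanical iteration of van der Corput's Lemma, of exactly the kind used repeatedly elsewhere in the paper.
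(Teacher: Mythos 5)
You should first note that the paper contains no proof of this statement: it is imported verbatim from Corollary~2.14 of \cite{HKc}, so there is no internal argument to compare against, and the proof given in \cite{HKc} follows a route quite different from yours. There, one associates to $(a_n)$ and the adapted family $\bI$ an auxiliary measure preserving system (a shift on a sequence space, with invariant measure a weak-star limit of empirical measures along $\bI$) in which the seminorms $\nnorm{\cdot}_{\bI,k}$ become the ergodic seminorms $\nnorm{\cdot}_k$ of a coordinate function; the conclusion is then deduced from the structure theorem for the factors $\CZ_{k-1}$ and properties of nilsystems, not from an induction on nilmanifolds. Your direct van der Corput induction is a legitimate alternative strategy, and its outer layers are sound: the reduction to basic nilsequences, the passage to subsequences of $\bI$, the identity $c_{\uh'}(\bI,a^{(h)})=c_{(\uh',h)}(\bI,a)$ showing that $\bI$ is automatically $(k-1)$-adapted to each $a^{(h)}$, the H\"older step at the end, and the base case $k=2$ (modulo replacing your asserted equality $\nnorm{a}_{\bI,2}^4=\lim_H\frac1H\sum_{h\le H}|c_h(\bI,a)|^2$ by the inequality $\geq$, which is all you use).

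The genuine gap is exactly where you flag it, and the repair you sketch does not work as stated. After van der Corput you face $u^{(h)}_n=u_{n+h}\overline{u_n}=g_h(b^nx_0)$ with $g_h=(f\circ T_{b^h})\cdot\overline{f}$, and you propose to decompose $g_h$ into vertical characters and dispose of the nontrivial-frequency components ``after the Ces\`aro average in $h$'' via equidistribution of $(b^hy)_{h\in\N}$. But for each fixed $h$ the nontrivial-frequency part of $g_h$ is not small, and its correlation with $a^{(h)}_n$ along $I_N$ is an inner limit in $n$ that the outer average in $h$ has no evident mechanism to control: the $h$-dependence sits simultaneously in $a^{(h)}$ and in $T_{b^h}$, and equidistribution of $b^h$ does not decouple them. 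The standard way to close this step is to perform the vertical decomposition \emph{before} van der Corput: approximate $f$ uniformly by a finite sum $\sum_\chi f_\chi$ of single-vertical-frequency functions, split the average over $n$ by the triangle inequality, and apply van der Corput to each $a_nf_\chi(b^nx_0)$ separately. Since $G_{k-1}$ is central, $(f_\chi\circ T_{b^h})\cdot\overline{f_\chi}$ is $G_{k-1}$-invariant for every $h$, hence descends to a continuous function on the $(k-2)$-step nilmanifold $G/(G_{k-1}\gG)$, and the family of these symbols over $h\in\N$ is bounded and \emph{equicontinuous} (the translations $T_{b^h}$ are isometries for a right-invariant metric). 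This last point also matters for your inductive statement: since only the average over $h$ of $\nnorm{a^{(h)}}_{\bI,k-1}^{2^{k-1}}$ is controlled (not each term), you genuinely need the constant $C$ uniform in $h$, and uniformity over a merely sup-norm-bounded family of symbols is not what your argument produces (the number of vertical characters needed to approximate the symbol is not controlled by its sup norm); uniformity over bounded equicontinuous families on a fixed nilmanifold is what the construction delivers and what the induction requires.
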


Combining the results of this section, we can now prove:
\begin{corollary}
\label{cor:uniformnil}
Let $(X,\CX,\mu,T)$ be a system and $f\in L^\infty(\mu)$.
Let $(u_n(x))_{n\in\N}$ be a uniformly bounded sequence of
$\mu$-measurable functions such that,
 for $\mu$-almost every $x\in X$, the sequence $(u_n(x))_{n\in\N}$ is a
$k$-step nilsequence for some $k\geq 1$. Suppose that $f\bot \cZ_{k,T}$.

 Then the averages
$$
\frac{1}{N-M}\sum_{n=M}^{N-1} f(T^nx)\cdot u_n(x)
$$
converge to $0$ in $L^2(\mu)$.
\end{corollary}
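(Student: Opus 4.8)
The plan is to combine Lemma~\ref{lem:subsec} with the nilsequence correlation estimate of Theorem~\ref{T:HKdirect}, and then to pass from pointwise $\mu$-a.e.\ convergence to $L^2$-convergence by dominated convergence. Throughout, let $C$ be a uniform bound for the $u_n$, and write $v_n(x)=f(T^nx)\cdot u_n(x)$, so that $\norm{v_n}_{L^\infty(\mu)}\leq C\norm{f}_{L^\infty(\mu)}$ for every $n$.

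First I would reduce to a statement about a single sequence of intervals. By the conventions of the paper it suffices to show that for every sequence of intervals $\bI=(I_N)_{N\in\N}$ with $|I_N|\to\infty$ one has $\frac1{|I_N|}\sum_{n\in I_N}v_n\to 0$ in $L^2(\mu)$; and for this it is enough to show that every such $\bI$ admits a subsequence $\bI'$ along which this convergence holds. Indeed, if the sequence of real numbers $\norm{\frac1{|I_N|}\sum_{n\in I_N}v_n}_{L^2(\mu)}$ did not tend to $0$, it would have a subsequence bounded below by some $\varepsilon>0$, and the corresponding subsequence of intervals (still with lengths tending to infinity) would then have no further subsequence along which the averages converge to $0$, contradicting the reduced claim.

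Second, apply Lemma~\ref{lem:subsec} with its ``$k$'' replaced by $k+1$: since $f\bot\cZ_{k,T}$ and $k+1\geq 2$, a given $\bI$ admits a subsequence $\bI'=(I'_N)_{N\in\N}$ such that, for $\mu$-almost every $x\in X$, the sequence $\bI'$ is $(k+1)$-adapted to $(f(T^nx))_{n\in\N}$ and $\nnorm{f(T^nx)}_{\bI',k+1}=0$. Enlarging this null set if necessary, we may assume that for every $x$ outside it the sequence $(u_n(x))_{n\in\N}$ is a $k$-step nilsequence (hence bounded). For each such $x$, Theorem~\ref{T:HKdirect}, applied with $a_n=f(T^nx)$, intervals $\bI'$, level $k+1\geq 2$, and the $k$-step nilsequence $u_n=u_n(x)$, yields $\frac1{|I'_N|}\sum_{n\in I'_N}v_n(x)\to 0$ as $N\to\infty$.

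Finally, the functions $x\mapsto\frac1{|I'_N|}\sum_{n\in I'_N}v_n(x)$ are measurable (finite sums of products of measurable functions), uniformly bounded by $C\norm{f}_{L^\infty(\mu)}$, and converge to $0$ $\mu$-a.e.\ by the previous step; hence they converge to $0$ in $L^2(\mu)$ by the dominated convergence theorem. This establishes convergence along $\bI'$ and, by the reduction, completes the proof. I do not anticipate a serious obstacle; the only points needing care are matching the indices between Lemma~\ref{lem:subsec} (which produces $(k+1)$-adaptedness and vanishing of $\nnorm{\cdot}_{\bI',k+1}$) and Theorem~\ref{T:HKdirect} (which consumes a $k$-step nilsequence at level $k+1$), and verifying that a single $\mu$-null set can be chosen to handle simultaneously the adaptedness/seminorm vanishing for $(f(T^nx))_n$ and the nilsequence hypothesis on $(u_n(x))_n$.
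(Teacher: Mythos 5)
Your argument is correct and is essentially identical to the paper's proof: both reduce to subsequences of interval sequences, invoke Lemma~\ref{lem:subsec} at level $k+1$ to get $(k+1)$-adaptedness and vanishing of $\nnorm{f(T^nx)}_{\bI',k+1}$ almost everywhere, apply Theorem~\ref{T:HKdirect} pointwise against the $k$-step nilsequence $(u_n(x))$, and finish with the bounded convergence theorem. The index bookkeeping and the single-null-set remark you flag are handled exactly as you describe.
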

\begin{proof}
It suffices to prove that every sequence of intervals
$\bI=(I_N)_{n\in\N}$ whose lengths tend to infinity admits a
subsequence $\bI'=(I'_N)_{n\in\N}$ such that
\begin{equation}\label{E:zew}
 \frac 1{|I'_N|}\sum_{n\in I'_N} f(T^nx)\cdot u_n(x)\to 0\text{ in
}L^2(\mu).
\end{equation}
Let $\bI'$ be given by Lemma~\ref{lem:subsec} (with $k$ in place of $k-1$). For $\mu$-almost every $x\in X$ we have
$\norm{(f(T^nx))_{n\in \N}}_{\bI',k+1}=0$. Theorem~\ref{T:HKdirect} gives that the averages
in \eqref{E:zew} converge to $0$ pointwise and the asserted convergence to $0$ in $L^2(\mu)$ follows from
 the bounded convergence theorem. This completes the proof.
\end{proof}

\subsection{Some weighted averages}We are going to prove Theorem~\ref{T:CharA} by induction on the number of
transformations involved.  The next result is going to help us carry out the induction step.
\begin{proposition}\label{prop:weighted}
Let $(X,\X,\mu,T_1,\cdots,T_\ell)$ be a  system and
$f_1,\ldots,f_\ell\in L^\infty(\mu)$. Let
 $p_1,\ldots,p_\ell\in \Z[t]$ be polynomials with distinct degrees and highest degree $d=\deg (p_1)$.
Let $(u_n(x))_{n\in\N}$ be a uniformly bounded sequence of
$\mu$-measurable functions such that,
 for $\mu$-almost every $x\in X$, the sequence $(u_n(x))_{n\in\N}$ is an
$s$-step nilsequence for some $s\geq 1$.

Then there  exists $k=k(d,\ell,s)\in \N$ such that: If
    $f_1\perp \cZ_{k,T_1}$, then the averages
$$
\frac{1}{N-M}\sum_{n=M}^{N-1}  f_1(T_1^{p_1(n)}x)\cdot \ldots\cdot f_\ell(T_\ell^{p_\ell(n)}x)\cdot u_n(x)
$$
 converge to $0$ in  $L^2(\mu)$.
\end{proposition}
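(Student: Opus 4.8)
The plan is to mimic the structure of the proof of Proposition~\ref{P:CharB2}, i.e.\ a PET-induction argument based on successive applications of van der Corput's Lemma, but now carrying along the extra weight $u_n(x)$, which we are allowed to assume (after applying Proposition~\ref{L:ApprNil} to the relevant functions, or directly from the hypothesis) is fiberwise an $s$-step nilsequence. First I would package the iterates of $T_1,\dots,T_\ell$ together with the trivial weight into the formalism of Section~\ref{subsec:families}: set up the nice ordered family $(\P_1,\dots,\P_\ell)$ associated to the monomials $p_1,\dots,p_\ell$ of distinct degrees (as in Corollary~\ref{cor:charZT1}, with $\P_1=(p_1,0,\dots,0)$, etc.), and run the van der Corput reduction of Lemma~\ref{L:reduceA'} and Lemma~\ref{L:k(d,m)'}. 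The new feature is that each application of van der Corput's Lemma, when applied to $v_n(x)=f_1(T_1^{p_1(n)}x)\cdots f_\ell(T_\ell^{p_\ell(n)}x)\cdot u_n(x)$, produces inner products $\langle v_{n+h},v_n\rangle$ containing a factor $u_{n+h}(x)\overline{u_n(x)}$; since the $u_n(x)$ form an $s$-step nilsequence for a.e.\ $x$, so does $u_{n+h}(x)\overline{u_n(x)}$ (pointwise in $x$ and in the parameter $h$), with step still bounded by $s$. So after the PET reduction we arrive, for a good set of parameters $h_1,\dots,h_k$, at averages of the form
$$
\frac{1}{N-M}\sum_{n=M}^{N-1} g_1(T_1^{a_1 n+b_1}T_2^{c_2}\cdots T_\ell^{c_\ell}x)\cdot\ldots\cdot g_{\tilde m}(T_1^{a_{\tilde m}n+b_{\tilde m}}T_2^{c_2}\cdots T_\ell^{c_\ell}x)\cdot w_n(x),
$$
where $g_1=f_1$, the $a_i$ are distinct nonzero integers, the remaining iterates of $T_2,\dots,T_\ell$ are constant, $\tilde m\le 2^k m$, and $(w_n(x))_{n}$ is, for a.e.\ $x$, an $s$-step nilsequence (a product of the original weight with conjugated shifts of it, accumulated through the reduction).

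Next I would dispose of this ``linear, single-transformation, weighted'' base case. Here the weight prevents us from citing Theorem~\ref{T:linear} directly, so instead I would invoke the machinery of Section~\ref{S:Charlower}: absorb the other $g_i$'s ($i\ge 2$), which are composed with distinct linear iterates of $T_1$, together with $w_n(x)$ into a single new weight $\tilde w_n(x)$ and show it is still, for a.e.\ $x$, a nilsequence of controlled step. The point is that if $g_i$ is $\cZ_{r,T_1}$-measurable then by Proposition~\ref{L:ApprNil} the sequence $(g_i(T_1^{a_i n+b_i}x))_n$ is (fiberwise) an $r$-step nilsequence, so the product of these with $(w_n(x))_n$ is a nilsequence of step $\le \max(r,s)$; hence Corollary~\ref{cor:uniformnil} (applied to $f_1$ and the weight $\tilde w_n$, with the appropriate step $k'$) shows the average tends to $0$ in $L^2(\mu)$ provided $f_1\perp\cZ_{k',T_1}$. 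To make this reduction legitimate one must first replace each $g_i$, $i\ge2$, by its conditional expectation on some $\cZ_{r,T_1}$; this is exactly the content of Corollary~\ref{cor:charZT1}/Proposition~\ref{P:CharB} applied with the highest-degree iterate playing the role of $T_1$ — it is the previously proven ``characteristic factor for the highest degree iterate'' result, which already handles an arbitrary number of transformations. Tracking the step: if the characteristic-factor level for the $g_i$'s is $r=r(d,\ell)$, then the accumulated weight has step $\le\max(r,s)$ up to a bounded multiple coming from the polynomial substitutions of \cite{L}, so Corollary~\ref{cor:uniformnil} demands $f_1\perp\cZ_{k',T_1}$ for some $k'$ depending only on $d,\ell,s$; unwinding through the $k=k(d,m)$-step PET reduction of Lemma~\ref{L:k(d,m)'} (with $m=\ell$) gives the final $k=k(d,\ell,s)$.

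The main obstacle I expect is bookkeeping the nilsequence structure of the weight through the PET induction, rather than anything conceptually new: one must verify that (i) the van der Corput operation applied to the weighted sequence really does only multiply the weight by conjugated shifts of itself (so the step does not blow up, only the number of factors, which stays bounded by $2^k m$), and (ii) the ``good set of parameters'' produced by Lemma~\ref{L:k(d,m)'} is compatible with the measurability statements of Corollary~\ref{cor:HKM2} and the a.e.-in-$x$ nilsequence property — i.e.\ that the exceptional null set of $x$'s can be chosen uniformly in the finitely many parameter values that actually matter. A secondary, more delicate point is the interchange between ``applying Proposition~\ref{L:ApprNil} to the lower-degree functions $g_i$'' and the van der Corput reduction: one wants the order to be (a) run PET to linearize the $T_1$-iterate and make the $T_2,\dots,T_\ell$ iterates constant, then (b) use Corollary~\ref{cor:charZT1} to project $g_i\mapsto\E(g_i|\cZ_{r,T_i})$, then (c) use Proposition~\ref{L:ApprNil} to replace those by fiberwise nilsequences, then (d) absorb everything into the weight and apply Corollary~\ref{cor:uniformnil}. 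Once this ordering is fixed the estimate on $k(d,\ell,s)$ is a routine (if tower-exponential) composition of the bounds in Lemma~\ref{L:k(d,m)'}, Corollary~\ref{cor:charZT1}, and \cite{L}.
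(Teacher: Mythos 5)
Your plan diverges from the paper's at the crucial point, and the divergence opens a genuine gap. You propose to carry the weight $u_n(x)$ through the PET induction, arguing that each van der Corput step only multiplies the weight by shifts of itself. This is true of the very first inner product $\langle v_{n+h},v_n\rangle$, but the PET reduction of Sections~\ref{S:Char2}--\ref{S:Char} does not stop there: to lower the type one must compose the integrand with a transformation $T_1^{-p(n)}\cdots T_\ell^{-p_\ell(n)}$ that \emph{depends on $n$} and then apply the Cauchy--Schwarz inequality to discard the factor that has become constant in $n$. Under this composition the weight $u_{n+h}(x)u_n(x)$ becomes $u_{n+h}(R_nx)u_n(R_nx)$ with $R_n$ a moving transformation, and for fixed $x$ this is no longer a nilsequence in $n$, nor a constant-in-$n$ factor that Cauchy--Schwarz can pull out. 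So the fiberwise nilsequence structure of the weight is destroyed at the first genuine PET step; your item (i) is not mere bookkeeping --- it fails. A related circularity appears in your base case: to replace $g_i$ ($i\ge2$) by $\E(g_i|\cZ_{r,T_1})$ inside a \emph{weighted} linear average you would already need a weighted version of Theorem~\ref{T:linear}, which is essentially an instance of the statement being proved.

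The paper avoids all of this by decoupling the weight from the PET machinery. Writing $a_n(x)=f_1(T_1^{p_1(n)}x)\cdots f_\ell(T_\ell^{p_\ell(n)}x)$ for the \emph{unweighted} product, it applies the already-established Proposition~\ref{P:CharB} to the shifted products $a_{n+r_1}(x)\cdots a_{n+r_m}(x)$ with $m=2^{s}$ (these correspond to nice families of $\ell m$ polynomials, nice precisely when $r_1\notin\{r_2,\dots,r_m\}$, and niceness uses $\deg(p_1)\ge 2$), passes to a subsequence of intervals to upgrade $L^2$-convergence to a.e.\ convergence, and concludes that the local uniformity seminorm $\nnorm{a(x)}_{\bI',s}$ of~\cite{HKc} vanishes for $\mu$-almost every $x$. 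Theorem~\ref{T:HKdirect} then says that a sequence with vanishing uniformity seminorm does not correlate with nilsequences, which kills the weighted average pointwise in $x$, and the bounded convergence theorem finishes; the case $\deg(p_1)=1$ is handled separately by Corollary~\ref{cor:uniformnil}. If you want to rescue your argument, this is the mechanism you need: run PET only on unweighted objects, and let the uniformity-seminorm correlation estimate absorb the weight at the very end.
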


\begin{proof}
First suppose that  $\deg (p_1)=1$. Then the polynomials $p_2,\ldots,p_\ell$  are all
 constant. The polynomial $p_1$ has the form $p_1(n)=an+b$ for some
 integers $a,b$ with $a\neq 0$. Applying Corollary~\ref{cor:uniformnil} for  $T_1^bf_1$ in place of
  $f$ and $T_1^a$ in place of  $T$, and using \eqref{E:powers} we get
 the announced result
with $k=s+1$.

Therefore, we can assume that $\deg (p_1)\geq 2$. The strategy  of the
proof is the same as in Corollary~\ref{cor:uniformnil}, but instead of the Ergodic Theorem used in the
proof of Lemma~\ref{lem:subsec}, we use Proposition~\ref{P:CharB}.

We assume that $f_1\perp \cZ_{k,T_1}$, where $k$ is the integer
$k(d,\ell,2^s\ell )$ given by Proposition~\ref{P:CharB}.
In order to prove the announced convergence to $0$, it suffices to show that
every sequence of intervals $\bI=(I_N)_{N\in\N}$ admits a subsequence
$\bI'=(I'_N)_{N\in \N}$ such that
\begin{equation}
\label{eq:Inprime}
 \frac 1{|I'_N|}\sum_{n\in I'_N}
 f_1(T_1^{p_1(n)}x)\cdot \ldots\cdot f_\ell(T_\ell^{p_\ell(n)}x)\cdot u_n(x)\
\text{ converges to $0$ in $L^2(\mu)$.}
\end{equation}

We let $m=2^{s}$, and for $x\in X$, let $a(x)=(a_n(x))_{n\in\Z}$ be
the sequence given by
$$
a_n(x)=f_1(T_1^{p_1(n)}x) \cdot \ldots \cdot
f_{\ell}(T_{\ell}^{p_{\ell}(n)}x).
$$

For $r_1,\cdots,r_m\in\Z$, we study the averages
$$
\frac 1{|I_N|}\sum_{n\in I_N} a_{n+r_1}(x)\cdots a_{n+r_m}(x).
$$
Consider the following $\ell$ ordered families   of
polynomials, each consisting of $\ell m$ polynomials:
\begin{align*}
\P_1&= \bigl(p_1(n+r_1),\dots,p_1(n+r_m), 0, \dots,0, \hbox to 1.5cm{\dotfill}, 0,\dots,0\bigr)\\
\P_2&=\bigl( 0, \dots,0,p_2(n+r_1),\dots,p_2(n+r_m),\hbox to
1.5cm{\dotfill},  0,\dots,0\bigr)\\
 \dots&\hbox to 9.7cm{\dotfill}\\
\P_\ell&=\bigl(0,\dots,0,0,\dots,0, \hbox to
1.5cm{\dotfill},p_\ell(n+r_1),\dots,p_\ell(n+r_m)\bigr)
\end{align*}
Using that $\deg(p_1)\geq 2$ and $\deg(p_i)<\deg(p_1)$ for
$i=2,\ldots,\ell$, it is easy to check that this family is nice
except if $r_1\in\{r_2,\cdots,r_m\}$.

 Using Proposition~\ref{P:CharB} (with  $k=k(d,\ell,2^s\ell)$)
we have that the averages
$$
 \frac 1{|I_N|}\sum_{n\in I_N} a_{n+r_1}(x)\cdot\ldots\cdot a_{n+r_m}(x)
$$
 converge to $0$ in $L^2(\mu)$ for every  $r_1,\ldots,r_{m}\in\Z$
with $r_1\notin\{r_2,\ldots,r_m\}$.
As in the proof of
Lemma~\ref{lem:subsec}, there exists a subsequence
$\bI'=(I'_N)_{N\in\N}$ of the  sequence of intervals $\bI$ such that
$$
 \frac 1{|I'_N|}\sum_{n\in I'_N}a_{n+r_1}(x)\cdot\ldots\cdot
a_{n+r_m}(x)\to 0 \ \ \mu\text{-almost everywhere}
$$
for all choices of $r_1,\ldots,r_{m}\in\Z$ with $r_1\notin\{r_2,\ldots,r_m\}$.

In particular, for every $h_1,\cdots,h_s\in\N$, we have
$$
 \frac 1{|I'_N|}\sum_{n\in
I'_N}\prod_{\epsilon\in\{0,1\}^s}a_{n+\epsilon_1h_1+\cdots+\epsilon_sh_s}(x)
\to 0 \ \ \mu\text{-almost everywhere}.
$$
To see this,  apply the previous convergence property when  $\{
r_1,\cdots,r_m\}$ is equal to the set $\big\{\epsilon_1h_1 + \cdots
+ \epsilon_sh_s, \epsilon_i \in \{0,1\}\big\}$ and  $r_1=0$.

 As a consequence, for $\mu$-almost every $x\in X$,  the sequence $\bI'$ of intervals
is $s$-adapted to the sequence $a(x)$,   and $c_{\uh}(\bI',a(x))=0$ for
every $\uh\in\N^s$. Therefore,
$\nnorm{a(x)}_{\bI',s}=0$ for $\mu$-almost every $x\in X$.
By Theorem~\ref{T:HKdirect}, we have
$$
\frac 1{|I'_N|}\sum_{n\in I'_N}a_n(x)\cdot u_n(x)\to 0  \ \ \mu\text{-almost everywhere}
$$
 and~\eqref{eq:Inprime} is proved. This completes the proof.
\end{proof}

\subsection{Proof of Theorem~\ref{T:CharA} }\label{SS:ProofA}
We are now ready to prove Theorem~\ref{T:CharA}. It is a special case (take $u_n$ to be constant)  of the following result:
\begin{theorem}
\label{th:charcnilseq}
Let $(X,\CX,\mu,T_1,\cdots,T_\ell)$ be a system and
$f_1,\cdots,f_\ell\in L^\infty(\mu)$. Let $p_1,\cdots,p_\ell$ be
polynomials with distinct degrees and maximum degree $d$. Let
$(u_n(x))_{n\in\N}$ be a uniformly bounded sequence of measurable
functions on $X$ such that, for $\mu$-almost every $x\in X$, the
sequence $(u_n(x))_{n\in\N}$ is an $s$-step nilsequence.

 Then there exists
$k=k(d,\ell,s)$ with the following property: If $f_i\perp\CZ_{k,T_i}$
for some $i\in\{1,\ldots,\ell\}$, then the
averages
\begin{equation}
\label{eq:weighted}
 \frac{1}{N-M}\sum_{n=M}^{N-1}f_1(T^{p_1(n)}x)\cdot\ldots\cdot f_\ell(T^{p_\ell(n)}x)\cdot u_n(x)
\end{equation}
converge to $0$ in $L^2(\mu)$.
\end{theorem}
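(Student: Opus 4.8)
The plan is to prove Theorem~\ref{th:charcnilseq} by induction on $\ell$, using Proposition~\ref{prop:weighted} to dispose of the distinguished (highest degree) transformation and Proposition~\ref{L:ApprNil} to absorb the remaining ones, one at a time, into the nilsequence weight. We may assume all the $p_j$ are non-constant (a constant iterate $f_j(T_j^{p_j(n)}x)$ is independent of $n$, hence a $0$-step nilsequence in $n$, and can be folded into $u_n$), and, after relabelling the triples $(T_j,f_j,p_j)$, that $\deg(p_1)=d$ is the maximal degree, so $\deg(p_j)<d$ for $j\geq2$. Since all $f_j$ and all $u_n$ are uniformly bounded, replacing a single $f_j$ by an $L^1(\mu)$-close function perturbs the averages in \eqref{eq:weighted} by a uniformly small amount in $L^1(\mu)$, hence --- by interpolation with the uniform $L^\infty$ bound, $\norm{g}_{L^2(\mu)}\leq\norm{g}_{L^1(\mu)}^{1/2}\norm{g}_{L^\infty(\mu)}^{1/2}$ --- by a uniformly small amount in $L^2(\mu)$; I use this freely. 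I also use that the $K$-step nilsequences form an algebra closed under uniform limits (so a product of an $s$-step and a $t$-step nilsequence is a $\max(s,t)$-step nilsequence), and that, by \cite{L}, if $(a_n)$ is a $k$-step nilsequence and $\deg(p)\geq 1$ then $(a_{p(n)})$ is a $(\deg(p)\cdot k)$-step nilsequence --- this is property~\eqref{it:tildepolnil} following Proposition~\ref{L:ApprNil}.

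Let $i\in\{1,\dots,\ell\}$ be an index with $f_i\perp\cZ_{k,T_i}$. If $i=1$, the conclusion is exactly Proposition~\ref{prop:weighted}, with $k$ the integer $k(d,\ell,s)$ supplied there; in particular this settles the base case $\ell=1$, where necessarily $i=1$. Assume now $i\geq2$ (so $\ell\geq2$), and let $k_0=k_0(d,\ell,s)$ be the integer from Proposition~\ref{prop:weighted}. Splitting $f_1=\E_\mu(f_1|\cZ_{k_0,T_1})+\bigl(f_1-\E_\mu(f_1|\cZ_{k_0,T_1})\bigr)$ and applying Proposition~\ref{prop:weighted} to the second summand, which is orthogonal to $\cZ_{k_0,T_1}$ and hence contributes $0$ to the limit, we reduce to the case $f_1\in L^\infty(\cZ_{k_0,T_1},\mu)$, with $\norm{f_1}_{L^\infty(\mu)}$ unchanged. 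Fix $\varepsilon>0$ and apply Proposition~\ref{L:ApprNil} to $f_1$ and $T_1$: there is $\tilde f_1\in L^\infty(\cZ_{k_0,T_1},\mu)$ with $\norm{\tilde f_1}_{L^\infty(\mu)}\leq\norm{f_1}_{L^\infty(\mu)}$, $\norm{f_1-\tilde f_1}_{L^1(\mu)}\leq\varepsilon$, and such that, for $\mu$-almost every $x$, the sequence $(\tilde f_1(T_1^nx))_{n\in\N}$ is a $k_0$-step nilsequence; hence, by property~\eqref{it:tildepolnil} above, for $\mu$-almost every $x$ the sequence $(\tilde f_1(T_1^{p_1(n)}x))_{n\in\N}$ is a $(dk_0)$-step nilsequence. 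By the opening remarks it suffices to prove that the averages \eqref{eq:weighted}, with $f_1$ replaced by $\tilde f_1$, converge to $0$ in $L^2(\mu)$.

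Now set $v_n(x)=u_n(x)\cdot\tilde f_1(T_1^{p_1(n)}x)$. This is a uniformly bounded sequence of $\mu$-measurable functions, and for $\mu$-almost every $x$ it is the product of an $s$-step and a $(dk_0)$-step nilsequence, hence an $s'$-step nilsequence with $s'=\max(s,dk_0)$, a quantity depending only on $d,\ell,s$. The averages in question equal
$$
\frac{1}{N-M}\sum_{n=M}^{N-1} f_2(T_2^{p_2(n)}x)\cdot\ldots\cdot f_\ell(T_\ell^{p_\ell(n)}x)\cdot v_n(x),
$$
which are weighted averages of exactly the type treated by the theorem, now for the $\ell-1$ commuting transformations $T_2,\dots,T_\ell$, polynomials $p_2,\dots,p_\ell$ of distinct degrees and maximal degree $\leq d$, weight $(v_n)$ of step $s'$, and with $f_i\perp\cZ_{k,T_i}$ for the index $i\in\{2,\dots,\ell\}$. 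By the induction hypothesis there is $k_1=k_1(d,\ell-1,s')$ such that, provided $k\geq k_1$, these averages converge to $0$ in $L^2(\mu)$; letting $\varepsilon\to0$ finishes this case. Taking $k(d,\ell,s)$ to be the maximum of the integer supplied by Proposition~\ref{prop:weighted} and of $k_1\bigl(d,\ell-1,\max(s,d\cdot k_0(d,\ell,s))\bigr)$ --- a quantity depending only on $d,\ell,s$ --- then works in all cases and closes the induction. Since Theorem~\ref{T:CharA} is the special case $u_n\equiv1$, it follows at once.

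The one genuinely delicate point is the bookkeeping: one must ensure the step $k$ of the characteristic factor stays controlled by $d,\ell,s$ alone as transformations are peeled off. This is what forces the use of the \emph{quantitative} versions of the earlier results --- Proposition~\ref{prop:weighted}, which rests on Proposition~\ref{P:CharB} and its bound $k=k(d,\ell,m)$ --- together with the observation that the weight's nilpotency step grows only as $s\mapsto\max(s,dk_0)$ at each of the $\ell$ stages. The $L^1\!\to\!L^2$ upgrade of the approximation in Proposition~\ref{L:ApprNil}, and the stability of nilsequences under products and polynomial time-changes, are routine or already recorded in the excerpt.
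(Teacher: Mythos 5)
Your proof is correct and follows essentially the same route as the paper's: induction on $\ell$, using Proposition~\ref{prop:weighted} to handle the highest-degree transformation, Proposition~\ref{L:ApprNil} to replace $f_1$ by a function whose orbit sequence is a nilsequence, and then absorbing $\tilde f_1(T_1^{p_1(n)}x)$ into the weight before invoking the induction hypothesis. Your explicit case split on the index $i$, the $L^1$-to-$L^2$ interpolation step, and the careful tracking of $k(d,\ell,s)$ merely spell out bookkeeping the paper leaves implicit.
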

\begin{proof}
The proof goes by induction on the number $\ell$ of transformations.
For $\ell=1$, the result is the case $\ell=1$ of
Proposition~\ref{prop:weighted}. We take $\ell\geq 2$, assume that
the results holds for $\ell-1$ transformations, and we are going to prove that it
holds for $\ell$ transformations.

Without loss of generality we can assume that
$\deg(p_1)=d>\deg(p_i)$ for $2\leq i\leq\ell$.
By  Proposition~\ref{prop:weighted}, there exists $k_0=k_0(d,\ell,s)$ such that,
if $f_1\perp\CZ_{k_0,T_1}$, then the averages~\eqref{eq:weighted}
converge to $0$ in $L^2(\mu)$.
Therefore we can restrict ourselves to the case  where
\smallskip

\centerline{\emph{the function $f_1$ is measurable
with respect to $\CZ_{k_0,T_1}$.}}

\smallskip

By Proposition~\ref{L:ApprNil}, for every $\varepsilon >0$, there exists
$\tilde{f_1}\in L^\infty(\mu)$, measurable with respect to
$\CZ_{k_0,T_1}$, with
$\norm{f_1-\tilde{f_1}}_{L^2(\mu)}<\varepsilon$, and such that
$(\tilde{f_1}(T_1^nx))_{n\in\N}$ is a $k_0$-step nilsequence for
$\mu$-almost every $x\in X$.
By density, it suffices to prove the result under the additional
hypothesis that
\smallskip

\centerline{\emph{$(f_1(T_1^nx))_{n\in\N}$ is a $k_0$-step nilsequence for
$\mu$-almost every $x\in X$.}}

\smallskip
Then for $\mu$-almost every $x\in X$, the sequence
$(f_1(T_1^{p_1(n)}x))_{n\in\N}$ is a $(dk_0)$-step nilsequence. The
sequence
$(f_1(T_1^{p_1(n)}x)\cdot u_n(x))_{n\in\N}$ is the product of two
$k$-step nilsequences where $k=\max(dk_0,s)$  and thus it is a $k$-step
nilsequence. Therefore, the announced result follows from the induction
hypothesis. This completes the induction and the proof.
\end{proof}

\subsection{Proof of Theorem~\ref{T:Conv}}\label{SS:proof}
Let $(X,\X,\mu,T_1,\cdots,T_\ell)$ be a system and
$f_1,\ldots,f_\ell\in L^\infty(\mu)$. We assume  that the
polynomials $p_1,\ldots,p_\ell\in \Z[t]$ have distinct degrees and
we want to show that the averages
\begin{equation}
\label{eq:conv}
\frac{1}{N-M}\sum_{n=M}^{N-1} f_1(T_1^{p_1(n)}x)\cdot \ldots\cdot f_\ell(T_\ell^{p_\ell(n)}x)
\end{equation}
converge in $L^2(\mu)$.

By Theorem~\ref{T:CharA}, there exists $k\in\N$ such that the
averages~\eqref{eq:conv} converge to $0$ whenever
$f_i\perp\CZ_{k,T_i}$ for some  $i\in\{1,\cdots,\ell\}$. Therefore,
we can assume that for $i=1,\ldots,\ell$, the function $f_i$ is
measurable with respect to $\CZ_{k,T_i}$.

By Proposition~\ref{L:ApprNil}, for every $\varepsilon >0$, and for
$i=1,\cdots,\ell$, there exists  a function $\tilde{f_i}\in
L^\infty(\mu)$, measurable with respect to $\CZ_{k,T_i}$, with
$\norm{f_i-\tilde{f_i}}_{L^2(\mu)}<\varepsilon$, and such that
$(\tilde{f_i}(T^nx))_{n\in\N}$ is a $k$-step nilsequence for
$\mu$-almost every $x\in X$.

By density we can therefore assume that, for  $i=1,\ldots,\ell$, and for $\mu$-almost every
$x\in X$, $(f_i(T^nx))_{n\in\N}$ is a
$k$-step nilsequence and as a consequence $(f_i(T^{p_i(n)}x))_{n\in\N}$ is a
$(dk)$-step nilsequence. Then
for $\mu$-almost every $x\in X$, the average \eqref{eq:conv} is an average of a $(dk)$-step nilsequence,
and therefore it converges by \cite{L}. This completes the proof. \qed

\section{Lower bounds for powers}\label{S:lower}

In this section we are going to prove Theorem~\ref{T:LowerBounds}.

We remark that a consequence of
Theorem~\ref{T:Conv} is that all the limits of multiple ergodic averages
mentioned in this section exist (in $L^2(\mu)$).  As a result,  we are allowed to write $\lim_{N-M\to\infty}$,
where $\limsup_{N-M\to\infty}$ should have been used.

We start with some background material.

\subsection{Equidistribution properties on nilmanifolds}\label{SS:equidistribution}
We summarize some notions and results  that will be needed later.

\subsubsection*{Polynomial sequences.}
Let $G$ be a nilpotent Lie group.
Let  $X=G/\Gamma$ be a nilmanifold, where $\Gamma$ is a discrete
cocompact  subgroup of $G$.
Recall that for $a\in G$ we write $T_a\colon X\to X$ for the
\emph{translation} $x\mapsto ax$.

If $a_1,\ldots, a_\ell\in G$, and
$p_1, \ldots, p_\ell \in \Z[t]$, then a sequence of the form
 $g(n) =a_1^{p_1(n)}a_2^{p_2(n)}\cdots$ $a_\ell^{p_\ell(n)}$
  is called a {\it polynomial sequence in $G$}.
 If $x\in X$ and $(g(n))_{n\in\N}$ is a polynomial
  sequence in $G$, then the sequence
 $(g(n)x)_{n\in\N}$
  is called a {\it polynomial sequence in $X$}.

\subsubsection*{Sub-nilmanifolds.}
If $H$ is a closed subgroup of $G$ and $x\in X$, then $Hx$ may not be a  closed
subset of $X$ (for example, take $X=\R/\Z$, $x=\Z$, and
$H=\{k\sqrt{2}\colon k\in \Z\}$), but if it is closed, then the compact set $Hx$  can be given the structure of a nilmanifold (\cite{L}).
More precisely, if $x=g\Gamma$, then  $Hx$ is closed if and only if
$\Delta=H\cap g\Gamma g^{-1}$ is cocompact in $H$. In this case
 $Hx\simeq
H/\Delta$, and $h\mapsto hg\Gamma $  induces the isomorphism from $H/\Delta$ onto $Hx$. We call any such set $Hx$ a {\it
sub-nilmanifold} of $X$.

\subsubsection*{Equidistribution.} We say that the sequence $(g(n)x)_{n\in\N}$, with
values in a nilmanifold $X$, is \emph{equidistributed} (or \emph{well distributed})
in a sub-nilmanifold $Y$ of $X$,
 if for every $F\in \CC(X)$ we have
$$
\lim_{N-M\to\infty}\frac{1}{N-M}\sum_{n=M}^{N-1} F(g(n)x)=\int F\ dm_{Y}
$$
where $m_{Y}$ denotes the normalized Haar measure on $Y$.

For typographical reasons, we use the following
notation:
\begin{notation} If $E$ is a subset of $X$, we denote by  $\cl_X (E)$
the closure of $E$  in $X$.
\end{notation}
A fact that we are going to use repeatedly is that polynomial sequences are equidistributed
in their orbit closure. More precisely:
\begin{theorem}[{\bf  \cite{L}}]\label{T:L1}
Let  $X = G/\gG$ be a  nilmanifold,  $(g(n)x)_{n\in\N}$ be a polynomial
sequence in $X$, and  $Y=\cl_X\{g(n)x, n\in\N\}$.
\begin{enumerate}
\item\label{it:TL1ii}
 There exists $r\in \N$ such that  the
sequence $(g(rn)x)_{n\in\N}$ is equidistributed on some connected component of $Y$.
\item\label{it:TL1i}
If $Y$ is connected, then $Y$ is a sub-nilmanifold of $X$, and for every $r\in \N$ the sequence
$(g(rn)x)_{n\in\N}$ is equidistributed on $Y$.
\end{enumerate}
\end{theorem}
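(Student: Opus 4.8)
The approach I would take follows the inductive strategy by which such nilmanifold equidistribution statements are established (this is exactly the content of \cite{L}). First I would make the standard reductions: replacing $G$ by the closed subgroup generated by $\gG$ together with the finitely many elements $a_1,\dots,a_k$ occurring in $g(n)=a_1^{p_1(n)}\cdots a_k^{p_k(n)}$, one arranges that the orbit in question is "full", and a covering argument reduces matters to $G$ connected and simply connected, so that Malcev coordinates are available. A further convenient reduction is the prolongation trick: a polynomial orbit $(g(n)x)$ in $X$ can be realized as the image of a \emph{linear} orbit $(b^n\widehat x)$ in an auxiliary connected nilmanifold $\widehat X=\widehat G/\widehat\gG$ that fibers over $X$, so it suffices to treat linear orbits — though the bulk of the work remains.

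The heart is an induction on (roughly) $\dim G$. Choose a nontrivial connected central subgroup $Z\le G$ with $Z\cap\gG$ cocompact in $Z$, set $\bar G=G/Z$, $\bar X=\bar G/\bar\gG$, and let $\pi\colon X\to\bar X$ be the factor map, whose fibers are tori $Z/(Z\cap\gG)$. The induction hypothesis, applied to the projected sequence, gives after replacing $n$ by $rn$ that $\pi(g(n)x)$ is equidistributed in a sub-nilmanifold $\bar Y$ of $\bar X$. To lift this, decompose an arbitrary $F\in\CC(X)$ into its vertical Fourier modes under the $Z$-action. The zero mode is pulled back from $\bar X$ and is handled by induction. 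For a nonzero mode, $F(zx)=\chi(z)F(x)$ with $\chi$ a nontrivial character of the fiber torus, and van der Corput's Lemma reduces the averages of $\int F(g(n)x)\,dm_X$ to controlling the averages in $n$ of $\int F(g(n+h)x)\overline{F(g(n)x)}\,dm_X$; here the group property of polynomial sequences enters, since $n\mapsto g(n+h)g(n)^{-1}$ is again polynomial, and because $Z$ is central the $\chi$-twists cancel, so this inner average is a polynomial-orbit average on a nilmanifold in which the relevant central direction has been eliminated — a lower-complexity instance of the theorem, giving $0$ for all but a small set of $h$. This forces the limiting distribution of $(g(rn)x)$ to charge only functions coming from $\bar Y$ together with at most one vertical frequency, which pins down the orbit closure $Y$ as a component of either $\pi^{-1}(\bar Y)$ or a codimension-one twist of it over $\bar Y$ — in either case a sub-nilmanifold — and identifies the limit measure with its Haar measure.

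Part (i) then comes out of the same induction, which delivers a decomposition of $Y$ into finitely many sub-nilmanifolds cyclically permuted by $n\mapsto n+1$; taking $r$ to be (a multiple of) the number of pieces, $(g(rn)x)$ stays in a single component $Y_0$, now connected, and the lifting argument above gives equidistribution on $Y_0$. For part (ii), when $Y$ is already connected the induction shows $Y$ is a sub-nilmanifold and $(g(rn)x)$ equidistributes on $Y$ for some $r$; one then upgrades "$\exists r$'' to "$\forall r$'' by observing that if some $(g(r'n)x)$ had closure $Y'\subsetneq Y$, then $Y=\bigcup_{j<r'}\cl_X\{g(r'n+j)x\}$ would be a finite union of sub-nilmanifolds of positive codimension, contradicting connectedness of $Y$ via the Weyl-type computation above.

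The step I expect to be the main obstacle is making the "complexity reduction'' in the van der Corput step precise: in the nonabelian setting one must track how the derived sequences $g(n+h)g(n)^{-1}$ interact with the central series, which is why one should work throughout with polynomial sequences \emph{adapted to a filtration} rather than a naive degree, and set the induction up on a composite invariant (dimension together with filtration data) rather than on $\dim G$ alone. Once the Weyl-type estimates are in place, the surrounding topological bookkeeping — that orbit closures of polynomial sequences are finite unions of sub-nilmanifolds, and the measurable identification of the limit with a Haar measure — is routine.
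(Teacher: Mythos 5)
This statement is not proved in the paper at all: it is quoted verbatim as a known result of Leibman (reference \cite{L}), so there is no internal proof to compare against. Your sketch is, in outline, the standard argument from that literature -- reduction to connected, simply connected $G$, the central-subgroup induction with vertical Fourier decomposition and van der Corput, the torus/abelianization criterion (which the paper separately imports as Theorem~\ref{T:L2}), and the decomposition of the orbit closure into finitely many cyclically permuted connected sub-nilmanifolds -- so as a reconstruction of the cited proof it is on the right track.

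One step, however, is not sound as you state it: the upgrade from ``there exists $r$'' to ``for every $r$'' in part~(ii). You argue that if $\cl_X\{g(r'n)x\}$ were a proper subset of $Y$, then $Y=\bigcup_{j<r'}\cl_X\{g(r'n+j)x\}$ would be a union of positive-codimension sub-nilmanifolds, contradicting connectedness. But only the $j=0$ piece is assumed proper; a measure/dimension count forces \emph{some} $j_0$ with $\cl_X\{g(r'n+j_0)x\}=Y$, which is perfectly consistent with connectedness of $Y$ and does not contradict the $j=0$ piece being proper (the residue-class orbit closures are not translates of one another for a general polynomial sequence). The correct mechanism is the equidistribution criterion of Theorem~\ref{T:L2}: for $Y$ connected, equidistribution of $(g(n)x)$ in $Y$ is equivalent to equidistribution of its projection in the affine torus of $Y$, and on a torus Weyl's theorem shows that a polynomial sequence equidistributed in a connected subtorus remains so after the substitution $n\mapsto rn$ (the relevant coefficients stay irrational); lifting back through the criterion gives the statement for every $r$. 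You gesture at ``the Weyl-type computation above,'' but the connectedness-contradiction framing should be replaced by this argument.
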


\subsubsection*{Ergodic elements.}
An element  $a\in G$ is \emph{ergodic},
or \emph{acts ergodically on $X$},  if the sequence $(a^n\Gamma)_{n\in\N}$ is dense
in $X$.

Suppose that   $a\in G$ acts ergodically on $X$.
 Then  for
every $x\in X$ the sequence $(a^nx)_{n\in\N}$ is equidistributed in $X$.
  If  $X$ is assumed to be connected,  then
 for every $r\in \N$ the element  $a^r$ also acts ergodically on $X$ (this follows from part (iii) of Theorem~\ref{T:L1}).
For general nilmanifolds $X$ we can easily  deduce  the following result (with $X_0$ we denote the connected component of the element $\Gamma$): There exists $r_0\in\N$ such that the nilmanifold $X$ is the disjoint union of the sub-nilmanifolds $X_i=a^iX_0$, $i=1,\ldots,r_0$, and $a^{r}$ acts ergodically on each $X_i$ for every $r\in r_0\N$.

\subsubsection*{The affine torus.} If $X=G/\Gamma$ is a connected nilmanifold, the {\em affine torus} of
$X$ is defined to be the homogeneous space $A=G/([G_0,G_0]\Gamma)$, where by $G_0$ we denote the connected component
  of the identity element in $G$.
The homogeneous space $A$ can be smoothly identified in a natural way with
the nilmanifold $G_0/([G_0,G_0](\Gamma\cap G_0))$,
which is a
finite dimensional torus, say $\T^m$ for some $m\in\N$.
It is known (\cite{FK1}) that, under this identification, $G$ acts on
$A$ by unipotent affine transformations.
This means that every  $T_g\colon \T^m\to \T^m$
has the form
$Tx =  Sx+b$, for some unipotent homomorphism $S$ of $\T^m$
and $b\in \T^m$.

\subsubsection*{Equidistribution criterion.}
If $X=G/\Gamma$ is a nilmanifold,
then $X$ is connected if and only if $G=G_0\Gamma$.
In the sequel we need to establish
  some
equidistribution properties of polynomial sequences on
nilmanifolds. The next criterion  is going to  simplify our task:

\begin{theorem}[{\bf \cite{L}}]\label{T:L2}
Let  $X = G/\gG$ be a connected nilmanifold,  $(g(n))_{n\in\N}$ be a polynomial
sequence in $G$, and $x\in X$. Let   $A=G/([G_0,G_0]\Gamma)$  be the affine torus of $X$
and  $\pi_A\colon
X\to A$ be the natural projection.

Then the sequence
$(g(n)x)_{n\in\mathbb{N}}$ is equidistributed in $X$ if and only if
the sequence $(g(n)\pi_A(x))_{n\in\mathbb{N}}$ is equidistributed in $A$.
\end{theorem}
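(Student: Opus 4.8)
\emph{Plan.} The statement is an equivalence of two equidistribution properties, and the two implications have very different natures. The forward direction (``equidistributed in $X$ $\Rightarrow$ equidistributed in $A$'') is routine: $\pi_A\colon X\to A$ is a continuous surjection that conjugates the translation $T_{g(n)}$ on $X$ to the affine map induced by $g(n)$ on $A$, and the pushforward $(\pi_A)_*m_X$ is a translation-invariant probability measure on $A$, hence equals $m_A$. So for every $F\in\CC(A)$,
\[
\frac1{N-M}\sum_{n=M}^{N-1}F\bigl(g(n)\pi_A(x)\bigr)=\frac1{N-M}\sum_{n=M}^{N-1}(F\circ\pi_A)\bigl(g(n)x\bigr)\longrightarrow\int_X F\circ\pi_A\,dm_X=\int_A F\,dm_A .
\]

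The substance is the converse, which I would reduce to one purely algebraic fact via Leibman's equidistribution theorem. Assume $(g(n)\pi_A(x))_{n\in\N}$ is equidistributed in the connected nilmanifold $A$; then its orbit closure in $A$ is all of $A$, so Theorem~\ref{T:L1}(ii) (applied to this polynomial sequence in $A$) gives that $(g(rn)\pi_A(x))_{n\in\N}$ is equidistributed in $A$ for \emph{every} $r\in\N$. On the other hand, Theorem~\ref{T:L1}(i) applied in $X$ provides an $r\in\N$ such that $(g(rn)x)_{n\in\N}$ is equidistributed on a connected component $Y_0$ of $\cl_X\{g(n)x:n\in\N\}$; in particular $Y_0=\cl_X\{g(rn)x:n\in\N\}$, which is then a connected sub-nilmanifold of $X$ by Theorem~\ref{T:L1}(ii). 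Since $\pi_A$ is continuous and $X$ compact, $\pi_A(Y_0)=\cl_A\{g(rn)\pi_A(x):n\in\N\}=A$. Granting the Claim below, this forces $Y_0=X$; hence $\cl_X\{g(n)x:n\in\N\}\supseteq Y_0=X$ is connected, and a last application of Theorem~\ref{T:L1}(ii) with $r=1$ shows that $(g(n)x)_{n\in\N}$ is equidistributed in $X$.

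\emph{Claim: a connected sub-nilmanifold $Y_0\subseteq X$ with $\pi_A(Y_0)=A$ equals $X$.} To prove this I would use the standard structure of sub-nilmanifolds. Since $X$ is connected, $G=G_0\Gamma$, and after conjugating $\Gamma$ (which transforms $A$ and $\pi_A$ naturally) I may assume $Y_0$ contains the base point, so $Y_0=H\Gamma/\Gamma$ for a closed \emph{connected} subgroup $H\le G_0$ with $\Gamma_0:=H\cap\Gamma$ cocompact in $H$; also $A\cong G_0/\bigl([G_0,G_0]\Gamma_0\bigr)$. Unwinding, $\pi_A(Y_0)=A$ means $H[G_0,G_0]\Gamma_0=G_0$. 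Now $N:=H[G_0,G_0]$ is a closed connected subgroup containing $[G_0,G_0]$, hence normal in $G_0$ with $G_0/N$ a connected abelian Lie group $\cong\R^k$; the relation $N\Gamma_0=G_0$ says the image of $\Gamma_0$ in $\R^k$ is all of $\R^k$, which is impossible unless $k=0$ because $\Gamma_0$ is finitely generated. Thus $H[G_0,G_0]=G_0$, and since $G_0$ is nilpotent the elementary fact ``$H[G_0,G_0]=G_0\Rightarrow H=G_0$'' (descend the lower central series: $[G_0,G_0]\subseteq HG_3$, so $G_0=HG_3=HG_4=\cdots=H$) gives $H=G_0$, i.e.\ $Y_0=X$.

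\emph{Main obstacle.} The reduction via Theorem~\ref{T:L1} is clean, so the real work is the Claim: showing that ``being as large as the affine torus'' already forces a connected sub-nilmanifold to be everything. This is where the nilpotency of $G$ and the arithmetic of the lattice genuinely enter (the finitely-generated-image argument and the lower-central-series descent), and it is precisely the point where one relies on the structure theory of (sub-)nilmanifolds from \cite{L}; the disconnected case is handled painlessly above by working with a single component $Y_0$ and using the ``for every $r$'' clause of Theorem~\ref{T:L1}(ii).
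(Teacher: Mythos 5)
The paper does not prove this statement at all --- it is imported verbatim from Leibman's paper \cite{L} --- so there is nothing to compare your argument against except the citation itself. Taken on its own terms, your derivation is sound in outline: the forward direction via $(\pi_A)_*m_X=m_A$ is correct, and the reduction of the converse to Theorem~\ref{T:L1} together with the algebraic Claim (a connected sub-nilmanifold surjecting onto the affine torus must be all of $X$) is a legitimate and rather clean route, given that the present paper treats Theorem~\ref{T:L1} as a black box. Two caveats. First, this reverses the logical order of \cite{L}, where the torus criterion is established before, and used in the proof of, the orbit-closure theorem; so your argument could not replace the citation inside \cite{L} itself, though it is unobjectionable here. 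Second, you define $\Gamma_0=H\cap\Gamma$ but then use it where $\Gamma\cap G_0$ is meant: unwinding $\pi_A(Y_0)=A$ gives $H[G_0,G_0](\Gamma\cap G_0)=G_0$, not the corresponding statement with $H\cap\Gamma$. Fortunately $\Gamma\cap G_0$ is still a finitely generated lattice in $G_0$, so the intended argument survives this slip.

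The one genuine gap is in the countability step of the Claim. You pass to $N=H[G_0,G_0]$ and argue that $G_0/N$ is a connected abelian Lie group covered by the countable image of a lattice, hence trivial. This needs $N$ to be \emph{closed} in $G_0$: a connected subgroup of a Lie group need not be closed (the image of $H$ in $G_0/[G_0,G_0]\cong\R^a\times\T^b$ can be a non-closed winding line when $b\geq 1$), and for a non-closed normal $N$ the abstract quotient $G_0/N$ can be a countable group without $N$ being all of $G_0$, so the countability argument does not close by itself. The gap is fixable in standard ways: either invoke the usual reduction to the case where $G_0$ is simply connected (there every connected Lie subgroup is closed and $G_0/[G_0,G_0]\cong\R^k$, so your argument runs as written), or argue on Lie algebras --- the continuous surjective homomorphism $H\to A\cong\T^m$ is open, hence a submersion at the identity, giving $\mathfrak{h}+[\mathfrak{g}_0,\mathfrak{g}_0]=\mathfrak{g}_0$, and the lower-central-series descent you already describe then yields $\mathfrak{h}=\mathfrak{g}_0$, i.e.\ $H=G_0$. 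With that repair the proof is complete.
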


\subsection{An  example} In order to explain the  strategy of the proof of Theorem~\ref{T:LowerBounds} we
use an example.
 Our goal  is to show that for a given system $(X,\X,\mu,T_1,T_2)$, and set $A\in \X$, for
 every $\varepsilon>0$,  we have
 $$
 \mu(A\cap T_1^{-n}A\cap T_2^{-n^2}A)\geq \mu(A)^3 -\varepsilon
 $$
for a set of $n\in\N$ that has bounded gaps.

After some manipulations that are explained in Section~\ref{SS:start}, we are left with showing that
 if $f_1\bot \krat (T_1)$ or $f_2\bot \krat(T_2)$, then the averages of
\begin{equation}\label{E:simple}
 f_1(T_1^nx)\cdot f_2(T_2^{n^2}x)
 \end{equation}
 converge to $0$ in $L^2(\mu)$. In fact, we are only going to be able to prove a somewhat more technical variation
  of this property (see Proposition~\ref{P:OptimalChar}), but the exact details are not  important at this point.

By Theorem~\ref{T:CharA} we can assume that the function $f_1$ is $\cZ_{k,T_1}$-measurable and the function $f_2$ is $\cZ_{k,T_2}$-measurable for some $k\in \N$. For convenience, we also assume that the transformation $T_1$ is totally  ergodic (meaning $T_1^r$ is ergodic for every $r\in \N$). In this case,
using  Theorem~\ref{T:Structure} and an approximation argument, we can further reduce matters to the case where
$X$ is a connected nilmanifold,  $\mu=m_X$,  and $T_1=T_a$ is an ergodic translation on $X$. The assumption that
$X$ is connected is important, and is a consequence of our simplifying assumption that the transformation $T_1$ is totally ergodic.
 Also, by Proposition~\ref{L:ApprNil}, we can assume
that for $m_X$-almost every $x\in X$ the sequence $u_n(x)=f_2(T_2^{n}x)$ is a finite step nilsequence.

After doing all these maneuvers our new goal becomes to establish the following result:

{\bf (a)} Let $X$ be a connected nilmanifold, $a$ be an ergodic translation of $X$, and $\int f_1\ dm_X=0$.
Let  $(u_n)_{n\in\N}$ be a uniformly bounded sequence of measurable functions such that $(u_n(x))_{n\in\N}$ is a nilsequence
for $m_X$-almost every $x\in X$. Then the averages of
$$
 f_1(a^nx)\cdot u_{n^2}(x)
 $$
 converge to $0$ in $L^2(m_X)$. (The conclusion fails if $X$ is not connected.)

  It is easy to see that (a)   follows from the following  result:

{\bf (a)'} Let $X$ be a connected nilmanifold  and $a$ be an ergodic translation of $X$. Let
  $Y$ be a nilmanifold and $b$ be  an ergodic translation of $Y$.  Then for $m_X$-almost
  every $x\in X$ we have: for every nilmanifold $Y$, every ergodic translation $b$ of $Y$, and   every $y\in Y$, the sequence
  $$
  (a^nx,b^{n^2}y)
  $$
  is equidistributed on the nilmanifold $X\times Y$.

  We prove a variation of this result that suffices for our purposes in Lemma~\ref{L:NilEqui}.
  This is the heart of our argument, and  we prove it by (i)  showing that it suffices to verify
  the announced equidistribution property when each translation  $a$ and $b$ is given by an
  ergodic unipotent affine
  transformation on some finite dimensional torus, and then (ii) verify the announced equidistribution
   property for affine transformations
   using direct computations (see Lemma~\ref{L:affine}). It is in this second step that we make crucial
  use of the special structure of our polynomial iterates; our argument does not quite work for some
  other distinct degree polynomials  iterates like $n$ and $n^2+n$. The key observation is that since all the coordinates of the sequence $(a^nx)$ have non-trivial linear part, and those of  $(b^{n^2}y)$ have trivial linear part, for typical values of $x\in X$, it is impossible for the coordinates of the sequences $(a^nx)$ and $(b^{n^2}y)$ to ``conspire'' and complicate the equidistribution properties of the sequence $(a^nx,b^{n^2}y)$.

If the transformation $T_1$ is ergodic but not totally ergodic, then further technical issues arise, but  they are not hard to overcome. If  $T_1$ is not ergodic, then it is possible to use its ergodic decomposition, and the previously established ergodic result per ergodic component, to deduce the result for $T_1$.
Finally, if  $f_2\bot \krat(T_2)$, we first use the previously established result to reduce matters to the case where the function $f_1$ is $\krat(T_1)$-measurable, and then it becomes an easy matter to show that the averages of  \eqref{E:simple} converge to $0$ in $L^2(\mu)$.

\subsection{Proof of Theorem~\ref{T:LowerBounds} modulo a convergence result}\label{SS:start}
We are going to derive Theorem~\ref{T:LowerBounds} from the following result (that will be proved in the next subsection):
\begin{proposition}\label{P:OptimalChar}
Let $(X,\X,\mu,T_1,\cdots,T_\ell)$ be a system. Let
$d_1,\ldots,d_\ell\in \N$ be distinct and $f_1,\ldots,f_\ell\in
L^\infty(\mu)$. Suppose  that $f_i\perp\krat(T_i)$ for some
$i=1,\ldots,\ell$.

Then  for every $\varepsilon>0$, there exists $r_0\in \N$, such that  for every $r\in r_0\N$, we have
\begin{equation}\label{E:approx}
\lim_{N-M\to\infty}\norm{\frac{1}{N-M}\sum_{n=M}^{N-1} f_1(T_1^{(rn)^{d_1}}x)\cdot \ldots\cdot f_\ell(T_\ell^{(rn)^{d_\ell}}x)}_{L^2(\mu)}
\leq \varepsilon.
\end{equation}
\end{proposition}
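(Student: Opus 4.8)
The plan is to reduce Proposition~\ref{P:OptimalChar}, through a chain of standard maneuvers, to an equidistribution statement for polynomial orbits on nilmanifolds, which is where the real content lies. Throughout I normalize $\|f_j\|_{L^\infty(\mu)}\le 1$.

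\textbf{Induction on $\ell$; the easy cases.} Fix an index $i$ with $f_i\perp\krat(T_i)$ and argue by induction on $\ell$, distinguishing whether $d_i=\min_j d_j$. If $d_i$ is not minimal, pick $j_0\ne i$ with $d_{j_0}=\min_j d_j$, write $f_{j_0}=g+f_{j_0}'$ with $g=\E(f_{j_0}\mid\krat(T_{j_0}))$ and $f_{j_0}'\perp\krat(T_{j_0})$, and split the product accordingly. The piece carrying $f_{j_0}'$ is handled by the minimal-degree case below (with $j_0$ as distinguished index). For the piece carrying $g$: approximating $g$ in $L^2$ by a linear combination of rational eigenfunctions of $T_{j_0}$ and taking $r_0$ divisible by all denominators appearing, one gets $g(T_{j_0}^{(rn)^{d_{j_0}}}x)=g(x)$ up to an $\varepsilon$-error in $L^2$, uniformly in $n$, for every $r\in r_0\N$ (since then $(rn)^{d_{j_0}}$ is a multiple of each denominator); so this piece reduces, up to $\varepsilon$, to $g(x)$ times the $(\ell-1)$-function average over $\{f_j\}_{j\ne j_0}$, which still has $f_i\perp\krat(T_i)$, and the induction hypothesis applies (after enlarging $r_0$ to a common multiple). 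The base case $\ell=1$ is immediate: $\tfrac{1}{N-M}\sum_n f_1(T_1^{(rn)^{d_1}}x)=\tfrac{1}{N-M}\sum_n (T_1^{r^{d_1}})^{n^{d_1}}f_1$ converges, by the classical single-transformation theory of polynomial averages (see \cite{Be}), to the conditional expectation of $f_1$ on a subfactor of $\krat(T_1^{r^{d_1}})\subseteq\krat(T_1)$, hence to $0$.

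\textbf{The main case $d_i=\min_j d_j$: reduction to nilmanifolds.} By Theorem~\ref{T:CharA} there is $k=k(\max_j d_j,\ell)$ so that, after splitting each $f_j=\E(f_j\mid\cZ_{k,T_j})+(f_j-\E(f_j\mid\cZ_{k,T_j}))$, every product-term containing a factor orthogonal to the relevant $\cZ_{k,T_j}$ has average $0$; since $\krat(T_i)\subseteq\cZ_{k,T_i}$, we may therefore assume each $f_j$ is $\cZ_{k,T_j}$-measurable, still with $f_i\perp\krat(T_i)$. Applying Proposition~\ref{L:ApprNil} to the $f_j$, $j\ne i$, using density of the approximants and the fact \cite{L} that a nilsequence composed with an integer polynomial is again a nilsequence, we may further assume that for $\mu$-a.e.\ $x$ the weight $u_n(x):=\prod_{j\ne i}f_j(T_j^{(rn)^{d_j}}x)$ is a bounded nilsequence of step bounded independently of $r$. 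Thus we must show: if $f:=f_i\perp\krat(T_i)$ and $(u_n(x))_n$ is a nilsequence for $\mu$-a.e.\ $x$, then $\tfrac{1}{N-M}\sum_n f(T_i^{(rn)^{d_i}}x)u_n(x)\to 0$ in $L^2(\mu)$ for all $r$ in some $r_0\N$. Decomposing $\mu$ along the ergodic decomposition of $T_i$ (using that $f\perp\krat(T_i,\mu_x)$ for a.e.\ $x$, by the cited result of \cite{FrK06}) and applying Fatou, it suffices to treat $T_i$ ergodic; then Theorem~\ref{T:Structure} lets us replace $f$, up to an $L^2$-error, by a function on a single nilsystem factor $X'=G/\Gamma$ of $(X,\cZ_{k,T_i},\mu,T_i)$ on which $T_i$ acts as a translation $T_a$, and subtracting the (small) $\krat$-component we may assume $\int_{X'_c}f\,dm_{X'_c}=0$ on each connected component $X'_c$. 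Choosing $r_0$ as in the discussion of ergodic elements in Section~\ref{SS:equidistribution} — so that $a^r$ acts ergodically on every $X'_c$ for $r\in r_0\N$, whence $\krat(T_a)$ is exactly the algebra of connected components — we observe that for $r\in r_0\N$ and $x'\in X'_c$ the sequence $T_a^{(rn)^{d_i}}x'=(a^{r^{d_i}})^{n^{d_i}}x'$ stays in $X'_c$ and is a polynomial orbit with pure monomial exponent of degree $d_i$.

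\textbf{The equidistribution heart, and the main obstacle.} For $\mu$-a.e.\ $x$ write the weight as $u_n(x)=F_x(g_x(n)\eta_x)$, a value along a polynomial orbit in an auxiliary nilmanifold $Y_x$ all of whose exponents have degree $>d_i$ (they arise by substituting $n\mapsto(rn)^{d_j}$, $d_j>d_i$, into nilsequences). The average then has the form $\tfrac{1}{N-M}\sum_n (f\otimes F_x)\bigl((a^{r^{d_i}})^{n^{d_i}}x',\,g_x(n)\eta_x\bigr)$, and what I need is Lemma~\ref{L:NilEqui}: for $m_{X'}$-a.e.\ $x'\in X'$, and for \emph{every} nilmanifold $Y$, element and point, the joint sequence $\bigl((a^{r^{d_i}})^{n^{d_i}}x',\,\text{(any degree-}{>}d_i\text{ polynomial orbit in }Y)\bigr)$ is equidistributed on $X'_c\times(\text{its }Y\text{-orbit closure})$ for the product of the Haar measures; integrating $f\otimes F_x$ and using $\int_{X'_c}f=0$ then yields pointwise, hence (by bounded convergence) $L^2$, convergence to $0$, and unwinding the reductions gives the bound $\varepsilon$. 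I expect the proof of Lemma~\ref{L:NilEqui} to be the main obstacle: one first reduces, via Theorem~\ref{T:L1} and the affine-torus criterion Theorem~\ref{T:L2}, to the equidistribution of a unipotent affine sequence on a finite-dimensional torus, and then (this is Lemma~\ref{L:affine}) one verifies the joint equidistribution by direct computation, exploiting that the coordinates of $(a^{r^{d_i}})^{n^{d_i}}x'$ involve only monomials of degree $d_i$ while those of the weight involve only monomials of strictly larger degree, so the two orbits cannot ``conspire'' for typical $x'$. Two further points need care: (i) making the ``a.e.\ $x'$'' statement hold uniformly over the \emph{unknown} nilmanifold $Y_x$ carrying the weight — this requires a local, coordinate-free characterisation of nilsequences, in the spirit of the proof of Proposition~\ref{L:ApprNil}, to guarantee the exceptional set is measurable and null; and (ii) choosing $r_0$ independently of the ergodic component, which works because the $\varepsilon$-approximant of $f$ lives on a factor with finitely generated, hence bounded-denominator, rational spectrum — this is precisely why the statement is phrased for all $r\in r_0\N$ rather than for a single $r$.
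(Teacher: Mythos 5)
Your proposal follows the paper's own route almost step for step: induction on $\ell$ with the minimal-degree index as the pivotal case (the paper's Lemma~\ref{P:OptimalChar'}), reduction via Theorem~\ref{T:CharA} and Proposition~\ref{L:ApprNil} to a weighted average with a nilsequence weight, passage to the ergodic decomposition and then to a nilsystem model of the distinguished factor, and finally a joint equidistribution statement proved through the affine torus criterion by exploiting that the distinguished orbit is a pure monomial of degree $d_i$ while the weight's exponent polynomials are divisible by $t^{d_i+1}$ (Lemmas~\ref{L:NilEqui} and~\ref{L:affine}). Your point (i) is handled in the paper in the spirit you suggest but more cheaply: no measurability argument for the exceptional set is needed, because the full-measure set in Lemma~\ref{L:affine} is cut out by the explicit Diophantine condition~\eqref{E:condition}, which depends only on the translation and not on the auxiliary nilmanifold carrying the weight.

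The one step whose justification does not hold as written is your point (ii). You assert that $r_0$ can be chosen independently of the ergodic component because the approximant of $f$ lives on a factor with ``bounded-denominator rational spectrum''. But the approximating nilsystem is chosen separately for each ergodic component $\mu_x$, its number of connected components varies with $x$, and the $r_0$ produced by Lemma~\ref{L:NilEqui} also depends on the connectedness of the orbit closure of the weight sequence; there is no a priori bound across components. The paper's mechanism is different and weaker: for each $r_0$ it defines the measurable set $X_{r_0}$ of points $x$ whose ergodic component satisfies the conclusion for all $r\in r_0\N$, observes that $X_{r}\subset X_{s}$ when $r$ divides $s$ and that $\mu\bigl(\bigcup_{r_0}X_{r_0}\bigr)=1$, and then picks $r_0$ with $\mu(X_{r_0})\ge 1-\varepsilon/4$, absorbing the exceptional components into the $\varepsilon$. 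This is precisely why the proposition carries an $\varepsilon$ loss at all (the authors remark they cannot obtain $r_0=1$ and $\varepsilon=0$). Replace your uniform choice of $r_0$ by this exhaustion argument and the proof matches the paper's.
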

(The existence of the limit is given by Theorem~\ref{T:Conv}.)
\begin{remark}
The conclusion should hold with $r_0=1$ and $\varepsilon=0$, but we currently do not see how to show this.
\end{remark}

We are also going to need the next inequality, it is proved by an appropriate application of
H\"older's inequality:

\begin{lemma}[\cite{Chu}]\label{L:LowerBound}
Let $\ell\in \N$, $(X,\mathcal{X},\mu)$ be a probability space,
$\X_1, \X_2, \ldots, \X_\ell$  be sub-$\sigma$-algebras of $\X$,
and  $f\in L^{\infty}(\mu)$ be non-negative.

Then
$$
\int f\cdot \E(f|\X_1)\cdot  \E(f|\X_2)\cdot \ldots\cdot
\E(f|\X_\ell) \,
 d\mu\geq \Big(\int f \ d\mu\Big)^{\ell+1}.
$$
\end{lemma}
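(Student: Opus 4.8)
The plan is to deduce the inequality from a single application of H\"older's inequality to a suitable factorization of $f$ into $\ell+1$ nonnegative pieces. If $\int f\,d\mu=0$ then $f=0$ $\mu$-almost everywhere and both sides vanish, so we may assume $\int f\,d\mu>0$. To avoid division by zero, I would fix $\varepsilon>0$ and work with $f_\varepsilon=f+\varepsilon$; then $f_\varepsilon\geq\varepsilon>0$ and $\E(f_\varepsilon|\X_i)=\E(f|\X_i)+\varepsilon\geq\varepsilon>0$ for every $i$, and since $f\in L^\infty(\mu)$ all functions appearing below are bounded.

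The key observation is the pointwise identity
$$
f_\varepsilon=\Bigl(f_\varepsilon\cdot\prod_{i=1}^\ell\E(f_\varepsilon|\X_i)\Bigr)^{1/(\ell+1)}\cdot\prod_{i=1}^\ell\Bigl(\frac{f_\varepsilon}{\E(f_\varepsilon|\X_i)}\Bigr)^{1/(\ell+1)},
$$
which exhibits $f_\varepsilon$ as a product of $\ell+1$ nonnegative functions. Applying H\"older's inequality with $\ell+1$ exponents all equal to $\ell+1$ gives
$$
\int f_\varepsilon\,d\mu\leq\Bigl(\int f_\varepsilon\prod_{i=1}^\ell\E(f_\varepsilon|\X_i)\,d\mu\Bigr)^{1/(\ell+1)}\cdot\prod_{i=1}^\ell\Bigl(\int\frac{f_\varepsilon}{\E(f_\varepsilon|\X_i)}\,d\mu\Bigr)^{1/(\ell+1)}.
$$
Each of the last $\ell$ factors equals $1$: since $1/\E(f_\varepsilon|\X_i)$ is $\X_i$-measurable, $\E\bigl(f_\varepsilon/\E(f_\varepsilon|\X_i)\bigm|\X_i\bigr)=1$, and integrating this over $X$ gives $\int f_\varepsilon/\E(f_\varepsilon|\X_i)\,d\mu=1$. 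Raising to the power $\ell+1$ we obtain $\bigl(\int f_\varepsilon\,d\mu\bigr)^{\ell+1}\leq\int f_\varepsilon\prod_{i=1}^\ell\E(f_\varepsilon|\X_i)\,d\mu$.

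Finally, I would let $\varepsilon\to0$. Since $f$ is bounded, for $0<\varepsilon\leq1$ the integrand on the right is dominated by $(\norm{f}_{L^\infty(\mu)}+1)^{\ell+1}$ and converges pointwise $\mu$-almost everywhere to $f\prod_{i=1}^\ell\E(f|\X_i)$, while $\int f_\varepsilon\,d\mu\to\int f\,d\mu$; the bounded convergence theorem then yields the claimed inequality. I do not expect a serious obstacle here: the only subtlety is the possible degeneracy coming from $\E(f|\X_i)$ vanishing on a set of positive measure, and this is exactly what the $\varepsilon$-regularization circumvents (alternatively one works on the set $\{\prod_{i=1}^\ell\E(f|\X_i)>0\}$, off which $f=0$ a.e., and verifies the conditional-expectation computation directly there).
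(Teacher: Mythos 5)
Your proof is correct, and it is precisely the ``appropriate application of H\"older's inequality'' that the paper alludes to: the article does not reprove the lemma but cites \cite{Chu}, where the argument is the same single H\"older step applied to the factorization of $f$ into the $\ell+1$ pieces $\bigl(f\prod_i\E(f|\X_i)\bigr)^{1/(\ell+1)}$ and $\bigl(f/\E(f|\X_i)\bigr)^{1/(\ell+1)}$, using $\int f/\E(f|\X_i)\,d\mu=1$. Your $\varepsilon$-regularization correctly handles the degeneracy where some $\E(f|\X_i)$ vanishes, so there is nothing to add.
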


\begin{proof}[Proof of Theorem~\ref{T:LowerBounds} assuming Proposition~\ref{P:OptimalChar}]
Let $\varepsilon>0$. It suffices to show that there exists $r\in \N$ such that
\begin{equation}\label{E:key}
\lim_{N-M\to \infty}\frac{1}{N-M}\sum_{n=M}^{N-1} \mu(A\cap T_1^{(rn)^{d_1}}A\cap \cdots\cap T_\ell^{(rn)^{d_\ell}}A)\geq \mu(A)^{\ell+1}-2\varepsilon.
\end{equation}

   First we use  Proposition~\ref{P:OptimalChar} to choose  $r_0\in \N$ so that  for every $r\in r_0\N$
   we have the estimate \eqref{E:approx} with $\varepsilon/2^\ell$ in place of $\varepsilon$. Next we choose a multiple $r$ of $r_0$ such that
for $i=1,\ldots,\ell$ we have
\begin{equation}\label{E:K_r}
\norm{\E({\bf 1}_A|\K_r(T_i))-\E({\bf 1}_A|\krat(T_i))}_{L^2(\mu)}\leq \frac{\varepsilon}{\ell}.
\end{equation}
We claim that for this choice of $r$ equation~\eqref{E:key} holds.
Indeed  by~\eqref{E:approx} (with $\varepsilon/2^\ell$ in place of $\varepsilon$)
we have that the limit in~\eqref{E:key} is $\varepsilon$-close to the limit of the averages of
\begin{equation}\label{E:key'}
\int {\bf 1}_A\cdot  T_1^{(rn)^{d_1}}\E({\bf 1}_A|\krat(T_1))\cdot
\ldots\cdot T_\ell^{(rn)^{d_\ell}}\E({\bf 1}_A|\krat(T_\ell))\, d\mu.
\end{equation}
Using \eqref{E:K_r} we easily conclude that the limit in \eqref{E:key'} is $\varepsilon$ close to the limit of the  averages of
$$
\int
{\bf 1}_A\cdot  T_1^{(rn)^{d_1}}\E({\bf 1}_A|\K_r(T_1))\cdot \ldots\cdot T_\ell^{(rn)^{d_\ell}}\E({\bf 1}_A|\K_r(T_\ell))\ d\mu.
$$
Since $T^rf=f$ for $\K_r(T)$-measurable functions $f$, the last expression is equal to
$$
\int {\bf 1}_A\cdot  \E({\bf 1}_A|\K_r(T_1))\cdot \ldots\cdot \E({\bf 1}_A|\K_r(T_\ell))\ d\mu.
$$
By Lemma~\ref{L:LowerBound}, the last integral is greater or equal than $\mu(A)^{\ell+1}$. It follows that \eqref{E:key} holds and  the proof is complete.
\end{proof}

\subsection{Some equidistribution results}
In the next subsection we prove Proposition~\ref{P:OptimalChar}.
A crucial step in the proof is  an
equidistribution result on nilmanifolds that we  prove in this subsection. We start
with a lemma.

\begin{lemma} \label{L:affine}
Let $d,m_1,m_2\in \N$ and
 $T\colon \T^{m_1}\to \T^{m_1}$ be an ergodic unipotent
affine transformation.
For  $i=1,\ldots,m_2$, let $u_i\in \R[t]$  be a polynomial divisible by $t^{d+1}$.
Suppose that  the sequence $(u(n))_{n\in\N}$, with values in $\T^{m_2}$,  defined by
$$
u(n)=\bigl(u_1(n)\! \! \!\pmod{1},\ldots,u_{m_2}(n)\! \! \!\pmod{1}\bigr)
$$
is equidistributed  on $\T^{m_2}$.

Then for $m_{\T^{m_1}}$-almost every $x\in \T^{m_1}$   the sequence
$(T^{n^{d}}x,u(n))_{n\in\N}$
is equidistributed on $\T^{m_1}\times \T^{m_2}$. Furthermore,  the set of full $m_{\T^{m_1}}$-measure  can be chosen to depend only on the transformation $T$
(so independently of the sequence $(u(n))_{n\in\N}$).
\end{lemma}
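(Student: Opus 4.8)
The plan is to pass to Weyl exponential sums and then exploit the fact that the coordinates of the polynomial sequence $(T^{n^d}x)_n$ and those of $(u(n))_n$ carry monomials of disjoint degrees. Write $Tx=Sx+b$ with $S$ an invertible unipotent integer matrix. Since $S$ is unipotent, $m\mapsto S^m$ and $m\mapsto\sum_{i=0}^{m-1}S^i$ have entries that are polynomials in $m$ with rational coefficients, so, after fixing a lift of $b$ and of each $x$ to $\R^{m_1}$, for every $k\in\Z^{m_1}$ the map $m\mapsto\langle k,T^m x\rangle=\langle (S^T)^m k,x\rangle+\langle k,\sum_{i=0}^{m-1}S^i b\rangle$ is a real polynomial $P_{k,x}(m)$ whose coefficients are affine functions of $x$ with rational linear parts. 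Substituting $m=n^d$, the coordinates of $(T^{n^d}x)_n$ are polynomials in $n$ supported on the degrees $0,d,2d,\dots$, while each $u_i$ is divisible by $n^{d+1}$; hence $(T^{n^d}x,u(n))_n$ is a polynomial sequence on the torus $\T^{m_1}\times\T^{m_2}$. By Weyl's criterion, together with Weyl's equidistribution theorem for polynomial sequences (a special case of Theorem~\ref{T:L1}), this sequence is equidistributed on $\T^{m_1}\times\T^{m_2}$ if and only if for every $(k,l)\in(\Z^{m_1}\times\Z^{m_2})\setminus\{0\}$ one has
$$\lim_{N-M\to\infty}\frac1{N-M}\sum_{n=M}^{N-1}e\bigl(\langle k,T^{n^d}x\rangle+\langle l,u(n)\rangle\bigr)=0 .$$
For $k=0$ this follows, for every $x$, from the hypothesis that $(u(n))_n$ is equidistributed on $\T^{m_2}$, since the sum then reduces to an average of the character $e(\langle l,\cdot\rangle)$ along $(u(n))_n$.

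Now fix $k\in\Z^{m_1}\setminus\{0\}$ and $l\in\Z^{m_2}$. Because $\langle l,u(n)\rangle$ is divisible by $n^{d+1}$, the coefficient of $n^{d}$ in $\langle k,T^{n^d}x\rangle+\langle l,u(n)\rangle$ equals the coefficient of $n^{d}$ in $P_{k,x}(n^d)$, which is exactly the linear coefficient $c_1(k,x)$ of $P_{k,x}$; so by Weyl's theorem on polynomials it suffices to show that $c_1(k,x)$ is irrational for $m_{\T^{m_1}}$-almost every $x$. Write $c_1(k,x)=\langle v_1(k),x\rangle+\rho_1(k)$, where $v_1(k)=\sum_{j\ge1}\frac{(-1)^{j-1}}{j}(S^T-I)^j k\in\QQ^{m_1}$ is the linear coefficient of $(S^T)^m k$ and $\rho_1(k)\in\R$ depends only on $S$ and $b$. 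If $v_1(k)\neq0$, then, being a nonzero rational vector, it makes $\{x:\langle v_1(k),x\rangle+\rho_1(k)\in\QQ\}$ a countable union of affine hyperplanes, hence $m_{\T^{m_1}}$-null, and $c_1(k,x)$ is irrational off that set. If $v_1(k)=0$, then, since $\log(I+M)$ and $M$ have the same kernel for nilpotent $M$, we get $(S^T-I)k=0$, i.e.\ $S^Tk=k$; in this case $P_{k,x}(m)=\langle k,x\rangle+m\langle k,b\rangle$, so $c_1(k,x)=\langle k,b\rangle$ is constant in $x$, and it must be irrational: if $\langle k,b\rangle=p/q$ with $q\ge1$, then $e(\langle qk,\cdot\rangle)$ is a nonconstant $T$-invariant function, contradicting the ergodicity of $T$.

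Finally, let $E\subset\T^{m_1}$ be the union, over those $k\in\Z^{m_1}\setminus\{0\}$ with $v_1(k)\neq0$, of the null hyperplane-sets obtained above; then $E$ is $m_{\T^{m_1}}$-null, and—because $v_1(k)$ and $\rho_1(k)$ depend only on $S$ and $b$—it depends only on $T$, in particular not on the sequence $(u(n))_n$. For $x\notin E$ the criterion of the first paragraph holds for every $(k,l)\neq0$: when $k\neq0$ because $c_1(k,x)$ is irrational, and when $k=0$ by the hypothesis on $u$; hence $(T^{n^d}x,u(n))_n$ is equidistributed on $\T^{m_1}\times\T^{m_2}$, which is the assertion. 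The step I expect to be the crux is the resonant case $S^Tk=k$: there the equidistribution of $(T^m x)_m$ alone carries no information about the offending character, and one must combine the genuine ergodicity of $T$ (forcing $\langle k,b\rangle\notin\QQ$) with the degree gap $d<d+1$ separating the monomials carried by $(T^{n^d}x)_n$ from those carried by $(u(n))_n$—this is precisely the point where the special shape of the iterates enters.
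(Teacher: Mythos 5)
Your proof is correct and follows essentially the same route as the paper's: reduce via Weyl's criterion to exponential sums, note that the degree-$d$ coefficient of $\langle k,T^{n^d}x\rangle+\langle l,u(n)\rangle$ is unaffected by $u$ because of the $t^{d+1}$-divisibility, and show that coefficient is irrational off a null set depending only on $T$, using ergodicity to handle the resonant case $S^Tk=k$. Your computation of the linear coefficient as $\langle \log(S^T)k,x\rangle+\rho_1(k)$ is in fact more careful than the paper's display $T^nx_0=x_0+n((S-I)x_0+b)+n^2p(n)$ (which ignores the linear contributions of $\binom{n}{j}$ for $j\geq 2$), though this does not change the structure of either argument.
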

\begin{proof}
Suppose that  $T\colon \T^{m_1}\to\T^{m_1}$ is defined by $Tx=Sx+b$ for some unipotent homomorphism $S$ of $\T^{m_1}$ and $b\in \T^{m_1}$.
We claim that   the desired equidistribution
property  holds provided that  $x$ satisfies the following condition:
 \begin{equation}\label{E:condition}
  \text{ If }\ k_1\cdot \tilde{b}+k_2\cdot x=0 \bmod 1 \text{ for some }
k_1,k_2\in \Z^{m_1}, \text{ then } k_2={\bf 0 }
 \end{equation}
 where $\tilde{b}$ is defined in \eqref{E:new} below.
 This defines a set of full measure in $\T^{m_1}$ that depends only on the transformation $T$.

 Let  $x_0$ be any point in $\T^{m_1}$ that satisfies \eqref{E:condition}.
Let $\chi$ be a non-trivial character of $\T^{m_1}\times \T^{m_2}$.
Then $\chi=(\chi_1,\chi_2)$ for some characters  $\chi_1$  of $\T^{m_1}$ and
 $\chi_2$  of $\T^{m_2}$, and  at least one of  $\chi_1$ and $\chi_2$ is non-trivial. By Weyl's equidistribution theorem,
in order to verify that the sequence $(T^{n^{d}}x_0,u(n))_{n\in\N}$
is equidistributed on $\T^{m_1}\times \T^{m_2}$,
 it  suffices to show that
\begin{equation}\label{E:equi}
\lim_{N-M\to\infty} \frac{1}{N-M}\sum_{n=M}^{N-1}\chi_1(T^{n^{d}}x_0)\cdot \chi_2(u(n))=0.
\end{equation}
If $\chi_1=1$, then \eqref{E:equi} holds because, by assumption, the sequence $(u(n))_{n\in\N}$ is equidistributed on $\T^{m_2}$.
Suppose now that $\chi_1\neq 1$. Since  $S\colon \T^{m_1}\to \T^{m_1}$ is unipotent, we have  $(S-I)^{m_1}=0$. For
 $n\geq m_1$ a straightforward  inductive argument shows that, for
every $x\in\T^{m_1}$,
$$
T^nx=\sum_{k=0}^{m_1-1} \binom{n}{k}(S-I)^kx+\sum_{k=0}^{m_1-1}
 \binom{n}{k+1}(S-I)^kb\ .
$$
Therefore, the  sequence $(T^nx)_{n\in \N}$ is polynomial in $n$ and
\begin{equation}
\label{eq:Tnx}
T^nx=x+n\big(\tilde{S}x+\tilde{b}\big)+\text{higher order terms}
\end{equation}
where
\begin{equation}\label{E:new}
\tilde{S}=\sum_{k=1}^{m_1-1}\frac{(-1)^{k-1}}{k}(S-I)^k, \quad
\tilde{b}=\sum_{k=0}^{m_1-1}
 \frac{(-1)^k}{k+1}(S-I)^kb\ .
\end{equation}

\begin{claim*}
$\chi_1(\tilde{S}x_0+\tilde{b})=e(\alpha)$ for some irrational number $\alpha$.
\end{claim*}
Suppose on the contrary that
$\chi_1\big(\tilde{S}x_0+\tilde{b}\big)$ is rational.
After replacing $\chi_1$ by some power of $\chi_1$ we can assume that
$\chi_1\big(\tilde{S}x_0+\tilde{b}\big)=1$.
We write  $\chi_1(x)=e(k_1\cdot x)$, $x\in \T^{m_1}$, where $k_1$ is some non-zero
element of  $\Z^{m_1}$.
Then $k_1\cdot (\tilde{S}x_0+\tilde{b})=0\pmod{1}$, or equivalently
\begin{equation}\label{E:*}
k_1\cdot \tilde{b} +(k_1\cdot \tilde{S})\cdot x_0=0 \pmod{1}.
 \end{equation}
 Combining  \eqref{E:condition} and \eqref{E:*} we get that $k_1\cdot \tilde{S}=0 $. Using \eqref{E:*}  again we get that $k_1\cdot \tilde{b}=0\pmod{1}$. Hence, $\chi_1\circ \tilde{S}=1$
 and $\chi_1(\tilde{b})=1$.
Let  $d$ be the smallest positive integer such that  $\chi_1\circ (S-I)^d=1$ (such a $d$ exists since $S$ is unipotent).
If $d\geq 2$, then since  $\chi_1\circ \tilde{S}=1$ we get
$\chi_1\circ \tilde{S}\circ (S-I)^{d-2}=1$ and using the form of $\tilde{S}$ in \eqref{E:new}   we deduce that
$\chi_1\circ (S-I)^{d-1}=1$,  contradicting the minimality of $d$. Hence, $d=1$, that is,
  $\chi_1\circ (S-I)=1$.
Furthermore, since $\chi_1(\tilde{b})=1$ and $\chi_1\circ (S-I)=1$,
using the form of $\tilde{b}$ in \eqref{E:new} we deduce that $\chi_1(b)=1$.
  Therefore, $\chi_1\circ S=\chi_1$ and $\chi_1(b)=1$. Hence, $\chi_1\circ T=\chi_1$,
and since $\chi_1\neq 1$, this contradicts our assumption that the transformation $T$ is  ergodic. This completes the proof
of the claim.

From~\eqref{eq:Tnx} we conclude that
$\chi_1(T^nx_0)=e(c+n\alpha +n^2p(n))$
for some $c\in\R$, some irrational $\alpha$,
and  some polynomial $p\in \R[t]$.
Using this, and our assumption that  all the polynomials $u_i(t)$ are
divisible by  $t^{d+1}$, we get that
$$
\chi_1(T^{n^{d}}x_0)\cdot \chi_2(u(n))=e(c+n^{d}\alpha+\text{higher order terms}).
$$
Since $\alpha$ is irrational, it follows from this identity and  Weyl's equidistribution criterion
that~\eqref{E:equi} holds. This completes the proof.
\end{proof}

\begin{lemma}\label{L:NilEqui}
Let  $X=G/\Gamma$ be a connected nilmanifold,  $a\in G$ be an ergodic element,
and $d\in \N$. Let $Y=H/\Delta$ be a nilmanifold, $(g(n)y)_{n\in\N}$
defined by $g(n)=a_1^{p_1(n)}\cdot\ldots\cdot a_\ell^{p_\ell(n)}$
be a polynomial sequence on $Y$, and suppose that  the polynomials $p_1,\ldots,p_\ell$
 are all divisible by $t^{d+1}$.

Then there exists $r_0\in\N$ such that   for $m_X$-almost every $x\in X$ we have:
For every $r\in r_0\N$,
  the sequence $\big((a^{(rn)^{d}}x,g(rn)y)\big)_{n\in\N}$
 is equidistributed on
the set $X\times\cl_Y\{g(rn)y, n\in\N\}$.
 Furthermore,  the set of full $m_X$-measure can be chosen to depend
 only on the element $a\in G$ (so  independently of $Y$, $y$, and $g(n)$).
\end{lemma}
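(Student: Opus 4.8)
The plan is to reduce the equidistribution statement on the nilmanifold $X \times Y$ to the simpler affine-torus setting handled in Lemma~\ref{L:affine}, via Leibman's equidistribution criterion (Theorem~\ref{T:L2}) and a lifting argument. First I would fix notation: let $Z = \cl_Y\{g(rn)y, n\in\N\}$ and consider the polynomial sequence $n \mapsto (a^{(rn)^d}x, g(rn)y)$ in $X \times Y$. The goal is that, for $x$ outside an $m_X$-null set depending only on $a$, this sequence is equidistributed in $X \times Z$ for every $r \in r_0\N$. Using part~\eqref{it:TL1ii} of Theorem~\ref{T:L1}, choose $r_0$ so that $(g(r_0 n)y)_{n\in\N}$ is equidistributed on a connected component of its orbit closure; replacing $a$ by $a^{r_0}$ reduces (by total ergodicity of $a$ on the connected $X$, which holds since $X$ is connected and $a$ is ergodic, cf. the discussion of ergodic elements) to the case $r_0 = 1$, with $Z$ now a connected sub-nilmanifold of $Y$, hence itself a nilmanifold $H'/\Delta'$ on which $n\mapsto g(n)y$ is equidistributed.

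Next I would pass to affine tori. Since $X$ and $Z$ are connected nilmanifolds, by Theorem~\ref{T:L2} the sequence $(a^{n^d}x, g(n)y)$ is equidistributed in $X \times Z$ if and only if its projection to the affine torus $A_X \times A_Z$ of $X \times Z$ is equidistributed there. The affine torus of the product is the product of the affine tori, $\T^{m_1} \times \T^{m_2}$, and $G$ acts on $A_X$ by a unipotent affine transformation $T$ while $H'$ acts on $A_Z$ by unipotent affine transformations; the translation by $a$ on $A_X$ is an \emph{ergodic} unipotent affine transformation of $\T^{m_1}$ because $a$ is ergodic on $X$. The image of $g(n)y$ in $A_Z = \T^{m_2}$ is then a polynomial sequence $u(n) = (u_1(n), \ldots, u_{m_2}(n)) \bmod 1$; because each $p_i$ is divisible by $t^{d+1}$ and the $g$-action is by unipotent affine maps, each coordinate polynomial $u_i$ is (up to an additive constant, which can be absorbed since equidistribution is translation-invariant) a sum of terms $\binom{n}{k} \cdot (\text{stuff})$ with the linear part coming from $\sum p_i$-type contributions, so $t^{d+1} \mid u_i$. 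Moreover equidistribution of $g(n)y$ on $Z$ forces, again by Theorem~\ref{T:L2}, that $u(n)$ is equidistributed on $\T^{m_2}$. Now Lemma~\ref{L:affine} applies verbatim: for $m_{\T^{m_1}}$-almost every $\bar x \in \T^{m_1}$, the sequence $(T^{n^d}\bar x, u(n))$ is equidistributed on $\T^{m_1} \times \T^{m_2}$, with the exceptional null set depending only on $T$, i.e. only on $a$.

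Finally I would pull this back: the full-measure set of good $\bar x \in \T^{m_1}$ pulls back (under the projection $X \to A_X$, which is measure-preserving) to a full-$m_X$-measure set of good $x \in X$, depending only on $a$. For such $x$, the projection of $(a^{n^d}x, g(n)y)$ to $A_X \times A_Z$ is equidistributed, hence by Theorem~\ref{T:L2} the original sequence is equidistributed on $X \times Z$, which is the claim for $r_0 = 1$; undoing the reduction gives the result for all $r \in r_0 \N$, since for each such $r$ the sequence $(a^{(rn)^d}x, g(rn)y)$ is again a polynomial sequence of the same shape (the polynomial $(rt)^d$ is still divisible by $t^d$ with irrational leading behavior preserved, and $g(rn)$ still has coordinate polynomials divisible by $t^{d+1}$), and one reruns the affine-torus computation — the key point being that the irrational number $\alpha$ produced from $\chi_1((S-I)x_0 + b)$ in Lemma~\ref{L:affine} is unchanged by passing from $n$ to $rn$ up to a nonzero rational multiple of $n^d$, so Weyl's criterion still applies. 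The main obstacle I anticipate is the bookkeeping in the affine-torus step: verifying cleanly that $Z$'s affine-torus sequence really has all coordinate polynomials divisible by $t^{d+1}$ and is genuinely equidistributed on the full torus $\T^{m_2}$ (rather than a subtorus) after the reduction — this requires carefully tracking how equidistribution of $g(n)y$ on $Z$ descends to $A_Z$, and ensuring the component/connectedness reductions do not destroy it. The rest is a routine assembly of Theorems~\ref{T:L1}, \ref{T:L2} and Lemma~\ref{L:affine}.
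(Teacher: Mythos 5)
Your proposal follows essentially the same route as the paper's proof of Lemma~\ref{L:NilEqui}: reduce to the case where the orbit closure in $Y$ is connected via Theorem~\ref{T:L1}, pass to the affine tori via Theorem~\ref{T:L2}, verify that the coordinate polynomials of the projection of $g(n)y$ are divisible by $t^{d+1}$ after subtracting a constant, and conclude with Lemma~\ref{L:affine}, pulling the full-measure set back to $X$. The only slip is minor: in the reduction to $r_0=1$ the element $a$ should be replaced by $a^{r_0^d}$ (since $a^{(r_0n)^d}=(a^{r_0^d})^{n^d}$), not by $a^{r_0}$.
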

\begin{remark}
It is crucial for our subsequent applications that the full $m_X$-measure set of the lemma   does not depend
on the polynomial sequence  $(g(n)y)_{n\in\N}$. It is for this reason that we require the polynomials $p_1,\ldots, p_\ell$   to be divisible by $t^{d+1}$.
\end{remark}
\begin{proof}
\noindent \emph{The connected case.}
Suppose first that the set $\cl_Y\{g(n)y,  n\in\N\}$ is
connected.  In this case we are going to show that $r_0=1$ works.

First, by part~\eqref{it:TL1i} of Theorem~\ref{T:L1}, the set  $\cl_Y\{g(n)y, n\in\N\}$
 is  a sub-nilmanifold of $Y$. Substituting this set for $Y$ we can
assume that $Y=\cl_Y\{g(n)y, n\in\N\}$.
By part~\eqref{it:TL1i} of Theorem~\ref{T:L1},  we have
\begin{equation}\label{E:equidistributed}\emph{the sequence }
(g(n)y) \emph{ is equidistributed in } Y.
\end{equation}
\noindent\textbf{(a)} First we claim that it suffices to show
\begin{itemize}
\item[(i)] For $m_X$-almost every $x\in X$, where the set of
full measure depends only on $a$, the sequence
$\bigl((a^{n^{d}}x,g(n)y)\bigr)$ is equidistributed on the set $X\times
Y$.
\end{itemize}
Indeed, since the nilmanifolds $X$  and  $X\times Y$  are connected,
it follows by part (iii) of Theorem~\ref{T:L1}
that for every $r\in \N$ we have
$\cl_Y\{g(rn)y, n\in\N\}=Y$, and
 for every $r\in \N$ and every $x$ in the set defined in~(ii),
the sequence
$\bigl((a^{(rn)^{d}}x,g(rn)y)\bigr)$ is equidistributed on the set
$X\times Y$. This proves the claim.

\noindent\textbf{(b)}
Next, we use the convergence criterion given in  Theorem~\ref{T:L2}.

Let $A_X=G/([G_0,G_0]\Gamma)$ be the affine torus of $X$,  $A_Y =H/([H_0,H_0]\Delta)$ be
the affine torus of $Y$, and  $ \pi_{A_X}\colon X\to A_X$,  $\pi_{A_Y}\colon Y\to A_Y$
be the corresponding natural projections.
We first remark that $A_X\times A_Y$ is the affine torus of $X\times
Y$, with projection $\pi_X\times\pi_Y$.

Since the sequence $(g(n)y)$ is equidistributed in $Y$, the projection of
this sequence  onto $A_Y$ is equidistributed on $A_Y$.
   By Theorem~\ref{T:L2}, in order to show the required
equidistribution property~(i), it suffices
to verify the following
statement:
\begin{itemize}
\item[(ii)]
 For $m_X$-almost every $x\in X$, where the set of full measure depends only on  $a$, the sequence
 $$
 \big((a^{n^{d}} \pi_{A_X}(x), a_1^{p_1(n)}\cdot\ldots\cdot a_\ell^{p_\ell(n)}\pi_{A_Y}(y))\big)
 $$ is
equidistributed on $A_X\times A_Y$.
\end{itemize}
This statement is the same as~(i), with $A_X$ substituted for $X$,
$A_Y$ substituted for $Y$. We remark that all the hypotheses of the
lemma remain valid when we make this substitution.

Therefore, using the identification explained in
Section~\ref{SS:equidistribution}, we can restrict, without loss of
generality, to the case where  $X=\T^{m_1}$ for some $m_1\in\N$,
the translation $T_a\colon x\mapsto ax$ on $X$ is an ergodic
unipotent affine transformation of $\T^{m_1}$, and where
$Y=\T^{m_2}$ for some integer $m_2\in\N$ and for $i=1,\cdots,\ell$
the translation $T_{a_i}\colon y\mapsto a_iy$ on $Y$ is a unipotent
affine transformation of $\T^{m_2}$. Moreover,
by~\eqref{E:equidistributed}, the sequence
$(T_{a_1}^{p_1(n)}\cdot\ldots\cdot T_{a_\ell}^{p_\ell(n)}y)$ is
equidistributed on $\T^{m_2}$.

Since the uniform distribution is not affected by translation,
the  statement (ii)
can be rewritten in the following equivalent form:
\begin{itemize}
\item[(iii)]
 For $m_{\T^{m_1}}$-almost every $x\in \T^{m_1}$, where the set of full measure depends only on  the transformation $T_{a_1}$,  the sequence
 $$
 \big((T_{a}^{n^{d}} x, T_{a_1}^{p_1(n)}\cdot\ldots\cdot
T_{a_\ell}^{p_\ell(n)}y-y)\big)
 $$ is
equidistributed on $\T^{m_1}\times \T^{m_2}$.
\end{itemize}

\noindent\textbf{(c)}
Define the sequence $(u(n))_{n\in\N}$ with values in  $\T^{m_2}$ by
$$
u(n)=T_{a_1}^{p_1(n)}\cdot\ldots\cdot T_{a_\ell}^{p_\ell(n)}y - y\ .
$$
For $i=1,\ldots,  \ell$, since  $T_{a_i} $ is a unipotent affine
transformation, $T_{a_i}^ny$ is given for every $n$ by a formula
similar to~\eqref{eq:Tnx}. Therefore, for $j=1,\ldots, m_2$,  each coordinate $u_j(n)$  of
$u(n)$ is
a polynomial in $n$ with real coefficients and without a constant term. Moreover, since by
hypothesis the polynomials
$p_i(t)$ are divisible by $t^{d+1}$, all the polynomials $u_j(t)$ are
divisible by $t^{d+1}$.

 Hence,  Lemma~\ref{L:affine} is applicable and the statement (iii) is
proved.
  This completes the proof of the result
 in the case where the set $\cl_Y\{g(n)y, n\in\N\}$  is connected.

\subsubsection*{The general case}
Lastly we deal with the case where the set $\cl_Y\{g(n)y, n\in\N\}$  is not necessarily connected.
 By  Theorem~\ref{T:L1}, there exists
an $r_0\in\N$ such that the set $\cl_Y\{g(r_0n)y, n\in\N\}$ is connected.
Substituting the sequence $(g(r_0n)y)$ for  $(g(n)y)$
and $a^{r_0^d}$ for $a$ (which is again an ergodic element), the previous argument
shows the advertised result for this value of $r_0\in \N$.
 This completes the
proof of the result in the general case.
\end{proof}
We deduce from the previous lemma a  result that is more suitable for our purposes:
\begin{corollary}\label{C:gfd}
Let  $X=G/\Gamma$ be a  nilmanifold,   $a\in G$ be an ergodic element,
$f\in \CC(X)$ with $\E_{m_X}(f|\krat(T_a))=0$, and  $d,d_1\ldots,d_\ell\in \N$ with $d<d_i$ for $i=1,\ldots,\ell$.
Suppose that   $(u_{1,n})_{n\in\N},\ldots,(u_{\ell,n})_{n\in\N},$ are finite step nilsequences.

Then there exists $r_0\in\N$  such that, for $m_X$-almost every $x\in
X$,
  the following holds: For every $r\in r_0\N$ we have
\begin{equation}\label{E:pol}
\lim_{N-M\to \infty} \frac{1}{N-M}\sum_{n=M}^{N-1}
 f(a^{(rn)^{d}}x)\cdot u_{1,(rn)^{d_1}}\cdot \ldots\cdot u_{\ell,(rn)^{d_\ell}}=0.
\end{equation}
 Furthermore,  the set of full $m_X$-measure can be chosen to depend only on the element $a\in G$.
\end{corollary}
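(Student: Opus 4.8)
The plan is to deduce Corollary~\ref{C:gfd} from Lemma~\ref{L:NilEqui} by realizing the product $\prod_{i=1}^\ell u_{i,(rn)^{d_i}}$ as the evaluation of a single continuous function along one polynomial orbit in an auxiliary nilmanifold, all of whose defining polynomials are divisible by $t^{d+1}$. First I would record the only point at which the orthogonality hypothesis is used: since the constant functions on $X$ are measurable with respect to $\krat(T_a)$, the assumption $\E_{m_X}(f\mid\krat(T_a))=0$ gives $\int_Xf\,dm_X=\int_X\E_{m_X}(f\mid\krat(T_a))\,dm_X=0$. Then I would make two routine reductions. Since every finite step nilsequence is a uniform limit of basic nilsequences, and since each Cesàro average that occurs is the average of a nilsequence in $n$ (hence converges, by \cite{L}), a telescoping estimate together with the uniform boundedness of the $u_{i,\cdot}$ reduces the statement to the case where each $u_{i,n}=\phi_i(b_i^n z_i)$ is a \emph{basic} nilsequence, with $Y_i=H_i/\Delta_i$ a nilmanifold, $b_i\in H_i$, $z_i\in Y_i$ and $\phi_i\in\CC(Y_i)$. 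Also, using the decomposition of $X$ into the finitely many sub-nilmanifolds $X_i=a^iX_0$ on which a suitable power of $a$ acts ergodically (recalled in Section~\ref{SS:equidistribution}), one may assume $X$ is connected; because $d<d_i$, the relevant exponents stay divisible by $t^{d+1}$ after this reduction, and the exceptional set produced below still depends only on $a$.

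Next I would build the auxiliary data. Set $Y=Y_1\times\dots\times Y_\ell=H/\Delta$ with $H=\prod_iH_i$, $\Delta=\prod_i\Delta_i$, let $\hat b_i\in H$ be the element equal to $b_i$ in the $i$-th coordinate and to the identity elsewhere, and define the polynomial sequence $g(n)=\hat b_1^{\,n^{d_1}}\hat b_2^{\,n^{d_2}}\cdots\hat b_\ell^{\,n^{d_\ell}}$ in $H$. With $z=(z_1,\dots,z_\ell)\in Y$ and $\Phi(y_1,\dots,y_\ell)=\phi_1(y_1)\cdots\phi_\ell(y_\ell)\in\CC(Y)$ one checks directly that $\prod_{i=1}^\ell u_{i,(rn)^{d_i}}=\Phi\bigl(g(rn)z\bigr)$ for all $r,n\in\N$. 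The crucial point is that, since $d<d_i$ for each $i$, every polynomial $n^{d_i}$ is divisible by $t^{d+1}$, so Lemma~\ref{L:NilEqui} applies with this connected $X$, the ergodic element $a$, the integer $d$, the nilmanifold $Y$, the point $y=z$, and the polynomial sequence $g$: it yields $r_0\in\N$ and a set $\Omega\subseteq X$ of full $m_X$-measure, depending only on $a$, such that for every $x\in\Omega$ and every $r\in r_0\N$ the sequence $\bigl(a^{(rn)^d}x,\ g(rn)z\bigr)_{n\in\N}$ is equidistributed on $X\times W$, where $W=\cl_Y\{g(rn)z\colon n\in\N\}$.

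Since $f\otimes\Phi\in\CC(X\times Y)$ and the Haar measure of the sub-nilmanifold $X\times W$ of $X\times Y$ is $m_X\otimes m_W$, this equidistribution gives, for $x\in\Omega$ and $r\in r_0\N$,
$$
\lim_{N-M\to\infty}\frac1{N-M}\sum_{n=M}^{N-1}f(a^{(rn)^d}x)\prod_{i=1}^\ell u_{i,(rn)^{d_i}}
=\Bigl(\int_Xf\,dm_X\Bigr)\Bigl(\int_W\Phi\,dm_W\Bigr)=0,
$$
the last equality because $\int_Xf\,dm_X=0$. Undoing the reduction to basic nilsequences by the uniform approximation noted above, and intersecting the countably many full-measure sets that arise (each depending only on $a$), completes the proof. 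I expect the only genuinely delicate ingredient to be Lemma~\ref{L:NilEqui} itself, which is already available; the rest is bookkeeping, the one substantive observation being the reduction to a single polynomial orbit all of whose exponents are divisible by $t^{d+1}$ — which is exactly what the hypothesis $d<d_i$ and the form of Lemma~\ref{L:NilEqui} are designed to exploit, and also what makes the exceptional set independent of the nilsequences $u_{i,\cdot}$.
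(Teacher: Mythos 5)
Your construction is the same as the paper's: you pass to basic nilsequences by uniform approximation, package them into a single polynomial sequence $g(n)=\hat b_1^{\,n^{d_1}}\cdots\hat b_\ell^{\,n^{d_\ell}}$ on the product nilmanifold so that all exponent polynomials are divisible by $t^{d+1}$, invoke Lemma~\ref{L:NilEqui}, and kill the limit with the vanishing integral of $f$; the reduction to connected $X$ via the components $X_i=a^iX_0$ is also how the paper proceeds.

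There is, however, one genuine misstep: your opening claim that the orthogonality hypothesis is used \emph{only} to give $\int_X f\,dm_X=0$. That is not enough for the disconnected case. After you restrict to a component $X_i=a^iX_0$ on which $a^{k^d}$ acts ergodically, the argument needs $\int_{X_i}f\,dm_{X_i}=0$, and this does not follow from $\int_X f\,dm_X=0$ alone: take $X$ a two-point nilmanifold with $a$ the swap, $f$ nonconstant with mean zero, and $r$ even; then the averages of $f(a^{(rn)^d}x)$ equal $f(x)\neq 0$. What saves the argument is the full hypothesis $\E_{m_X}(f|\krat(T_a))=0$: each indicator $\mathbf 1_{X_i}$ is $\K_k(T_a)$-measurable, hence $\krat(T_a)$-measurable, so
\begin{equation*}
\int_{X_i}f\,dm_X=\int \mathbf 1_{X_i}\cdot\E_{m_X}(f|\krat(T_a))\,dm_X=0 ,
\end{equation*}
which is exactly the observation the paper makes when it reduces the general case to the connected one. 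With that substitution your proof is complete and coincides with the paper's.
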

\begin{proof}
\emph{The connected case.} Suppose first that the nilmanifold $X$ is connected.
Using an approximation argument we can assume that for $i=1,\ldots,\ell$ the sequence $u_{i,n}$ is
a basic finite step nilsequence. In this case,
for $i=1,\ldots,\ell$ there exist nilmanifolds $X_i=G_i/\Gamma_i$,
elements $a_i\in G_i$, and functions $f_i\in \CC(X_i)$ such that
$u_{i,n}=f_i(a_i^n\Gamma_i)$.
We define $\tilde G=G_1\times\cdots\times G_\ell$, $\tilde \Gamma=
\Gamma_1\times\cdots\times\Gamma_\ell$, and
$\tilde X=X_1\times\cdots\times X_\ell=\tilde G/\tilde\Gamma$.
Let $(g(n))_{n\in\N}$ be the polynomial
sequence in $\tilde G$ given by
$g(n)=(a_1^{p_1(n)},\ldots,a_\ell^{p_\ell(n)})$ for every $n$.

Lemma~\ref{L:NilEqui} gives that there exists $r_0\in\N$ such that
for $m_X$-almost every $x\in X$
we have:
For every $r\in r_0\N$, the sequence
$(a^{(rn)^{d}}x,g(rn)\tilde{\Gamma})$ is
equidistributed on the nilmanifold $X\times Y$ where
$Y=\cl_{\tilde X}\{g(rn)\tilde{\Gamma},n\in\N\}$.

Therefore, for every
$f\in \CC(X)$ and $F\in \CC(\tilde{X})$  we have
$$
\lim_{N-M\to \infty} \frac{1}{N-M}\sum_{n=M}^{N-1}
f(a^{(rn)^{d}}x)\cdot F(g(rn)\tilde{\Gamma})=\int f(x) \ dm_X(x)\cdot \int F(\tilde{x}) \ dm_{\tilde{X}}(\tilde{x}).
$$
Letting $F=f_1\cdot \ldots \cdot f_\ell$, and using that $\int f\, dm_X=0$,   we get the advertised identity. This completes the proof in the case where the nilmanifold $X$ is connected.
\medskip

\noindent \emph{The general case.} Let $X_0$ be the connected component of the nilmanifold $X$. Since $a$ is an ergodic element,
 there exists
$k\in \N$ such that the nilmanifold $X$ is the disjoint union of the sub-nilmanifolds
$X_i=a^iX_0$, $i=1,\ldots,k$, and $a^{k}$ acts ergodically on each $X_i$.
Furthermore, since $\E_{m_X}(f|\krat(T_a))=0$, we have  $\int f\, dm_{X_i}=0$ for $i=1,\ldots,k$.
For $i=1,\ldots,k$, we can apply the previously established ``connected  result'', for the translation $a^{k^{d}}$ in place of $a$, acting (ergodically) on the connected sub-nilmanifolds $X_i$, and the
 nilsequences $(u_{j,k^{d_j}n})$ in place of $(u_{j,n})$, $j=1,\ldots,\ell$.  We get that there exist $r_i\in \N$ such that for every $r\in kr_i\N$ equation  \eqref{E:pol}
holds for $m_{X_i}$-almost every $x\in X_i$. It follows that  if  $r_0=k\prod_{i=1}^k r_i$, then
for every $r\in r_0\N$ equation  \eqref{E:pol}
holds for $m_{X}$-almost every $x\in X$. This completes the proof in the general case.
\end{proof}

\subsection{Proof of the convergence result (Proposition~\ref{P:OptimalChar})}
In this section we prove Proposition~\ref{P:OptimalChar} by induction on the number of transformations
involved. The key ingredient in the proof of the inductive step is the following   special case of  Proposition~\ref{P:OptimalChar}:

\begin{lemma}\label{P:OptimalChar'}
Let $(X,\X,\mu,T_1,\cdots,T_\ell)$ be a system. Let
$d_1,\ldots,d_\ell\in \N$ be distinct and suppose that $d_1<d_i$ for
$i=2,\ldots,\ell$. Let $f_1,\ldots,f_\ell\in L^\infty(\mu)$ and
suppose  that $f_1\perp\krat(T_1)$.

Then  for every $\varepsilon>0$, there exists $r_0\in \N$, such that  for every $r\in r_0\N$, we have
$$
\lim_{N-M\to\infty}\norm{\frac{1}{N-M}\sum_{n=M}^{N-1} f_1(T_1^{(rn)^{d_1}}x)\cdot \ldots\cdot f_\ell(T_\ell^{(rn)^{d_\ell}}x)}_{L^2(\mu)}
\leq \varepsilon.
$$
\end{lemma}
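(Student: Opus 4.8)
The plan is to reduce, using the machinery already developed, to the nilmanifold equidistribution statement of Corollary~\ref{C:gfd}, exploiting that by hypothesis the iterate of smallest degree is $f_1(T_1^{(rn)^{d_1}}x)$: this function will play the r\^ole of the ``driving'' nilrotation in Corollary~\ref{C:gfd}, while the iterates $f_i(T_i^{(rn)^{d_i}}x)$ with $i\ge 2$, having strictly larger degrees $d_i>d_1$, will enter as nilsequence weights. Throughout, approximation errors will be measured in $L^1(\mu)$ and upgraded to $L^2(\mu)$ via the uniform $L^\infty$-bounds on the $f_i$, so that it suffices to prove convergence to $0$ of the average after finitely many reductions, up to a terminal error that can be made $\le\varepsilon$.

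\emph{Step 1: reduction to $\cZ_k$-measurable functions and nilsequence weights.} Applying Theorem~\ref{T:CharA} to the polynomials $(rn)^{d_1},\dots,(rn)^{d_\ell}$, whose degrees are the distinct integers $d_1,\dots,d_\ell$, we obtain $k=k(\max_i d_i,\ell)$, independent of $r$, such that the average converges to $0$ in $L^2(\mu)$ whenever $f_i\perp\cZ_{k,T_i}$ for some $i$. Writing $f_i=\E(f_i\mid\cZ_{k,T_i})+f_i'$ and expanding the product, every summand containing a factor $f_i'$ has vanishing limit; hence we may assume each $f_i$ is $\cZ_{k,T_i}$-measurable, and the hypothesis $f_1\perp\krat(T_1)$ persists because $\krat(T_1)\subset\cZ_{1,T_1}\subset\cZ_{k,T_1}$ (using $\nnorm{f}_{1,T^\ell}\ll_\ell\nnorm f_{2,T}$ and \eqref{E:DefZ_l'}). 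By Proposition~\ref{L:ApprNil} applied to $f_2,\dots,f_\ell$ we may further assume that, for $\mu$-a.e.\ $x$, each $(f_i(T_i^nx))_{n\in\N}$ with $i\ge 2$ is a $k$-step nilsequence.

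\emph{Step 2: the case of $T_1$ ergodic.} Here $(X,\cZ_{k,T_1},\mu,T_1)$ is, by Theorem~\ref{T:Structure}, an inverse limit of $k$-step nilsystems; approximating $f_1$ in $L^2(\mu)$ by a continuous function on such a nilsystem $(W,m_W,T_a)$ with $a$ ergodic, and then subtracting its projection onto the rational Kronecker factor of $W$ (a span of finitely many continuous eigenfunctions, contained in $\krat(T_1)$ and hence already annihilated by $f_1$), we reduce to the case $\mu=m_W$, $f_1\in\CC(W)$, $\E_{m_W}(f_1\mid\krat(T_a))=0$. Now fix $x\in X$ in the $\mu$-conull set on which all $(f_i(T_i^nx))_n$, $i\ge 2$, are nilsequences and whose image under the factor map $\pi\colon X\to W$ lies in the $m_W$-conull set furnished by Corollary~\ref{C:gfd} (which depends only on $a$). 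Corollary~\ref{C:gfd}, applied with driving translation $a$, continuous function $f_1$, exponent $d=d_1$, and scalar nilsequences $u_{i,n}=f_{i+1}(T_{i+1}^nx)$ (exponents $d_2,\dots,d_\ell$, all $>d_1$), yields $r_0=r_0(x)\in\N$ such that for every $r\in r_0\N$ the pointwise average $\frac1{N-M}\sum_{n=M}^{N-1}f_1(T_1^{(rn)^{d_1}}x)\cdots f_\ell(T_\ell^{(rn)^{d_\ell}}x)$ tends to $0$. Since these $r_0(x)$ depend measurably on $x$, we choose $R$ with $\mu\bigl(\{x:\text{no }r'\le R\text{ works at }x\}\bigr)<\varepsilon^2/(4\prod_i\|f_i\|_\infty^2)$ and set $r_0=\operatorname{lcm}(1,\dots,R)$; then, for every $r\in r_0\N$, the reverse Fatou inequality gives
\[
\lim_{N-M\to\infty}\Bigl\|\tfrac1{N-M}\sum_{n=M}^{N-1}f_1(T_1^{(rn)^{d_1}}x)\cdots f_\ell(T_\ell^{(rn)^{d_\ell}}x)\Bigr\|_{L^2(\mu)}\le\varepsilon/2,
\]
the existence of the limit being provided by Theorem~\ref{T:Conv}.

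\emph{Step 3: the general case, conclusion, and the main obstacle.} If $T_1$ is ergodic but not totally ergodic the nilmanifold $W$ is disconnected, which Corollary~\ref{C:gfd} already handles; if $T_1$ is not ergodic one uses its ergodic decomposition $\mu=\int\mu_y\,d\mu(y)$, noting that $f_1$ is $\cZ_{k,T_1,\mu_y}$-measurable with $f_1\perp\krat(T_1,\mu_y)$ for $\mu$-a.e.\ $y$ (Corollary~\ref{cor:Zkmux} together with the descent property of $\krat$), and that the nilsequence property of the weights holds for $\mu_y$-a.e.\ $x$ for $\mu$-a.e.\ $y$; one then runs Step~2 on each ergodic component and integrates in $y$, again extracting a single $r_0$ by an $\operatorname{lcm}$ argument, and adds the controllable errors from Step~1 to obtain the stated estimate (after renaming $\varepsilon$). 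The substantive input, Corollary~\ref{C:gfd} (the ``heart'' of the paper's argument, via Lemma~\ref{L:NilEqui}), is already available, so the real difficulty here is organizational: preserving $\cZ_k$-measurability, orthogonality to $\krat(T_1)$ and the nilsequence structure of $f_1$ simultaneously while passing to an honest continuous function on a nilsystem; accommodating that $T_2,\dots,T_\ell$ need not preserve the $T_1$-ergodic components (handled by treating the higher-degree iterates purely as $x$-indexed nilsequence weights, which is why the fact that the conull set in Corollary~\ref{C:gfd} depends only on $a$ is essential); and distilling from the $\mu$-a.e.\ pointwise numbers $r_0(y,x)$ a single dilation $r_0$, at the unavoidable price of the slack $\varepsilon$ and of passing to multiples of $r_0$ rather than to $r_0=1$ --- which is exactly what the statement of the lemma permits.
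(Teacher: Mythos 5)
Your proposal is correct and follows essentially the same route as the paper: reduce via Theorem~\ref{T:CharA} and Proposition~\ref{L:ApprNil} to the case where $f_1$ lives on an inverse limit of nilsystems and the higher-degree iterates are $x$-indexed nilsequence weights, invoke Corollary~\ref{C:gfd} (whose conull set depends only on the translation $a$) pointwise, and extract a single $r_0$ by exhausting the measure over a countable family of candidate periods at the cost of the slack $\varepsilon$. The only difference is organizational: you perform the lcm/measure-exhaustion extraction of $r_0$ already inside the ergodic case, whereas the paper defers it to the ergodic-decomposition step via the nested sets $X_{r_0}$ — the same mechanism either way.
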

\begin{proof}
Let $\varepsilon>0$. Without loss of generality we can assume that
all the functions involved are bounded by $1$. From
Theorem~\ref{T:CharA} we have that there exists $k\in\N$, depending
only on $\max(d_1,\cdots,d_\ell)$ and $\ell$, such that if
$f_i\perp \cZ_{k,T_i}$ for some $i=1,\ldots,\ell$, then the
corresponding multiple ergodic averages converge to $0$ in
$L^2(\mu)$.
 Therefore, we can assume that
$f_i\in L^\infty(\cZ_{k,T_i}, \mu)$ for $i=1,\ldots,\ell$.
Then  Proposition~\ref{L:ApprNil} shows  that for $i=2,\ldots,\ell$
  there exist  functions $\tilde{f}_i$, with $L^\infty$-norm  bounded by $1$,  that satisfy
\begin{enumerate}
\item
 $\tilde{f}_i\in L^\infty(\cZ_{k,T_i},\mu)$ and $\norm{f_i-\tilde{f}_i}_{L^2(\mu)}\leq \varepsilon/(2\ell+2)$\ ;
\item
  for every $r\in \N$ and  $x\in X$ the sequence $(\tilde{f}_i(T_i^{n^{d_i}}x))_{n\in\N}$ is a $(d_ik)$-step nilsequence.
\end{enumerate}

An easy computation then shows that  in order to prove the announced claim, it suffices to show the following:
 If $f_1\in L^\infty(\cZ_{k,T_1},\mu)$  and $f_1\bot \krat(T_1)$, then there exists $r_0\in \N$ such that
for every $r\in r_0\N$  we have
\begin{equation}\label{E:NilAv'}
\lim_{N-M\to\infty}\norm{\frac{1}{N-M}\sum_{n=M}^{N-1} f_1(T_1^{(rn)^{d_1}}x)\cdot \tilde{f}_2(T_2^{(rn)^{d_2}}x)\cdot \ldots\cdot
\tilde{f}_\ell(T_\ell^{(rn)^{d_\ell}}x)}_{L^2(\mu)} \leq \frac{\varepsilon}{2}.
\end{equation}
 \medskip

\noindent \emph{The ergodic case.} Suppose first that the transformation $T_1$ is ergodic.
Since $f_1\in L^\infty(\cZ_{k,T_1},\mu)$, after using an appropriate
conjugation we can assume that $T_1$ is an inverse limit of
nilsystems.
Furthermore, after using an approximation argument we can
assume that $T_1=T_a$ where $a$ is an ergodic rotation on a
nilmanifold $X=G/\Gamma$, and $f_1\in C(X)$, while still maintaining
our assumption that $f_1\perp\krat(T_1)$.
(If $f\bot \cD$  where $\cD$ is any sub-$\sigma$-algebra of $\X$, and $g$ is such that $\norm{f-g}_{L^1(\mu)}\leq \varepsilon/2$,
 then $\norm{\E(g|\cD)}_{L^1(\mu)}\leq \varepsilon/2$. Therefore, $\norm{f-\tilde{g}}_{L^1(\mu)}\leq \varepsilon$ where
 $\tilde{g}=g-\E(g|\cD)$, and $\E(\tilde{g}|\cD)=0$.)
In this case,  combining property (ii)  above and Corollary~\ref{C:gfd}, we get  that there exists $r_0\in\N$
such that for every $r\in r_0\N$ the averages \eqref{E:NilAv'} converge to $0$ for $m_X$-almost every  $x\in X$,
 and as a result in $L^2(m_X)$.
 This completes the proof of  \eqref{E:NilAv'} in the case where the transformation $T_1$ is ergodic.
\medskip

\noindent \emph{The  general case.}   Suppose now that the transformation  $T_1$ is not necessarily ergodic. Let $\mu=\int \mu_x\ d\mu$ be the ergodic decomposition of $\mu$ with respect to
  the transformation $T_1$. Since $f_1\in
L^\infty(\cZ_{k,T_1,\mu}, \mu)$,   Corollary~\ref{cor:Zkmux} shows that
for $\mu$-almost every $x\in X$ we have
$f_1\in L^\infty(\cZ_{k,T_1,\mu_x}, \mu_x)$. Furthermore, since $\E_\mu(f_1|\krat(T,\mu))=0$, we have  for $\mu$-almost every
$x\in X$ that  $\E_{\mu_x}(f_1|\krat(T,\mu_x))=0$.

   For every $r_0\in \N$  we define the $\mu$-measurable set
 \begin{multline*}
X_{r_0}= \bigl\{x\in X\colon   \eqref{E:NilAv'}  \text{ holds
 for every } r\in r_0\N,
   \text{ with }
 \mu_x  \text{ in place of } \mu,
\text{ and } \varepsilon/2 \text{ in place of } \varepsilon  \bigr\}.
 \end{multline*}
Notice that when we previously established  the ``ergodic case'', we
did not use the invariance of the measure $\mu$ under the transformations $T_i$ for $i\neq  1$; we merely used the fact that for  $i=2,\ldots,\ell$,  for $\mu$-almost every $x\in X$, the sequences $(\tilde{f}_i(T_i^{n}x))_{n\in\N}$ are  $k$-step nilsequences.
Hence,  we can use    the previously established
 ``ergodic  result'' for $\mu$-almost every measure $\mu_x$, and   conclude that
   $$
   \mu(\bigcup_{r_0\in \N}X_{r_0})=1.
   $$

Also, we clearly have $X_{r}\subset X_s$ if $r$ divides $s$.
It follows that there exists $r_0\in \N$ such that
$$
  \mu(X_{r_0})\geq 1-\varepsilon/4.
$$
As a direct consequence, for this choice of $r_0$,
equation~\eqref{E:NilAv'} holds for every $r\in r_0\N$. This completes the proof.
\end{proof}

We are now ready to prove Proposition~\ref{P:OptimalChar}.
\begin{proof}[Proof of Proposition~\ref{P:OptimalChar}]
Without loss of generality we can assume that $d_1<d_i<d_\ell$ for $i=2,\ldots,\ell-1$ and $\norm{f_i}_{L^\infty(\mu)}\leq 1$ for $i=1,\ldots,\ell$.

We are going to use induction on the number of transformations $\ell$. For $\ell=1$ the statement is known (Chapter 3 in \cite{Fu81}) and in fact it holds with $r_0=1$ and $\varepsilon=0$. Suppose that $\ell\geq 2$, and the statement holds for $\ell-1$ transformations. We are going to show that it holds for $\ell$ transformations. Namely, we are going to show that if $f_i\perp\krat(T_i)$ for some $i\in \{1,\ldots,\ell\}$, then   for every $\varepsilon>0$, there exists $r_0\in \N$, such that  for every $r\in r_0\N$ we have
$$
\lim_{N-M\to\infty}\norm{\frac{1}{N-M}\sum_{n=M}^{N-1} f_1(T_1^{(rn)^{d_1}}x)\cdot \ldots\cdot f_\ell(T_\ell^{(rn)^{d_\ell}}x)}_{L^2(\mu)}
\leq \varepsilon.
$$

Let $\varepsilon>0$.
If $f_1\perp\krat(T_1)$,  then the result follows from Lemma~\ref{P:OptimalChar'}.
So we can assume that  $f_i\perp\krat(T_i)$ for some $i\in \{2,\ldots,\ell\}$.
By Lemma~\ref{P:OptimalChar'} we can assume that the function $f_1$ is $\krat(T_1)$-measurable.
Furthermore, using a standard approximation argument we can  assume that the function $f_1$  is $\K_{r_1}(T_1)$-measurable
for some $r_1\in \N$.
Since
 for every $r\in r_1 \N$ we have $T^{r}f_1=f_1$,  it remains to find $r_2\in r_1\N$, such  that  for every $r\in r_2\N$  we have
$$
\lim_{N-M\to\infty}\norm{\frac{1}{N-M}\sum_{n=M}^{N-1} f_2(T_2^{(rn)^{d_1}}x)\cdot \ldots\cdot f_\ell(T_\ell^{(rn)^{d_\ell}}x)}_{L^2(\mu)}
\leq \varepsilon.
$$
Such an integer $r_2$ exists   from the induction hypothesis. This completes the induction and the proof.
\end{proof}

\appendix
\section{Some ``simple'' proofs of  special cases of the main results}
\label{sec:appendix}

It turns out that Theorem~\ref{T:CharA} can be strengthened, and the
proof of Theorems~\ref{T:Conv}, \ref{T:CharA}, and
\ref{T:LowerBounds}, can be greatly simplified in some
interesting special cases, namely when $\ell=2$ and one of the two
polynomials is linear.
Such a simplification is feasible because of
the nature of the averages involved;
it turns out  to be possible to  get simple characteristic factors
by using a variation of van der Corput's Lemma, and then appealing to a known
result from \cite{FK1}.
 We take the opportunity in this section to give these
simple arguments. Hopefully, the non-persistent reader, that does not
want to embark to the details of the more complicated proofs of our
main results, will benefit from the proofs of the special cases given
here.

The key ingredient in the proofs is the following result:
\begin{theorem}[\cite{FK1}]\label{T:FrKr}
Let $(X,\X,\mu,T)$ be a system and
suppose that the integer polynomials $1,p,q$ are linearly independent.
Let $f,g\in L^\infty(\mu)$ and suppose
that $f\perp \krat(T)$ or $g\perp\krat(T)$.

Then
$$
\lim_{N-M\to\infty}\frac{1}{N-M}\sum_{n=M}^{N-1} f(T^{p(n)}x)\cdot g(T^{q(n)}x)=0
$$
where the convergence takes place in $L^2(\mu)$.
\end{theorem}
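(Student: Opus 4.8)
The plan is to identify $\krat(T)$ as a characteristic factor for these averages and then, after unwinding the structure theory down to a nilsystem, to show that the linear independence of $1,p,q$ forces the two polynomial iterates $T^{p(n)}$ and $T^{q(n)}$ to decouple, so that the limit is a product of integrals, which vanishes under the hypothesis. Convergence of the averages is already known (\cite{L}, or Theorem~\ref{T:Conv} specialised to a single transformation), so only the identification of the limit is at issue.

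\emph{Reduction to an ergodic nilsystem.} Writing $\mu=\int\mu_x\,d\mu(x)$ for the ergodic decomposition of $\mu$, the hypothesis $f\perp\krat(T,\mu)$ descends to $f\perp\krat(T,\mu_x)$ for $\mu$-a.e.\ $x$ (likewise for $g$); since the averages are uniformly bounded, reverse Fatou on the $\limsup$ reduces matters to ergodic systems. For a single transformation, the polynomial theory of \cite{HKb,L} yields $k=k(\deg p,\deg q)\in\N$ (with $\deg p,\deg q\ge 1$ since $1,p,q$ are independent) such that $\cZ_k(T)$ is a characteristic factor: if $f\perp\cZ_k(T)$ or $g\perp\cZ_k(T)$, the averages tend to $0$. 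Replacing $f,g$ by $\E(f\mid\cZ_k),\E(g\mid\cZ_k)$ — which preserves $\E(f\mid\krat)=0$ because $\krat\subseteq\cZ_k$ — we may assume $f,g$ are $\cZ_k$-measurable. By Theorem~\ref{T:Structure} and the topological refinement in the remark following it, together with Proposition~\ref{L:ApprNil} and approximation by continuous functions, it suffices to treat the case where $X=G/\Gamma$ is a nilmanifold, $T=T_a$ with $a$ ergodic, $f,g\in\CC(X)$, and (WLOG $f\perp\krat(T_a)$) $\int f\,dm_{X_i}=0$ for every connected component $X_i$ of $X$; this last property is exactly the content of $\E(f\mid\krat(T_a))=0$, since the component $\sigma$-algebra is $\CI(T_a^{k_0})\subseteq\krat(T_a)$, where $k_0$ is the number of components.

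\emph{The equidistribution step.} The sequence $n\mapsto(a^{p(n)}x,a^{q(n)}x)=(a,e)^{p(n)}(e,a)^{q(n)}\cdot(x,x)$ is a polynomial sequence in $X\times X=(G\times G)/(\Gamma\times\Gamma)$, since $(a,e)$ and $(e,a)$ commute. I claim there is $r_0\in\N$ and a set of full $m_X$-measure, both depending only on $a$, such that for $x$ in that set and every $r\in r_0\N$ the sequence $(a^{p(rn)}x,a^{q(rn)}x)$ is equidistributed in $X^{(x)}\times X^{(x)}$, where $X^{(x)}$ is the connected component of $x$. Granting this, the averages of $f(T^{p(rn)}x)g(T^{q(rn)}x)$ converge to $\int_{X^{(x)}}f\cdot\int_{X^{(x)}}g=0$, and letting $r\to\infty$ along $r_0\N$ gives the theorem. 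To prove the claim, one reduces (handling the cyclic component structure exactly as in the ``general case'' of Lemma~\ref{L:NilEqui}) to $X$ connected and $a$ totally ergodic, and then invokes Leibman's criterion (Theorem~\ref{T:L2}) to replace $X\times X$ by its affine torus $A_X\times A_X\cong\T^m\times\T^m$ — so the higher-step structure of the nilmanifold never obstructs equidistribution, being invisible on the affine torus — where $T_a$ acts by an ergodic unipotent affine map $Tv=Sv+b$ (and, after a harmless translation, $p,q$ may be taken with zero constant term). There it is a direct Weyl computation in the spirit of Lemma~\ref{L:affine}: for a character $(k_1,k_2)$ of $\T^m\times\T^m$, the quantity $k_1\cdot T^{p(n)}u+k_2\cdot T^{q(n)}u$ is a polynomial in $n$ whose degree-$i$ coefficient, $i\ge1$, must be irrational unless a genericity condition on $u$ of the type \eqref{E:condition} and ergodicity force $a_ik_1+b_ik_2=0$ in $\Z^m$; since the positive-degree parts $(a_i)_{i\ge1}$ of $p$ and $(b_i)_{i\ge1}$ of $q$ are $\Q$-linearly independent (this is equivalent to $1,p,q$ being independent), a suitable pair of such relations forces $k_1=k_2=0$. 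Hence no nontrivial character survives and the projected sequence equidistributes in the whole torus.

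\emph{Main obstacle.} The substantive point is the equidistribution claim, and within it two things: propagating the genericity set and the modulus $r_0$ through Leibman's reduction to the affine torus (the same bookkeeping already confronted in Lemma~\ref{L:NilEqui}, in particular that these must be uniform in $x$ and independent of $g$), and — conceptually more important — that the correct characteristic factor is $\krat$ rather than the mean. This is governed entirely by the possible \emph{disconnectedness} of the nilmanifold: the components are the source of the rational eigenvalues, the functions measurable with respect to $\CI(T_a^{k_0})$ are precisely what survives the averaging, and the hypothesis $f\perp\krat$ says exactly that $\int f$ vanishes on each component. It is for this reason that the reduction cannot be carried out by van der Corput's Lemma alone and genuinely requires the structure theorem together with the equidistribution results on nilmanifolds.
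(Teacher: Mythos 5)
The paper does not prove this statement: it is quoted from \cite{FK1}, and the only argument the paper supplies is the remark following it, namely that the ergodic case proved in \cite{FK1} extends to general systems because $f\perp\krat(T,\mu)$ implies $f\perp\krat(T,\mu_x)$ for $\mu$-almost every $x$. Your first reduction coincides exactly with that remark; everything after it is a reconstruction of the proof of the cited theorem. The architecture you propose (characteristic factor $\cZ_k$ via \cite{HKb,L}, the structure theorem plus approximation, Leibman's criterion to pass to the affine torus, then Weyl) is the right one, and it runs parallel to the machinery the paper builds for its own purposes in Lemma~\ref{L:NilEqui} and Corollary~\ref{C:gfd}; what your route buys is self-containedness where the paper outsources the work.

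Two steps, however, do not hold up as written. First, showing that the averages of $f(T^{p(rn)}x)\cdot g(T^{q(rn)}x)$ vanish for every $r\in r_0\N$ does not give the theorem: the full average over $n$ is the average over the residues $j\bmod r_0$ of the averages along the progressions $r_0\N+j$, and ``letting $r\to\infty$ along $r_0\N$'' only recovers the residue $j=0$ (take $c_n=\mathbf{1}_{r_0\nmid n}$ to see that vanishing along multiples of $r_0$ says nothing about the full average). You must run the equidistribution argument for each $j$, with the sequence $\bigl(a^{p(r_0n+j)}x,a^{q(r_0n+j)}x\bigr)$ landing in the product of the components of $a^{p(j)}x$ and $a^{q(j)}x$, on each of which $f$ still has zero integral; this is repairable, but it is not what you wrote. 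Second, and more substantively, the Weyl step is where the entire content of \cite{FK1} is concentrated, and your description of the degree-$i$ coefficient of the phase as $a_ik_1+b_ik_2$ is not correct. Writing the phase as $F(p(n))+G(q(n))$ with $F(t)=\sum_j\binom{t}{j}\,k_1\cdot(S-I)^ju+\sum_j\binom{t}{j+1}\,k_1\cdot(S-I)^jb$ (and similarly for $G$ with $k_2$), the coefficient of $n^i$ receives contributions from $\binom{p(n)}{j}$ and $\binom{q(n)}{j}$ for all $j$ simultaneously, so the two polynomial phases can cancel against each other across different values of $j$: for instance $F(t)=\beta t^2$, $G(t)=-\beta t$, $p=n$, $q=n^2$ gives identically zero phase even though $1,p,q$ are independent. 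Ruling this out for the particular $F,G$ arising from the affine action requires a top-down induction on the coefficients, a genericity condition on the base point strictly stronger than \eqref{E:condition}, and total ergodicity of $T$ on the connected torus; the sentence ``a suitable pair of such relations forces $k_1=k_2=0$'' asserts the conclusion of that induction rather than supplying it.
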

\begin{remark}
The proof in \cite{FK1} is given for ergodic systems, but the announced result follows
directly from this, since $f\bot \krat(T,\mu)$ implies that $f\bot \krat(T,\mu_x)=0$
for $\mu$-almost every $x\in X$, where as usual,
 $\mu=\int \mu_x\ d\mu(x)$ is the ergodic decomposition of $\mu$.
\end{remark}

We are also going to use the following   variation of the classical elementary lemma of van der Corput.
Its proof is  a straightforward modification of  the one given
in~\cite{Be}.

\begin{lemma}\label{L:N-VDC'}
Let  $\{v_{N,n}\}_{N,n\in\N}$ be a bounded  sequence of vectors in a
Hilbert space.
 For every $h\in \N$ we set
$$
b_h=\overline{\lim}_{N\to \infty}\Big|\frac{1}{N}
\sum_{n=1}^{N}<v_{N,n+h},v_{N,n}>\Big|.
$$
Suppose that
$$
\lim_{H\to\infty}\frac{1}{H}\sum_{h=1}^H b_h=0.
$$

Then
$$
\lim_{N\to\infty}
\norm{\frac{1}{N}\sum_{n=1}^{N} v_{N,n}}=0.
$$
\end{lemma}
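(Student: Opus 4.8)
The plan is to mimic, \emph{mutatis mutandis}, the proof of the single-indexed van der Corput lemma from~\cite{Be}; the only new point is to keep the index $N$ frozen inside every averaging step and to pass to $\limsup_{N\to\infty}$ only at the very end. Set $C=\sup_{N,n}\norm{v_{N,n}}<\infty$ and fix $H\in\N$. The first step is the standard averaging trick: writing $w_{N,n}^{(H)}=\frac1H\sum_{h=0}^{H-1}v_{N,n+h}$, a shift of the summation index shows that
$$
\frac1N\sum_{n=1}^N v_{N,n}-\frac1N\sum_{n=1}^N w_{N,n}^{(H)}
$$
has norm at most $2CH/N$, since it involves only the boundary terms $v_{N,n}$ with $n\in\{1,\dots,H-1\}\cup\{N+1,\dots,N+H-1\}$ together with the bound $\sup_{N,n}\norm{v_{N,n}}\le C$. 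Hence, for fixed $H$, it suffices to control $\frac1N\sum_{n=1}^N w_{N,n}^{(H)}$ as $N\to\infty$.

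Second, by the Cauchy--Schwarz inequality (equivalently, by convexity of $t\mapsto\norm t^2$),
$$
\Bigl\|\frac1N\sum_{n=1}^N w_{N,n}^{(H)}\Bigr\|^2\le\frac1N\sum_{n=1}^N\bigl\|w_{N,n}^{(H)}\bigr\|^2
=\frac1{H^2}\sum_{h,h'=0}^{H-1}\frac1N\sum_{n=1}^N\langle v_{N,n+h},v_{N,n+h'}\rangle .
$$
The $H$ diagonal terms $h=h'$ contribute at most $C^2/H$. For the off-diagonal terms, group the pair $\{h,h'\}$ with $h<h'$ and put $j=h'-h\in\{1,\dots,H-1\}$; a further shift of the summation index in $n$, at the cost of a boundary error $O(Cj/N)$, gives $\frac1N\sum_{n=1}^N\langle v_{N,n+h'},v_{N,n+h}\rangle=\frac1N\sum_{n=1}^N\langle v_{N,n+j},v_{N,n}\rangle+O(CH/N)$. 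Since there are $H-j\le H$ pairs with $h'-h=j$, and $\sum_{j}j(H-j)=O(H^3)$, collecting everything yields, with an absolute implied constant,
$$
\Bigl\|\frac1N\sum_{n=1}^N v_{N,n}\Bigr\|^2\le\frac{C^2}{H}+\frac2H\sum_{j=1}^{H-1}\Bigl|\frac1N\sum_{n=1}^N\langle v_{N,n+j},v_{N,n}\rangle\Bigr|+O\!\Bigl(\frac{CH}{N}\Bigr).
$$

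Third, for fixed $H$ the error $O(CH/N)$ tends to $0$ as $N\to\infty$, and since the sum over $j$ is finite, $\limsup$ of a finite sum is dominated by the sum of the $\limsup$'s; by the definition of $b_j$ we get
$$
\limsup_{N\to\infty}\Bigl\|\frac1N\sum_{n=1}^N v_{N,n}\Bigr\|^2\le\frac{C^2}{H}+\frac2H\sum_{j=1}^{H}b_j
$$
for every $H\in\N$. Letting $H\to\infty$, the first term vanishes and the second vanishes by the hypothesis $\frac1H\sum_{j=1}^H b_j\to0$, which gives the conclusion. As for difficulties: there is essentially no genuine obstacle, the argument being pure bookkeeping. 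The only points requiring care are that the uniform bound $C$ is what makes all boundary corrections $O(H/N)$ (hence harmless for fixed $H$), and that the passage to $\limsup_{N\to\infty}$ is legitimate precisely because at that stage $H$ is fixed and the $j$-sum is finite.
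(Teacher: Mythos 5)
Your proof is correct, and it is exactly the ``straightforward modification of the one given in~\cite{Be}'' that the paper invokes without writing out: the paper supplies no proof of this lemma, and your argument (averaging trick with $N$ frozen, Cauchy--Schwarz, index shifts costing $O(H/N)$, then $\limsup_{N\to\infty}$ followed by $H\to\infty$) is the standard van der Corput computation adapted to the double index. The only cosmetic quibble is that the boundary error terms should carry a constant $C^2$ rather than $C$ (they come from inner products, not norms), which does not affect the argument.
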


  We start with the following strengthening of Theorem~\ref{T:CharA} in our particular setup:
\begin{theorem}\label{T:1}
Let $(X,\X,\mu,T,S)$ be a system.
Let  $f,g\in L^\infty(\mu)$ and suppose that  either $f\perp\krat(T)$ or
$g\perp\krat(S)$.

Then for every polynomial $p\in\Z[t]$ with $\deg (p)\geq 2$ we have
\begin{equation}\label{E:vbn}
\lim_{N-M\to\infty}\frac{1}{N-M}\sum_{n=M}^{N-1} f(T^nx)\cdot g(S^{p(n)}x)=0
\end{equation}
where the convergence takes place in $L^2(\mu)$.
\end{theorem}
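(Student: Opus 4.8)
The plan is to deduce Theorem~\ref{T:1} from Theorem~\ref{T:FrKr} by repeated use of van der Corput's lemma. It suffices to treat the two cases $f\perp\krat(T)$ and $g\perp\krat(S)$ separately.

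First I would apply van der Corput's lemma to $v_n=f(T^nx)\,g(S^{p(n)}x)$. Using $(T^{n+h}f)(T^nf)=T^n(f\cdot T^hf)$ one gets
$$
\langle v_{n+h},v_n\rangle=\int (T^nF_h)\,(S^{p(n+h)}g)\,(S^{p(n)}g)\,d\mu,\qquad F_h:=f\cdot T^hf,
$$
so it is enough to show that for every fixed $h\geq 1$ the averages in $n$ of the right-hand side tend to $0$. The key structural point is that, since $\deg p\geq 2$, the polynomial $p(n+h)-p(n)$ is non-constant, so $1,\,p(n),\,p(n+h)$ are linearly independent — exactly the situation covered by Theorem~\ref{T:FrKr} for the transformation $S$.

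What remains is to remove the weight $T^nF_h$, which, unlike $F_h$, is not a fixed function but is twisted by a power of the other transformation. This is where I would invoke the double-indexed van der Corput lemma (Lemma~\ref{L:N-VDC'}): rewriting the average in $n$ as $\langle F_h,\ \frac{1}{N-M}\sum_{n=M}^{N-1}T^{-n}[(S^{p(n+h)}g)(S^{p(n)}g)]\rangle$ and applying Lemma~\ref{L:N-VDC'} to the $N$-dependent vectors carrying the $T^{-n}$-shift, one straightens out the twist and is reduced to estimating averages in $n$ of single-$S$ polynomial correlations of the form $\int (S^{p(n+a)}\tilde g)\,(S^{p(n+b)}g)\,d\mu$ with $a\neq b$ and $\tilde g$ a bounded $T$-translate of $g$. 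Since $g\perp\krat(S)$ and $1,\,p(n+a),\,p(n+b)$ are linearly independent, Theorem~\ref{T:FrKr} applied to $S$ shows these tend to $0$, finishing the case $g\perp\krat(S)$. The case $f\perp\krat(T)$ is symmetric: after the first van der Corput step one composes with $S^{-p(n)}$, which collapses the moving $S$-iterates of $g$ and turns the weight into a polynomial orbit of $F_h$ under the commuting pair $(T,S)$; one or two further applications of van der Corput (again including the double-indexed form) then isolate single-$T$ correlations $\int (T^{p'(n)}f')\,(T^{q'(n)}f'')\,d\mu$ to which Theorem~\ref{T:FrKr} applies via $f\perp\krat(T)$.

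The main obstacle is the bookkeeping in these secondary van der Corput steps: one must check that the polynomial iterates that survive really are two in number and, together with $1$, linearly independent (rather than a degenerate, e.g.\ parallelogram-type, configuration), so that the rational Kronecker factor is characteristic and the finer structure theorems of the main text are not needed; and one must track which of the two functions keeps its orthogonality to the relevant rational Kronecker factor. The hypothesis $\deg p\geq 2$ enters precisely here, by excluding the degeneracy $p(n+h)=p(n)+\mathrm{const}$.
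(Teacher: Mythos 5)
There is a genuine gap, and it is precisely the ``parallelogram-type'' degeneracy you flag at the end but do not check: in your scheme it actually occurs. If you difference in $h$ first, you must then kill, for each fixed $h$, the averages of $\int (T^nF_h)\,(S^{p(n+h)}g)(S^{p(n)}g)\,d\mu$. Writing this as $\langle F_h, A_N\rangle$ with $A_N=\frac{1}{|I_N|}\sum_n T^{-n}\bigl[(S^{p(n+h)}g)(S^{p(n)}g)\bigr]$ and applying van der Corput to the vectors $u_n=T^{-n}\bigl[(S^{p(n+h)}g)(S^{p(n)}g)\bigr]$ gives inner products
$$\langle u_{n+m},u_n\rangle=\int (S^{p(n+h+m)}T^{-m}g)\,(S^{p(n+m)}T^{-m}g)\,(S^{p(n+h)}g)\,(S^{p(n)}g)\,d\mu,$$
a \emph{four}-term $S$-correlation, not the two-term one you claim. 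All four factors carry non-constant polynomial $S$-iterates, so none can be discarded by Cauchy--Schwarz, and Theorem~\ref{T:FrKr} (which treats exactly two functions) does not apply. Worse, the iterates $p(n),p(n+h),p(n+m),p(n+h+m)$ satisfy $p(n+h+m)-p(n+h)-p(n+m)+p(n)=$ a polynomial of degree $\deg(p)-2$, so for $\deg(p)=2$ they are linearly dependent together with $1$; this is a genuine parallelogram configuration for which $\krat(S)$ is not even characteristic. So the reduction to ``single-$S$ correlations with $a\neq b$'' fails, and the first vdC step (differencing the whole product in $h$) is the wrong first move.

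The paper avoids this by reversing the order of operations: it first dualizes, writing the square of the $L^2$-norm as the average of $\int h_N\cdot f(T^nx)\,g(S^{p(n)}x)\,d\mu$ with $h_N=\frac{1}{|I_N|}\sum_{n\in I_N}f(T^n\cdot)g(S^{p(n)}\cdot)$, composes with $T^{-n}$ so that $f$ becomes static, and discards $f$ by Cauchy--Schwarz \emph{before} any van der Corput step. Only then is Lemma~\ref{L:N-VDC'} applied, to the two-factor product $h_N(T^{-n}x)\,g(S^{p(n)}T^{-n}x)$ (the double-indexed form is needed because $h_N$ depends on $N$); after composing with $T^n$ the two $h_N$-factors carry no $S$-iterate and are removed by a second Cauchy--Schwarz, leaving exactly the two-term correlation $\int (S^{p(n)}g)\,(S^{p(n+m)}T^{-m}g)\,d\mu$ with $1,p(n),p(n+m)$ independent, to which Theorem~\ref{T:FrKr} applies. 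Your treatment of the case $f\perp\krat(T)$ has the same problem in amplified form: further vdC steps on iterates of the commuting pair $(T,S)$ is the PET induction of Proposition~\ref{P:CharB2}, which yields $\cZ_k$-factors for large $k$, not $\krat(T)$. The paper instead uses the first case to replace $g$ by $\E(g|\krat(S))$, approximates by rational eigenfunctions $Sg=e(r)g$, and finishes with the spectral theorem and Weyl's theorem, using that $\sigma_f$ has no rational atoms.
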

\begin{proof}
Suppose first that  $\E(g|\krat(S))=0$. It suffices to show that for every  sequence of intervals
$(I_N)_{N\in\N}$ with length increasing to infinity, the averages in $n$ over the intervals $I_N$ of
$$
\int h_N(x) \cdot f(T^nx) \cdot g(S^{p(n)}x) \ d\mu
$$
converge to $0$, where $h_N(x)=\frac{1}{|I_N|}\sum_{n\in I_N}f(T^nx)\cdot g(S^{p(n)}x)$.
Equivalently, it suffices to show that the averages over the intervals $I_N$ of
$$
\int f(x) \cdot h_N(T^{-n}x) \cdot g( S^{p(n)} T^{-n}x) \ d\mu
$$
converge to  $0$.
Using the Cauchy-Schwarz inequality it suffices to show that the averages over the intervals $I_N$ of
$$
h_N(T^{-n}x) \cdot g(S^{p(n)}T^{-n}x)
$$
converge to $0$ in $L^2(\mu)$.
By Lemma~\ref{L:N-VDC'}   it suffices to show that for every $m\in \N$ the averages in $n$ over the intervals $I_N$ of
$$
\int {h_N}(T^{-n}x) \cdot {g}( S^{p(n)} T^{-n}x)
\cdot h_N(T^{-(n+m)}x)\cdot g(S^{p(n+m)}T^{-(n+m)}x) \ d\mu
$$
converge to $0$.
We compose with  the transformation $T^{n}$ and use the Cauchy-Schwarz inequality. It
 suffices to show that for every $m\in\N$  the averages in $n$ over the intervals $I_N$ of
$$
 g(S^{p(n)}x)
\cdot g(T^{-m}S^{p(n+m)}x)
$$
converge to $0$ in $L^2(\mu)$.
Since $\deg (p)\geq 2$, for every $m\in\N$ the polynomials  $1, p(n), p(n+m)$  are linearly independent.
 Since $g\perp\krat(S)$,
  Theorem~\ref{T:FrKr}  verifies that the last identity holds.

It remains to show that if  $f\perp\krat(T)$, then the averages over the intervals $I_N$ of
 $$
f(T^nx)\cdot g(S^{p(n)}x)
$$
converge to $0$ in $L^2(\mu)$.
 Using the previously established property we get that the above limit remains unchanged if we replace
  the function  $g$ with  the function $\E(g|\krat(S))$. Furthermore,  using an approximation argument and linearity,
 we can assume that $Sg=e(r)g$ for some $r\in \QQ$. In this case, it suffices to show that the averages over the intervals $I_N$ of
$$
  f(T^nx)\cdot e(rp(n))
$$
converge to $0$ in $L^2(\mu)$. Using the spectral theorem for unitary operators it
suffices to show that for every $r\in \QQ$ we have
\begin{equation}\label{E:spectral}
\lim_{N\to\infty}\Big{|}\Big{|}\frac{1}{|I_N|}\sum_{n\in I_N}   e(nt+rn^2)\Big{|}\Big{|}_{L^2(\sigma_f(t))}= 0
\end{equation}
where $\sigma_f$ denotes the spectral measure of the function $f$.
 Since $f\bot \krat (T)$, the measure $\sigma_f$ has no rational point masses.
 Furthermore, as is well known, for $t$ irrational the averages in \eqref{E:spectral} converge to $0$ pointwise.
Combining these two facts, and using the bounded convergence theorem, we deduce that \eqref{E:spectral} holds.
This completes the proof.
\end{proof}
We deduce the following special case of Theorem~\ref{T:Conv}:
\begin{theorem}\label{T:2}
Let $(X,\X,\mu,T,S)$ be a system and $f,g\in L^\infty(\mu)$. Let  $p\in\Z[t]$ with $\deg (p)\geq 2$.

Then  the limit
$$
\lim_{N-M\to\infty}\frac{1}{N-M}\sum_{n=M}^{N-1} f(T^nx)\cdot g(S^{p(n)}x)
$$
exists in $L^2(\mu)$.
\end{theorem}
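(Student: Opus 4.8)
The plan is to deduce Theorem~\ref{T:2} from the characteristic factor statement Theorem~\ref{T:1} and then to evaluate the limit by an explicit computation on the rational Kronecker factors. Write $A_{M,N}(f,g)=\frac1{N-M}\sum_{n=M}^{N-1}f(T^nx)\,g(S^{p(n)}x)$. Decomposing $f=\E(f\mid\krat(T))+f'$ with $f'\perp\krat(T)$ and $g=\E(g\mid\krat(S))+g'$ with $g'\perp\krat(S)$, and expanding the bilinear expression $A_{M,N}(f,g)$ into four pieces, Theorem~\ref{T:1} shows that the three pieces containing $f'$ or $g'$ have $L^2$-limit zero (each such piece has one factor orthogonal to the relevant rational Kronecker factor and $\deg p\geq 2$). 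Hence it suffices to prove convergence when $f\in L^\infty(\krat(T),\mu)$ and $g\in L^\infty(\krat(S),\mu)$.

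In this reduced situation I would run the standard ``approximate on the factor, then compute'' scheme. Since $\krat(T)=\bigvee_{d}\CK_d$ with $\CK_d=\CI(T^d)$, the functions $f_k:=\E(f\mid\CK_{k!})$ converge to $f$ in $L^2(\mu)$ while $\|f_k\|_{L^\infty(\mu)}\leq\|f\|_{L^\infty(\mu)}$, and each $f_k$ satisfies $T^{k!}f_k=f_k$; by a finite Fourier decomposition in $\Z/k!\Z$ one writes $f_k=\sum_j f_{k,j}$ where $Tf_{k,j}=e(a_{k,j})f_{k,j}$ for some $a_{k,j}\in\QQ$ and $\|f_{k,j}\|_{L^\infty(\mu)}\leq\|f\|_{L^\infty(\mu)}$. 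Doing the same for $g$ and using the elementary bound $\|A_{M,N}(u,v)\|_{L^2(\mu)}\leq\|u\|_{L^\infty(\mu)}\|v\|_{L^2(\mu)}$ (and symmetrically), the triangle inequality reduces the Cauchy property of $\{A_{M,N}(f,g)\}$ as $N-M\to\infty$ to the Cauchy property, for each fixed $k$, of the finite linear combinations $A_{M,N}(f_k,g_k)$, hence by bilinearity to the single case where $T\phi=e(a)\phi$ and $S\psi=e(b)\psi$ with $a,b\in\QQ$. There one has $A_{M,N}(\phi,\psi)=\bigl(\frac1{N-M}\sum_{n=M}^{N-1}e(an+bp(n))\bigr)\,\phi\psi$, and since $a,b$ are rational and $p$ has integer coefficients (so $n\mapsto p(n)\bmod q$ is periodic in $n$), the scalar sequence $n\mapsto e(an+bp(n))$ is periodic; its Cesàro averages over intervals whose lengths tend to infinity therefore converge, which gives convergence of $A_{M,N}(\phi,\psi)$ in $L^2(\mu)$ (indeed uniformly), and the proof concludes.

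There is essentially no serious obstacle here, since all the analytic difficulty is already contained in Theorem~\ref{T:1} (which in turn rests on Theorem~\ref{T:FrKr} and the spectral argument used there). The only point demanding a little care is the bilinear approximation step: one must choose the approximants to $f$ and to $g$ \emph{inside} $L^\infty$ --- which is why conditional expectations onto the factors $\CK_{k!}$ are used rather than arbitrary $L^2$-approximants --- so that the error terms $A_{M,N}(f-f_k,g)$ and $A_{M,N}(f_k,g-g_k)$ are controlled uniformly in $M$ and $N$.
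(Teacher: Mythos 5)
Your proposal is correct and follows essentially the same route as the paper: reduce via Theorem~\ref{T:1} to the case where $f$ is $\krat(T)$-measurable and $g$ is $\krat(S)$-measurable, approximate in $L^2$ (with uniform $L^\infty$ control) by functions fixed under $T^r$ and $S^r$, and conclude by periodicity of $n\mapsto f(T^nx)\,g(S^{p(n)}x)$. The paper simply declares this last step ``obvious'' where you spell it out via a rational eigenfunction decomposition; the content is identical.
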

\begin{proof}
By Theorem~\ref{T:1} we can assume that the function $f$ is  $\krat(T)$-measurable
and the function $g$ is $\krat(S)$-measurable.
Furthermore using an approximation argument we can assume that $T^rf=f$ and $T^rg=g$ for some $r\in \N$.
In this case the result is obvious.
\end{proof}
As a corollary we get an short proof for weak convergence of some multiple ergodic averages    recently
 studied by T.~Austin in~\cite{A2} (where strong convergence was proven when $p(n)=n^2$).
\begin{corollary}
Let $(X,\X,\mu,T,S)$ be a system
 and $f,g\in L^\infty(\mu)$.
Let  $p\in\Z[t]$ with $\deg (p)\geq 2$.

Then  the averages
\begin{equation}\label{E:fvb}
\frac{1}{N-M}\sum_{n=M}^{N-1}  f(T^{p(n)}x)\cdot g(T^{p(n)}S^nx)
\end{equation}
converge weakly in $L^2(\mu)$ as $N-M\to\infty$. Furthermore, the limit is $0$   if either  $g\bot \krat(S)$
or $f\bot (\krat(T)\vee \krat(S))$.
\end{corollary}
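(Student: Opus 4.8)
The plan is to deduce the weak convergence from the already-established Theorem~\ref{T:2} by a change of variables that decouples the two iterates. First I would fix $h\in L^\infty(\mu)$ — it suffices to treat such test functions, since they are dense in $L^2(\mu)$ and the averages in question are uniformly bounded in $L^2(\mu)$ — and look at the scalar averages of $\int h\cdot f(T^{p(n)}x)\cdot g(T^{p(n)}S^nx)\,d\mu$. Performing the substitution $x\mapsto T^{-p(n)}x$ and using that $T$ and $S$ commute, so that $T^{p(n)}S^nT^{-p(n)}=S^n$, this integral becomes $\int f(x)\cdot g(S^nx)\cdot h(T^{-p(n)}x)\,d\mu$. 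Hence the $n$-average of the quantity under consideration equals $\int f\cdot w_{N,M}\,d\mu$ with
$$
w_{N,M}=\frac{1}{N-M}\sum_{n=M}^{N-1}g(S^nx)\cdot h(T^{-p(n)}x).
$$

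Next I would apply Theorem~\ref{T:2} to the system $(X,\X,\mu,S,T^{-1})$ (note $S$ and $T^{-1}$ are commuting invertible measure preserving transformations), the functions $g$ and $h$, and the polynomial $p$, which has degree $\geq 2$; since $(T^{-1})^{p(n)}=T^{-p(n)}$, this gives that $w_{N,M}$ converges in $L^2(\mu)$ as $N-M\to\infty$, to some function $G_h$. Consequently the averages of $\int h\cdot f(T^{p(n)}x)\cdot g(T^{p(n)}S^nx)\,d\mu$ converge to $\int f\cdot G_h\,d\mu$, which is exactly the asserted weak convergence.

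For the vanishing statements I would use Theorem~\ref{T:1} in the same guise. If $g\perp\krat(S)$, then Theorem~\ref{T:1}, applied to $(X,\X,\mu,S,T^{-1})$, the functions $g,h$, and the polynomial $p$, yields $w_{N,M}\to 0$ in $L^2(\mu)$, so $G_h=0$ and the limit of the averages is $0$. If instead $f\perp(\krat(T)\vee\krat(S))$, I would first invoke Theorem~\ref{T:1} to see that replacing $g$ by $g-\E(g|\krat(S))$ does not change $G_h$, and likewise that replacing $h$ by $h-\E(h|\krat(T))$ does not change it (using $\krat(T^{-1})=\krat(T)$, which holds since $T^df=f$ iff $T^{-d}f=f$). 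Thus $G_h$ is the $L^2(\mu)$-limit of the averages of $\E(g|\krat(S))(S^nx)\cdot\E(h|\krat(T))(T^{-p(n)}x)$; since $\krat(S)$ is $S$-invariant and $\krat(T)$ is $T$-invariant, the first factor is $\krat(S)$-measurable and the second is $\krat(T)$-measurable, so the product — hence its average, hence the limit $G_h$ — is $(\krat(T)\vee\krat(S))$-measurable. Therefore $\int f\cdot G_h\,d\mu=\int\E\bigl(f\mid\krat(T)\vee\krat(S)\bigr)\cdot G_h\,d\mu=0$.

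The argument uses no genuinely new idea beyond Theorems~\ref{T:1} and \ref{T:2}; the only point requiring care is the bookkeeping of the symmetric applications of those theorems, where after the substitution the linear iterate is carried by $S$ and the polynomial iterate by $T^{-1}$. This is also why only weak, and not norm, convergence comes out of the method: we control the pairing with each fixed test function $h$, but the argument does not by itself produce a single limiting function in $L^2(\mu)$ for the product $f(T^{p(n)}x)\cdot g(T^{p(n)}S^nx)$.
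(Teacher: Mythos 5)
Your proof is correct and follows essentially the same route as the paper: the same change of variables $x\mapsto T^{-p(n)}x$ reduces the pairing with a test function $h$ to the averages of $\int f\cdot h(T^{-p(n)}x)\cdot g(S^nx)\,d\mu$, to which Theorems~\ref{T:2} and~\ref{T:1} are applied. Your write-up merely makes explicit two points the paper leaves implicit, namely the density/boundedness argument for weak convergence and the reason the case $f\perp(\krat(T)\vee\krat(S))$ follows from the two orthogonality cases.
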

\begin{proof}
Notice that for every $h\in L^\infty(\mu)$ the averages
of
$$
\int h(x) \cdot  f(T^{p(n)}x)\cdot g(T^{p(n)}S^nx)\ d\mu
$$
are equal to the averages of
\begin{equation}\label{E:tyr}
\int f(x)\cdot h(T^{-p(n)}x)\cdot g(S^nx) \ d\mu.
\end{equation}
 Theorem~\ref{T:2}  shows that the averages of \eqref{E:tyr}   converge, therefore
the averages  \eqref{E:fvb} converge weakly. Furthermore,
Theorem~\ref{T:1}
 shows that the averages of \eqref{E:tyr} converge to $0$ if either  $g\bot \krat(S)$
 or $h\bot \krat(T)$, and as a consequence they converge to $0$  if $f\bot (\krat(T)\vee \krat(S))$.
Therefore, if $g\bot \krat(S)$ or $f\bot (\krat(T)\vee \krat(S))$, then
 the averages of \eqref{E:tyr}  converge weakly to $0$.
 This completes the proof.
\end{proof}
Finally we establish the following result:
\begin{theorem}\label{T:LowerPoly}
Let $(X,\X,\mu, T,S)$ be a system and
 $A\in \X$. Let $p\in \Z[t]$ with $\deg (p)\geq 2$ and $p(0)=0$.

Then for every positive integer $k\geq 2$ and  $\varepsilon>0$  the set
$$
\{n\in\N\colon \mu(A\cap T^{-n}A\cap S^{-p(n)}A)> \mu(A)^3-\varepsilon\}
$$
has bounded gaps.
\end{theorem}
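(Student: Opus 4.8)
The plan is to run the scheme of Section~\ref{SS:start} in the simple setting afforded by Theorem~\ref{T:1}, which tells us that in this linear/higher-degree two-transformation situation the rational Kronecker factors $\krat(T)$ and $\krat(S)$ are characteristic. Write $f=\mathbf 1_A$, so that $\mu(A\cap T^{-n}A\cap S^{-p(n)}A)=\int f\cdot T^nf\cdot S^{p(n)}f\,d\mu$. It suffices to produce, for each $\varepsilon>0$, an integer $r\in\N$ with
$$
\lim_{N-M\to\infty}\frac{1}{N-M}\sum_{m=M}^{N-1}\mu(A\cap T^{-rm}A\cap S^{-p(rm)}A)>\mu(A)^3-\varepsilon
$$
(the limit exists by Theorem~\ref{T:2}): since the averaged quantities lie in $[0,1]$ and their averages over any intervals whose lengths tend to infinity converge to this limit, no arbitrarily long interval of integers can consist entirely of $m$ with $\mu(A\cap T^{-rm}A\cap S^{-p(rm)}A)\leq\mu(A)^3-\varepsilon$; hence the set of $m$ for which $\mu(A\cap T^{-rm}A\cap S^{-p(rm)}A)>\mu(A)^3-\varepsilon$ has bounded gaps, and consequently so does the set in the statement, which contains $r$ times that set. (The integer $k$ in the statement plays no role.)

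To obtain such an $r$, I would pass to the transformation $T^r$. Put $\tilde p(m):=p(rm)\in\Z[t]$; then $\deg\tilde p=\deg p\geq 2$, and since $p(0)=0$ every coefficient of $\tilde p$ is a multiple of $r$, so $r\mid\tilde p(m)$ for all $m$. Applying Theorem~\ref{T:1} to the system $(X,\X,\mu,T^r,S)$ and the polynomial $\tilde p$, and noting that $\krat(T^r)=\krat(T)$ (a set invariant under $T^d$ is invariant under $T^{rd}$, and conversely), together with a one-line Cauchy--Schwarz estimate to absorb the weight $f$ standing in front, the averages $\frac{1}{N-M}\sum_{m=M}^{N-1}\int f\cdot(T^r)^mf\cdot S^{\tilde p(m)}f\,d\mu$ have the same limit as the averages obtained by replacing $f$ by $\E(f|\krat(T))$ in the $(T^r)^m$-slot and by $\E(f|\krat(S))$ in the $S^{\tilde p(m)}$-slot.

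Finally I would choose $r$ to be a common multiple of integers for which $\norm{\E(f|\K_r(T))-\E(f|\krat(T))}_{L^2(\mu)}<\varepsilon/4$ and $\norm{\E(f|\K_r(S))-\E(f|\krat(S))}_{L^2(\mu)}<\varepsilon/4$, which is possible since $\K_r$ increases to $\krat$ along the multiples of any fixed integer. Replacing $\krat(T)$ by $\K_r(T)$ and $\krat(S)$ by $\K_r(S)$ changes the limit by at most $\varepsilon/2$ (all functions involved are bounded by $1$ and $T,S$ preserve $\mu$). But $\E(f|\K_r(T))$ is $T^r$-invariant and $\E(f|\K_r(S))$ is $S^r$-invariant, so using $r\mid\tilde p(m)$ the integrand $\int f\cdot(T^r)^m\E(f|\K_r(T))\cdot S^{\tilde p(m)}\E(f|\K_r(S))\,d\mu$ is independent of $m$ and equals $\int f\cdot\E(f|\K_r(T))\cdot\E(f|\K_r(S))\,d\mu$, which is $\geq\mu(A)^3$ by Lemma~\ref{L:LowerBound} applied with $\ell=2$. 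Hence the limit in the first display is $\geq\mu(A)^3-\varepsilon/2>\mu(A)^3-\varepsilon$, as required.

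The argument is essentially routine given Theorem~\ref{T:1}; the only steps needing care are the reduction to the progression $r\N$ — a naive appeal to the periodicity of the $\K_r$-conditional expectations in $n$ would only produce the useless bound $\mu(A)^3/r$, so one really must replace $T$ by $T^r$, which is harmless precisely because $\krat(T^r)=\krat(T)$ — and the use of the hypothesis $p(0)=0$, which is exactly what makes $r$ divide $p(rm)$ and hence makes the $S^{p(rm)}$-iterate act trivially on the $\K_r(S)$-measurable factor.
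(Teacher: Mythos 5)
Your proposal is correct and follows essentially the same route as the paper: reduce to a lower bound for the limit of the averages along $r\N$, use Theorem~\ref{T:1} (which you apply to the system $(X,\X,\mu,T^r,S)$ with the polynomial $p(rm)$, using $\krat(T^r)=\krat(T)$, where the paper instead invokes a ``straightforward modification'' of Theorem~\ref{T:1} with $T^n$ replaced by $T^{rn}$ --- the same content) to pass to the rational Kronecker factors, approximate $\krat$ by $\K_r$, exploit $r\mid p(rn)$ from $p(0)=0$ to make the integrand constant, and finish with Lemma~\ref{L:LowerBound}. Your side remarks (the role of $p(0)=0$, the irrelevance of $k$, and why one must genuinely pass to $T^r$ rather than average the periodic conditional expectations) are all accurate.
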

\begin{proof}
Let $\varepsilon>0$. There exists $r\in \N$ such that
\begin{align}\label{E:approxi}
\norm{\E({\bf 1}_A|\mathcal{K}_r(T))-\E({\bf 1}_A|\krat(T))}_{L^2(\mu)}
&\leq \varepsilon/3, \quad
\norm{\E({\bf 1}_A|\mathcal{K}_r(S))-\E({\bf 1}_A|\krat(S))}_{L^2(\mu)}
& \leq \varepsilon/3.
\end{align}
 It suffices to show  that
$$
\lim_{N-M\to\infty}\frac{1}{N-M}\sum_{n=M}^{N-1}\mu(A\cap T^{-rn}A\cap S^{-p(rn)}A)\geq \mu(A)^3-\varepsilon.
$$
Using   a straightforward modification  of Theorem~\ref{T:1}, where $T^n$ is replaced with $T^{rn}$, we see that
 the previous limit is equal to the limit of the averages of
$$
\int {\bf 1}_A\cdot T^{-rn}\E({\bf 1}_A|\krat(T))\cdot
S^{-p(rn)}\E({\bf 1}_A|\krat(S))\ d\mu.
$$
Using  \eqref{E:approxi} we see that the last limit is $\varepsilon$-close to the limit of the averages of
$$
\int {\bf 1}_A\cdot T^{-rn}\E({\bf 1}_A|\mathcal{K}_r(T))\cdot S^{-p(rn)}
\E({\bf 1}_A|\mathcal{K}_r(S))\ d\mu.
$$
Since $T^rf=f$ for $\mathcal{K}_r(T)$-measurable functions $f$, $S^rf=f$
for $\mathcal{K}_r(S)$-measurable functions $f$, and $r|p(rn)$ for every $n\in\N$ (since $p(0)=0$),
the last limit is equal to
$$
\int {\bf 1}_A\cdot \E({\bf 1}_A|\mathcal{K}_r(T))\cdot \E({\bf
1}_A|\mathcal{K}_r(S)) \ d\mu.
$$
By Lemma~\ref{L:LowerBound}, the last integral is greater or equal than
$\mu(A)^3$, completing the proof.
\end{proof}

\end{document}